\documentclass[12pt]{amsart}
\usepackage{tikz-cd}
\usepackage{amsmath}
\usepackage{fourier}
\usepackage{amssymb}
\usepackage{amscd}
\usepackage{amsthm}
\usepackage[centertags]{amsmath}
\usepackage{amsfonts}
\usepackage{newlfont}
\usepackage{graphicx}
\usepackage{amsfonts, amssymb}
\usepackage{mathrsfs}
\usepackage{latexsym}
\usepackage{tikz}
\usepackage{verbatim}
\usepackage[all]{xy}
\usepackage{enumitem}

\usepackage{setspace}

\usepackage[colorlinks=true,linkcolor=colorref,citecolor=colorcita,urlcolor=colorweb]{hyperref}
\definecolor{colorcita}{RGB}{21,86,130}
\definecolor{colorref}{RGB}{5,10,177}
\definecolor{colorweb}{RGB}{177,6,38}

%\usepackage[latin1]{inputenc}

%\numberwithin{section}{chapter}
\numberwithin{subsection}{section}

\newtheorem{theorem}{Theorem}[section]

\newtheorem{proposition}[theorem]{Proposition}
\newtheorem{corollary}[theorem]{Corollary}
\newtheorem{lemma}[theorem]{Lemma}

\theoremstyle{definition}
\newtheorem{remark}[theorem]{Remark}

\theoremstyle{remark}

\newcommand{\dis}{\displaystyle}

\newcommand{\Jj}{\mathcal J}

\newcommand{\NN}{\mathbb N}

\newcommand{\CC}{\mathbb C}

\newcommand{\ii}{\mathbf{i}}

\usepackage{mathtools}

\DeclareMathOperator{\mon}{mon}

\DeclareMathOperator{\id}{\mathrm{id}}

\newcommand{\bi}{\mathbf i}
\newcommand{\bj}{\mathbf j}

\newcommand{\chimon}{\chi_{\mon}}

\newcommand{\Pp}{\mathcal{P}}

% % resctriction

\newcommand{\sinc}{\operatorname{sinc}}

\def\N{\mathbb{ N}}

\definecolor{miverde}{RGB}{42, 180, 59}

 \textwidth=17.4cm \textheight=23cm \hoffset=-20.5mm \voffset=-5mm
 \parskip 7.2pt

\begin{document}
\title[Local constants and Bohr's phenomenon for Banach spaces of analytic polynomials]{Local constants and Bohr's phenomenon for \\ Banach spaces of analytic polynomials}
 \author[Defant]{A.~Defant}
 \address{%
 Institut f\"{u}r Mathematik,
 Carl von Ossietzky Universit\"at,
 26111 Oldenburg,
 Germany}
 \email{defant@mathematik.uni-oldenburg.de}

\author[D. Galicer]{D.~Galicer}
\address{Departamento de Matem\'{a}ticas y Estad\'{\i}stica, Universidad T. Di Tella, Av. Figueroa Alcorta 7350 (1428), Buenos Aires, Argentina and IMAS-CONICET.} \email{daniel.galicer@utdt.edu}

 \author[Mansilla]{M.~Mansilla}
 \address{Departamento de Matem\'{a}tica,
 Facultad de Cs. Exactas y Naturales, Universidad de Buenos Aires and IAM-CONICET. Saavedra 15 (C1083ACA) C.A.B.A., Argentina}
 \email{mmansilla$@$dm.uba.ar}

 \author[Masty{\l}o]{M.~Masty{\l}o}
 \address{Faculty of Mathematics and Computer Science, Adam Mickiewicz University, Pozna{\'n}, Uniwersytetu \linebreak
  Pozna{\'n}skiego 4,
 61-614 Pozna{\'n}, Poland}
 \email{mieczyslaw.mastylo$@$amu.edu.pl}

 \author[Muro]{S.~Muro}
 \address{FCEIA, Universidad Nacional de Rosario and CIFASIS, CONICET, Ocampo $\&$ Esmeralda, S2000 Rosario, Argentina}
 \email{muro$@$cifasis-conicet.gov.ar}

\date{}

% \thanks{The research of the fourth author was supported by the National Science Centre (NCN), Poland,
% Project 2019/33/B/ST1/00165; the second, third and fifth author were supported by CONICET-PIP 11220200102336
% and PICT 2018-4250. The research of the fifth author is additionally supported by ING-586-UNR}

\begin{abstract}
The primary aim of this work is to develop methods that provide new insights into the relationships between fundamental constants
in Banach space theory--specifically, the projection constant, the unconditional basis constant and the Gordon-Lewis constant--for
the Banach space $\mathcal{P}_J(X_n)$ of multivariate analytic polynomials. This class consists of all polynomials whose monomial
coefficients vanish outside the set of multi-indices $J$, and it is equipped with the supremum norm on the unit sphere of the
finite-dimensional Banach space $X_n = (\mathbb{C}^n, \|\cdot\|)$. We establish a~general framework for proving quantitative results
on the asymptotic optimal behavior of these constants, which depend on both the dimension of the space and the degree of the polynomials.
Using the tools developed, we derive asymptotic estimates of the Bohr radius for general Banach sequence lattices. Additionally, we apply our results to the asymptotic study of local constants and the Bohr radius within finite-dimensional Lorentz sequence spaces, which requires a~refined analysis of the combinatorial structure of the associated \linebreak index sets. As a consequence, we obtain optimal results across a broad range of parameters.
\end{abstract}

\subjclass[2020]{46B07, 46B28, 46E15, 32A05, 32A15}

\keywords{}
\maketitle

%\tableofcontents

\section*{Introduction} \label{Introduction}

The study of local properties of infinite-dimensional Banach spaces, which are intricately linked to the structure of finite-dimensional subspaces, is fundamental to both the local and general theory of Banach spaces. This area of research is driven by various aspects of modern analysis. One of the core challenges in local Banach space theory is understanding the asymptotic behavior of certain fundamental constants that have significant applications.

The goal of this work is to establish estimates for key constants in Banach space theory--specifically the projection constant, the unconditional basis constant, and the Gordon-Lewis constant--within the context of the finite-dimensional Banach spaces $\mathcal{P}_J(X_n)$ of polynomials. In this setting, $\mathcal{P}_J(X_n)$
denotes the Banach space of multivariate analytic polynomials $P$ defined on finite-dimensional Banach spaces $X_n = (\mathbb{C}^n, \|\cdot\|)$, where the monomial coefficients of $P$ vanish outside the set of multi-indices $J$. This space is equipped with the supremum norm over the unit sphere of $X_n$.  Additionally, we explore the relationships between the involved constants. Furthermore, we aim to unify these concepts in the study
of the Bohr radius for unit spheres of finite-dimensional Banach spaces and demonstrate the application of our general results to a significant class of finite-dimensional spaces generated by the scale of Lorentz spaces. To accomplish these objectives, we will employ techniques from various fields of analysis, including local Banach space theory, probability theory, complex analysis, Banach lattice theory, and combinatorics.

To clarify these aims, we will present key definitions and essential technical tools that naturally arise in our investigation.

\noindent{\bf The projection constant.}
The projection constant, in particular, is one of the most significant concepts in modern Banach space theory, closely related to the classical problem of describing complemented subspaces of Banach spaces—a problem that has been extensively studied since the inception of abstract operator theory.

Recall that if $X$ is a~closed subspace of a~Banach space $Y$, then the relative projection constant of $X$ in $Y$ is defined by
\begin{align*}
\boldsymbol{\lambda}(X, Y) & =  \inf\big\{\|P\|: \,\, P\in \mathcal{L}(Y, X),\,\, P|_{X} = \id_X\big\}\,,
\end{align*}
where $\id_X$ denotes the identity operator on $X$, and $\mathcal{L}(U,V)$ represents the Banach space of all bounded linear
operators between Banach spaces $U$ and $V$ with the supremum norm. Here, $\mathcal{L}(U):=\mathcal{L}(U,U)$, and we adopt the
convention that $\inf \varnothing = \infty$.

The (absolute) projection constant of $X$ is expressed as:
\[
\boldsymbol{\lambda}(X) := \sup \,\,\boldsymbol{\lambda}(I(X),Y)\,,
\]
where the supremum is taken over all Banach spaces $Y$ and isometric embeddings $I\colon X \to Y$. Note that if $X$ is
a~finite-dimensional Banach space and $X_1$ is a subspace of some $C(K)$-space isometric to $X$, then (see, e.g., \cite[III.B.5 Theorem] {wojtaszczyk1996banach}):
\begin{align*}
\boldsymbol{\lambda}(X) = \boldsymbol{\lambda}(X_1, C(K))\,.
\end{align*}
Thus, determining $\boldsymbol{\lambda}(X)$ is equivalent to finding the norm of a~minimal projection from $C(K)$ onto~$X_1$.

General bounds for projection constants of various finite-dimensional Banach spaces have been studied by many authors. The most
fundamental general upper bound is due to Kadets and Snobar~\cite{kadecsnobar}: For every finite-dimensional Banach space $X$,
we have
\begin{equation} \label{kadets1}
\boldsymbol{\lambda}(X) \leq \sqrt{\text{dim}X}.
\end{equation}

Regarding the Kadets-Snobar estimate~\eqref{kadets1}, we also mention a~remarkable result of\,Pisier
\cite[Corollary 10.8]{pisier1986factorization}, who constructed an infinite dimensional Banach space $X$ such that $\|P\| \geq \delta \sqrt{\text{rank}\,P}$ for some
$\delta \in (0, 1)$ and all  finite rank projections $P\colon X \to X$.
K\"onig \cite{konig1985spaces} proved that the Kadets-Snobar estimate~\eqref{kadets1} is asymptotically  best possible. More
precisely, he showed that there exists a~sequence $(X_{n_k})_{k=1}^{\infty}$ of finite-dimensional real Banach spaces such
that $\text{dim}(X_{n_k}) = n_k$, where $n_k\to \infty$ as $k\to \infty$, and
\[
\lim_{k\to \infty} \frac{\boldsymbol{\lambda}(X_{n_k})}{\sqrt{n_k}} = 1\,.
\]
Another surprising result in this direction is due to Figiel, Lindenstrauss and Milman \cite{FLM}. It states that there is
a~universal constant $c\in (0, 1)$ such that every $n$-dimensional Banach space $E$ contains a~subspace $F$ such that
$\boldsymbol{\lambda}(F) \geq c\,\sqrt{n}$.

\noindent{\bf The unconditional basis constant.}
The unconditional basis constant of a~basis $(e_i)_{i \in I}$ of a~Banach space $E$ is given by the infimum over all $K > 0$
such that for any finitely supported family $(\alpha_i)_{i \in I}$ of scalars and for any finitely supported family
$(\varepsilon_i)_{i \in I}$ with $|\varepsilon_i| =1, \,  i \in I$ we have
\begin{equation*}\label{unconditionality}
\Big\Vert  \sum_{i \in I} \varepsilon_i \alpha_i e_i \Big\Vert \leq K \Big\Vert \sum_{i \in I} \alpha_i e_i \Big\Vert\,.
\end{equation*}
We denote the unconditional basis constant of $(e_i)_{i\in I}$ by
$\boldsymbol{\chi}((e_i)_{i\in I})= \boldsymbol{\chi}((e_i)_{i\in I}; X)$. We also write  $\boldsymbol{\chi}((e_i)_{i \in I})
= +\infty$, whenever $(e_i)_{i\in I}$ is not unconditional, and say that $(e_i)_{i\in I}$ is a~$1$-unconditional basis, whenever
$\boldsymbol{\chi}((e_i)_{i\in I}) =1$. The unconditional basis constant $\boldsymbol{\chi}(X)$ of $X$ is defined to be the
infimum of $\boldsymbol{\chi}((e_i)_{i\in I})$ taken over all possible unconditional basis $(e_i)_{i\in I}$ of~$X$.

\noindent{\bf The  Gordon-Lewis constant.}
A significant milestone in the study of  unconditional basis constants was achieved by Gordon and Lewis in \cite{gordon1974absolutely},
who introduced the constant now known as the G-L~constant. In light of our study of this constant for the space of polynomials,
we need to introduce some additional concepts to properly present the original definition of this important  concept in the context of
local Banach space theory.

Given Banach spaces $X$, $Y$ and $1 \leq p \leq \infty$, an  operator $u\in \mathcal{L}(X,Y)$ is said to be $p$-factorable
whenever  there exist a~measure space $(\Omega, \Sigma, \mu)$ and operators $v \in  \mathcal{L}(X, L_p(\mu))$,
$w \in \mathcal{L}(L_p(\mu), Y^{\ast\ast})$, satisfying the following factorization
$
\kappa_Yu\colon X \stackrel{v} \longrightarrow L_p(\mu) \stackrel{w} \longrightarrow Y^{\ast\ast}\,;
$
here, as usual, $\kappa_{Y}\colon Y \to Y^{\ast\ast}$ is the canonical injection. In this case the $\gamma_p$-norm of the
$p$-factorable operator $u$ is given by
\begin{equation}\label{fac-p}
 \gamma_p(u) = \inf \|v\| \|w\|\,,
\end{equation}
where the infimum is taken over all possible factorizations. We are  mainly interested  in the  norms  $\gamma_p$ for operators acting
between  finite dimensional Banach spaces $X$ and $Y$. In this case, the infimum in \eqref{fac-p} is realized considering all possible
factorizations of the more simple form
\begin{equation*}
\begin{tikzcd}
X  \arrow[rd, "v"']  \arrow[rr, "u"] &  & Y \,\,,\\
& \ell_p^n   \arrow[ru, "w"'] &
\end{tikzcd}
\end{equation*}
where $n$ is arbitrary.
An operator $u\in \mathcal{L}(X,Y)$ is said to be $1$-summing  if there is a~constant $C>0$ such that
for each choice of finitely many $x_1, \ldots, x_N\in X$ one has
\[
\sum_{j=1}^N \|ux_j\|_Y  \leq C
\sup\Big\{\sum_{j=1}^{N} |x^\ast(x_j)|\,:\,\, \|x^\ast\|_{X^\ast} \leq 1\Big\}\,.
\]
By $\pi_{1}(u\colon X\to Y)$  we denote the least such $C$ satisfying this inequality. It is known that  for any $n$-dimensional
Banach space $X$, we have
\begin{equation*}
\boldsymbol{\lambda}(X)\,\pi_1(\id_X) \geq n\,,
\end{equation*}
and if $X$ has enough symmetries (i.e., any operator $u\colon X \to X$ which commutes with all isometries on $X$, is a~multiple
of the identity map), then
\begin{equation*}
\boldsymbol{\lambda}(X)\pi_1(\id_X)= n\,,
\end{equation*}
see, e.g., \cite{garlinggordon,konig1990bounds}. A Banach space $X$ has the Gordon-Lewis property if every $1$-summing operator
$u\colon X \to \ell_2$ is $1$-factorable. In this case, there is a~positive number $C$ such that for all $1$-summing operators
$u \colon X \to \ell_2$
\begin{equation*}
\label{eq: def G-L}
\gamma_1(u)\leq C\,\pi_1(u)\,,
\end{equation*}
and the best such $C>0$ is called the Gordon--Lewis constant of $X$ and denoted by $\boldsymbol{g\!l}(X)$.

We can further clarify the aforementioned comment about the crucial role of the Gordon-Lewis constant in the study of unconditionality
in Banach spaces. Specifically, Gordon and Lewis \cite{gordon1974absolutely} showed  that for every Banach space $X$ with
an unconditional basis~$(e_i)_{i \in I}$ the following estimate holds
\begin{equation} \label{gl-inequality}
\boldsymbol{g\!l}(X)\leq  \boldsymbol{\chi}(X) \leq \boldsymbol{\chi}( (e_i)_{i \in I})\,.
\end{equation}
It is worth noting that in contrast the unconditional basis constant, the Gordon-Lewis constant has the useful (ideal)
property that $\boldsymbol{g\!l}(X)\leq \|u\|  \|v\|\,\,\boldsymbol{g\!l}(Y)$ whenever $u\colon X \to Y$ and
$v\colon Y \to X$ are operators such that $\text{id}_{X} = uv$. In particular, if $X_0$ is an isometric subspace of $X$,
we get the following estimate
\begin{equation*}
\boldsymbol{g\!l}(X_0)\leq \boldsymbol{\lambda}(X_0,X)\,\, \boldsymbol{g\!l}(X)\,.
\end{equation*}

The projection constant of a~Banach space $X$ can be formulated in terms of the  $\infty$-factorization norm of the identity
operator $\id_X$.  More precisely, if $X$ is a~Banach space and $X_0$
is any subspace of some $L_{\infty}(\mu)$ isometric to $X$, then
\begin{align} \label{gammainfty}
\boldsymbol{\lambda}(X)= \gamma_\infty(\id_X)= \boldsymbol{\lambda}(X_0, L_{\infty}(\mu))\,.
\end{align}

In the general case of finite-dimensional spaces, determining these constants, or even finding their optimal asymptotic behavior
as a function of the dimension of the space, is indeed a challenging problem.

\noindent{\bf Summary of key results.}
Our focus is on spaces of  polynomials defined on $n$-dimensional Banach spaces.

We  mention three fundamental examples for projection constants of multivariate polynomials.
The first one is a theorem of  Lozinski and Kharshiladze (see \cite[IIIB. Theorem 22]{wojtaszczyk1996banach} and
\cite{natanson1961constructive}), which shows a  precise estimate of the projection constant of $\text{Trig}_{\leq m}(\mathbb{T})$,
the space of trigonometric polynomials on the torus $\mathbb{T} \subset \mathbb{C}$ of degree at most $m$ equipped with the sup-norm:
\begin{equation} \label{LoKa}
\boldsymbol{\lambda}\big(\text{Trig}_{\leq m}(\mathbb{T})\big)= \frac{4}{\pi^2} \log(m+1) + o(1).
\end{equation}

The second result is a beautiful formula from the 1980s, due to Ryll and Wojtaszczyk~\cite{ryll1983homogeneous} (see also
\cite[IIIB. Theorem 15]{wojtaszczyk1996banach}), which provides the exact value of the projection constant for the space
$\mathcal{P}_{m}(\ell^n_2)$ of all $m$-homogeneous polynomials on the $n$-dimensional complex Hilbert space $\ell_2^n$:
\begin{equation} \label{RWRW}
\boldsymbol{\lambda}\big(\mathcal{P}_{m}(\ell^n_2)\big) = \frac{\Gamma(n+m) \Gamma(1+\frac{m}{2})}{\Gamma(1+m)\Gamma(n+\frac{m}{2})}.
\end{equation}

This result was a key step in addressing several open questions of the time in the context of Hardy and Bloch spaces on the complex ball, including one posed by Rudin, who asked whether there exists an inner function on the open unit ball of Hilbert spaces (see, e.g.,  Rudin's monograph \cite{rudin1980} and Aleksandrov's article \cite{aleksandrov1984inner}).

Motivated by the study of multivariate variants of Bohr's famous power series theorem,  Defant and Frerick
\cite{defant2011bohr} (see also \cite[Section 22.2]{defant2019libro}) provided (implicitly) the following asymptotically sharp estimate
\begin{equation}\label{eq:proj const pols on lp}
\boldsymbol{\lambda}\big(\mathcal P_m(\ell_r^n)\big)  \sim_{C^m} \left( 1+\frac{n}{m} \right)^{m\left(1-\frac{1}{\min\{r,2\}}\right)}\,,
\end{equation}
where $1 \leq r \leq \infty$ and $\sim_{C^m}$ stands for equivalence up to $C^m$ with a constant $C$ only depending on $r$.

We note that the cases involving polynomial spaces on real and complex Hilbert spaces, Boolean cubes, as well as Dirichlet polynomials, have been explored in \cite{defant2024minimalarxiv},\cite{defant2024ryll}, \cite{defant2024asymptotic}, and \cite{defant2024projection}, respectively. These works, authored by Defant, Galicer, Mansilla, Masty{\l}o, and Muro, are part of a recent systematic study that also includes an analysis of the projection constant for the trace class \cite{defant2024trace}.

We now provide a brief overview of some of the main results of the present article. In Section $1$, we review the key definitions and notation used
throughout the paper.

In Section 2, we establish the relationships between  the Gordon-Lewis constant ${\boldsymbol{g\!l}}\big(\mathcal{P}_{J}(X_n)\big)$,
the the unconditional basis constant $\boldsymbol{\chimon}\big(\mathcal{P}_{J}(X_n) \big)$ (that is, the unconditional basis constant
of the monomials $(z^\alpha)_{\alpha \in J}$ in $\mathcal{P}_{J}(X_n)$) and the projection constant
$\boldsymbol{\lambda}\big(\mathcal{P}_{J}(X_n)\big)$ in the case that $X_n = (\mathbb{C}^n, \|\cdot\|)$ is a Banach lattice
and $J$ an index set. The main results here are Theorem~\ref{gl_versus_proj} and Theorem~\ref{thm: uncond cte vs proj cte}.

Applying a famous theorem of Lozanovski\v{\i}, we derive new  upper estimates for $\boldsymbol{\lambda}(\mathcal{P}_{J}(X_n)\big)$ in the
setting of Banach lattices $X_n = (\mathbb{C}^n, \|\cdot\|)$. We draw attention to Theorem~\ref{thm: bound for coef functional} and
Corollary~\ref{coro: bound proj constant tetra}. These results allow us to systematically extend estimates like~\eqref{eq:proj const pols on lp}
to a wider range of spaces $X_n$ and index sets $J$. To obtain lower estimates for $\boldsymbol{\chimon}\big(\mathcal{P}_{I}(X_n)\big)$, we use probabilistic methods. An application of the obtained estimates is Theorem~\ref{lower bound 2-convex for poly}, which states that in
a class of $2$-convex Banach lattices $X_n$ for a large class of index sets $J$, the lower and upper bounds of the aforementioned constants
$\boldsymbol{\chimon}\big( \mathcal{P}_{J}(X_n) \big)$ and $\boldsymbol{\lambda}\big(\mathcal{P}_{J}(X_n)\big)$ are asymptotically
correct.

We also revisit and expand on various results from the literature concerning Bohr radii in high dimensions in this section.
Recall that given an $n$-dimensional Banach lattice $X_n = (\mathbb{C}^n, \| \cdot \|)$, the Bohr radius $K(B_{X_n})$ of its open unit ball $B_{X_n}$ is the largest $0 < r < 1$ such that for every holomorphic function $f: B_{X_n} \to \mathbb{C}$, the following holds:
\[
\sup_{z \in r B_{X_n}} \sum_{\alpha \in \mathbb{N}_0^{(\mathbb{N})}} \left|\frac{\partial^\alpha f(0)}{\alpha!} z^\alpha \right| \le \sup_{z \in B_{X_n}} |f(z)|.
\]
Bohr's famous power series theorem in \cite{bohr1914theorem} shows, for the unit ball $\mathbb{D}$ of the Banach space
$\mathbb{C}$, that $K(\mathbb{D}) = \frac{1}{3}\,$. As usual, the Banach space of all bounded holomorphic functions
$f\colon B_{X_n} \to \mathbb{C}$ is denoted by $H_\infty(B_{X_n})$.
We will focus on a more general setting. Fixing  an  $n$-dimensional  Banach lattice
$X_n = (\CC^n, \|\cdot\|)$ together with 
an index set $J \subset \NN_0^{(\NN)}$, we consider the closed subspace
\[
H_{\infty}^{J}(B_{X_n}) :=  \left\lbrace f \in H^\infty(B_{X_n}) : \, c_\alpha(f) = 0 \text{ for } \, \alpha \, \notin J \right\rbrace
\]
of the Banach space of all bounded holomorphic functions $H_\infty(B_{X_n})$, and define
\begin{equation}\label{definitionB}
K(B_{X_n},J) := \sup \bigg\{r\in (0, 1) \colon \sup_{z \in r  B_{X_n}}
\sum_{\alpha \in J} \Big|\frac{\partial^\alpha f(0)}{\alpha!} z^\alpha\Big|
\le \|f\|_\infty \,\,\, \text{for all \,\, $f \in H^J_\infty (B_{X_n})$} \bigg\}\,,
\end{equation}
the Bohr radius of $B_{X_n}$ with respect to $J$, and  $K(B_{X_n}) := K\big(B_{X_n}, \NN_0^{(\NN)}\big)$.

Based on our findings, we describe the behavior of Bohr radii with respect to the unconditional basis and projection constants of
characteristics 'homogeneous' index sets, as well as the characteristic 'Homogeneous building blocks' $H^{J}_{\infty}(B_{X_n})$.
Additionally, we explore their relationship to the convexity of the underlying Banach sequence lattices. The key result in this
section  is Theorem~\ref{limits++} which shows for a large class of Banach sequence lattices $X$ and sets of indices $J$  that
$K(B_{X_n},J)\big/\sqrt{\log n /n} \to  1$ as $n \to \infty$. This result is inspired by the remarkable work of Bayart, Pellegrino, and Seoane, who in \cite{bayart2014bohr} established that the preceding limit holds for $X = \ell_{\infty}$ and the full set of multi-indices, with the asymptotic order initially determined by Defant, Frerick, Ortega-Cerd{\`a}, Ouna{\"{\i}}es, and Seip in \cite{defant2011bohnenblust}.

In Section 3, we focus on polynomials defined on finite-dimensional Lorentz spaces $\ell_{r,s}^n$ and supported on a given finite index
set $J$ of multi-indices. We provide asymptotically accurate estimates for the projection constant and the unconditional basis constant
of $\mathcal{P}_J(\ell_{r,s}^n)$, which depend on both the dimension $n$ and the structure of $J$. In particular, we show in
Theorem~\ref{t-final} that for the set of tetrahedral indices, the estimates for the space of homogeneous polynomials on $\ell_r^n$
(as given by \eqref{eq:proj const pols on lp}) remain valid for $\mathcal{P}_J(\ell_{r,s}^n)$ and for more general sets of indices $J$,
independently of the second parameter $s$. To establish similar  results for more general index sets (see the Theorems~\ref{t-finalII}, \ref{bound_similar_ell_r}, ~\ref{proj lorentz s<r}, and~\ref{proj lorentz s>r}), we overcome subtle technical challenges that are more
complex than those encountered in the case of $\ell_r^n$, requiring the use of most of the techniques previously developed. Finally, we
apply this to prove Theorem~\ref{thm: main bohr radii} on asymptotic estimates of Bohr radii in finite-dimensional Lorentz sequence spaces.
Specifically, assuming that the index set \( J \) has a certain mild structure (including the tetrahedral indices), we provide the optimal
asymptotic order of $K(B_{\ell_{r,s}^n}, J)$ for (almost) all $r$ and $s$.

For the entire index set $J = \N_{0}^{(\N)}$, this problem was addressed in \cite[Corollary 10]{defant2018bohr}. The results presented here
significantly advance those findings by proving the limit in certain cases and providing a new asymptotically optimal result over a range
of parameters, thus making a relevant progress on an open problem in the field. Our research contributes to the extensive recent literature
on both one-dimensional and multidimensional Bohr radii. For the one-dimensional case, see, for example, \cite{alkhaleefah2019bohr,
beneteau2004remarks,
bhowmik2018bohr,
bombieri2004remark,
ismagilov2020sharp,
kayumov2017bohr,
kayumov2018bohr,
paulsen2002bohr,
paulsen2004bohr}, and for the multivariate case
\cite{
bayart2012maximum,
bayart2014bohr,
boas2000majorant,
defant2011bohr,
defant2011bohnenblust,
defant2003bohr,
defant2018bohr,
khavinson1997bohr}.

\section{Background and notation}

We will require significant notation from (local) Banach space theory, as used in the monographs \cite{diestel1995absolutely, LT1,
pisier1986factorization, tomczak1989banach}. Unless otherwise specified, we consider complex Banach spaces. The dual Banach space
of $X$ is denoted by $X^\ast$.%, and the canonical embedding of $X$ into its bidual $X^{\ast \ast}$ by $\kappa_X$. 
The symbol $B_X$
(resp., $\overline{B}_X$) denotes the open (resp., closed) unit ball of $X$.

If $X$ and $Y$ are isomorphic spaces, that is, there is an invertible operator from $X$ onto $Y$, we write $X\simeq Y$. We use the
notion $X\equiv Y$ whenever $X$ and $Y$ may be identified isometrically. To indicate injective bounded mappings $T\colon X \to Y$,
we sometimes write $T\colon X \hookrightarrow Y$.

For isomorphic Banach spaces $X$ and $Y$, the Banach-Mazur  distance between $X$ and $Y$ is defined to be
\[
d(X, Y):=\inf \big\{\|T\|\,\|T^{-1}\|: \,\, \text{$T$ an isomorphism of $X$ onto $Y$} \big\}\,.
\]
If $X$ and $Y$ are not isomorphic, we let $d(X, Y)= +\infty$.

Given two sequences $(a_n)$ and $(b_n)$ of non-negative real numbers we write $a_n \prec_C b_n$, if there is a~constant $C>0$ such
that $a_n \leq C\,b_n$ for all $n\in \mathbb{N}$, while $a_n \sim_C b_n$ means that $a_n \prec_C b_n$ and $b_n \prec_C a_n$ holds.
In the case that an extra parameter $m$ is also involved, for two sequences of non-negative real numbers $(a_{n,m})$ and $(b_{n,m})$,
we write $a_{n,m} \prec_{C^m} b_{n,m}$ when there is a hypercontractive comparison, that is, there is a~constant $C>0$ (independent
of $n$ and $m$) such that $a_{n,m} \leq C^m b_{n,m}$ for all $n,m \in \mathbb{N}$. Of course, if $a_{n,m} \prec_{C_1^m} b_{n,m}$ and
$b_{n,m} \prec_{C_2^m} a_{n,m}$ we simply write $a_{n,m} \sim_{C^m} b_{n,m}$.

We denote the set of all sequences \(\alpha \in \mathbb{N}_0^{\mathbb{N}}\) with finite support as \(\mathbb{N}_0^{(\mathbb{N})}\).
As is customary, we refer to these sequences \(\alpha\) as multi-indices. It is clear that
\[
\mathbb{N}_0^{(\mathbb{N})} = \bigcup_{n \in \mathbb{N}} \mathbb{N}_0^n\,,
\]
where  $\mathbb{N}_0^n$ is  interpreted as a subset of $\mathbb{N}_0^{\mathbb{N}}$. We say  that   $\mathbb{N}_0^n$  forms the  set
of all
multi indices of length $n$.  For each multi-index \( \alpha = (\alpha_i) \in \mathbb{N}_0^{(\mathbb{N})} \), we define the order of
\( \alpha \) as
\[
|\alpha| = \sum \alpha_i\,.
\]
Using this notion, for \( m,n \in \mathbb{N} \), we define
\begin{align*}
\Lambda(m,n) &= \big\{ \alpha \in \mathbb{N}_0^n \colon |\alpha| = m \big\}, \\
\Lambda(\leq m,n) &= \big\{ \alpha \in \mathbb{N}_0^n \colon |\alpha| \leq m \big\},
\end{align*}
as well as
\[
\Lambda(m) := \bigcup_n \Lambda(m,n) \quad \text{and} \quad \Lambda(\leq m) := \bigcup_n \Lambda(\leq m,n).
\]
The following  formula and estimate
\begin{equation} \label{cardi}
|\Lambda(m,n)| = \dbinom{n+m-1}{m}\leq e^m \Big( 1 + \frac{n}{m}\Big)^m
\end{equation}
for the cardinality of $\Lambda(m,n)$ is crucial for our purposes.

For simplicity of notation and presentation throughout the paper, a subset of \(\mathbb{N}_0^{(\mathbb{N})}\) is called an \textit{index set}.
If \(J\) is an index set and \(m,n \in \mathbb{N}\), then we will frequently consider the following four building blocks of \(J\):
\[
J(m,n) : = J \cap \Lambda(m,n) \quad \text{ and } \quad  J(\leq m,n) : = J \cap \Lambda(\leq m,n)\,.
\]
\[
J(m) : = J \cap \Lambda(m) \quad \text{ and } \quad  J(\leq m) : = J \cap \Lambda(\leq m)\,.
\]
An index set $J$ is said to have degree at most $m$ whenever $J = J (\leq m)$, that is, $|\alpha| \leq m$ for all $\alpha \in J$.
Moreover,  $J$ is  called  $m$-homogeneous if  $J = J (m)$, so  $|\alpha| = m$ for all $\alpha \in J$.

A multi-index \(\alpha = (\alpha_i) \in \mathbb{N}_0^{(\mathbb{N})}\) is called tetrahedral if each entry \(\alpha_i\) is either \(0\)
or \(1\).  We denote the index set of all tetrahedral multi-indices by \(\Lambda_T\). The preceding definitions also provide precise
meanings for \(\Lambda_T(m,n)\), \(\Lambda_T(\leq m,n)\), \(\Lambda_T(m)\), and \(\Lambda_T(\leq m)\).

Given  $J \subset  \Lambda(m,n)$, we  need to consider  the index set
\[
J^\flat \subset \Lambda(m-1,n)\,,
\]
which consists of all $\alpha \in \Lambda(m-1,n)$ for which there is $1 \leq k \leq n$ and $\beta \in J$ such $\beta_i = \alpha_i$
for all $1 \leq i\neq  k\leq n$ and $\beta_k = \alpha_k +1$. We call $J^\flat$ the reduced index
set of $J$. Note that $\Lambda(m,n)^\flat=\Lambda(m-1,n)$.

We point out that it will sometimes be convenient to use an equivalent description of \(\Lambda(m,n)\). To describe it, let us denote
\[
\text{
$
\mathcal{M}(m,n)  := \{1, \ldots, n\}^{m}$ \quad  and \quad $ \mathcal{J}(m,n);  = \big\{\bj
= (j_1, \ldots, j_m) \in \mathcal{M}(m, n)\colon \, j_1 \leq \ldots \leq j_m\big\}\,,
$
}
\]
and observe that there is a~canonical bijection between $\mathcal{J}(m,n)$ and $\Lambda(m,n)$. Indeed, assign to
$\bj \in \mathcal{J}(m,n) $ the multi-index $\alpha \in \Lambda(m,n)$ given by $\alpha_r = |\{k \colon \bj_k = r\}|,\, 1 \leq r \leq n$,
and conversely for each $\alpha \in \Lambda(m,n)$ the index $\bj \in \mathcal{J}(m,n)$, where
\[
j_1 =\ldots =j_{\alpha_1} =1, \quad\, j_{\alpha_1 +1} =\ldots =j_{\alpha_1+\alpha_2} =2,\, \dots , \,j_{\alpha_{n-1} +1}
=\ldots =j_{\alpha_{n-1}+\alpha_n} = n\,.
\]

\noindent
On $\mathcal{M}(m,n)$ we consider the following equivalence relation: $\bi  \sim \bj$ if there is a permutation $\sigma$ on $\{1, \ldots,m\}$
such that $(i_1, \ldots, i_k) = (i_{\sigma(1)}, \ldots, i_{\sigma(m)})$. The equivalence class of $\bi \in \mathcal{M}(m,n)$ is
denoted by $[\bi]$, and its cardinality  by $|[\bi]|$. Provided that $\bj$ is associated with $\alpha$, we have that
\begin{equation*}
|[\alpha]| :=  |[\bj]| =\frac{m!}{\alpha!}\,.
\end{equation*}
Note that, given an index set \( J \subset \mathcal{J}(m,n) = \Lambda(m,n) \), the reduced set \( J^\flat \) in terms of the \(\mathbf{j}\)-mode is described as follows:
\begin{equation}\label{index sets}
J^\flat = \big\{ \bj \in \mathcal{J}(m-1,n)
\colon \exists 1 \leq k \leq n \,\, \text{  such that }\,\,\, (\bj,k)_\ast \in J \big\}\,,
\end{equation}
where we associate to each $\bi \in \mathcal{M}(m,n)$  the unique element $\bi_\ast\in \mathcal{J}(m,n)$ for which $\bi \in [\bi_\ast]$.

By $\mathcal{P}(\mathbb{C}^n)$ we denote the vector space of all finite  polynomials
\[
P(z)=\sum_{\alpha\in \mathbb{N}_0^n}c_\alpha(P)\,z^\alpha, \quad\, z=(z_1, \ldots, z_n)\in \mathbb{C}^n\,,
\]
where $c_\alpha(P)\in \mathbb{C}$ for each $\alpha \in \mathbb{N}_0^n$. More generally, for any  nonempty  finite index set $J$, we define
$$\mathcal{P}_J(\mathbb{C}^n)$$
to be the subspace of all $P \in \mathcal{P}(\mathbb{C}^n)$ for which $c_{\alpha}(P) = 0$ for all $\alpha \notin J$. Clearly,
$
\mathcal{P}_J(\mathbb{C}^n) = \mathcal{P}_{J\cap \mathbb{N}_0^n}(\mathbb{C}^n)\,.
$

For  $m \in \mathbb{N}$, we denote by  $\mathcal{P}_{\leq m}(\mathbb{C}^n)$ the space of all   polynomials
$P \in \mathcal{P}(\mathbb{C}^n)$ which have degree $\text{deg}\,P := \max\{|\alpha|:\, \alpha\in J \} \leq m$, and by $\mathcal{P}_{m }(\mathbb{C}^n)$ the space of all $m$-homoge\-neous
polynomials. Note that under the above notation
$\mathcal{P}_{\leq m}(\mathbb{C}^n)=  \mathcal{P}_{\Lambda(\leq  m)}(\mathbb{C}^n)$
and
$\mathcal{P}_{m}(\mathbb{C}^n)=  \mathcal{P}_{\Lambda(m)}(\mathbb{C}^n)$.

Given a nonempty  finite  index set $J$ and a~Banach space $X_n := (\mathbb{C}^n,\|\cdot\|)$, we equip $\mathcal{P}(\mathbb{C}^n)$
with the sup norm
\[
\|P\|_{B_{X_n}} := \sup_{z\in B_{X_n}} |P(z)|\,,
\]
and  denote the resulting Banach space by $\mathcal{P}_J(X_n)$. This is, in fact, the main object of our interest.

We  also need a few facts on polynomials defined on arbitrary Banach spaces $X$ (and not only $X_n:= (\mathbb{C}^n, \|\cdot\|)$).
For all relevant information we refer to  \cite{defant1992tensor,defant2019libro,dineen1999complex,floret1997natural}.

Let $X$ be a   Banach space $X$ over the field $\mathbb{K}$,  where $\mathbb{K} \in \{\mathbb{R}, \mathbb{C}\}$, and $m \in \mathbb{N}$.
A mapping $P\colon X\to \mathbb{K}$ is said to be a (bounded) $m$-homogeneous  polynomial if there exists a~(bounded) $m$-linear form
$T\colon X\times\cdots \times X \to \mathbb{K}$  such that $P(z)=T(z,\dots,z)$ for all $z\in X$. In fact, given a~(bounded) $m$-homogeneous
polynomial $P$ on $X$, by the usually called polarization formula
\begin{equation}\label{formula de polarizacion}
T(z^{(1)}, \dots, z^{(m)}) = \frac{1}{m!2^m} \sum_{\varepsilon_i = \pm 1} \varepsilon_1 \dots \varepsilon_m P\left(\sum_{i=1}^m \varepsilon_i z^{(i)}\right),
\end{equation}
there is a unique symmetric (bounded) $m$-linear form $T$ with the property that $P(z)=T(z,\dots,z)$ for all $z\in X$. We as usual denote
this unique form by $\overset{\vee}{P}=T$. The space of all bounded $m$-linear forms over $X$ is denoted by $\mathcal L_m(X)$, whereas we
write $\mathcal{P}_m(X)$ for all bounded $m$-homogeneous polynomials on $X$. Endowed with the norms
\[
\Vert T \Vert = \sup_{z^{(i)} \in B_{X}} \vert T(z^{(1)}, \dots, z^{(m)}) \vert \,\,\,\,\,\,\,\,\,\,\text{resp.,} \,\,\,\,\,\,\,\,\,\,
\Vert P \Vert = \sup_{z \in B_{X}} \vert P(z) \vert \,,
\]
each gets a~Banach space. Furthermore, we need the notion of a polynomial of degree less than or equal to $m$ on a given Banach space $X$.
By $\mathcal{P}_{\leq m}(X)$ we denote the linear space of all functions $P\colon X \to \mathbb{K}$ having the  form $P = P_0 + \sum_{k=1}^m P_k$,
where $P_k \in \mathcal{P}_k(X), \, 1 \leq k \leq m$ and $P_0 \in \mathbb{C}$. As before, $\mathcal{P}_{\leq m}(X)$ together with the supremum
norm (of $P$ on $B_X$) leads to a~Banach space.

We recall the notion of usually called polarization constants, which appear naturally  when relating homogeneous polynomials with their associated
symmetric multilinear forms. From  \eqref{formula de polarizacion} we easily obtain the  following polarization inequality
\begin{equation*}
\Vert \overset{\vee}{P} \Vert \leq \frac{m^m}{m!} \Vert P \Vert\,,\,\,\,\,\,\,\,\,P \in \mathcal P_m(X)\,.
\end{equation*}

Given  a finite dimensional Banach space $X$, we at a few occasions  need to represent  $\mathcal L_m(X)$ and $\mathcal P_m(X)$
as injective tensor products. Recall that in this case the canonical identities
\[
\mathcal{L}_m(X) = \otimes_\varepsilon^m X^\ast
\,\,\,\,\,\,\,\,\,\,\, \text{ and } \,\,\,\,\,\,\,\,\,\,\,
\mathcal{P}_m(X) = \otimes_{\varepsilon_s}^m X^\ast
\]
hold algebraically as well as isometrically, where $\otimes_\varepsilon^m$ stands for the $m$th full injective tensor product and $\otimes_{\varepsilon_s}^m$ for the $m$th symmetric  injective tensor product (see, e.g., \cite{floret1997natural}). We also need
the $m$th full projective  tensor product $\otimes_\pi^m$ (resp., the $m$th symmetric  projective tensor product $\otimes_{\pi_s}^m$),
which is dual to $\otimes_\varepsilon^m$ (resp., $\otimes_{\varepsilon_s}^m$).

\vspace{15 mm}

Important geometric concepts from the theory of Banach lattices will be used, and we will repeat some of them here. For more details, we refer to \cite{LT1, LT}.

A~Banach function lattice $X$ over a~measure space $(\Omega, \mathcal{A}, \mu)$ is said to be $p$-convex, $1 \leq p \leq \infty$, respectively $q$-concave,
$1 \leq q \leq \infty$, if there is a~constant $C>0$ such that for every choice of finitely many $x_1,\ldots, x_N \in~X$
\begin{align*}
\Big\|\Big(\sum_{k=1}^{N} |x_k|^p \Big)^{1/p}\Big\|_X \le C \, \Big (\sum^{N}_{k=1}\|x_k\|_X^p \Big)^{1/p}\,,
\end{align*}
resp.,
\begin{align*}
\Big (\sum^{N}_{k=1}\|x_k\|_X^q \Big) ^{1/q}
\le C \, \Big\|\Big(\sum_{k=1}^{N}|x_k|^q \Big)^{1/q}\Big\|_{X}
\end{align*}
(with the usual modification whenever $p= \infty$ or $q= \infty$). We define the $p$-convexity constant $M^{(p)}(X)$ (resp.,
$q$-concavity constant $M_{(p)}(X)$) to be the least constant $C>0$ satisfying the above inequality. In case that $X$ is not
$p$-convex (resp., not $q$-concave), then we write $M^{(p)}(X) = \infty$ (resp., $M_{(q)}(X)~=~\infty$).
Clearly, every Banach function lattice $X$ is $1$-convex and $\infty$-concave (with constants $1$). We also note that, if
$r < p < s$ and  $X$ is $p$-convex (resp., $p$-concave), then $X$ is $r$-convex (resp., $s$-concave) with $M^{(r)}(X) \le M^{(p)}(X)$
(resp., $M_{(s)}(X) \le M_{(p)}(X)$). For details we refer to \cite[Proposition 1.d.5]{LT1}. If a~Banach lattice is $r$-convex,
for some $1<r<\infty$, then it is $p$-convex, respectively $r$-concave implies $q$-concave, for every $1 < p < r < q <\infty$
(see \cite[Theorem 1.f.7]{LT1}).

(Quasi-)~Banach sequence lattices $X$ are of special interest for our purposes. We recall that a (quasi-) Banach function lattice
over the counting measure space $\big(\Omega, 2^{\Omega}, \mu\big)$ is called a~(quasi-)~Banach sequence lattice over $\Omega$.  A~Banach
sequence lattice $X$ is said to be maximal (or $X$ has the Fatou property) whenever the closed unit ball of $X$ is closed in the
topology of pointwise convergence.

We are mainly interested in the case $\Omega:=\{1, \ldots, n\}$ and $\Omega:=\mathbb{N}$. The fundamental function of a~(quasi-) Banach
sequence lattice $X$ is defined by
\[
\varphi_X(k):= \Big\|\sum_{j=1}^k e_j\Big\|_X \quad , \, k\in \mathbb{\Omega}\,.
\]
A (quasi-)~Banach sequence lattice $X$ is said to be symmetric whenever we have that $(x_{\sigma(n)}) \in X$ with $\|x_\sigma\|_X =\|x\|_X$
for every $x = (x_n)\in X$ and every permutation $\sigma: \mathbb{N} \to \mathbb{N}$.

Notice that if $X$ is a~separable Banach sequence lattice over $\mathbb{N}$, then the Banach dual $X^{*}$ can be identified in
a~natural way with the K\"othe dual $X'$ of $X$,that is,
\begin{align*}
X' := \bigg\{(x_k): \, \sum_{k=1}^{\infty}\, |x_k y_k| < \infty \, \, \, \, \,\mbox{for all \,\,\, $(y_k) \in X$} \bigg\}
\end{align*}
equipped with the norm
\[
\|(x_k)\|_{X'} = \sup\bigg\{\sum_{k=1}^{\infty}\,|x_k y_k|: \, \, \|(y_k)\|_X \leq 1 \bigg\}\,.
\]
Clearly, $X'$ is a~maximal Banach sequence lattice. We also note that  a Banach sequence lattice $X$
is  maximal if and only if  $X \equiv X''$.

Given two (quasi-) Banach sequence lattices $X,Y$, one may define  the pointwise product $x y~=~(x_k y_k)$ of $x = (x_k) \in X $  and
$y  = (y_k) \in Y$. This leads to the definition of the pointwise product space
$$X \circ Y : =\big\{ x y : \,\, x \in X, \,\, y \in  Y \big\}\,,$$
equipped with the (quasi-) norm $\|z\|_{X \circ Y}:= \inf \big\{\|x\|_{X} \|y\|_{Y}: \, z = x y,\,\, x\in X,\,\, y \in Y \big\}\,$.
By $M(X, Y)$ we denote the space of all multipliers from $X$ to $Y$, that is, all sequences $ \xi$ such that $ \xi x \in Y$
for all $x \in X$, equipped with the (quasi)-norm $\| \xi \|_{M(X, Y)} := \sup \big\{\| \xi x\|_{Y}:\, \|x\|_{X} \leq 1\big\}\,)$.

If $X$ is a (quasi-) Banach sequence lattice over $\mathbb{N}$ and $n \in \mathbb{N}$, then
\[
\|z\|:= \Big\|\sum_{k=1}^n |z_k| e_k\Big\|_{X}, \quad\, z=(z_1,\ldots, z_n)\in \mathbb{C}^n
\]
defines  a~lattice (quasi-) norm on $\mathbb{C}^n$, where $e_k$ stands for $k$th standard unit vectors of $c_0$. This defines the
(quasi-)~Banach sequence lattice
\[
X_n = (\mathbb{C}^n, \|\cdot\|)\,,
\]
which we call the $n$th section of $X$. Obviously, the collection $\{e_k\}_{k=1}^n$ of all unit basis vectors forms
a~$1$-unconditional basis of $X_n$.

Note that the projection constant is bounded by the Banach-Mazur distance to $\ell_\infty^n,$ i.e. $\boldsymbol{\lambda}(X_n) \leq d\big(X_n, \ell_\infty^n \big)$,
for any $n$-dimensional Banach lattice $X_n$, so it follows that
\begin{equation} \label{Carsten1}
\boldsymbol{\lambda}(X_n) \leq d(\ell^n_\infty, X_n) \leq \varphi_{X_n}(n), \quad\, n\in \mathbb{N}\,,
\end{equation}
provided that $\|\id\colon X_n \to \ell_{\infty}^n\| \leq 1$, or equivalently  $\|e_k\|_{X_n}\leq 1\,, \, 1 \leq k \leq n$.
Conversely, Sch\"{u}tt proved in \cite{schutt1978projection} that
\begin{equation} \label{schuett}
\varphi_{X_n}(n) \leq \sqrt{2} \, \big\|\id\colon \ell_{2}^n \to  X_n\big\| \, \boldsymbol{\lambda}(X_n)\,.
\end{equation}

\section{Unconditionality, projection and Gordon-Lewis constants} \label{Unconditionality}

Given a~Banach sequence lattice $X_n:=(\mathbb{C}^n, \|\cdot\|)$ and a finite  index set $J \subset \mathbb{N}_0^{(\mathbb{N})}$,
our goal in this section is to estimate the projection and unconditionality constant of the Banach space $\mathcal{P}_{J}(X_n)$.
These estimates will be expressed as functions of the dimension $n$ of the space $X_n$  and the degree
$m = \max\{|\alpha|: \, \alpha \in J\}$ of the set $J$. To achieve this, we will explore the relationships between the invariants
of the local theory of Banach spaces as discussed in the Introduction.

\subsection{Building blocks}
We start by examining how the projection constant varies when considering subspaces derived from corresponding subsets of indices.

Let $X_n = (\mathbb C^n, \|\cdot\|)$ be a Banach space. Then for each pair of  (finite) index sets $I,J \subset  \mathbb{N}_0^n$
with $I \subset J$ we define the projection
\begin{equation} \label{anni}
\mathbf{Q}_{J,I}:\mathcal{P}_J(X_n) \to \mathcal{P}_I(X_n)\,,\,\,\,
P \mapsto \sum_{\alpha \in  I} c_\alpha(P) z^\alpha\,.
\end{equation}

This projection annihilates the coefficients of a polynomial in $\mathcal{P}_J(X_n)$ whose indices are in the complement of $I$.

\begin{remark} \label{simple}
Given $I,J \subset  \mathbb{N}_0^n$ with $I \subset J$ and a  Banach space $X_n= (\mathbb{C}^n, \|\cdot\|)$, we have
\[
\boldsymbol{\lambda}\big(\mathcal{P}_{I}(X_n)
\big) \leq \|\mathbf{Q}_{J,I}\, \| \boldsymbol{\lambda}\big(\mathcal{P}_J(X_n)\big)\,.
\]
Indeed, factorize $\id_{\mathcal{P}_I(X_n)} = \mathbf{Q}_{J,I}\circ j_{I,J}$ \, through the\,canonical embedding
$j_{I,J}\colon \mathcal{P}_I(X_n)\hookrightarrow \mathcal{P}_J(X_n)$ and \,the
projection $\mathbf{Q}_{J,I}: \mathcal{P}_J(X_n)\hookrightarrow \mathcal{P}_I(X_n)$, and use \eqref{gammainfty}.
\end{remark}

We will start with a simple but crucial case for further discussion

\begin{proposition} \label{prop: homogeneous part proj constant}
Let $J \subset \mathbb{N}_0^n$ be an index set of degree $m$. Then, for each $0 \leq k \leq m$ and each Banach space
$X_n = (\mathbb{C}^n, \|\cdot\|)$,
\[
\big\|\mathbf{Q}_{J,J(k)}\colon \mathcal{P}_J(X_n) \to \mathcal{P}_{J(k)}(X_n)\big\| =1\,,
\]
so in particular
\[
\boldsymbol{\lambda}\big(\mathcal{P}_{J(k)}(X_n)\big) \leq  \boldsymbol{\lambda}\big(\mathcal{P}_J(X_n)\big)\,.
\]
\end{proposition}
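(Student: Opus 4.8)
The plan is to show that $\mathbf{Q}_{J,J(k)}$ has norm exactly $1$; since it is a nonzero projection its norm is at least $1$, so the whole content is the upper bound $\|\mathbf{Q}_{J,J(k)}P\|_{B_{X_n}} \le \|P\|_{B_{X_n}}$ for every $P \in \mathcal{P}_J(X_n)$. The natural tool is a rotation/averaging argument: for a polynomial $P = \sum_{\alpha \in J} c_\alpha(P) z^\alpha$ and a scalar $\lambda \in \mathbb{T}$, the function $z \mapsto P(\lambda z)$ again lies in $\mathcal{P}_J(X_n)$ with the same norm, because the unit ball $B_{X_n}$ of a lattice $X_n = (\mathbb{C}^n,\|\cdot\|)$ is invariant under the map $z \mapsto \lambda z$ (indeed $\|\lambda z\| = \|z\|$ since $|\lambda z_j| = |z_j|$ for all $j$). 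The key identity is
\[
\int_{\mathbb{T}} P(\lambda z)\, \lambda^{-k}\, \frac{d\lambda}{2\pi} = \sum_{\alpha \in J,\ |\alpha| = k} c_\alpha(P) z^\alpha = \big(\mathbf{Q}_{J,J(k)}P\big)(z)\,,
\]
which follows by interchanging sum and integral and using the orthogonality relation $\int_{\mathbb{T}} \lambda^{|\alpha|-k}\, d\lambda/2\pi = \delta_{|\alpha|,k}$, valid since $J$ has degree $m$ so all exponents $|\alpha|-k$ are integers.

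With this identity in hand, the upper bound is immediate: for any $z \in B_{X_n}$,
\[
\big|\big(\mathbf{Q}_{J,J(k)}P\big)(z)\big| \le \int_{\mathbb{T}} |P(\lambda z)|\, \frac{d\lambda}{2\pi} \le \sup_{w \in B_{X_n}} |P(w)| = \|P\|_{B_{X_n}}\,,
\]
where the last step uses that $\lambda z \in B_{X_n}$ whenever $z \in B_{X_n}$ and $\lambda \in \mathbb{T}$. Taking the supremum over $z \in B_{X_n}$ gives $\|\mathbf{Q}_{J,J(k)}P\|_{B_{X_n}} \le \|P\|_{B_{X_n}}$, hence $\|\mathbf{Q}_{J,J(k)}\| \le 1$; combined with $\|\mathbf{Q}_{J,J(k)}\| \ge 1$ (it restricts to the identity on the nonzero subspace $\mathcal{P}_{J(k)}(X_n)$, assuming $J(k) \ne \varnothing$; if $J(k) = \varnothing$ the operator is zero and the norm statement is interpreted accordingly, while the projection-constant inequality is trivial) we get equality.

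The final assertion $\boldsymbol{\lambda}(\mathcal{P}_{J(k)}(X_n)) \le \boldsymbol{\lambda}(\mathcal{P}_J(X_n))$ then follows directly from Remark~\ref{simple} applied with $I = J(k)$, since $\|\mathbf{Q}_{J,J(k)}\| = 1$. I do not anticipate a serious obstacle here; the only mild point to be careful about is the justification of the interchange of the finite sum with the integral (trivial, since $P$ is a finite polynomial) and the rotation-invariance of $B_{X_n}$, which uses specifically that $X_n$ is a lattice norm so that $|\lambda| = 1$ preserves the norm coordinatewise.
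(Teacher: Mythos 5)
Your argument is correct and is essentially the paper's proof: the paper simply cites Cauchy's inequality for the Taylor expansion $P=\sum_{k}\mathbf{Q}_{J,J(k)}(P)$, and your rotation--averaging identity $\int_{\mathbb{T}}P(\lambda z)\lambda^{-k}\,d\lambda/2\pi=(\mathbf{Q}_{J,J(k)}P)(z)$ is exactly the standard proof of that inequality. One small correction: the invariance of $B_{X_n}$ under $z\mapsto\lambda z$ for $|\lambda|=1$ follows from complex homogeneity of any norm ($\|\lambda z\|=|\lambda|\,\|z\|$), so no lattice assumption is needed, consistent with the proposition being stated for arbitrary Banach spaces $X_n$.
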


\begin{proof}
Given  $P \in \mathcal{P}_J(X_n)$ consider
$
P = \sum_{k=0}^{m} \mathbf{Q}_{J,J(k)}(P)$, the unique Taylor expansion of $P$. Cauchy's inequality gives the bound
$\|\mathbf{Q}_{J,J(k)}(P)\| \leq \|P\|$ for all $0 \leq k \leq m$ (see, e.g., \cite[Proposition 15.33]{defant2019libro}).
\end{proof}

Less standard is the result we now prove for subsets of tetrahedral indices $\alpha$, i.e., $\alpha \in \Lambda_T( \leq m,n)$.
The following tool is crucial for our purposes and is essentially due to Ortega-Cerd\`a, Ouna\"{\i}es and Seip. Since this result
appears in the unpublished manuscript \cite{ortega2009sidon} and we need a slight improvement, we provide a detailed proof for
completeness. For clarity, we will first define the relevant constant

\begin{equation*}
\kappa:=\bigg(\prod_{k=1}^\infty\ \sinc{\frac{\pi}{\mathfrak{p}_k}}\bigg)^{-1}=2.209\ldots\,,
\end{equation*}
where  $(\mathfrak{p}_n)_{n=1}^{\infty}$ stands for the  sequence of prime numbers and $\sinc x:=(\sin x)/x$.

\begin{theorem}\label{OrOuSe}
Let $X = (\CC^n, \| \cdot \|)$ be a Banach lattice  and  $J \subset \mathbb{N}_0^n$ an  index set of degree at most $m$. Then
\[
\big\|\mathbf{Q}_{J,J_T}\colon \mathcal{P}_J(X_n) \to \mathcal{P}_{J_T}(X_n)\big\|\,
\leq \kappa^m\,,
\]
where $J_T = J \cap \Lambda_T(\le\! m,n)$. In particular,
\[
\boldsymbol{\lambda}\big(\mathcal{P}_{J_T }(X_n)\big) \leq \kappa^m \boldsymbol{\lambda}\big(\mathcal{P}_J(X_n)\big)\,.
\]
\end{theorem}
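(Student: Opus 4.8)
The plan is to reduce the multivariate statement to a one-variable Bohr--Sidon type inequality by iterating on the degree, extracting tetrahedral monomials one variable at a time. Concretely, write $m = \deg J$ and think of a polynomial $P \in \mathcal{P}_J(X_n)$ in the last variable $z_n$ as $P(z) = \sum_{j=0}^{m} z_n^j\, P_j(z_1,\dots,z_{n-1})$, where $P_j$ is a polynomial of degree at most $m-j$. The projection $\mathbf{Q}_{J,J_T}$ keeps only the terms with $j \in \{0,1\}$, i.e.\ it replaces $P$ by $P_0 + z_n P_1$, and then one must still kill the non-tetrahedral monomials in the first $n-1$ variables. So the natural approach is: first bound the norm of the single-variable truncation operator $P \mapsto P_0 + z_n P_1$ on $\mathcal{P}_{\leq m}$ with the $z_n$-direction measured against $\overline{\mathbb{D}}$, pulling in the constant $\kappa$ from the prime-sinc product; then induct on $n$ to handle the remaining variables, each step contributing one more factor related to the degree drop.

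The key one-variable input is the estimate, due in essence to Ortega-Cerd\`a--Ouna\"ies--Seip, that for a polynomial $q(w) = \sum_{j=0}^{m} a_j w^j$ one has $\sup_{|w|\le 1}|a_0 + a_1 w| \le \kappa \sup_{|w|\le 1}|q(w)|$, or more precisely a hypercontractive version where truncating to degree $\le 1$ in each of several variables costs $\kappa$ per variable. The proof of that scalar fact uses a clever averaging: evaluate $q$ at points $w = \zeta$ running over suitable roots of unity (of prime orders $\mathfrak{p}_k$) and combine with the infinite product $\prod_k \sinc(\pi/\mathfrak p_k)$ to isolate the low-order coefficients; the "slight improvement" the authors mention is presumably sharpening the constant or making the argument work uniformly in $m$ with the clean bound $\kappa^m$. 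I would state this as a lemma (scalar, or better: vector-valued, allowing the coefficients $a_j$ to lie in an auxiliary Banach space so that the induction on $n$ goes through verbatim) and prove it via that root-of-unity averaging.

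For the inductive step one writes $X_n = X_{n-1} \oplus_{\text{lattice}} \mathbb{C}$ — here is where the Banach \emph{lattice} hypothesis enters, since one needs the sup over $B_{X_n}$ to be controlled by iterated sups over $B_{X_{n-1}}$ and $\overline{\mathbb{D}}$ in the last coordinate, using that the lattice norm dominates the coordinate $|z_n|$ and that $(z_1,\dots,z_{n-1},0)$ stays in $B_{X_n}$; monotonicity of the lattice norm is exactly what legitimizes restricting/evaluating coordinate-wise. Applying the vector-valued one-variable lemma with coefficients in $\mathcal{P}_{\leq m-j}(X_{n-1})$ truncates the $z_n$-degree to $\le 1$ at the cost of one factor $\kappa$ (absorbing the per-monomial-slot constants into $\kappa^{(\text{degree used in }z_n)}$), and then the inductive hypothesis applied to $P_0$ (degree $\le m$) and to $P_1$ (degree $\le m-1$) removes the non-tetrahedral monomials among the first $n-1$ variables; a short bookkeeping argument shows the accumulated constant is at most $\kappa^{m}$ rather than $\kappa^{n}$, because each application of the scalar lemma is charged against a genuine drop in total degree, and the total degree is $m$. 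The base case $n=1$ is immediate ($\Lambda_T(\le m,1) = \{0,1\}$ gives the scalar lemma directly). The final assertion about $\boldsymbol{\lambda}$ is then immediate from Remark~\ref{simple} together with \eqref{gammainfty}.

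The main obstacle I expect is twofold: first, getting the scalar/vector-valued averaging lemma to yield exactly $\kappa^m$ with $\kappa$ the stated prime-sinc constant, uniformly in $m$ and in the dimension — this requires choosing the evaluation points (mixing primitive roots of unity of prime orders) with care and checking that the cross terms cancel, which is the genuinely delicate combinatorial/Fourier-analytic heart of the argument; and second, the bookkeeping that prevents the constant from degrading to $\kappa^{n}$: one must set up the induction so that the exponent of $\kappa$ tracks the degree consumed in the variables already processed, not the number of variables, which forces a slightly more refined inductive statement (e.g.\ bounding $\|\mathbf{Q}_{J,J_T}\|$ on polynomials of degree $\le d$ by $\kappa^{d}$ for every $d\le m$, with the reduction replacing $d$ by $d$ or $d-1$ appropriately).
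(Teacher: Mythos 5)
Your overall strategy---variable-by-variable truncation plus induction on $n$---contains a gap that I do not see how to close, and it is precisely the point where your proposal diverges from the actual mechanism. The paper does \emph{not} peel off variables one at a time. It constructs a single function $r_m$ on $[0,1]^{\pi(m)}$, namely $r_m(t)=c_m\exp\bigl(2\pi i(t_1/2+\cdots+t_{\pi(m)}/\mathfrak{p}_{\pi(m)})\bigr)$, with $\int r_m\,d\mu=1$, $\int r_m^k\,d\mu=0$ for $2\le k\le m$, and $|r_m|\equiv|c_m|\le\kappa$, and then averages \emph{simultaneously} in all $n$ coordinates:
\[
\mathbf{Q}_{J,J_T}P(z)=\int P\bigl(z_1r_m(t^1),\dots,z_nr_m(t^n)\bigr)\,d\mu(t^1)\cdots d\mu(t^n)\,.
\]
The bound $\kappa^m$ then comes from a \emph{single} dilation estimate: the lattice hypothesis puts the point $\bigl(z_jr_m(t^j)\bigr)_j$ into $\kappa\overline{B}_{X_n}$, and a polynomial of degree $\le m$ grows by at most $\kappa^m$ under dilation by $\kappa\ge1$. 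The cost is governed by the degree, never by the number of variables, because there is only one averaging step.

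Your induction cannot reproduce this. First, the one-variable lemma you invoke, if proved by the root-of-unity/prime-sinc averaging you describe, costs $\kappa^{\deg q}$ (the evaluation points have modulus $|c_m|\le\kappa>1$, so you must control $q$ on the disk of radius $\kappa$), not $\kappa$; the uniform-in-$m$ constant for truncating to degree $\le1$ in one variable comes instead from the Schur--Wiener inequality $|a_1|\le1-|a_0|^2$ and equals $5/4$, which is genuinely $>1$. Second, and fatally, your bookkeeping claim that ``each application of the scalar lemma is charged against a genuine drop in total degree'' is false: when you truncate the $z_n$-expansion $P=\sum_j z_n^jP_j$ to $P_0+z_nP_1$, the dominant component $P_0$ undergoes no degree drop at all, yet the truncation operator still has norm $>1$ (close to $5/4$ already for degree $2$ in that variable). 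A sequential argument therefore accumulates a constant $>1$ per \emph{variable} actually appearing in $P$, giving $C^n$ rather than $\kappa^m$, which is useless in the regime $n\gg m$ that the theorem is designed for. To repair this you would have to abandon the induction on $n$ and perform the averaging in all variables at once, which is exactly the paper's proof. (Your base case and the final deduction of the $\boldsymbol{\lambda}$-inequality from Remark~\ref{simple} are fine.)
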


\begin{proof}

As usual, we write $\pi(x)$ for the counting function of the prime numbers. Now, given
\[
t=(t_1,\ldots,t_{\pi(m)}) \in  \pmb{R} :=[0,1]^{\pi(m)}\,,
\]
define
\[
r_m(t)=c_m \exp \left(2\pi i\Bigl(\frac {t_1}2 + \frac {t_2}3+\cdots+ \frac
{t_{\pi(m)}}{\mathfrak{p}_{\pi(m)}}\Bigr)\right)\,,
\]
where
\[
c_m=\prod_{k=1}^{\pi(m)} \left(\frac {\mathfrak{p}_k}{2\pi i}
\Bigl(e^{\frac {2\pi i}{\mathfrak p_k}} -1\Bigr)\right)^{-1}\,.
\]
Note that the function $r_m:\pmb{R}\to \mathbb{C}$ has the following properties:
\begin{enumerate}
\item[(i)] $\int_{\pmb{R}} r_m(t)\,d\mu(t)=1$,
\item[(ii)] $\int_{\pmb{R}} r_m^k(t)\, d\mu(t)=0$\ \ for each \, $2 \leq k \leq m$,
\item[(iii)] $|r_m(t)|\le \kappa$\ \ for all $t \in \pmb{R}$.
\end{enumerate}
Here $\mu$ denotes the  Lebesgue measure on $\pmb{R}$. Indeed, (i) and (ii) are trivial and follow by the definition
of the function, and (iii) holds because $|r_m(t)| = |c_m|$ and
\[
|c_m|^{-2}= \prod_{k=1}^{\pi(m)} \frac{\mathfrak{p}_k^2}{(2\pi)^2} \Bigl|e^{\frac{2\pi i}{\mathfrak{p}_k}}-1\Bigr|^2
 = \prod_{k=1}^{\pi(m)} \sinc^2\frac{\pi}{\mathfrak{p}_k}\,.
\]
Given a polynomial $P \in \mathcal P_J(X_n)$, note that by the properties (i) and (ii) we have the representation
\[
\mathbf{Q}_{J,J_T}P(z)=\int_{\pmb{R}^n} P(z_1r_m(t^1),\ldots, z_n r_m(t^n))\,
d\mu(t^1)\cdots d\mu(t^n)\,,\,\,\, z \in X_n\,.
\]
Since $X_n$ is a~Banach lattice we by (iii) deduce that  $|P(z_1 r_m(t^1),\ldots, z_n r_m(t^n))|
\le \kappa^m \|P\|_{\mathcal{P}_{J}(X_n)}$ for every $z~\in~B_{X_n}$, and therefore
\[
\Vert \mathbf{Q}_{J,J_T}(P) \Vert_{\mathcal{P}_{\Lambda_T}(X_n)} \leq \kappa^m \,\Vert P \Vert_{\mathcal{P}_{J}(X_n)}\,.
\]
This proves the first statement, the second one is an immediate consequence of the observation from Remark~\ref{simple}.
\end{proof}

Observe that by the Kadets-Snobar theorem from \eqref{kadets1} we know that for any Banach space $X_n = (\mathbb{C}^n, \|\cdot\|) $
and any finite index set $J \subset \mathbb{N}_0^n $ we have
\begin{equation*}\label{tincho}
\boldsymbol{\lambda}\big(\mathcal{P}_{J}(X_n)\big) \leq \sqrt{|J|} \leq \sqrt{ m+1} \max_{0 \leq k \leq m} \sqrt{|J(k)|}\,.
\end{equation*}
We can now provide estimates of the projection constant of $\mathcal{P}_{J}(X_n)$ from above or below by the projection constants
of its 'homogeneous building blocks' $\mathcal{P}_{J(k)}(X_n), \, 0 \leq k \leq m$.

\begin{theorem} \label{degree-homo}
Let $X_n = (\mathbb{C}^n, \|\cdot\|) $ be a Banach space, and let $J \subset \mathbb{N}_0^n$ be an index set of degree at most $m$. Then
\[
\max_{0 \leq k \leq m} \boldsymbol{\lambda}\big(\mathcal{P}_{J(k)}(X_n)\big) \,\,\leq \,\,
\boldsymbol{\lambda}\big(\mathcal{P}_{J}(X_n)\big)
\,\,\leq \,\,(m+1) \max_{0 \leq k \leq m} \boldsymbol{\lambda}\big(\mathcal{P}_{J(k)}(X_n)\big)\,.
\]
In particular, for any  Banach sequence lattice $X$
\[
\lim_{m \to \infty}  \sup_{n\in \mathbb{\mathbb{N}}}
\frac{\sqrt[m]{\boldsymbol{\lambda}\big(\mathcal{P}_{\leq m}(X_n)\big)}}{\sqrt[m]{\max_{0 \leq k \leq m} \boldsymbol{\lambda}\big(\mathcal{P}_{k}(X_n)\big)}} = 1\,.
\]
\end{theorem}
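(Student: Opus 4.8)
The left-hand inequality is immediate from Proposition~\ref{prop: homogeneous part proj constant}: for each $0\le k\le m$ we have $\|\mathbf{Q}_{J,J(k)}\|=1$, so Remark~\ref{simple} gives $\boldsymbol{\lambda}\big(\mathcal{P}_{J(k)}(X_n)\big)\le\boldsymbol{\lambda}\big(\mathcal{P}_J(X_n)\big)$; taking the maximum over $k$ yields the claim.

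For the right-hand inequality I would argue via the identity $\boldsymbol{\lambda}(X)=\gamma_\infty(\id_X)$ from~\eqref{gammainfty}. Write $V=\mathcal{P}_J(X_n)$ and $V_k=\mathcal{P}_{J(k)}(X_n)$, let $j_k\colon V_k\hookrightarrow V$ denote the isometric inclusion and $\mathbf{Q}_k:=\mathbf{Q}_{J,J(k)}\colon V\to V_k$, so that the homogeneous (Taylor) expansion reads $\id_V=\sum_k j_k\mathbf{Q}_k$, with $\|j_k\|=\|\mathbf{Q}_k\|=1$ by Proposition~\ref{prop: homogeneous part proj constant}; only the finitely many indices $k$ with $V_k\neq\{0\}$ contribute. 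Fix $\varepsilon>0$; for each such $k$ choose, from the definition of $\gamma_\infty$, a measure $\mu_k$ together with operators $a_k\colon V_k\to L_\infty(\mu_k)$ and $b_k\colon L_\infty(\mu_k)\to V_k$ with $b_ka_k=\id_{V_k}$, $\|a_k\|\le 1$ and $\|b_k\|\le\boldsymbol{\lambda}(V_k)+\varepsilon$ (rescale so that $\|a_k\|=1$). Since the $\ell_\infty$-sum of the $L_\infty(\mu_k)$'s is again an $L_\infty(\mu)$ space (over the disjoint union of the $\mu_k$), the operators
\[
A\colon V\to L_\infty(\mu),\quad A(P)=(a_k\mathbf{Q}_kP)_k,\qquad B\colon L_\infty(\mu)\to V,\quad B\big((f_k)_k\big)=\sum_k j_kb_kf_k
\]
satisfy $BA=\sum_k j_kb_ka_k\mathbf{Q}_k=\sum_k j_k\mathbf{Q}_k=\id_V$, while $\|A\|\le\max_k\|a_k\|\le 1$ (using $\|\mathbf{Q}_k\|=1$) and $\|B\|\le\sum_k\|b_k\|\le (m+1)\big(\max_{0\le k\le m}\boldsymbol{\lambda}(V_k)+\varepsilon\big)$. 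Hence $\boldsymbol{\lambda}(V)=\gamma_\infty(\id_V)\le\|A\|\,\|B\|\le(m+1)\big(\max_k\boldsymbol{\lambda}(V_k)+\varepsilon\big)$, and letting $\varepsilon\to 0$ finishes the double inequality. (Alternatively, one may simply invoke that $\gamma_\infty$ is a subadditive ideal norm, so $\gamma_\infty(\id_V)\le\sum_k\gamma_\infty(j_k\,\id_{V_k}\,\mathbf{Q}_k)\le\sum_k\boldsymbol{\lambda}(V_k)$.)

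For the limiting statement, apply the inequality just proved to $J=\Lambda(\le m,n)$, so that $V=\mathcal{P}_{\le m}(X_n)$ and $V_k=\mathcal{P}_k(X_n)$, and take $m$-th roots: for every $n\in\mathbb{N}$,
\[
1\le\frac{\sqrt[m]{\boldsymbol{\lambda}\big(\mathcal{P}_{\le m}(X_n)\big)}}{\sqrt[m]{\max_{0\le k\le m}\boldsymbol{\lambda}\big(\mathcal{P}_k(X_n)\big)}}\le (m+1)^{1/m},
\]
where the left inequality is the (already established) left-hand bound and the denominator is positive since $\boldsymbol{\lambda}(\mathcal{P}_0(X_n))=1$. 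As both bounds are independent of $n$, the supremum over $n$ lies between $1$ and $(m+1)^{1/m}$, and since $(m+1)^{1/m}\to 1$ as $m\to\infty$, the limit equals $1$.

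I expect the middle paragraph to be the crux: one must pick the correct ambient $L_\infty$ space — the $\ell_\infty$-direct sum of the blocks' $L_\infty(\mu_k)$'s — so that the "analysis" map $A$ reassembling the homogeneous parts has norm $1$ (this is precisely where $\|\mathbf{Q}_{J,J(k)}\|=1$ enters) while the factor $m+1$ is incurred only once, in the "synthesis" map $B$, through $\sum_k\boldsymbol{\lambda}(V_k)\le (m+1)\max_k\boldsymbol{\lambda}(V_k)$. The remaining points — that the inclusions $j_k$ are isometric, that empty blocks $J(k)$ are harmlessly discarded, the $\varepsilon$-bookkeeping in the $\gamma_\infty$-factorizations, and the elementary fact $(m+1)^{1/m}\to 1$ — are routine.
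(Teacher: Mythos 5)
Your proposal is correct and follows essentially the same route as the paper: the lower bound via Proposition~\ref{prop: homogeneous part proj constant}, and the upper bound by assembling $\gamma_\infty$-factorizations of the homogeneous blocks through an $\ell_\infty$-sum, with the factor $m+1$ entering exactly where the paper puts it (the paper carries it in the norm of the identity $\Phi\colon\bigoplus_\infty\to\bigoplus_1$, you carry it in $\|B\|\le\sum_k\|b_k\|$ — the same estimate). The treatment of the limit statement is also the intended one.
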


\begin{proof}
Note first that by Proposition~\ref{prop: homogeneous part proj constant} for all $0 \leq k \leq m$ it holds
$ \boldsymbol{\lambda}\big(\mathcal{P}_{J(k)}(X_n)\big) \,\,\leq \,\,
\boldsymbol{\lambda}\big(\mathcal{P}_{J}(X_n)\big)$, so that it remains  to  check the second estimate.
We  use  that each $P \in \mathcal{P}_{J}(X_n)$ has a unique Taylor series expansion
$P = \sum_{k=0}^m P_k$ with $P_k \in \mathcal{P}_{J(k)}(X_n)$, and from the Cauchy inequality we  know that
$\|P_k\| \leq \|P\| $ for all $0 \leq k \leq m$. Consequently,  the two operators
\begin{align*}
&
U\colon \mathcal{P}_{J}(X_n) \to  \bigoplus_\infty \mathcal{P}_{J(k)}(X_n)\,,\,\,\,\, P \mapsto (P_k)_{k=1}^m
\\&
V\colon  \bigoplus_1 \mathcal{P}_{J(k)}(X_n) \to \mathcal{P}_{J}(X_n) \,,\,\,\,\, (Q_k)_{k=1}^m  \mapsto \sum_{k=1}^m Q_k\,,
\end{align*}
 both have norms less than or equal to $1$.
 Now fix some $\varepsilon >0$, and choose for each $1 \leq k \leq m$ an appropriate  factorization
\[
\xymatrix
{\mathcal{P}_{J(k)}(X_n) \ar[r]^{\id} \ar[d]_{u_k}
& \mathcal{P}_{J(k)}(X_n) \\
\ell_\infty^{M_k} \ar[ur]_{v_k}}
\]
such that $\|u_k\|\leq 1$ and $\|v_k\|\leq (1 + \varepsilon) \boldsymbol{\lambda} \big( \mathcal{P}_{J(k)}(X_n) \big)$.
Then we arrive at  the following commutative diagram
\[
\xymatrix
{\mathcal{P}_{J}(X_n) \ar[r]^{\id} \ar[d]_{U}
& \mathcal{P}_{J}(X_n)
\\
\bigoplus_\infty \mathcal{P}_{J(k)}(X_n)
\ar[d]_{\bigoplus u_k}
&
\bigoplus_1 \mathcal{P}_{J(k)}(X_n) \ar[u]_{V}
\\
\bigoplus_\infty \ell_\infty^{M_k} \ar[r]^{\Phi}
&
\bigoplus_1 \ell_\infty^{M_k}  \ar[u]_{\bigoplus v_k}\,,
}
\]
where $\Phi$ stands for the identity map which here obviously has norm $\leq m+1$.
But
\[
\big\|\bigoplus u_k\big\| \leq \max{\|u_k\|} \leq 1\,,
\]
as well as
\[
\big\|\bigoplus v_k\big\| \leq \max{\|v_k\|} \leq (1+\varepsilon)
\max_{0 \leq k \leq m} \boldsymbol{\lambda}\big(\mathcal{P}_{J(k)}(X_n)\big)\,.
\]
This finally gives
\begin{align*}
\boldsymbol{\lambda}\big(\mathcal{P}_{J}(X_n)\big)
\leq \|U\| \,\big\|\bigoplus u_k\big\| \,\,\boldsymbol{\lambda}\big(\bigoplus_\infty \ell_\infty^{M_k}\big)\,\,
\|\Phi\|\,\big\|\bigoplus v_k\big\| \, \|V\|
\leq (m+1) \max_{0 \leq k \leq m} \boldsymbol{\lambda}\big(\mathcal{P}_{J(k)}(X_n)\big)\,,
\end{align*}
the conclusion.
\end{proof}

% We conclude with the following remark that, in general, there is a dependence on $m$ when we compare  $\mathcal{P}_{\leq m}(X_n)$
% with $\mathcal{P}_{k}(X_n)$ if $ 0 \leq k \leq m$. For example, $\boldsymbol{\lambda}(\mathcal{P}_{k}(\mathbb{C}))=1$ for all
% $0 \leq k \leq m$ (since $\mathcal{P}_{k}(\mathbb{C})$  is one dimensional), and on the other hand by \eqref{LoKa}, we have $\boldsymbol{\lambda}(\mathcal{P}_{\leq m}(\mathbb{C})) \asymp 1 + \log m$. It is possible to prove that this dependence is,
% in fact, subexponential for any space $X_n$. More precisely, the following result holds.

% \begin{theorem} \label{tensor}
% For any Banach space $X_n := (\mathbb{C}^n, \|\cdot\|)$ and any index set $J \subset \Lambda(m,n)$, we have
% \[
% \boldsymbol{\lambda}\big(\mathcal{P}_{J}(X_n)\big) \,\,\leq \,\,e^m \|\mathbf{Q}_{\Lambda(m,n),J}\| \, \boldsymbol{\lambda}(X^\ast_n)^{m}\,.
% \]
% If, in addition, $X_n$ possesses enough  symmetries, then
% \[
% \frac{|J|}{ n^me^m\|\mathbf{Q}_{\Lambda(m,n),J}\|}\,\,\boldsymbol{\lambda}(X^\ast_n)^{m}
% \,\,\leq \,\,
% \boldsymbol{\lambda}\big(\mathcal{P}_{J}(X_n)\big)\,.
% \]
% \end{theorem}

% \noindent
% Since the proof relies on additional results involving tensor products, we will omit the somewhat complex proof of this result
% here.

\subsection{The Gordon-Lewis bridge}

In this subsection, we use the Gordon-Lewis constant to bridge the gap between the unconditionality and projection constants.
We start with the following result which relates the Gordon-Lewis constant ${\boldsymbol{g\!l}}\big(\mathcal{P}_{J}(X_n)\big)$
with the unconditional basis constant $\boldsymbol{\chimon}\big( \mathcal{P}_{J}(X_n) \big)$ of the monomial basis
$(z^\alpha)_{\alpha \in J}$ of $\mathcal{P}_{J}(X_n)$.

\begin{theorem} \label{gl-versus-unc}
Let $X_n = (\mathbb{C}^n,\|\cdot\|)$  be a~Banach  lattice, and  let $J\subset~\mathbb{N}_0^n$ be a~finite index
set of degree at most $m$. Then
\[
\boldsymbol{g\!l}\big( \mathcal{P}_{J}(X_n)\big) \,\leq\,\boldsymbol{\chi}\big(\mathcal{P}_{J}(X_n) \big)
\,\leq\, \boldsymbol{\chimon}\big( \mathcal{P}_{J}(X_n) \big) \le 2^m \boldsymbol{g\!l}\big( \mathcal{P}_{J}(X_n) \big)\,.
\]
\end{theorem}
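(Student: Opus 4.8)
The first two inequalities need no work: $\mathcal{P}_J(X_n)$ is finite-dimensional, so the monomials $(z^\alpha)_{\alpha\in J}$ are an unconditional basis, and \eqref{gl-inequality} applied to $X=\mathcal{P}_J(X_n)$ with this basis gives $\boldsymbol{g\!l}\big(\mathcal{P}_J(X_n)\big)\le\boldsymbol{\chi}\big(\mathcal{P}_J(X_n)\big)\le\boldsymbol{\chimon}\big(\mathcal{P}_J(X_n)\big)$. All the content is in the estimate $\boldsymbol{\chimon}\big(\mathcal{P}_J(X_n)\big)\le 2^m\,\boldsymbol{g\!l}\big(\mathcal{P}_J(X_n)\big)$, and the structural fact I would exploit is the symmetry of $\mathcal{P}_J(X_n)$. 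Since $X_n$ is a Banach lattice, the diagonal action $(\rho(\omega)P)(z):=P(\omega_1z_1,\dots,\omega_nz_n)$, $\omega\in\mathbb{T}^n$, is by isometries of $\mathcal{P}_J(X_n)$, with $\rho(\omega)z^\alpha=\omega^\alpha z^\alpha$ and $\alpha\mapsto(\omega\mapsto\omega^\alpha)$ injective. Hence the monomials are the joint eigenvectors of $\rho$, sitting in pairwise distinct one-dimensional isotypic components, and the diagonal operators $D_\varepsilon\colon z^\alpha\mapsto\varepsilon_\alpha z^\alpha$ ($|\varepsilon_\alpha|=1$), whose norms compute $\boldsymbol{\chimon}$, are exactly the operators commuting with $\rho$.

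The plan for the main estimate is to push the Gordon--Lewis property through this symmetry. Given a factorization $\id_{\mathcal{P}_J(X_n)}=vu$ with $u\colon\mathcal{P}_J(X_n)\to Z$ and $v\colon Z\to\mathcal{P}_J(X_n)$ through a suitable intermediate space $Z$, put $\bar u(P):=\big(\omega\mapsto u(\rho(\omega)P)\big)\in L_2(\mathbb{T}^n;Z)$ and $\bar v(g):=\int_{\mathbb{T}^n}\rho(\omega)^{-1}v\big(g(\omega)\big)\,d\omega$; then $\|\bar u\|\le\|u\|$ and $\|\bar v\|\le\|v\|$, and a short computation using $\rho(\omega)z^\alpha=\omega^\alpha z^\alpha$ together with $|\omega^\alpha|=1$ shows $D_\varepsilon=\bar v\,\widehat D_\varepsilon\,\bar u$, where $\widehat D_\varepsilon$ is the Fourier multiplier on $L_2(\mathbb{T}^n;Z)$ with symbol $(\varepsilon_\gamma)$ acting only on the Fourier frequencies (of order $\le m$) reached by $\bar u$. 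Consequently $\|D_\varepsilon\|\le\|u\|\,\|v\|\,\|\widehat D_\varepsilon\|$, and $\|\widehat D_\varepsilon\|$ is exactly the unconditional constant of the degree-$\le m$ Steinhaus chaos with coefficients in $Z$; when $Z$ is an $L_1$-type space a Khintchine / hypercontractivity estimate controls it by $2^m$ (after first reducing to $m$-homogeneous $J$ via Proposition~\ref{prop: homogeneous part proj constant} and the ideal property of $\boldsymbol{g\!l}$, so that only the ``homogeneous'' exponential constants appear). It then remains to choose $Z$ and the factorization $vu$ so that $\|u\|\,\|v\|$ is comparable to $\boldsymbol{g\!l}\big(\mathcal{P}_J(X_n)\big)$; this is where the Gordon--Lewis property itself is used, since it is precisely the statement that $1$-summing operators out of $\mathcal{P}_J(X_n)$ are $1$-factorable with the associated constant.

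The main obstacle is this last step. A crude application of Gordon--Lewis to $\id_{\mathcal{P}_J(X_n)}$ only bounds $\gamma_1\big(\id_{\mathcal{P}_J(X_n)}\big)$, which is of order $\sqrt{|J|}$ and far too large; one needs a factorization whose middle space is simultaneously $L_1$-like (so the torus chaos multiplier stays $\le 2^m$) and whose factor norms are governed by $\boldsymbol{g\!l}$. I expect this to force one to work on the dual/lattice side — using the structure of $\mathcal{P}_J(X_n)^*$ for a lattice $X_n$ and a Lozanovski\v{\i}-type factorization — rather than with $\id_{\mathcal{P}_J(X_n)}$ directly. A further delicate point is to track the Khintchine/hypercontractivity constants of degree-$\le m$ Steinhaus chaos in $L_1$ precisely enough that the final bound is the clean $2^m$ rather than some larger $\kappa^m$.
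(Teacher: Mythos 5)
Your handling of the first two inequalities is correct and agrees with the paper, and you have rightly identified that all the content lies in $\boldsymbol{\chimon}\big(\mathcal{P}_J(X_n)\big)\le 2^m\,\boldsymbol{g\!l}\big(\mathcal{P}_J(X_n)\big)$. For that inequality, however, your proposal has a genuine gap, and it is exactly the step you yourself flag as ``the main obstacle''. In the scheme $D_\varepsilon=\bar v\,\widehat D_\varepsilon\,\bar u$, whatever factorization $\id_{\mathcal{P}_J(X_n)}=vu$ through an $L_1$-type space $Z$ you feed in, the cost $\|u\|\,\|v\|$ is by definition at least $\gamma_1\big(\id_{\mathcal{P}_J(X_n)}\big)$, and this quantity is not controlled by $\boldsymbol{g\!l}\big(\mathcal{P}_J(X_n)\big)$. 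A concrete failure: for $m=1$ and $X_n=\ell_1^n$ one has $\mathcal{P}_J(X_n)=\ell_\infty^n$, so $\boldsymbol{g\!l}=1$ and the theorem claims the bound $2$, while $\gamma_1(\id_{\ell_\infty^n})=\boldsymbol{\lambda}(\ell_1^n)\sim\sqrt n$; your route can therefore never recover the stated estimate. The Gordon--Lewis property does not factor the identity --- it factors $1$-summing operators into $\ell_2$ --- so to bring $\boldsymbol{g\!l}$ into play one must first exhibit a suitable operator $\mathcal{P}_J(X_n)\to\ell_2(J)$ with controlled $\pi_1$-norm. That construction is absent from your argument, and the closing suggestion that a Lozanovski\v{\i}-type factorization on the dual side ``should'' supply it is a conjecture, not a proof.

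For comparison, the paper gives no self-contained proof either: it asserts that the argument of \cite[Theorem 21.11]{defant2019libro} (going back to \cite[Proposition 3.1]{defant2011bohr}) adapts verbatim to general $J$ of degree at most $m$. That argument proceeds by trace duality rather than by averaging the identity. One fixes $Q$ in the unit ball of $\mathcal{P}_J(X_n)$ and $\varphi$ in the unit ball of its dual, writes $\langle\varphi,D_\varepsilon Q\rangle=\sum_{\alpha}\varepsilon_\alpha c_\alpha(Q)\varphi(z^\alpha)$, and splits each weight $c_\alpha(Q)\varphi(z^\alpha)$ into two square-root halves: one half is built into an auxiliary coefficient operator $T\colon\mathcal{P}_J(X_n)\to\ell_2(J)$, the other into a fixed vector of $\ell_2(J)$. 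The norm $\pi_1(T)$ is estimated using precisely the lattice structure of $X_n$ (which makes $z\mapsto(\omega_1z_1,\dots,\omega_nz_n)$ isometric, so coefficients are recovered by integration over $\mathbb{T}^n$) together with the Khintchine--Steinhaus inequality for chaos of degree at most $m$; the Gordon--Lewis property then factors $T$ through some $L_1(\mu)$ at cost $\boldsymbol{g\!l}\big(\mathcal{P}_J(X_n)\big)\,\pi_1(T)$, and the cotype-$2$ structure of $L_1$ plus a second application of the same chaos inequality close the estimate --- the two Khintchine factors of $(\sqrt2)^m$ producing the constant $2^m$. So the torus action and the polynomial Khintchine inequalities you invoke are indeed the right ingredients, but without the auxiliary $1$-summing operator and the trace-duality splitting the proof does not go through.
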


When $J = \Lambda(m,n)$, we recover the result stated in \cite[Proposition3.1]{defant2011bohr}. Theorem \ref{gl-versus-unc}
extends Theorem~21.11 from \cite{defant2019libro} to any index set $J$ with degree at most $m$; its proof can be adapted almost
verbatim from the one provided there. Note that the first estimate in Theorem \ref{gl-versus-unc} follows directly from the
Gordon-Lewis inequality \eqref{gl-inequality}, while the second estimate is straightforward.

\bigskip
The following result estimates the Gordon-Lewis constant of $\mathcal{P}_{J}(X_n)$, where $J$ is a finite  index set of degree
$m$, in terms of  the projection constants of the spaces $\mathcal{P}_{J(k)^\flat}(X_n),\, 1 \leq  k \leq m$.

\begin{theorem} \label{gl_versus_proj}
Let $X_n = (\mathbb{C}^n,\|\cdot\|)$  be a Banach  lattice and  let an index set $J \subset \Lambda(m,n)$. Then
\[
\boldsymbol{g\!l}\big( \mathcal{P}_{J}(X_n)\big) \,\le \,e  \|\mathbf{Q}_{\Lambda(m,n),J}\|\,\,
\boldsymbol{\lambda}\big( \mathcal{P}_{J^\flat}(X_n)\big)\,.
\]
Moreover, if $J\subset~\mathbb{N}_0^n$ is an index set of degree  $m$, then
\[
\boldsymbol{g\!l}\big( \mathcal{P}_{J}(X_n)\big) \le e (m+1) \,\max_{1 \leq  k \leq m}\|\mathbf{Q}_{\Lambda(k,n),J(k)}\|\,\,
\max_{1 \leq  k \leq m} \boldsymbol{\lambda}\big( \mathcal{P}_{J(k)^\flat}(X_n)\big)\,.
\]
\end{theorem}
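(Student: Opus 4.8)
I would prove the inequality $\boldsymbol{g\!l}\big(\mathcal{P}_J(X_n)\big) \le e\,\|\mathbf{Q}_{\Lambda(m,n),J}\|\,\boldsymbol{\lambda}\big(\mathcal{P}_{J^\flat}(X_n)\big)$ in the homogeneous case first, and then deduce the degree-$m$ version by combining it with Theorem~\ref{degree-homo} and the homogeneous part projections. The heart of the matter is the homogeneous estimate, so let me focus there. The key structural fact is that the gradient (or the associated symmetric $(m-1)$-linear form in one less variable) sends $\mathcal{P}_{\Lambda(m,n)}(X_n)$ into something built from $\mathcal{P}_{\Lambda(m-1,n)}(X_n)$ and $X_n^\ast$, and — crucially — the polarization constant $m^m/m!$ that usually appears is tamed here because we only ever lose one derivative at a time, giving the clean factor $e$ (via $m^m/m! \le e^{m-1}$ is too weak; the right estimate must come from a more delicate one-step argument yielding exactly $e$ in the $m$-th root, or more precisely $\binom{m}{1}$-type combinatorics). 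The ideal property of $\boldsymbol{g\!l}$ — namely $\boldsymbol{g\!l}(X)\le \|u\|\|v\|\,\boldsymbol{g\!l}(Y)$ whenever $\id_X = uv$ factors through $Y$ — is what lets us transfer the problem from $\mathcal{P}_J(X_n)$ to a space whose Gordon-Lewis constant we can control by its projection constant.

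**Key steps, in order.** First, I would realize $\mathcal{P}_{\Lambda(m,n)}(X_n) = \otimes_{\varepsilon_s}^m X_n^\ast$ and build a factorization of the identity on $\mathcal{P}_J(X_n)$ through a space of the form $X_n^\ast \otimes_{\varepsilon} \mathcal{P}_{J^\flat}(X_n)$ (or a suitable subspace thereof). Concretely: the map $P \mapsto$ (its first partial derivatives $\partial_k P$, each lying in $\mathcal{P}_{J^\flat}(X_n)$ after the degree drop) packaged as an element of $\ell_\infty^n(\mathcal{P}_{J^\flat}(X_n))$, composed with $\mathbf{Q}_{\Lambda(m,n),J}$ somewhere to land back in $\mathcal{P}_J$, should reconstruct $P$ up to the Euler identity $mP(z) = \sum_k z_k \partial_k P(z)$. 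Second, I would estimate the norms of the two legs of this factorization: the "differentiate" leg has norm controlled by a Cauchy-type/polarization estimate on $B_{X_n}$ (here the Banach-lattice hypothesis on $X_n$ enters, exactly as in the proof of Theorem~\ref{OrOuSe}, to pull the gradient back with the correct constant), and the "reconstruct via Euler + project" leg has norm $\le \|\mathbf{Q}_{\Lambda(m,n),J}\|$ up to a harmless constant. Third, I would invoke the ideal property of $\boldsymbol{g\!l}$ together with the fact that for $Y = \ell_\infty^n(\mathcal{P}_{J^\flat}(X_n))$ one has $\boldsymbol{g\!l}(Y) \lesssim \boldsymbol{\lambda}(\mathcal{P}_{J^\flat}(X_n))$ (a finite $\ell_\infty$-sum does not spoil the Gordon-Lewis constant, and $\boldsymbol{g\!l} \le \boldsymbol{\lambda}$ for spaces embedded in $L_\infty$), and track the constants to see that the accumulated factor is exactly $e$. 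Fourth, for the degree-$m$ statement, decompose $P = \sum_{k=1}^m P_k$ into homogeneous parts (Cauchy inequalities give norm-$1$ projections onto each $\mathcal{P}_{J(k)}(X_n)$), apply the homogeneous bound to each piece, and reassemble through an $\ell_\infty$/$\ell_1$ sandwich exactly as in the proof of Theorem~\ref{degree-homo}; the identity-map-between-$\ell_\infty$-and-$\ell_1$ step contributes the factor $(m+1)$.

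**The main obstacle.** The delicate point is getting the constant to be precisely $e$ rather than something like $e^m$ or $m$. The naive approach — write $P$ as an $m$-linear form and apply the full polarization inequality, with its $m^m/m!$ loss — is fatal, since $\sqrt[m]{m^m/m!}\to e$ but the non-asymptotic constant is bad and, worse, it doesn't interact well with the projection-constant bound for $\mathcal{P}_{J^\flat}$ which lives in degree $m-1$. The correct route is to drop only a \emph{single} derivative, using the Euler identity $mP = \sum_k z_k \partial_k P$, so that the combinatorial cost is $\binom{m}{1}/1 = m$ in the linear form but the appearance of the extra factor $z_k$ (norm $\le 1$ on $B_{X_n}$) and the lattice structure of $X_n$ allow the $m$ to be absorbed or to cancel against the $1/m$ in Euler's identity, leaving the universal $e = \lim (1+1/m)^m$-flavored constant. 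Pinning down exactly where the $e$ comes from — I expect it is from an estimate of the type $\big(\tfrac{m}{m-1}\big)^{m-1}\le e$ applied when comparing the degree-$m$ and degree-$(m-1)$ normalizations — and verifying that the Banach-lattice hypothesis is genuinely used (and sufficient) to pull back the gradient with norm $\le 1$, is where the real work lies; the rest is bookkeeping with the ideal property of $\boldsymbol{g\!l}$ and the $\ell_\infty$-$\ell_1$ sandwich.
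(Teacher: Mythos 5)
Your high-level strategy is the one the paper uses: factor $\id_{\mathcal{P}_J(X_n)}$ through a space built from $X_n^\ast$ and $\mathcal{P}_{J^\flat}(X_n)$ by dropping a single degree of homogeneity, invoke the ideal property of $\boldsymbol{g\!l}$ together with $\boldsymbol{g\!l}(\mathcal{L}(X,Y))\le\boldsymbol{\lambda}(Y)$ for $X$ a lattice, and for the degree-$m$ case reassemble the homogeneous pieces through the $\ell_\infty$/$\ell_1$ sandwich of Theorem~\ref{degree-homo}, which is exactly where the $(m+1)$ comes from. Your guess about the source of the constant $e$ is also correct: it is the Harris one-slot polarization estimate $\sup_{u,x\in B}|\check{P}(u,\dots,u,x)|\le\bigl(\tfrac{m}{m-1}\bigr)^{m-1}\|P\|\le e\|P\|$.

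However, the concrete factorization you commit to has a genuine gap. You propose to package the partial derivatives as an element of $\ell_\infty^n\bigl(\mathcal{P}_{J^\flat}(X_n)\bigr)$ and to reconstruct $P$ via Euler's identity $mP(z)=\sum_k z_k\partial_kP(z)$. The reconstruction map $(Q_k)_{k=1}^n\mapsto \frac1m\sum_k z_kQ_k(z)$ is \emph{not} uniformly bounded on $\ell_\infty^n(\mathcal{P}_{J^\flat}(X_n))$: knowing only $\sup_k\|Q_k\|\le 1$, the best bound is $\frac1m\sup_{z\in B_{X_n}}\sum_k|z_k|\sim\frac1m\varphi_{X_n'}(n)$, which is dimension-dependent and destroys the estimate. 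The fix — and what the paper does — is to package the degree drop as an \emph{operator}: $U_m(P)\in\mathcal{L}\bigl(X_n,\mathcal{P}_{J^\flat}(X_n)\bigr)$, $\bigl(U_m(P)x\bigr)(u)=\check{P}(u,\dots,u,x)$, so that the reconstruction $V(T)(y)=(Ty)(y)$ trivially has norm $\le 1$ since $|(Ty)(y)|\le\|Ty\|\,\|y\|^{m-1}\le\|T\|$ for $y\in B_{X_n}$. (One then has to check, by a coefficient computation, that $U_m(P)x$ really lands in $\mathcal{P}_{J^\flat}(X_n)$ and not merely in $\mathcal{P}_{m-1}(X_n)$; this is routine but not free, and it is why $\mathbf{Q}_{\Lambda(m,n),J}$ appears at the end of the diagram rather than ``somewhere'' in the middle.) You do mention $X_n^\ast\otimes_\varepsilon\mathcal{P}_{J^\flat}(X_n)=\mathcal{L}(X_n,\mathcal{P}_{J^\flat}(X_n))$ as an alternative, which is the right choice. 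A second, smaller correction: the Banach-lattice hypothesis is not used to ``pull back the gradient'' (Harris polarization holds for arbitrary Banach spaces); it is used precisely in the inequality $\boldsymbol{g\!l}\bigl(\mathcal{L}(X,Y)\bigr)\le\boldsymbol{\lambda}(Y)$, which requires the domain $X$ to be a lattice.
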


The case $J= \Lambda(m,n)$ was previously established in \cite[Proposition 4.2]{defant2011bohr}, and a detailed alternative
presentation of the proof for this homogeneous case is provided in \cite[Proposition 22.1]{defant2019libro}. In this context,
we will show how to adapt those arguments to tackle the general situation, as it involves more complex technical subtleties.

We start with an  elementary  observation  taken from \cite[Lemma 22.2]{defant2019libro}, which up to polarization covers the
case $J=\Lambda(2,n)$ of Theorem~\ref{gl_versus_proj} (take $Y= X^\ast$): For every finite dimensional Banach lattice~$X$,
and every finite dimensional Banach space $Y$ one has
\begin{equation}\label{supo}
\boldsymbol{g\!l}\big( \mathcal{L}(X,Y)\big) \leq \boldsymbol{\lambda}(Y)\,.
\end{equation}

We now proceed with the proof of Theorem~\ref{gl_versus_proj}. For the first part of the statement, we will factorize the identity of $\mathcal{P}_{J}(X_n)$ through $\mathcal{L}\big(X_n, \mathcal{P}_{J^\flat}(X_n)\big)$ and apply the previous inequality. In the second part, we will decompose into homogeneous indices and use the factorization from the first part for each homogeneous piece.

\begin{proof}[Proof of Theorem~\ref{gl_versus_proj}]
To see the first statement, we  consider the following commutative diagram:
\begin{equation}\label{diagram}
\xymatrix
{
\mathcal{P}_{J}(X_n) \ar[rr]^{id} \ar[d]^{\text{$U_m$}} &  & \mathcal{P}_{J}(X_n)\\
\mathcal{L}\big(X_n,\mathcal{P}_{J^\flat}(X_n)\big)
\ar[r]_{I_m\;\;\;\;\;\;\;} & \mathcal{L}\big(X_n,\mathcal{P}_{m-1}( X_n)\big) \ar[r]_{\;\;\;\;\;\;\;\;\;\;\;\;\;\;V_m}
& \mathcal{P}_{m}(X_n), \ar[u]^{\mathbf{Q}_{\Lambda(m,n),J}}\,,\\
}
\end{equation}
where $I_m$ is the canonical inclusion map and
\begin{align*}
&
\big(U_m(P)x\big)(u) := \check{P}( u, \ldots, u
,x)
\,\,\, \text{ for  } \,\,\, P \in  \mathcal{P}_{J}(X_n) \,\,\, \text{ and  } \,\,\, x,u \in X_n\,,
\\[1ex]&
V_m(T)(y) := (Ty)y \,\,\, \text{ for  } \,\,\,  T \in \mathcal{L}\big(X_n,\mathcal{P}_{m-1}(X_n)\big)
\,\,\, \text{ and  } \,\,\, y \in X_n.
\end{align*}
We show that $U_m$, as an operator from $\mathcal{P}_{J}(X_n)$ into $\mathcal{L}\big(X_n, \mathcal{P}_{J^\flat}(X_n)\big)$,
is well-defined. Indeed,  define
\[
a_\bi(\check{P})= \frac{c_\bj(P)}{|[\bj]|}\,\,\,\, \text{ for $\bj \in \mathcal{J}(m,n)$ and $\bi \in [\bj]$} \,.
\]
Then, given $P \in  \mathcal{P}_{J}(X)$ and $x,u \in X_n$,
\begin{align*}
\check{P}( u, \ldots, u,x)
&
=\sum_{\bi \in \mathcal{M}(m,n)} a_\bi(\check{P})
u_{i_1}\ldots u_{i_{m-1}} x_{i_m}
\\&
=\sum_{\bi \in \mathcal{M}(m-1,n)} \sum_{\ell=1}^n a_{(\bi,\ell)}(\check{P})
u_\bi x_{\ell}
=
\sum_{\bj \in \mathcal{J}(m-1,n)} \sum_{\bi \in  [\bj]} \,\, \sum_{\ell=1}^n a_{(\bi,\ell)}(\check{P})
u_\bi x_{\ell}
\\&
=
\sum_{\bj \in \mathcal{J}(m-1,n)}
\sum_{\ell=1}^n
\,\,
\sum_{\bi \in  [\bj]}
 a_{(\bi,\ell)}(\check{P})
 u_\bi x_{\ell}
   =
\sum_{\bj \in \mathcal{J}(m-1,n)}
\sum_{\ell=1}^n
\,\,
\sum_{\bi \in  [\bj]}
 \frac{c_{(\bi,\ell)_\ast}(P)}{|[(\bi,\ell)_\ast]|}
  u_\bi x_{\ell}
   \\&
  =
\sum_{\bj \in \mathcal{J}(m-1,n)}
\sum_{\ell=1}^n
\bigg[
 \frac{c_{(\bj,\ell)_\ast}(P)}{|[(\bj,\ell)_\ast]|}  u_\bj \bigg] |[\bj]|
 x_{\ell}
     =
\sum_{\bj \in \mathcal{J}(m-1,n)}
\bigg[
\sum_{\ell=1}^n
  \frac{c_{(\bj,\ell)_\ast}(P)}{|[(\bj,\ell)_\ast]|} |[\bj]|
  x_{\ell} \bigg]  u_\bj
  \\&
  =
\sum_{\bj \in \mathcal{J}(m-1,n)}
\bigg[\sum_{\substack{1 \leq \ell \leq n \\ (\bj,\ell)_\ast \in J}}
 \frac{c_{(\bj,\ell)_\ast}(P)}{|[(\bj,\ell)_\ast]|} |[\bj]|
  x_{\ell} \bigg]  u_\bj
    =
\sum_{\bj \in J^\flat}
\bigg[\sum_{\substack{1 \leq \ell \leq n \\ (\bj,\ell)_\ast \in J}}
 \frac{c_{(\bj,\ell)_\ast}(P)}{|[(\bj\grave{},\ell)_\ast]|} |[\bj]|
  x_{\ell} \bigg]  u_\bj\,,
  \end{align*}
which shows that $U_m(P)x \in \mathcal{P}_{ J^\flat}(X_n)$ for every $x \in X_n$.
By the Harris polarization formula (see, e.g., \cite[Proposition 2.34]{defant2019libro}) we have $\|U_m\| \leq e$,
and moreover trivially  $\|V_m\| \leq~1$. Hence by the ideal properties of the involved ideal norms we see that
\[
\boldsymbol{g\!l}\big( \mathcal{P}_{J}(X_n)\big) \leq e \, \|\mathbf{Q}_{\Lambda(m,n),J}\| \,
\boldsymbol{g\!l}\big(\mathcal{L}\big(X_n,\mathcal{P}_{J^\flat}(X_n)\big)\big) \leq  e \, \|\mathbf{Q}_{\Lambda(m,n),J}\|\, \boldsymbol{\lambda}\big(\mathcal{P}_{J^\flat}(X_n)\big)\,,
\]
where for the last estimate we use  \eqref{supo}. This proves  the first claim.

For the second claim we have   to handle an  index set $J$ of degree $m$, and consider the following commutative diagram
\begin{equation*}
    \xymatrix
{
\mathcal{P}_{J}(X_n)
  \ar[d]^{\text{$\mathbf{O}\oplus\bigoplus\mathbf{Q}_{J,J(k)}$}}
  \ar[r]^{\text{$ \id_{\mathcal{P}_{J}(X_n)}$}}
 & \mathcal{P}_{J}(X_n)
 \\
  \mathbb{C} \oplus_\infty \bigoplus_\infty \mathcal{P}_{J(k)}(X_n)
  \ar[d]^{\text{$ \id_\mathbb{C}\oplus
  \bigoplus U_k$}}
  \ar[r]^{\text{$\id_\mathbb{C}\oplus\bigoplus \id_{ \mathcal{P}_{J(k)}(X_n)}$}}
 &
 \mathbb{C} \oplus_1\bigoplus_1 \mathcal{P}_{J(k)}(X_n) \ar[u]^{\sum }  & \mathbb{C} \oplus_1\bigoplus_1 \mathcal{P}_{k}(X_n) \ar[l]_{\text{$\id_\mathbb{C}\oplus\bigoplus\mathbf{Q}_{\Lambda(k,n),J(k)}$}}
 \\
 \mathbb{C} \oplus_\infty\bigoplus_\infty  \mathcal{L}\big(X_n,\mathcal{P}_{J(k)^\flat}(X_n)\big)
\ar[r]^{\id_\mathbb{C}\oplus\bigoplus I_k}
&
 \mathbb{C} \oplus_\infty\bigoplus_\infty \mathcal{L}\big(X_n,\mathcal{P}_{k-1}(X_n)\big) \ar[r]^{\Phi}  & \mathbb{C} \oplus_1\bigoplus_1 \mathcal{L}\big(X_n,\mathcal{P}_{k-1}(X_n)\big)
 \ar[u]^{\text{$\id_\mathbb{C}\oplus\bigoplus V_k$}}\,.
}
\end{equation*}

Let us explain our notation in this diagram: Here  $U_k$, $V_k$ and $I_k$ for $1 \leq k \leq m$ are the operators from \eqref{diagram}.
If $P = a_0 +\sum_{k=1}^m P_k$ is the Taylor decomposition of $P \in \mathcal{P}_{J}(X_n)$, then
$\mathbf{O}(P) = a_0$, and hence  $\big(\mathbf{O}\oplus\bigoplus\mathbf{Q}_{J,J(k)}\big)(P) = \big(a_0, (P_k)_{k=1}^m\big)$.
Additionally, $\sum$ is the mapping which assigns to every $\big(a_0, (P_k)_{k=1}^m\big)$ the polynomial $P = a_0 +\sum_{k=1}^m P_k$,
and $\Phi$ stands for the identity map -- whereas the notation for the rest of the maps is self-explaining. Obviously, this gives that
\[
\boldsymbol{g\!l}\big( \mathcal{P}_{J}(X_n)\big)
\le e (m+1) \,\max_{1 \leq  k \leq m}\|\mathbf{Q}_{\Lambda(k,n),J(k)}\|\,\,
\max_{1 \leq  k \leq m} 
\boldsymbol{g\!l}\big( \bigoplus_\infty  \mathcal{L}\big(X_n,\mathcal{P}_{J(k)^\flat}(X_n)\big)\big)\,,
\]
so it remains to prove the following claim
\begin{equation*}
\boldsymbol{g\!l}\big( \bigoplus_\infty  \mathcal{L}\big(X_n,\mathcal{P}_{J(k)^\flat}(X_n)\big)\big)
\leq \max_{1 \leq  k \leq m} \boldsymbol{\lambda}\big( \mathcal{P}_{J(k)^\flat}(X_n)\big)\,.
\end{equation*}
Indeed, using standard properties of $\varepsilon$- and $\pi$-tensor products, we have
\begin{align*}
\bigoplus_\infty  \mathcal{L}\big(X_n,\mathcal{P}_{J(k)^\flat}(X_n)\big)
& \hookrightarrow \bigoplus_\infty  \mathcal{L}\big(X_n,\bigoplus_\infty \mathcal{P}_{J(k)^\flat}(X_n)\big) \\
& = \ell_\infty^m \otimes_\varepsilon \big[ X_n^\ast \otimes_\varepsilon \bigoplus_\infty \mathcal{P}_{J(k)^\flat}(X_n)\big] \\
& = \big[\ell_\infty^m \otimes_\varepsilon  X_n^\ast\big] \otimes_\varepsilon \bigoplus_\infty \mathcal{P}_{J(k)^\flat}(X_n) \\
& = \big(\ell_1^m \otimes_\pi X_n\big)^\ast   \otimes_\varepsilon \bigoplus_\infty \mathcal{P}_{J(k)^\flat}(X_n)
= \mathcal{L}\big(  \ell_1^m(X_n), \bigoplus_\infty \mathcal{P}_{J(k)^\flat}(X_n)\big)\,,
\end{align*}
where the first space in fact is $1$-complemented in the second one, and all other identifications are isometries.
Then we deduce from \eqref{supo} that
\[
\boldsymbol{g\!l}\big( \bigoplus_\infty  \mathcal{L}\big(X_n,\mathcal{P}_{J(k)^\flat}(X_n)\big)\big)
\leq \boldsymbol{g\!l}\big(\mathcal{L}\big(\ell_1^m(X_n), \bigoplus_\infty \mathcal{P}_{J(k)^\flat}(X_n)\big)\big)
\leq \boldsymbol{\lambda} \big(\bigoplus_\infty \mathcal{P}_{J(k)^\flat}(X_n) \big)\,.
\]
Since obviously
\[
\boldsymbol{\lambda} \big(\bigoplus_\infty \mathcal{P}_{J(k)^\flat}(X_n) \big)
= \gamma_\infty \big(\id_{\bigoplus_\infty \mathcal{P}_{J(k)^\flat}(X_n)} \big)
\leq \max_{1 \leq  k \leq m} \gamma_\infty \big(\id_{\mathcal{P}_{J(k)^\flat}(X_n)} \big)
= \max_{1 \leq  k \leq m} \boldsymbol{\lambda} \big(\mathcal{P}_{J(k)^\flat}(X_n) \big)\,,
\]
the proof is complete.
\end{proof}

From the Theorems ~\ref{gl-versus-unc} and~\ref{gl_versus_proj}, the following result follows.

\begin{theorem}\label{thm: uncond cte vs proj cte}
Let $X_n = (\mathbb{C}^n,\|\cdot\|)$  be a Banach  lattice, and let $J\subset~\mathbb{N}_0^n$ be an index set of degree at most~$m$. Then
\[
\boldsymbol{\chimon}\big( \mathcal{P}_{J}(X_n)\big)
\,\le \,e (m+1) 2^m \,\max_{1 \leq  k \leq m}\|\mathbf{Q}_{\Lambda(k,n),J(k)}\|\,\,
\max_{1 \leq  k \leq m} \boldsymbol{\lambda}\big( \mathcal{P}_{J(k)^\flat}(X_n)\big)\,.
\]
In addition,
\[
\boldsymbol{\chimon}\big( \mathcal{P}_{J}(X_n)\big) \,\le \,e 2^m \, \|\mathbf{Q}_{\Lambda(m,n),J}\|
\,\, \boldsymbol{\lambda}\big( \mathcal{P}_{J^\flat}(X_n)\big)\,,
\]
whenever  $J \subset \Lambda(m,n)$.
\end{theorem}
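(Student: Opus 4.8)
The plan is to derive both inequalities by composing the two ``bridges'' already established. Theorem~\ref{gl-versus-unc} passes from the monomial unconditional basis constant to the Gordon--Lewis constant at the cost of a factor $2^m$, and Theorem~\ref{gl_versus_proj} passes from the Gordon--Lewis constant to the projection constants of the reduced homogeneous blocks. So the whole argument reduces to a short chain of inequalities, once the index-set hypotheses are matched up.

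First I would treat the second, cleaner display. If $J \subset \Lambda(m,n)$, then $J$ has degree at most $m$, so Theorem~\ref{gl-versus-unc} gives $\boldsymbol{\chimon}\big(\mathcal{P}_{J}(X_n)\big) \le 2^m\,\boldsymbol{g\!l}\big(\mathcal{P}_{J}(X_n)\big)$, while the first part of Theorem~\ref{gl_versus_proj} (which is stated precisely for $J \subset \Lambda(m,n)$) gives $\boldsymbol{g\!l}\big(\mathcal{P}_{J}(X_n)\big) \le e\,\|\mathbf{Q}_{\Lambda(m,n),J}\|\,\boldsymbol{\lambda}\big(\mathcal{P}_{J^\flat}(X_n)\big)$; multiplying yields the claim. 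For the first display, let $J \subset \mathbb{N}_0^n$ have degree at most $m$: Theorem~\ref{gl-versus-unc} again supplies the factor $2^m$ in front of $\boldsymbol{g\!l}\big(\mathcal{P}_{J}(X_n)\big)$, and the ``moreover'' part of Theorem~\ref{gl_versus_proj} bounds $\boldsymbol{g\!l}\big(\mathcal{P}_{J}(X_n)\big)$ by $e(m+1)\max_{1\le k\le m}\|\mathbf{Q}_{\Lambda(k,n),J(k)}\|\cdot\max_{1\le k\le m}\boldsymbol{\lambda}\big(\mathcal{P}_{J(k)^\flat}(X_n)\big)$; composing the two gives the asserted bound.

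The only point that needs a moment's care --- and this is bookkeeping rather than an analytic obstacle --- is that Theorem~\ref{gl_versus_proj} is phrased for $J$ of degree exactly $m$ (resp.\ $J \subset \Lambda(m,n)$), whereas here $J$ is only assumed of degree at most $m$. This is harmless: if the true degree of $J$ is $m' \le m$, applying Theorem~\ref{gl_versus_proj} with $m'$ gives a bound involving $(m'+1)$ and $\max_{1\le k\le m'}$, which is dominated by the asserted bound with $(m+1)$ and $\max_{1\le k\le m}$ (for $k>m'$ the block $J(k)$ is empty, so $\mathbf{Q}_{\Lambda(k,n),J(k)}=0$ and these terms do not affect the maxima), and likewise $2^{m'}\le 2^m$. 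So no genuine difficulty arises; the entire proof is the multiplication of the constants coming from the two previous theorems.
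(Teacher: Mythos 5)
Your proof is correct and is exactly the paper's argument: the result is stated there as an immediate consequence of composing Theorem~\ref{gl-versus-unc} with Theorem~\ref{gl_versus_proj}, which is precisely what you do. The extra remark about reducing from degree at most $m$ to the true degree $m'$ is sound bookkeeping and does not change the route.
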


\begin{corollary} \label{main3A}
Let $m \in \mathbb{N}$ with $m\geq 2$ and let $X_n = (\mathbb{C}^n,\|\cdot\|)$  be a Banach  lattice. Then
\[
\boldsymbol{\chimon}\big( \mathcal{P}_{m}(X_n)\big) \,\le \, \boldsymbol{\chimon}\big( \mathcal{P}_{\leq m}(X_n)\big)
\,\le \,e (m+1) 2^m  \,\,\max_{1 \leq  k \leq m-1} \boldsymbol{\lambda}\big( \mathcal{P}_{k}(X_n)\big)\,,
\]
and
\[
\boldsymbol{\chimon}\big( \Pp_{\Lambda_T(m)}(X_n)\big) \,\le \,\boldsymbol{\chimon}\big(\Pp_{\Lambda_T(\le m)}(X_n)\big)
 \,\le \,e (m+1) 2^m \kappa^m  \,\,\max_{1 \leq  k \leq m-1} \boldsymbol{\lambda}\big( \Pp_{\Lambda_T(k)}(X_n)\big)\,.
\]
\end{corollary}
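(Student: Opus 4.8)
The plan is to derive Corollary~\ref{main3A} as a direct specialization of Theorem~\ref{thm: uncond cte vs proj cte} together with the structural observations of Subsection~2.1. First I would observe that the left-hand inequalities in both displays are instances of the general monotonicity $\boldsymbol{\chimon}\big(\mathcal{P}_{J(m)}(X_n)\big) \le \boldsymbol{\chimon}\big(\mathcal{P}_{J}(X_n)\big)$ whenever $J$ has degree $m$: indeed, the monomial basis of $\mathcal{P}_{J(m)}(X_n)$ is a block of the monomial basis of $\mathcal{P}_{J}(X_n)$ (the homogeneous part is obtained by the norm-one projection $\mathbf{Q}_{J,J(m)}$ of Proposition~\ref{prop: homogeneous part proj constant}, which moreover commutes with the sign changes $z^\alpha \mapsto \varepsilon_\alpha z^\alpha$), so restricting an unconditional basis to a subset does not increase its unconditional basis constant. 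Applying this with $J = \Lambda(\le m)$ (so $J(m) = \Lambda(m)$) and with $J = \Lambda_T(\le m)$ (so $J(m) = \Lambda_T(m)$) gives the two leftmost estimates.

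Next I would attack the right-hand inequalities. For the first display, take $J = \Lambda(\le m, n)$, which has degree at most $m$, and feed it into the first estimate of Theorem~\ref{thm: uncond cte vs proj cte}. Here $J(k) = \Lambda(k,n)$ for every $0 \le k \le m$, so $\mathbf{Q}_{\Lambda(k,n),J(k)} = \id$ and the factor $\max_k \|\mathbf{Q}_{\Lambda(k,n),J(k)}\| = 1$ disappears. It remains to identify $\max_{1 \le k \le m} \boldsymbol{\lambda}\big(\mathcal{P}_{J(k)^\flat}(X_n)\big)$: since $\Lambda(k,n)^\flat = \Lambda(k-1,n)$ (this is exactly the remark $\Lambda(m,n)^\flat = \Lambda(m-1,n)$ recorded after the definition of the reduced index set), we get $\mathcal{P}_{J(k)^\flat}(X_n) = \mathcal{P}_{k-1}(X_n)$, and as $k$ ranges over $1, \ldots, m$ the index $k-1$ ranges over $0, \ldots, m-1$. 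Noting that $\boldsymbol{\lambda}\big(\mathcal{P}_0(X_n)\big) = \boldsymbol{\lambda}(\mathbb{C}) = 1$ is harmless inside the maximum for $m \ge 2$, the maximum is $\max_{1 \le k \le m-1} \boldsymbol{\lambda}\big(\mathcal{P}_k(X_n)\big)$, which yields the claimed bound $e(m+1)2^m \max_{1 \le k \le m-1} \boldsymbol{\lambda}\big(\mathcal{P}_k(X_n)\big)$.

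For the tetrahedral display, take instead $J = \Lambda_T(\le m, n)$, again of degree at most $m$, and apply the same estimate from Theorem~\ref{thm: uncond cte vs proj cte}. Now $J(k) = \Lambda_T(k,n)$, and the relevant quantities are $\|\mathbf{Q}_{\Lambda(k,n),\Lambda_T(k,n)}\|$ and $\boldsymbol{\lambda}\big(\mathcal{P}_{\Lambda_T(k,n)^\flat}(X_n)\big)$. The first is handled by Theorem~\ref{OrOuSe} applied at degree $k$: $\|\mathbf{Q}_{\Lambda(k,n),\Lambda_T(k,n)}\| \le \kappa^k \le \kappa^m$, so $\max_{1 \le k \le m}\|\mathbf{Q}_{\Lambda(k,n),\Lambda_T(k,n)}\| \le \kappa^m$. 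For the second, I would observe that reducing a tetrahedral index of order $k$ produces a tetrahedral index of order $k-1$, i.e. $\Lambda_T(k,n)^\flat \subset \Lambda_T(k-1,n)$; combining Remark~\ref{simple} (monotonicity of $\boldsymbol{\lambda}$ under the annihilation projection $\mathbf{Q}_{\Lambda_T(k-1,n),\Lambda_T(k,n)^\flat}$, which has norm $1$ since it is a coordinate projection) with Proposition~\ref{prop: homogeneous part proj constant} gives $\boldsymbol{\lambda}\big(\mathcal{P}_{\Lambda_T(k,n)^\flat}(X_n)\big) \le \boldsymbol{\lambda}\big(\mathcal{P}_{\Lambda_T(k-1,n)}(X_n)\big) = \boldsymbol{\lambda}\big(\mathcal{P}_{\Lambda_T(k-1)}(X_n)\big)$. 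Taking the maximum over $1 \le k \le m$ (and discarding the trivial $k=1$ term since $m \ge 2$) bounds it by $\max_{1 \le k \le m-1}\boldsymbol{\lambda}\big(\mathcal{P}_{\Lambda_T(k)}(X_n)\big)$, and assembling the three factors $e(m+1)$, $2^m$, $\kappa^m$ gives the asserted estimate.

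The only point requiring genuine care — and what I expect to be the main obstacle — is the bookkeeping around the reduced index set in the tetrahedral case: unlike $\Lambda(k,n)^\flat = \Lambda(k-1,n)$, the inclusion $\Lambda_T(k,n)^\flat \subset \Lambda_T(k-1,n)$ is in general strict (a reduced tetrahedral index need not be the reduction of \emph{every} tetrahedral index extending it), so one cannot simply write an equality of polynomial spaces; one must instead route through Remark~\ref{simple} to pass from $\mathcal{P}_{\Lambda_T(k,n)^\flat}(X_n)$ to the cleaner space $\mathcal{P}_{\Lambda_T(k-1)}(X_n)$. Verifying that $\mathbf{Q}_{\Lambda_T(k-1,n),\Lambda_T(k,n)^\flat}$ is indeed a norm-one projection (it is the restriction to $\mathcal{P}_{\Lambda_T(k-1,n)}(X_n)$ of a coordinate projection, hence a contraction in the sup-norm by the maximum modulus principle applied coordinatewise) closes the gap. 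Everything else is a mechanical substitution into the two theorems already proved.
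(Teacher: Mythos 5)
Your proof is correct and is exactly the derivation the paper intends: Corollary~\ref{main3A} is stated without proof as an immediate specialization of Theorem~\ref{thm: uncond cte vs proj cte} (together with Theorem~\ref{OrOuSe}, the identity $\Lambda(k,n)^\flat=\Lambda(k-1,n)$, and the fact that the degree-zero term contributes only $\boldsymbol{\lambda}(\mathbb{C})=1$, which is absorbed since $m\ge 2$), and that is precisely what you carry out. One small correction to your final paragraph: for $k\le n$ one actually has the \emph{equality} $\Lambda_T(k,n)^\flat=\Lambda_T(k-1,n)$ (any tetrahedral $\alpha$ of order $k-1<n$ has a zero coordinate that can be increased to $1$ to produce a tetrahedral extension in $\Lambda_T(k,n)$), and for $k>n$ both sides of your inclusion are trivial, so the detour through Remark~\ref{simple} is never needed --- which is fortunate, because your justification that $\mathbf{Q}_{\Lambda_T(k-1,n),\Lambda_T(k,n)^\flat}$ has norm one (``a coordinate projection, hence a contraction'') would not be valid for an arbitrary proper sub-index-set: annihilating monomial coefficients is in general \emph{not} a contraction in the sup norm, which is the very reason the unconditional basis constants studied here can be large.
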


The previous results, comparing Gordon-Lewis constants, unconditionality constants, and projection constants, allow us to
transfer the bound provided by the Kadets-Snobar theorem~\eqref{kadets1} for the projection constant to the unconditionality
constant.

\begin{corollary}\label{K-S for poly}
Let  $X$ be a Banach sequence lattice, and let $\big(J_m\big)$ be a~sequence of index sets, each with degree at most $m$.
Then the following estimates hold{\rm:}
\begin{align} \label{mmm}
\boldsymbol{\lambda}\big(\mathcal{P}_{J_m}(X_n)\big)  \prec_{C^m}\Big(1+\frac{n}{m}\Big)^{\frac{m}{2}}
\end{align}
and
\begin{align*}
\boldsymbol{\chimon}\big(\mathcal{P}_{J_m}(X_n)\big)  \prec_{C^m}\Big(1+ \frac{n}{m}\Big)^{\frac{m-1}{2}}\,,
\end{align*}
where $C > 0$ is universal.
\end{corollary}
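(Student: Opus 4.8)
The plan is to derive both estimates from the structural results already established, feeding in the cardinality bound \eqref{cardi} as the only genuinely new input. First I would prove \eqref{mmm}. By Proposition~\ref{prop: homogeneous part proj constant} applied to $J_m$ (noting $J_m = J_m(\leq m)$), and then by the Kadets--Snobar theorem \eqref{kadets1}, we have for each $0 \leq k \leq m$
\[
\boldsymbol{\lambda}\big(\mathcal{P}_{J_m(k)}(X_n)\big) \leq \sqrt{|J_m(k)|} \leq \sqrt{|\Lambda(k,n)|}\,.
\]
Using the estimate $|\Lambda(k,n)| = \binom{n+k-1}{k} \leq e^k\big(1+\tfrac{n}{k}\big)^k$ from \eqref{cardi}, and observing that the function $k \mapsto \big(1+\tfrac nk\big)^{k/2}$ is increasing so that its maximum over $0 \leq k \leq m$ is attained at $k=m$ (with the $k=0$ term contributing only $1$), one gets $\max_{0\leq k\leq m}\boldsymbol{\lambda}\big(\mathcal{P}_{J_m(k)}(X_n)\big) \leq e^{m/2}\big(1+\tfrac nm\big)^{m/2}$. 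Plugging this into the upper bound of Theorem~\ref{degree-homo} yields
\[
\boldsymbol{\lambda}\big(\mathcal{P}_{J_m}(X_n)\big) \leq (m+1)\, e^{m/2}\Big(1+\tfrac nm\Big)^{m/2}\,,
\]
and the polynomial factor $m+1$ is absorbed into $C^m$ for a suitable universal $C$ (e.g. $C = e$, since $m+1 \leq e^{m/2}$ for all $m \geq 1$, giving $C = e$). This is exactly \eqref{mmm}.

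For the unconditional basis constant estimate, I would invoke Theorem~\ref{thm: uncond cte vs proj cte}, which gives
\[
\boldsymbol{\chimon}\big(\mathcal{P}_{J_m}(X_n)\big) \leq e(m+1)2^m \max_{1\leq k\leq m}\|\mathbf{Q}_{\Lambda(k,n),J_m(k)}\|\,\, \max_{1\leq k\leq m}\boldsymbol{\lambda}\big(\mathcal{P}_{J_m(k)^\flat}(X_n)\big)\,.
\]
The projection norms satisfy $\|\mathbf{Q}_{\Lambda(k,n),J_m(k)}\| \leq 1$ by the definition \eqref{anni} (restriction of coefficients is norm-nonincreasing on such polynomial spaces — this can be quoted from the fact that $\mathbf{Q}$ here is, up to the canonical inclusion, a coordinate projection on an injective tensor product, or more simply it follows since for homogeneous polynomials each $J_m(k) \subset \Lambda(k,n)$ and averaging over characters kills the unwanted monomials exactly as in the proof of Theorem~\ref{OrOuSe}). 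For the projection constants of the reduced sets, since $J_m(k)^\flat \subset \Lambda(k-1,n)$, Kadets--Snobar \eqref{kadets1} and \eqref{cardi} give
\[
\boldsymbol{\lambda}\big(\mathcal{P}_{J_m(k)^\flat}(X_n)\big) \leq \sqrt{|\Lambda(k-1,n)|} \leq e^{(k-1)/2}\Big(1+\tfrac{n}{k-1}\Big)^{(k-1)/2} \leq e^{m/2}\Big(1+\tfrac nm\Big)^{(m-1)/2}\,,
\]
where the last inequality uses monotonicity of $k \mapsto \big(1+\tfrac nk\big)^{k/2}$ together with $\big(1+\tfrac nm\big)^{1/2} \geq 1$ to replace $k-1$ by $m$ wherever it helps. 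Combining, $\boldsymbol{\chimon}\big(\mathcal{P}_{J_m}(X_n)\big) \leq e(m+1)2^m e^{m/2}\big(1+\tfrac nm\big)^{(m-1)/2}$, and again the factor $e(m+1)2^m e^{m/2}$ is hypercontractively absorbed into $C^m$ for a universal $C$ (one may take $C = 4e$, say).

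The only mild subtlety — the step I would watch most carefully — is the elementary monotonicity claim that $k \mapsto \big(1+\tfrac nk\big)^{k/2}$ is nondecreasing in $k$ for $k \geq 1$ (equivalently $k\log(1+n/k)$ is nondecreasing), which lets me uniformly replace the running index $k$ by $m$ in all the cardinality bounds; this is a one-line calculus fact (the derivative of $k\log(1+n/k)$ is $\log(1+n/k) - \tfrac{n}{k+n} \geq 0$ since $\log(1+x) \geq \tfrac{x}{1+x}$), but it is the hinge that makes the maxima collapse to the $k=m$ term. Everything else is bookkeeping: tracking that the reduced-index version loses exactly one half-power of $\big(1+\tfrac nm\big)$, and confirming that all polynomial-in-$m$ prefactors are swallowed by $C^m$. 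No new analytic ideas are needed beyond what Sections~2.1 and 2.2 already supply.
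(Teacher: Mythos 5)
Your proof of \eqref{mmm} is correct, though slightly more roundabout than necessary: you route through Theorem~\ref{degree-homo} and the homogeneous blocks, whereas the paper simply applies Kadets--Snobar once to the whole space, using $\dim \mathcal{P}_{J_m}(X_n)=|J_m|\le|\Lambda(\le m,n)|\prec_{C^m}(1+n/m)^m$. Your monotonicity computation for $k\mapsto k\log(1+n/k)$ is fine, and the extra factor $m+1$ is indeed harmless. Either route works.

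The second estimate, however, has a genuine gap. You invoke Theorem~\ref{thm: uncond cte vs proj cte} for $J_m$ itself and then claim $\|\mathbf{Q}_{\Lambda(k,n),J_m(k)}\|\le 1$. This is false for a general subset $J_m(k)\subset\Lambda(k,n)$, and both justifications you offer fail. A coordinate projection with respect to the monomial basis on an injective tensor product is \emph{not} contractive in general --- the failure of contractivity of such coefficient projections is precisely the unconditionality phenomenon the whole section is quantifying; if these norms were always $1$, the constant $\kappa^m$ in Theorem~\ref{OrOuSe} and the Cauchy-inequality argument in Proposition~\ref{prop: homogeneous part proj constant} would be pointless. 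The averaging argument of Theorem~\ref{OrOuSe} also does not transfer: the function $r_m$ there is built so that $\int r_m\,d\mu=1$ and $\int r_m^k\,d\mu=0$ for $2\le k\le m$, which kills exactly the monomials containing a variable to a power $\ge 2$ --- i.e.\ it singles out the \emph{tetrahedral} part --- and there is no analogous multiplicative averaging that singles out an arbitrary subset of $\Lambda(k,n)$. The correct (and easy) repair is the one the paper uses via Corollary~\ref{main3A}: first observe that the monomials $(z^\alpha)_{\alpha\in J_m}$ form a subfamily of the monomial basis of $\mathcal{P}_{\le m}(X_n)$, so
\[
\boldsymbol{\chimon}\big(\mathcal{P}_{J_m}(X_n)\big)\le\boldsymbol{\chimon}\big(\mathcal{P}_{\le m}(X_n)\big)\,,
\]
and only then apply Theorem~\ref{thm: uncond cte vs proj cte} to the \emph{full} index set $\Lambda(\le m,n)$, for which the projections $\mathbf{Q}_{\Lambda(k,n),\Lambda(k,n)}$ are identities and $\Lambda(k,n)^\flat=\Lambda(k-1,n)$; the Kadets--Snobar bound on $\max_{1\le k\le m-1}\boldsymbol{\lambda}(\mathcal{P}_k(X_n))$ then finishes the argument exactly as in your final display. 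With that substitution your bookkeeping goes through unchanged.
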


\begin{proof}
Using \eqref{cardi}, a~simple calculation shows that
\begin{equation*}
|J_m|\le |\Lambda(\leq m,n) | \prec_{C^m} \Big( 1+\frac{n}{m}\Big)^{m}\,.
\end{equation*}
Then estimate \eqref{mmm} follows from the Kadets-Snobar inequality~\eqref{kadets1}. Hence, using Corollary
\ref{main3A}, we conclude that
\[
\boldsymbol{\chimon}\big( \mathcal{P}_{J_m}(X_n)\big)\,\le \,
\boldsymbol{\chimon}\big( \mathcal{P}_{\leq m}(X_n)\big)
\,\le \,e (m+1) 2^m  \,\,\max_{1 \leq  k \leq m-1} \boldsymbol{\lambda}\big( \mathcal{P}_{k}(X_n)\big)\, \prec_{C^m}\Big(1+ \frac{n}{m}\Big)^{\frac{m-1}{2}}\,. \qedhere
\]
\end{proof}

\subsection{Convexity and concavity}

Assuming that the Banach sequence lattice is $2$-convex, we can demonstrate that the previous estimate provides the correct
asymptotics. Before presenting and proving this result, we introduce general estimates that relate the unconditional basis
constants $\boldsymbol{\chimon}\big(\mathcal{P}_{J}(X_n)\big)$ and $\boldsymbol{\chimon}\big(\mathcal{P}_{J}(Y_n)\big)$, provided that $X$
and $Y$ satisfy certain general lattice properties, such as convexity and concavity. These results, combined with those
discussed in the previous subsection, often enable us to obtain accurate asymptotic estimates for the local constants
under consideration for the space of analytic polynomials on finite-dimensional Banach lattices.

Much of what we prove below is grounded in Lozanovski\v{\i}'s profound factorization theorem \cite{Loz}. Since we will need
to reference it again later, we prefer to state it explicitly here. It states that for any maximal Banach function lattice
$X$ over a~complete and $\sigma$-finite measure space $(\Omega, \mathcal{A}, \mu)$, we have $\text{supp}\,X= \Omega$, we
have $X\circ X' \equiv L_1(\mu)$. More precisely, for every $f\in L_1(\mu)$, there exist $g\in X$ and $h\in X'$ such that
\begin{equation*}\label{Lozanovskii}
f= gh \quad\, \text{and} \,\,\,\, \|f\|_{L_1(\mu)} = \|g\|_{X}\,\|h\|_{X'}\,.
\end{equation*}
Clearly, $g$ and $h$ can be chosen positive whenever $f$ is positive. Moreover, if $\|f\|_{L_1(\mu)} =1$,
then we can find $g \in S_{X}$ and $h \in S_{X'}$.

The following lemma serves as our starting point.

\begin{lemma} \label{YM(Y,X)}
Let $X_n = (\mathbb{C}^n, \|\cdot\|)$ and $Y_n = (\mathbb{C}^n, \|\cdot\|)$ be two Banach lattices such that
\[
\|\id \colon X_n \to Y_n\circ M(Y_n, X_n)\| \leq 1\,.
\]
Then, for any index set $J \subset \mathbb{N}_{0}^n$, we have
\begin{align*}
\boldsymbol{\chimon}\big(\mathcal{P}_{J}(X_n)\big) \leq \boldsymbol{\chimon}\big(\mathcal{P}_{J}(Y_n)\big)\,.
\end{align*}
\end{lemma}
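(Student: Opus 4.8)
The plan is to exploit the factorization hypothesis as a change of variables that turns a polynomial on $B_{X_n}$ into a polynomial on $B_{Y_n}$ with reweighted monomial coefficients, and then to apply the definition of $\boldsymbol{\chimon}\big(\mathcal{P}_J(Y_n)\big)$ to those reweighted coefficients. Write $K := \boldsymbol{\chimon}\big(\mathcal{P}_J(Y_n)\big)$; we may assume $K < \infty$. Fix scalars $(c_\alpha)_{\alpha\in J}$ and signs $(\varepsilon_\alpha)_{\alpha\in J}$ with $|\varepsilon_\alpha|=1$, put $P(z) = \sum_{\alpha\in J} c_\alpha z^\alpha$ and $P^\varepsilon(z) = \sum_{\alpha\in J} \varepsilon_\alpha c_\alpha z^\alpha$, so it suffices to prove $\|P^\varepsilon\|_{B_{X_n}} \le K\,\|P\|_{B_{X_n}}$.

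First I would fix $z\in B_{X_n}$ and use the hypothesis $\|\id\colon X_n\to Y_n\circ M(Y_n,X_n)\|\le 1$ to obtain a pointwise factorization $z = y\xi$ (meaning $z_k = y_k\xi_k$) with $y\in \overline{B}_{Y_n}$ and $\xi\in \overline{B}_{M(Y_n,X_n)}$: indeed $\|z\|_{Y_n\circ M(Y_n,X_n)}\le\|z\|_{X_n}<1$, so $z = y\xi$ for some $y,\xi$ with $\|y\|_{Y_n}\,\|\xi\|_{M(Y_n,X_n)}<1$, and after rescaling the two factors (the case $z=0$ being trivial) both norms are $\le 1$. The elementary but key observation is that for every multi-index $\alpha$ one has $z^\alpha = y^\alpha\xi^\alpha$, hence
\[
P^\varepsilon(z) = \sum_{\alpha\in J} \varepsilon_\alpha\,(c_\alpha\xi^\alpha)\,y^\alpha \qquad\text{and}\qquad P(\xi w) = \sum_{\alpha\in J} (c_\alpha\xi^\alpha)\, w^\alpha \ \ \text{for } w\in\mathbb{C}^n .
\]

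Next I would introduce the reweighted polynomial $Q(w) := \sum_{\alpha\in J}(c_\alpha\xi^\alpha)\,w^\alpha$, which lies in $\mathcal{P}_J(Y_n)$ (its coefficients vanish outside $J$ since $c_\alpha=0$ there). Applying the definition of $K=\boldsymbol{\chimon}\big(\mathcal{P}_J(Y_n)\big)$ to $Q$ with the signs $(\varepsilon_\alpha)$ and then evaluating at $y\in\overline{B}_{Y_n}$ (using that the suprema over $B_{Y_n}$ and over $\overline{B}_{Y_n}$ agree by continuity),
\[
|P^\varepsilon(z)| = \Big|\sum_{\alpha\in J}\varepsilon_\alpha (c_\alpha\xi^\alpha) y^\alpha\Big| \le \Big\|\sum_{\alpha\in J}\varepsilon_\alpha (c_\alpha\xi^\alpha) w^\alpha\Big\|_{B_{Y_n}} \le K\,\|Q\|_{B_{Y_n}} .
\]
To bound $\|Q\|_{B_{Y_n}}$ I use the second identity above: for $w\in B_{Y_n}$ we have $Q(w) = P(\xi w)$, and since $\|\xi w\|_{X_n}\le\|\xi\|_{M(Y_n,X_n)}\|w\|_{Y_n}\le 1$ the point $\xi w$ lies in $\overline{B}_{X_n}$, so $|Q(w)| = |P(\xi w)|\le \|P\|_{B_{X_n}}$; thus $\|Q\|_{B_{Y_n}}\le\|P\|_{B_{X_n}}$. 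Combining the two displays yields $|P^\varepsilon(z)|\le K\|P\|_{B_{X_n}}$, and taking the supremum over $z\in B_{X_n}$ gives $\|P^\varepsilon\|_{B_{X_n}}\le K\|P\|_{B_{X_n}}$; since $(c_\alpha)$ and $(\varepsilon_\alpha)$ were arbitrary, $\boldsymbol{\chimon}\big(\mathcal{P}_J(X_n)\big)\le K$.

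I do not expect a genuine obstacle here: the whole argument is the change-of-variables device $z=y\xi$. The only points requiring a little care are the normalization so that both factors sit in the respective closed unit balls, the routine passage between open and closed balls (valid because $X_n,Y_n$ are finite dimensional and polynomials are continuous), and keeping the multi-index identities $z^\alpha=y^\alpha\xi^\alpha$ and $(\xi w)^\alpha=\xi^\alpha w^\alpha$ consistent throughout.
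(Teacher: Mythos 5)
Your proof is correct and follows essentially the same route as the paper: both use the hypothesis to factorize a point $z\in B_{X_n}$ as $z=y\xi$ with $\|y\|_{Y_n}\le 1$ and $\|\xi\|_{M(Y_n,X_n)}\le 1$, pass to the reweighted polynomial $P\circ D_\xi\in\mathcal{P}_J(Y_n)$, and apply the unconditionality constant of $\mathcal{P}_J(Y_n)$ at the point $y$. The only difference is cosmetic: the paper phrases the estimate with $|c_\alpha(P)|$ in place of arbitrary unimodular signs, whereas you carry general signs $\varepsilon_\alpha$ explicitly and spell out the normalization of the factorization, which is a slightly more complete write-up of the same argument.
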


\begin{proof}
Fix some $z\in B_{X_n}$. Then by assumption there exist $\xi, v\in \mathbb{C}^n$ such that  $z= v \xi$, $\|v\|_{Y_n} \leq 1$,
and $\|D_\xi \colon Y_n \to X_n\|\leq 1$. Hence, for every $P\in \mathcal{P}_J(X_n)$, we have
\begin{align*}
\Big| \sum_{\alpha \in J} |c_\alpha(P)|\,z^\alpha\Big| = \Big|\sum_{\alpha \in J} |c_\alpha(P\circ D_\xi)|\,v^\alpha\Big|
& \leq \boldsymbol{\chimon}\big(\mathcal{P}_J(Y_n)\big)\,\|P\circ D_\xi\|_{\mathcal{P}_J(Y_n)} \\
& \leq \boldsymbol{\chimon}\big(\mathcal{P}_J(Y_n)\big)\,\|P\|_{\mathcal{P}_J(X_n)}\,,
\end{align*}
which completes the proof.
\end{proof}

An immediate consequence is the following result, which shows that for any set $J\subset \mathbb{N}_{0}^n$, the minimum
(resp., maximum) value of the unconditional basis constant $\boldsymbol{\chimon}\big(\mathcal{P}_{J}(X)\big)$ for the
class of all $n$-dimensional Banach lattices is attained by $\ell_{1}^n$ (resp., $\ell_{\infty}^n$).

\begin{corollary} \label{C1C2}
Let $X_n = (\mathbb{C}^n, \|\cdot\|)$  be a Banach lattice. Then, for any index set $J\subset \mathbb{N}_{0}^n$, the following
estimates holds{\rm:}
\[
\boldsymbol{\chimon}\big(\mathcal{P}_{J}(\ell^n_1)\big) \le
\boldsymbol{\chimon}\big(\mathcal{P}_{J}(X_n)\big) \le \boldsymbol{\chimon}\big(\mathcal{P}_{J}(\ell^n_\infty)\big)\,.
\]
\end{corollary}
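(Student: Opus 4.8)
The plan is to deduce Corollary~\ref{C1C2} directly from Lemma~\ref{YM(Y,X)} by exhibiting, for an arbitrary $n$-dimensional Banach lattice $X_n = (\mathbb{C}^n, \|\cdot\|)$, the two factorization inequalities
\[
\|\id \colon X_n \to \ell_\infty^n \circ M(\ell_\infty^n, X_n)\| \leq 1
\qquad\text{and}\qquad
\|\id \colon \ell_1^n \to X_n \circ M(X_n, \ell_1^n)\| \leq 1\,,
\]
after which the lemma applied twice yields $\boldsymbol{\chimon}\big(\mathcal{P}_J(\ell_1^n)\big) \leq \boldsymbol{\chimon}\big(\mathcal{P}_J(X_n)\big)$ and $\boldsymbol{\chimon}\big(\mathcal{P}_J(X_n)\big) \leq \boldsymbol{\chimon}\big(\mathcal{P}_J(\ell_\infty^n)\big)$. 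Note that the statement only involves the \emph{lattice} structure of $X_n$, so we may and do assume $X_n$ is equipped with its canonical $1$-unconditional basis $\{e_k\}$; the factorizations above are purely about the interplay between this lattice and the extreme lattices $\ell_1^n$, $\ell_\infty^n$ on the same index set $\{1,\dots,n\}$.

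The first step is to identify the multiplier spaces concretely. For the upper bound: $M(\ell_\infty^n, X_n)$ consists of all $\xi \in \mathbb{C}^n$ such that $\xi y \in X_n$ for every $y \in \ell_\infty^n$, with norm $\|\xi\|_{M(\ell_\infty^n, X_n)} = \sup_{\|y\|_\infty \leq 1} \|\xi y\|_{X_n} = \big\|\sum_k |\xi_k| e_k\big\|_{X_n}$, because the supremum over the unit ball of $\ell_\infty^n$ is attained (by lattice monotonicity) at $y = \operatorname{sign}\xi$; hence $M(\ell_\infty^n, X_n) \equiv X_n$ isometrically. Then for any $z \in B_{X_n}$, write $z = v\,\xi$ with $v = \operatorname{sign} z \in \overline{B}_{\ell_\infty^n}$ and $\xi = |z|$, so $\|v\|_{\ell_\infty^n} \leq 1$ and $\|\xi\|_{M(\ell_\infty^n, X_n)} = \|z\|_{X_n} \leq 1$; this gives exactly the hypothesis of Lemma~\ref{YM(Y,X)} with $Y_n = \ell_\infty^n$. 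For the lower bound: $M(X_n, \ell_1^n)$ has norm $\|\xi\|_{M(X_n,\ell_1^n)} = \sup_{\|y\|_{X_n} \leq 1} \sum_k |\xi_k y_k|$, which is precisely $\|\xi\|_{X_n'}$, the Köthe-dual norm; so $M(X_n, \ell_1^n) \equiv X_n'$. Thus $X_n \circ M(X_n, \ell_1^n) \equiv X_n \circ X_n'$, and by Lozanovski\v{\i}'s factorization theorem (recalled above) this is isometrically $\ell_1^n = L_1(\mu)$ for the counting measure $\mu$ on $\{1,\dots,n\}$. Concretely, given $z \in B_{\ell_1^n}$, i.e. $\sum_k |z_k| \leq 1$, Lozanovski\v{\i} produces $g \in X_n$, $h \in X_n'$ with $|z| = g h$ (entrywise, taking the positive factorization of the positive element $|z|$) and $\|g\|_{X_n}\|h\|_{X_n'} = \|z\|_{\ell_1^n} \leq 1$; scaling so that $\|g\|_{X_n}, \|h\|_{X_n'} \leq 1$ and writing $z = v\,\xi$ with $v = (\operatorname{sign} z)\, g$ (so $\|v\|_{X_n} = \|g\|_{X_n} \leq 1$) and $\xi = h$ (so $\|\xi\|_{M(X_n, \ell_1^n)} = \|h\|_{X_n'} \leq 1$), we obtain $\|\id \colon \ell_1^n \to X_n \circ M(X_n, \ell_1^n)\| \leq 1$, which is the hypothesis of Lemma~\ref{YM(Y,X)} with $X_n$ in the role of the "$X_n$" there and $\ell_1^n$ in the role of "$Y_n$" — wait, rather: apply the lemma with the domain lattice being $\ell_1^n$ and the target being $X_n$, i.e. to conclude $\boldsymbol{\chimon}\big(\mathcal{P}_J(\ell_1^n)\big) \leq \boldsymbol{\chimon}\big(\mathcal{P}_J(X_n)\big)$ we use $\|\id\colon \ell_1^n \to X_n \circ M(X_n, \ell_1^n)\|\leq 1$.

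The main obstacle — such as it is — is bookkeeping: making sure the roles of $X$ and $Y$ in Lemma~\ref{YM(Y,X)} are assigned correctly in each of the two applications, and verifying that Lozanovski\v{\i}'s theorem applies to $X_n'$ with the counting measure on a finite set (it does: $X_n'$ is a maximal, in fact finite-dimensional, Banach function lattice with full support, so $X_n' \circ X_n'' \equiv L_1(\mu)$, and since $X_n$ is finite-dimensional $X_n'' \equiv X_n$, giving $X_n \circ X_n' \equiv \ell_1^n$). One should also note the hypothesis "$\|e_k\|_{X_n} \leq 1$" is \emph{not} needed here — the corollary is stated for arbitrary Banach lattices $X_n$ — because the two factorization estimates above hold regardless of the normalization of the unit vectors, as they compare $X_n$ against $\ell_\infty^n$ and $\ell_1^n$ built on the same normalized-or-not basis; indeed Lemma~\ref{YM(Y,X)} itself carries no normalization assumption. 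Once these identifications are in place the proof is three lines, so I would present it as: "identify $M(\ell_\infty^n, X_n) \equiv X_n$ and $M(X_n, \ell_1^n) \equiv X_n'$; invoke Lozanovski\v{\i}; apply Lemma~\ref{YM(Y,X)} twice."
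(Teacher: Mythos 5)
Your proposal is correct and follows exactly the paper's own argument: identify $M(\ell_\infty^n, X_n)\equiv X_n$ and $M(X_n,\ell_1^n)\equiv X_n'$, invoke Lozanovski\v{\i}'s factorization to get $X_n\circ X_n'\equiv \ell_1^n$, and apply Lemma~\ref{YM(Y,X)} twice with the roles of the two lattices assigned as you describe. The only difference is that you spell out the multiplier-space identifications and the role bookkeeping in more detail than the paper does.
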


\begin{proof}
We apply Lemma~\ref{YM(Y,X)}. Clearly, $\ell^n_\infty \circ M(\ell^n_\infty, X_n ) \equiv X_n$, which  gives the
second inequality. On the other hand, from Lozanowski's theorem, we obtain
\[
X_n  \circ M( X_n ,\ell^n_1 ) \equiv X_n  \circ  X_n'  \equiv\ell^n_1\,,
\]
which leads to the first estimate.
\end{proof}

Under the assumptions of convexity and concavity of the underlying Banach lattice, the previous results can be further improved.

\begin{theorem}
\label{PropAppl}
Let $X_n = (\mathbb{C}^n, \|\cdot\|)$ and $Y_n = (\mathbb{C}^n, \|\cdot\|)$ be two Banach lattices
such that $M_{(r)}(X_n ) = M^{(r)}(Y_n) = 1$, where $1 <r <\infty$. Then, for any index set $J\subset \mathbb{N}_{0}^n$, we have
\begin{equation*}
\boldsymbol{\chimon}\big(\mathcal{P}_{J}(X_n)\big) \leq \boldsymbol{\chimon}\big(\mathcal{P}_{J}(Y_n)\big)\,.
\end{equation*}
\end{theorem}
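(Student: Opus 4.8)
The plan is to reduce to Lemma~\ref{YM(Y,X)}, so that it is enough to check $\|\id\colon X_n\to Y_n\circ M(Y_n,X_n)\|\le 1$. In fact I would prove the sharper statement that $X_n$ itself factors isometrically as a pointwise product $X_n\equiv Y_n\circ Z_n$ for a suitable Banach lattice $Z_n$: once this is known, $Z_n$ embeds contractively into $M(Y_n,X_n)$ for free (for $\zeta\in Z_n$ and $y\in Y_n$ one has $\|\zeta y\|_{X_n}=\|\zeta y\|_{Y_n\circ Z_n}\le\|\zeta\|_{Z_n}\|y\|_{Y_n}$), whence $\|\id\colon X_n\to Y_n\circ M(Y_n,X_n)\|\le\|\id\colon X_n\to Y_n\circ Z_n\|=1$ and Lemma~\ref{YM(Y,X)} applies.

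To construct $Z_n$ I would first recast $X_n$ and $Y_n$ using the hypotheses. Since $M^{(r)}(Y_n)=1$, the lattice $Y_n$ is isometrically the $r$-convexification of a Banach lattice $C$ --- one may take $C=(Y_n)^r$, so that $\|y\|_{Y_n}=\| |y|^r \|_C^{1/r}$ (see e.g.~\cite[Sec.~1.d]{LT1}) --- which, written as a Calder\'on--Lozanovski\v{\i} product with $1/r+1/r'=1$, reads $Y_n\equiv C^{1/r}(\ell_\infty^n)^{1/r'}$. Since $M_{(r)}(X_n)=1$, the K\"othe dual $X_n'$ is $r'$-convex with constant $1$, so it is the $r'$-convexification of a Banach lattice $D$; dualizing (which is essentially Lozanovski\v{\i}'s theorem \cite{Loz}) and using that $X_n$ is maximal, one obtains $X_n\equiv(D')^{1/r'}(\ell_1^n)^{1/r}$.

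Now put $Z_n:=(C')^{1/r}(D')^{1/r'}$, the Calder\'on--Lozanovski\v{\i} product of two Banach lattices with exponents $1/r+1/r'=1$ lying in $(0,1)$ --- this is the place where the hypothesis $1<r<\infty$ enters --- so that $Z_n$ is again a Banach lattice. By the multiplicativity of the pointwise product over Calder\'on--Lozanovski\v{\i} products, together with $\ell_\infty^n\circ D'\equiv D'$ and Lozanovski\v{\i}'s identity $C\circ C'\equiv\ell_1^n$, one computes
\[
Y_n\circ Z_n\;\equiv\;\bigl[C^{1/r}(\ell_\infty^n)^{1/r'}\bigr]\circ\bigl[(C')^{1/r}(D')^{1/r'}\bigr]\;\equiv\;(C\circ C')^{1/r}(\ell_\infty^n\circ D')^{1/r'}\;\equiv\;(\ell_1^n)^{1/r}(D')^{1/r'}\;\equiv\;X_n,
\]
all identifications being isometric; by the first paragraph this finishes the proof.

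I expect the main obstacle to be the bookkeeping in the middle step: pinning down which K\"othe duals ($C'$ versus $C$, $D'$ versus $D$) and which exponents ($1/r$ versus $1/r'$) appear in each factor, so that the two ``trivial'' pieces $\ell_\infty^n$ and $\ell_1^n$ get paired against each other via Lozanovski\v{\i} and annihilate. The conceptual content, short once spotted, is that one should not try to factor a generic $z\in B_{X_n}$ by hand but instead display $X_n$ as the product of $Y_n$ with a third lattice; the $r$-convexity of $Y_n$ and the $r$-concavity of $X_n$ (both with constant $1$) are precisely what make the two sides of such a factorization meet at the common H\"older exponent $r$. One should also record the routine side conditions --- maximality/Fatou property, full support, the zero sets of the factors in the pointwise products --- which are automatic here since the spaces are finite-dimensional; alternatively, the same factorization unwinds into an explicit $z=v\xi$ with $v\in\overline{B}_{Y_n}$ and $\xi\in\overline{B}_{M(Y_n,X_n)}$, obtained from one Lozanovski\v{\i} factorization of $z$ and a second one applied to $C$.
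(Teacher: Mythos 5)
Your proposal is correct and follows the paper's route: reduce to Lemma~\ref{YM(Y,X)} and verify $\|\id\colon X_n\to Y_n\circ M(Y_n,X_n)\|\le 1$ via the factorization $X_n\equiv Y_n\circ Z_n$. The only difference is that the paper obtains this identity by citing \cite[Theorem 3.8(i)]{schep2010products}, whereas you reconstruct it directly through Calder\'on--Lozanovski\v{\i} products and Lozanovski\v{\i} duality; your construction is a correct, self-contained proof of the inclusion that Schep's theorem supplies.
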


In view of Lemma~\ref{YM(Y,X)}, the proof is an consequence of \cite[Theorem 3.8,(i)]{schep2010products} which implies
that $X_n \equiv Y_n\circ M(Y_n, X_n)$.

\begin{corollary} \label{C1}
Let $X_n = (\mathbb{C}^n, \|\cdot\|)$ be a Banach lattice such that $M_{(r)}(X_n ) =  1$, where $1 <r <\infty$. Then,
for any index set $J \subset \mathbb{N}_0^n$, we have
\[
\boldsymbol{\chimon}\big(\mathcal{P}_{J}(X_n)) \leq \boldsymbol{\chimon}\big(\mathcal{P}_{J}(\ell_r^n)\big)\,.
\]
\end{corollary}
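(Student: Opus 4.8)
The plan is to deduce Corollary~\ref{C1} directly from Theorem~\ref{PropAppl} by choosing $Y_n = \ell_r^n$, so the only thing to verify is that $\ell_r^n$ has $r$-convexity constant equal to $1$, i.e. $M^{(r)}(\ell_r^n) = 1$. This is classical: for $x_1,\dots,x_N \in \ell_r^n$, writing $x_k = (x_k(j))_{j=1}^n$, one computes
\[
\Big\| \Big(\sum_{k=1}^N |x_k|^r\Big)^{1/r}\Big\|_{\ell_r^n}^r
= \sum_{j=1}^n \sum_{k=1}^N |x_k(j)|^r
= \sum_{k=1}^N \|x_k\|_{\ell_r^n}^r\,,
\]
so the $r$-convexity inequality holds with constant exactly $1$ (in fact with equality). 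Hence $M^{(r)}(\ell_r^n) = 1$.

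With this in hand, I would apply Theorem~\ref{PropAppl} to the pair $(X_n, Y_n) = (X_n, \ell_r^n)$: by hypothesis $M_{(r)}(X_n) = 1$, and we have just checked $M^{(r)}(\ell_r^n) = 1$, so the theorem applies verbatim and yields
\[
\boldsymbol{\chimon}\big(\mathcal{P}_{J}(X_n)\big) \leq \boldsymbol{\chimon}\big(\mathcal{P}_{J}(\ell_r^n)\big)
\]
for every index set $J \subset \mathbb{N}_0^n$, which is exactly the claim.

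There is essentially no obstacle here; the corollary is a specialization. The one point meriting a line of care is that Theorem~\ref{PropAppl} (via \cite[Theorem 3.8(i)]{schep2010products}) requires the $r$-convexity of the \emph{second} space and the $r$-concavity of the \emph{first}, with both constants equal to one, and one must make sure the roles are not swapped when instantiating $Y_n = \ell_r^n$. Since $\ell_r^n$ is the space playing the role of $Y_n$ and it is the one required to be $r$-convex (which it is, with constant $1$), while $X_n$ is required to be $r$-concave with constant $1$ (which is the standing hypothesis), the matching is correct and the proof is complete.
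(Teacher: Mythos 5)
Your proof is correct and matches the paper's intended argument: Corollary~\ref{C1} is simply Theorem~\ref{PropAppl} applied with $Y_n = \ell_r^n$, using the elementary fact that $M^{(r)}(\ell_r^n)=1$. The paper states it as an immediate consequence without further proof, and your verification of the convexity constant and of the correct matching of roles is exactly the right (and only) thing to check.
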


As already shown in Corollary~\ref{C1C2}, this result also holds for $r = \infty$ (note that $M_{(\infty)}(X_n ) =  1$
for any~$X_n$).

\begin{corollary} \label{C2}
Let $Y_n = (\mathbb{C}^n, \|\cdot\|)$ be a  Banach lattice such that $M^{(r)}(Y_n ) =  1$, where $1 <r <\infty$. Then,
for any index set $J\subset \mathbb{N}_{0}^n$, we have
\[
\boldsymbol{\chimon}\big(\mathcal{P}_{J}(\ell^n_r)\big) \leq \boldsymbol{\chimon}\big(\mathcal{P}_{J}(Y_n)\big)\,.
\]
\end{corollary}

Since  $M^{(1)}(Y_n ) =  1$ holds for any $Y_n$, the case $r=1$ is again covered by Corollary~\ref{C1C2}.

Finally, we show that, if in the above corollaries  the concavity constant of $X_n$ and the  convexity constant of
$Y_n$ differ from  $1$, then, at least in the  homogeneous case (i.e., $J \subset \Lambda(m,n)$ for some $m$), the
estimates hold up to a~constant $C^m$, where $C$ depends on the $r$-concavity and the $r$-convexity constants.

To understand this, we recall  the usually called renorming theorem of Figiel and Johnson (see, e.g., \cite[Proposition~1.d.8]{LT}):
If a~Banach function lattice $X$ on $(\Omega, \mathcal{A}, \mu)$ is $p$-convex and $q$-concave for some $1\leq p\leq q\leq \infty$,
then on $X$ there is an equivalent lattice norm $\|\cdot\|_0$ such that both the $p$-convexity and $q$-concavity constants are
equal to $1$, and furthermore the following inequality holds
\[
\|x\|_X \leq \|x\|_0 \leq M^{(p)}(X)M_{(q)}(X) \|x\|_{X}\,, \quad\, x\in X\,.
\]

\begin{corollary} \label{appl}
Let $X$ and $Y$ be Banach sequence lattices such that  $X$ is $r$-concave and $Y$ is $r$-convex for $1 <r <\infty$. Then, for
any index set $J \subset \Lambda(m,n)$, we have
\[
\boldsymbol{\chimon}\big(\mathcal{P}_{J}(X_n)\big)
\le (M_{(r)}(X) M^{(r)}(Y))^m\,\,\boldsymbol{\chimon}\big(\mathcal{P}_{J}(Y_n)\big)\,.
\]
\end{corollary}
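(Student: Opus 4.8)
The plan is to reduce to Theorem~\ref{PropAppl} by replacing the lattice norms on the finite‑dimensional sections $X_n$ and $Y_n$ with equivalent lattice norms whose $r$-concavity, resp. $r$-convexity, constant equals $1$, and then to control the resulting perturbation of $\boldsymbol{\chimon}$ by exploiting that every element of $\mathcal{P}_J(X_n)$ is $m$-homogeneous because $J\subset\Lambda(m,n)$.

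First I would observe that $X_n$ is an ideal (hence a sublattice) of $X$, so it is $r$-concave with $M_{(r)}(X_n)\le M_{(r)}(X)$ and, being any Banach lattice, $1$-convex with constant $1$; likewise $Y_n$ is $r$-convex with $M^{(r)}(Y_n)\le M^{(r)}(Y)$ and $\infty$-concave with constant $1$. Applying the renorming theorem of Figiel and Johnson (\cite[Proposition~1.d.8]{LT}) to $X_n$ with $p=1$, $q=r$ yields an equivalent lattice norm $\|\cdot\|_0$ on $\mathbb{C}^n$ whose $r$-concavity constant is $1$ and which satisfies $\|x\|_{X_n}\le\|x\|_0\le M^{(1)}(X_n)M_{(r)}(X_n)\|x\|_{X_n}\le M_{(r)}(X)\|x\|_{X_n}$; write $\widetilde X_n:=(\mathbb{C}^n,\|\cdot\|_0)$. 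Applying the same theorem to $Y_n$ with $p=r$, $q=\infty$ yields an equivalent lattice norm with $r$-convexity constant $1$ and $\|y\|_{Y_n}\le\|y\|_0\le M^{(r)}(Y_n)M_{(\infty)}(Y_n)\|y\|_{Y_n}\le M^{(r)}(Y)\|y\|_{Y_n}$; write $\widetilde Y_n$ for the corresponding lattice. Since $M_{(r)}(\widetilde X_n)=M^{(r)}(\widetilde Y_n)=1$, Theorem~\ref{PropAppl} applied to $\widetilde X_n$ and $\widetilde Y_n$ gives $\boldsymbol{\chimon}\big(\mathcal{P}_J(\widetilde X_n)\big)\le\boldsymbol{\chimon}\big(\mathcal{P}_J(\widetilde Y_n)\big)$.

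It then remains to compare $\boldsymbol{\chimon}\big(\mathcal{P}_J(X_n)\big)$ with $\boldsymbol{\chimon}\big(\mathcal{P}_J(\widetilde X_n)\big)$, and $\boldsymbol{\chimon}\big(\mathcal{P}_J(\widetilde Y_n)\big)$ with $\boldsymbol{\chimon}\big(\mathcal{P}_J(Y_n)\big)$. Here I would use that, for $J\subset\Lambda(m,n)$, every $P\in\mathcal{P}_J(X_n)$ is $m$-homogeneous: if $c:=M_{(r)}(X)$ then $B_{\widetilde X_n}\subset B_{X_n}\subset c\,B_{\widetilde X_n}$, so passing to suprema and using $P(cz)=c^mP(z)$ gives $\|P\|_{\mathcal{P}_J(\widetilde X_n)}\le\|P\|_{\mathcal{P}_J(X_n)}\le c^m\|P\|_{\mathcal{P}_J(\widetilde X_n)}$ for every such $P$ (and the same with $d:=M^{(r)}(Y)$ in place of $c$ for $Y$). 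Plugging these two estimates into the definition of the unconditional basis constant of the monomials — the upper one applied to the signed polynomial $\sum_{\alpha}\varepsilon_\alpha c_\alpha(P)z^\alpha$, which is again $m$-homogeneous in $\mathcal{P}_J$, and the lower one applied to $P$ — produces $\boldsymbol{\chimon}\big(\mathcal{P}_J(X_n)\big)\le M_{(r)}(X)^m\,\boldsymbol{\chimon}\big(\mathcal{P}_J(\widetilde X_n)\big)$ and, symmetrically, $\boldsymbol{\chimon}\big(\mathcal{P}_J(\widetilde Y_n)\big)\le M^{(r)}(Y)^m\,\boldsymbol{\chimon}\big(\mathcal{P}_J(Y_n)\big)$. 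Chaining these with the inequality from Theorem~\ref{PropAppl} yields $\boldsymbol{\chimon}\big(\mathcal{P}_J(X_n)\big)\le (M_{(r)}(X)M^{(r)}(Y))^m\,\boldsymbol{\chimon}\big(\mathcal{P}_J(Y_n)\big)$, as claimed.

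The only real bookkeeping hazard is keeping straight the direction of each renorming estimate together with the exponent $m$ supplied by $m$-homogeneity; this is also precisely the step where the hypothesis $J\subset\Lambda(m,n)$ is used, since for non‑homogeneous $J$ the two norms on $\mathcal{P}_J$ are only equivalent with a constant that does not scale multiplicatively in $m$, and the bound with the $m$‑th power of the convexity/concavity constants would be lost. Everything else is routine.
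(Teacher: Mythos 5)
Your proposal is correct and follows essentially the same route as the paper: renorm via Figiel–Johnson so that the $r$-concavity constant of $X_n$ and the $r$-convexity constant of $Y_n$ become $1$, apply Theorem~\ref{PropAppl} to the renormed lattices, and transfer the unconditional basis constants back using the $m$-homogeneity of polynomials supported on $J\subset\Lambda(m,n)$. The bookkeeping of the norm inequalities and the exponent $m$ is carried out correctly, so nothing further is needed.
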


\begin{proof}
Observe first that if $E:= (\mathbb{C}^n, \|\cdot\|_E)$ and $F:=(\mathbb{C}^n, \|\cdot\|_F)$ are
$n$-dimensional Banach lattices such that for some $\gamma \geq 1$
\[
\|z\|_E \le \|z\|_F \leq \gamma \|z\|_E, \quad\, z\in \mathbb{C}^n\,,
\]
then for any polynomial $P\in \mathcal{P}_J(\mathbb{C}^n)$, we have
$\gamma^{-m} \|P\|_{\mathcal{P}_J(E)} \leq \|P\|_{\mathcal{P}_J(F)} \leq \|P\|_{\mathcal{P}_J(E)}$.
Clearly, this yields
\[
\gamma^{-m}\,\boldsymbol{\chimon}\big(\mathcal{P}_J(E)\big) \leq  \boldsymbol{\chimon}\big(\mathcal{P}_J(F)\big)
\leq \gamma^m\,\boldsymbol{\chimon} \big(\mathcal{P}_J(E)\big)\,.
\]
Applying the renorming theorem mentioned above, we conclude that there exist equivalent lattice norms
$\|\cdot\|_{\tilde X}$ and $\|\cdot\|_{\tilde Y}$ on $X$ and $Y$, respectively, such that the Banach lattices
$\widetilde{X}:= (X, \|\cdot\|_{\widetilde{X}})$ and $\widetilde{Y} := (Y, \|\cdot\|_{\widetilde{Y}})$ satisfy
$M_{(r)}(\widetilde{X}) = M^{(r)}(\widetilde{Y})=1$ as well as
\begin{equation*}
\|x\|_X \le \|x\|_{\widetilde{X}} \le M_{(r)}(X) \|x\|_X, \quad\, x\in X\,,\,\,\,\,\,\,
\text{and}\,\,\,\,\,\,\,
\|y\|_Y \leq \|y\|_{\widetilde{Y}} \le M^{(r)}(Y) \|y\|_Y, \quad\, y\in Y\,.
\end{equation*}
The above facts combined with Proposition \ref{PropAppl} yield
\begin{align*}
M_{(r)}(X)^{-m}\,\boldsymbol{\chimon}
\big(\mathcal{P}_J(X_n)\big) & \le  \boldsymbol{\chimon}\big(\mathcal{P}_J(\widetilde{X}_n)\big)
\le \boldsymbol{\chimon}\big(\mathcal{P}_J(\widetilde{Y}_n)\big)
\le M^{(r)}(Y)^{m}\,\boldsymbol{\chimon}\big (\mathcal{P}_J(Y_n)\big)
\end{align*}
and this completes the proof of the  claim.
\end{proof}

We are now prepared to prove the result concerning the correct asymptotic estimates stated in Corollary
\ref{K-S for poly}.

\begin{theorem}\label{lower bound 2-convex for poly}
Let  $X$ be a \,$2$-convex Banach sequence lattice, and let $\big(J_m\big)$ be a sequence of index sets,
each with degree at most~$m$. Then, for all $m$ such that $\Lambda_T(m) \subset J_m$, we have
\begin{equation} \label{mmm 2-conv}
\boldsymbol{\lambda}\big(\mathcal{P}_{J_m}(X_n)\big)  \sim_{C^m}\Big( 1+\frac{n}{m}\Big)^{\frac{m}{2}}
\end{equation}
and
\begin{equation} \label{MMM 2-conv}
\boldsymbol{\chimon}\big(\mathcal{P}_{J_m}(X_n)\big)  \sim_{C^m}\Big(1+ \frac{n}{m}\Big)^{\frac{m-1}{2}}\,,
\end{equation}
where $C > 0$ only depends on the $2$-convexity constant of $X$.
\end{theorem}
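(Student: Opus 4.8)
Since Corollary~\ref{K-S for poly} already delivers the upper bounds $\boldsymbol{\lambda}\big(\mathcal{P}_{J_m}(X_n)\big)\prec_{C^m}(1+\tfrac nm)^{m/2}$ and $\boldsymbol{\chimon}\big(\mathcal{P}_{J_m}(X_n)\big)\prec_{C^m}(1+\tfrac nm)^{(m-1)/2}$ with a universal $C$, the task is solely to match them from below. I would first dispose of the range $n\le m$: there $|J_m|\le|\Lambda(\le m,n)|=\binom{n+m}{m}\le 4^m$, so by Kadets--Snobar both constants lie in $[1,2^m]$, as do the two right‑hand sides, and \eqref{mmm 2-conv}, \eqref{MMM 2-conv} hold trivially. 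So assume $n\ge m+1$ (in particular $\Lambda_T(m)\neq\varnothing$, so $J_m$ has degree exactly $m$). The plan is to isolate the tetrahedral block $\mathcal{P}_{\Lambda_T(m)}$ inside $\mathcal{P}_{J_m}(X_n)$, transport it to the Hilbertian lattice $\ell_2^n$, and there invoke a probabilistic lower bound.

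\emph{Reduction to $\mathcal{P}_{\Lambda_T(m)}(\ell_2^n)$.} Because $\Lambda_T(m)\subset J_m$, the $m$‑homogeneous part satisfies $\Lambda_T(m)\subset J_m(m)\subset\Lambda(m,n)$ and $J_m(m)\cap\Lambda_T(\le m,n)=\Lambda_T(m)$. Proposition~\ref{prop: homogeneous part proj constant} gives $\boldsymbol{\lambda}\big(\mathcal{P}_{J_m(m)}(X_n)\big)\le\boldsymbol{\lambda}\big(\mathcal{P}_{J_m}(X_n)\big)$, and Remark~\ref{simple} together with the tetrahedral estimate $\|\mathbf{Q}_{J_m(m),\Lambda_T(m)}\|\le\kappa^m$ of Theorem~\ref{OrOuSe} yields $\boldsymbol{\lambda}\big(\mathcal{P}_{\Lambda_T(m)}(X_n)\big)\le\kappa^m\,\boldsymbol{\lambda}\big(\mathcal{P}_{J_m(m)}(X_n)\big)$; hence $\boldsymbol{\lambda}\big(\mathcal{P}_{\Lambda_T(m)}(X_n)\big)\le\kappa^m\,\boldsymbol{\lambda}\big(\mathcal{P}_{J_m}(X_n)\big)$. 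For the monomial constant I would use the elementary fact that $\boldsymbol{\chimon}\big(\mathcal{P}_I(X_n)\big)\le\boldsymbol{\chimon}\big(\mathcal{P}_J(X_n)\big)$ whenever $I\subset J$ (restrict the defining inequality to coefficients supported on $I$), giving $\boldsymbol{\chimon}\big(\mathcal{P}_{\Lambda_T(m)}(X_n)\big)\le\boldsymbol{\chimon}\big(\mathcal{P}_{J_m}(X_n)\big)$. Finally, as $\ell_2$ is $2$‑concave with constant $1$ and $X$ is $2$‑convex, Corollary~\ref{appl} (applied to the $m$‑homogeneous set $\Lambda_T(m)$) gives
\[
\boldsymbol{\chimon}\big(\mathcal{P}_{\Lambda_T(m)}(\ell_2^n)\big)\,\le\,\big(M^{(2)}(X)\big)^m\,\boldsymbol{\chimon}\big(\mathcal{P}_{\Lambda_T(m)}(X_n)\big)\,.
\]

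\emph{The Hilbertian input and the step up to $\boldsymbol{\lambda}$.} The only non‑formal ingredient is the lower bound
\[
\Big(1+\tfrac{n}{\mu}\Big)^{\frac{\mu-1}{2}}\,\prec_{C^{\mu}}\,\boldsymbol{\chimon}\big(\mathcal{P}_{\Lambda_T(\mu)}(\ell_2^n)\big)\qquad(\mu,n\in\mathbb N),
\]
with a universal $C$, which I would establish by a Kahane--Salem--Zygmund random‑polynomial argument: putting independent $\pm1$ signs on the $\binom n\mu$ coefficients of $\sum_{|S|=\mu}\prod_{i\in S}z_i$ makes the supremum over $B_{\ell_2^n}$ small with positive probability, whereas the all‑$+1$ polynomial is large at a suitable point. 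Taking $\mu=m$ and combining with the two displayed inequalities above yields $(1+\tfrac nm)^{(m-1)/2}\prec_{C^m}\boldsymbol{\chimon}\big(\mathcal{P}_{J_m}(X_n)\big)$ with $C$ depending only on $M^{(2)}(X)$, which together with Corollary~\ref{K-S for poly} proves \eqref{MMM 2-conv}. To reach \eqref{mmm 2-conv} I would run the Gordon--Lewis bridge one degree higher: Theorem~\ref{thm: uncond cte vs proj cte} applied to $\Lambda_T(m+1)\subset\Lambda(m+1,n)$, the identity $\Lambda_T(m+1)^\flat=\Lambda_T(m)$ (valid since $n\ge m+1$), and Theorem~\ref{OrOuSe} give
\[
\boldsymbol{\chimon}\big(\mathcal{P}_{\Lambda_T(m+1)}(X_n)\big)\,\le\,e\,2^{m+1}\kappa^{m+1}\,\boldsymbol{\lambda}\big(\mathcal{P}_{\Lambda_T(m)}(X_n)\big)\,.
\]
Feeding in the $\boldsymbol{\chimon}$‑lower bound at degree $m+1$ (the $\mu=m+1$ instance of the reduction above, together with $(1+\tfrac{n}{m+1})^{m/2}\sim_{C^m}(1+\tfrac nm)^{m/2}$) forces $(1+\tfrac nm)^{m/2}\prec_{C^m}\boldsymbol{\lambda}\big(\mathcal{P}_{\Lambda_T(m)}(X_n)\big)$, and the reduction $\boldsymbol{\lambda}\big(\mathcal{P}_{\Lambda_T(m)}(X_n)\big)\le\kappa^m\boldsymbol{\lambda}\big(\mathcal{P}_{J_m}(X_n)\big)$ then gives $(1+\tfrac nm)^{m/2}\prec_{C^m}\boldsymbol{\lambda}\big(\mathcal{P}_{J_m}(X_n)\big)$; with Corollary~\ref{K-S for poly} this is \eqref{mmm 2-conv}, again with $C$ depending only on $M^{(2)}(X)$.

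\emph{Main obstacle.} The genuinely hard point is the Hilbertian estimate $(1+\tfrac n\mu)^{(\mu-1)/2}\prec_{C^\mu}\boldsymbol{\chimon}\big(\mathcal{P}_{\Lambda_T(\mu)}(\ell_2^n)\big)$; everything else is a bookkeeping of the ideal‑norm inequalities of Section~\ref{Unconditionality} and of the $C^m$‑factors. One also has to be careful that the equivalences $\sim_{C^m}$ really survive the chain, in particular through the harmless degree shift $m\leadsto m+1$, that $\Lambda_T(m+1)^\flat=\Lambda_T(m)$ genuinely needs $n\ge m+1$, and that the range $n\le m$ is treated separately as above.
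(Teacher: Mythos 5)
Your proposal is correct and follows essentially the same route as the paper: upper bounds from Corollary~\ref{K-S for poly}; the lower bound for $\boldsymbol{\chimon}$ by restricting to the tetrahedral block, transporting to $\ell_2^n$ via Corollary~\ref{appl}, and a Kahane--Salem--Zygmund lower bound on $\ell_2^n$ (which is exactly the paper's Proposition~\ref{toblach} in the Hilbert-space case, so the ``main obstacle'' you isolate is already available and proved by precisely the random-sign argument you sketch); and the lower bound for $\boldsymbol{\lambda}$ via the degree-shifted chain through Theorem~\ref{thm: uncond cte vs proj cte}, Theorem~\ref{OrOuSe} and $\Lambda_T(m+1)^\flat=\Lambda_T(m)$, matching the paper's display \eqref{verflucht}. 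Your separate treatment of the range $n\le m$ corresponds to the paper's estimate \eqref{obvious}.
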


\begin{proof}
The upper bounds are a special case of Corollary \ref{K-S for poly}. To prove the lower bounds, we start with
that of \eqref{MMM 2-conv}.  From Corollary \ref{appl} with $r=2$, it follows that for all $m$
with $\Lambda_T(m) \subset J_m$,
\[
\boldsymbol{\chimon}\big(\mathcal{P}_{\Lambda_{T}(m)} (\ell_2^n)\big) \leq M^{(2)}(X)^m
\boldsymbol{\chimon}\big(\mathcal{P}_{\Lambda_{T}(m)}(X_n) \big) \leq M^{(2)}(X)^m \boldsymbol{\chimon}
\big(\mathcal{P}_{J_m}(X_n)\big)\,,
\]
Consequently by Proposition~\ref{toblach} (in the Hilbert space case) for $m\le n$, we obtain
\[
\Big( 1+ \frac{n}{m}\Big)^{\frac{m-1}{2}} \leq 2^{\frac{m-1}{2}} \Big(\frac{n}{m}\Big)^{\frac{m-1}{2}}
\prec_{C^m} \boldsymbol{\chimon} \big(\mathcal{P}_{\Lambda_T(m)}(\ell_2^n)\big) \prec_{C^m} \boldsymbol{\chimon}
\big(\mathcal{P}_{J_m}(X_n)\big)\,.
\]
Clearly, for all $m \ge n$, we have
\begin{align} \label{obvious}
\Big( 1+ \frac{n}{m}\Big)^{\frac{m-1}{2}} \leq 2^{\frac{m-1}{2}}  \prec_{C^m} \boldsymbol{\chimon}\big(\mathcal{P}_{J_m}(X_n)\big)\,,
\end{align}
so the lower bound in \eqref{MMM 2-conv} follows.

Finally, it remains to check the  lower bound in \eqref{mmm 2-conv}. We apply  Theorem~\ref{thm: uncond cte vs proj cte},
Theorem~\ref{OrOuSe} (twice), and  Proposition~\ref{prop: homogeneous part proj constant}, which all together prove that
for $m $ with $\Lambda_T(m) \subset J_m$, we get
\begin{align}\label{verflucht}
\begin{split}
\boldsymbol{\chimon}\big( \mathcal{P}_{\Lambda_T(m+1)}(X_n)\big) & \, \leq \,e 2^{m+1}
\|\mathbf{Q}_{\Lambda(m+1,n),\Lambda_T(m+1,n)}\|\,\, \boldsymbol{\lambda}\big( \mathcal{P}_{\Lambda_T(m)}(X_n)\big) \\
& \, \leq \,e 2^{m+1}  \kappa^{m+1} \boldsymbol{\lambda}\big( \mathcal{P}_{\Lambda_T(m)}(X_n)\big) \\
& \, \leq \,e 2^{m+1}  \kappa^{2m+1} \boldsymbol{\lambda}\big(\mathcal{P}_{J_m(m)}(X_n)\big)
\, \leq \,e 2^{m+1} \kappa^{2m+1} \boldsymbol{\lambda}\big(\mathcal{P}_{J_m}(X_n)\big)\,;
 \end{split}
\end{align}
for the third estimate note that that $\Lambda_T(m) = J_m(m)\cap  \Lambda_T(\leq m) $.
 Then for $m $ with $\Lambda_T(m) \subset J_m$, we obtain the lower bound in \eqref{mmm 2-conv} from
\eqref{MMM 2-conv}. 
\end{proof}

\subsection{Estimates by norms of coefficient functionals} \label{Part: Polynomial projection constants}

In this subsection we introduce another useful technique to estimate the projection constant of spaces $\mathcal{P}_J(X_n)$
of multivariate polynomials on finite dimensional Banach lattices $X_n$ supported on a finite index set $J$.

Given an index set $J \subset \mathbb{N}_0^n$ and Banach space $X_n = (\mathbb{C}^n,\|\cdot\|)$, we  define the isometrical
embedding $I \colon \mathcal{P}_{J}(X_n) \to \ell_\infty(B_{X_n})$ by
\[
I(P) := P|_{B_{X_n}}, \quad\, P\in \mathcal{P}_{J}(X_n)\,.
\]
For each $\alpha \in J$ \, let $c_{\alpha}^{*}  \colon \mathcal{P}_J(X_n) \to \mathbb{C}$ be the linear functional, given, for
any polynomial $P = \sum_{\alpha \in J} c_\alpha(P) e_\alpha$ by $c_{\alpha}^{*}(P) := c_{\alpha}(P)$. From the Hahn-Banach
theorem, it follows that for any $c_{\alpha}^{*}$, we can find a~norm-preserving extension
\[
\widetilde{c}_{\alpha}^{*} \in \ell_\infty(B_{X_n})^{*}\,,
\]
that is, for all $P\in \mathcal{P}_{J}(X_n)$, we have $\|\widetilde{c}_{\alpha}^{*}\| = \|c_{\alpha}^{*}\|$
and $\widetilde{c}_{\alpha}^{*}(P)= c_\alpha(P)$.

Consider the following obvious factorization of the identity map $\id \colon \mathcal{P}_J(X_n) \to
\mathcal{P}_J(X_n)$,
\[
\id \colon \mathcal{P}_J(X_n) \stackrel{I} \longrightarrow  \ell_\infty(B_{X_n}) \stackrel{Q}
\longrightarrow \mathcal{P}_J(X_n)\,,
\]
where the finite dimensional operator $Q$ is given by the formula
\begin{equation}\label{QQQ}
Qf := \sum_{\alpha \in J} \widetilde{c}_{\alpha}^{*}(f) \,e_\alpha, \quad\, f \in \ell_\infty(B_{X_n})\,.
\end{equation}
Clearly, this implies that
\[
\boldsymbol{\lambda}\big(\mathcal{P}_J(X_n)\big) \leq \big\|Q\colon \ell_\infty(B_{X_n}) \to\ell_\infty(B_{X_n})\big\|\,.
\]
Although determining the norm of this projection $Q$ is generally a challenging task, we find upper estimates which, under certain
restrictive assumptions on the index set $J$ and the Banach space $X_n$, lead to asymptotically optimal bounds for $\boldsymbol{\lambda}\big(\mathcal{P}_J(X_n)\big)$. More precisely, we introduce what we call the polynomial projection constant:
\[
\widehat{\boldsymbol{\lambda}}\big(\mathcal P_J(X_n)\big):= \sup_{z \in B_{X_n}} \sum_{\alpha \in J} c_{X_n}(\alpha) \vert z^{\alpha}\vert\,,
\]
where $c_{X_n}(\alpha)$, the  characteristic of the multi-index $\alpha$, is the reciprocal of the norm of the monomial $z^{\alpha}$ in $\mathcal{P}_J(X_n)$, that is,
\[
c_{X_n}(\alpha):\, = \,\frac{1}{\sup_{z \in B_{X_n}} |z^\alpha|}\,.
\]
An interesting fact is that we can actually bound the projection constant of the space $\mathcal{P}_J(X_n)$ by this quantity, as shown
in Theorem~\ref{lambda-dash}.  Therefore, having accurate upper bounds for the  characteristic $c_{X_n}(\alpha)$ is crucial for our
purposes. In fact,  it turns out that the polynomial projection $\widehat{\boldsymbol{\lambda}}\big(\mathcal{P}_J(X_n)\big)$, facilitates
the systematic and  comfortable extension of estimates such as \eqref{eq:proj const pols on lp} to wider ranges of index sets $J$ and
spaces $X_n$. At the same time, it adheres to a~reasonable abstract theory. In many concrete situations, this estimate is sufficient;
the advantage here is that this new parameter is more manageable compared to $\boldsymbol{\lambda}\big(\mathcal{P}_J(X_n)\big)$.

The norm of the functional $c_{\alpha}^*$ can be bounded by the characteristic of the index $\alpha$.
Indeed, the following lemma is just a rephrasing of the Cauchy inequalities.
\begin{lemma} \label{sup}
Let $X_n = (\mathbb{C}^n, \|\cdot\|)$ be a Banach lattice and $J \subset \mathbb{N}_0^n$. Then, for every $P \in \mathcal{P}_J(X_n)$ and
$\alpha \in J $, we have
\[
|c_\alpha(P)| \leq c_{X_n}(\alpha) \Vert P \Vert_{\mathcal{P}_J(X_n)}\,;
\]
in other terms, $\,\big\|c_\alpha^{*}\big\|_{\mathcal{P}_J(X_n)^\ast} \leq  c_{X_n}(\alpha)$\,.
\end{lemma}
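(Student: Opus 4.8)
The plan is to show that the statement is simply a restatement of the Cauchy integral inequalities adapted to the norm structure of $\mathcal{P}_J(X_n)$, so no deep tools are needed. First I would fix $P \in \mathcal{P}_J(X_n)$ and a multi-index $\alpha \in J$, and recover the coefficient $c_\alpha(P)$ by an iterated Cauchy integral over the distinguished boundary of a polydisc: for $z = (z_1, \dots, z_n)$ with $|z_i| \le 1$ fixed, one has
\[
c_\alpha(P)\,z^\alpha = \frac{1}{(2\pi)^n}\int_{[0,2\pi]^n} P\big(z_1 e^{i\theta_1}, \dots, z_n e^{i\theta_n}\big)\, e^{-i\langle \alpha, \theta\rangle}\, d\theta_1 \cdots d\theta_n\,,
\]
which holds because integrating the monomial expansion of $P$ against $e^{-i\langle\alpha,\theta\rangle}$ kills every term except the $\alpha$-th. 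The key point is that since $X_n$ is a Banach lattice, for any $z \in B_{X_n}$ the vector $(z_1 e^{i\theta_1}, \dots, z_n e^{i\theta_n})$ has the same norm as $z$ (the modulus of each coordinate is unchanged), hence still lies in $B_{X_n}$; therefore $|P(z_1 e^{i\theta_1}, \dots, z_n e^{i\theta_n})| \le \|P\|_{\mathcal{P}_J(X_n)}$ for all $\theta$.

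Taking absolute values inside the integral then gives $|c_\alpha(P)|\,|z^\alpha| \le \|P\|_{\mathcal{P}_J(X_n)}$ for every $z \in B_{X_n}$. Taking the supremum over $z \in B_{X_n}$ on the left yields
\[
|c_\alpha(P)| \sup_{z \in B_{X_n}} |z^\alpha| \le \|P\|_{\mathcal{P}_J(X_n)}\,,
\]
and dividing by $\sup_{z \in B_{X_n}} |z^\alpha|$ produces exactly $|c_\alpha(P)| \le c_{X_n}(\alpha)\,\|P\|_{\mathcal{P}_J(X_n)}$, using the definition of the characteristic $c_{X_n}(\alpha)$. The final assertion about $\|c_\alpha^*\|$ is then immediate: the displayed inequality says precisely that the linear functional $c_\alpha^* \colon P \mapsto c_\alpha(P)$ has norm at most $c_{X_n}(\alpha)$ on $\mathcal{P}_J(X_n)$.

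There is essentially no obstacle here; the only point requiring a word of care is the interchange justifying the Cauchy integral formula for polynomials in several variables (which is elementary, since $P$ is a finite sum of monomials and each monomial integrates trivially) and the observation that the lattice structure of $X_n$ is exactly what makes the torus action $z \mapsto (e^{i\theta_j} z_j)_j$ norm-preserving on $X_n$, so that one stays inside the unit ball. Alternatively, as the paper suggests, one can just cite the standard several-variable Cauchy estimates (e.g. \cite[Proposition 15.33]{defant2019libro}), but I would prefer to include the short self-contained argument above for completeness.
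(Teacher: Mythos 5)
Your argument is correct and is exactly the standard proof of the Cauchy inequalities that the paper invokes without writing out (it states the lemma is "just a rephrasing of the Cauchy inequalities" and cites \cite[Proposition 15.33]{defant2019libro}). The one point you rightly emphasize—that the lattice norm on $X_n$ depends only on the moduli of the coordinates, so the torus rotation keeps $z$ in $B_{X_n}$—is precisely where the Banach lattice hypothesis enters, and the rest (positivity of $\sup_{z\in B_{X_n}}|z^\alpha|$, the duality restatement) is immediate.
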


% \begin{proof}
% To establish the required estimate, it suffices to prove the estimate
% \[
% \vert c_{\alpha}(P) \vert \leq  \frac{1}{\vert z^{\alpha} \vert}\,\Vert P \Vert_{\mathcal{P}_J(X_n)}
% \]
% for all $z \in B_{X_n}$ for which  $z_i\neq 0$ for all  $1 \leq i \leq n$. To see this inequality, fix such $z \in B_{X_n}$
% and consider the set
% \[
% \mathbb{D}(0,z):=\big\{w \in \mathbb{C}^n : \vert w_i \vert  \leq \vert z_i \vert \,\, \text{for each\, $1 \leq i \leq n$}\big\}\,.
% \]
% Then by the Cauchy integral formula and using that $X_n$ is a Banach lattice, we get
% \[
% \vert c_{\alpha}(P) \vert = \Big|\frac{1}{(2\pi i)^n} \int_{\partial \mathbb{D}(0, z)} \frac{P(w)}{w_1^{\alpha_1+1}
% \cdots w_n^{\alpha_n+1}} dw_1 \cdots dw_n \Big|
% \leq \frac{1}{\vert z^{\alpha} \vert}\,\Vert P \Vert_{\mathcal{P}_J(X_n)}\,,
% \]
% which proves the inequality.
% \end{proof}

Coming back to the discussion from the introduction of  this chapter, we get an estimate of the projection constant by the
polynomial projection constant.

\begin{theorem} \label{lambda-dash}
Let $X_n = (\mathbb{C}^n,\|\cdot\|)$ be a~Banach lattice and  $J \subset \mathbb{N}_0^n$. Then
\[
\boldsymbol{\lambda}\big(\mathcal{P}_{J}
(X_n)\big) \leq \widehat{\boldsymbol{\lambda}}\big(\mathcal{P}_{J}(X_n)\big)\,.
\]
\end{theorem}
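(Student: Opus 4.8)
The plan is to simply bound the norm of the projection $Q$ appearing in the factorization $\id_{\mathcal{P}_J(X_n)} = Q\circ I$ introduced just above, since we already know $\boldsymbol{\lambda}\big(\mathcal{P}_J(X_n)\big) \leq \big\|Q\colon \ell_\infty(B_{X_n}) \to \ell_\infty(B_{X_n})\big\|$. So the whole theorem reduces to showing $\|Q\| \leq \widehat{\boldsymbol{\lambda}}\big(\mathcal{P}_J(X_n)\big)$.

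First I would fix $f \in \ell_\infty(B_{X_n})$ with $\|f\|_\infty \leq 1$. By the defining formula \eqref{QQQ}, $Qf$ is the polynomial $z \mapsto \sum_{\alpha \in J} \widetilde{c}_\alpha^{*}(f)\, z^\alpha$, so evaluating on $B_{X_n}$ and using the triangle inequality gives
\[
\|Qf\|_{\ell_\infty(B_{X_n})} = \sup_{z \in B_{X_n}} \Big| \sum_{\alpha \in J} \widetilde{c}_\alpha^{*}(f)\, z^\alpha \Big| \leq \sup_{z \in B_{X_n}} \sum_{\alpha \in J} \big|\widetilde{c}_\alpha^{*}(f)\big|\,\big|z^\alpha\big|\,.
\]
Then I would use that each $\widetilde{c}_\alpha^{*}$ is a norm-preserving Hahn--Banach extension of $c_\alpha^{*}$, so that $\big|\widetilde{c}_\alpha^{*}(f)\big| \leq \|\widetilde{c}_\alpha^{*}\| = \|c_\alpha^{*}\|_{\mathcal{P}_J(X_n)^\ast} \leq c_{X_n}(\alpha)$ by Lemma~\ref{sup}. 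Substituting this bound yields
\[
\|Qf\|_{\ell_\infty(B_{X_n})} \leq \sup_{z \in B_{X_n}} \sum_{\alpha \in J} c_{X_n}(\alpha)\,\big|z^\alpha\big| = \widehat{\boldsymbol{\lambda}}\big(\mathcal{P}_J(X_n)\big)\,,
\]
and taking the supremum over all such $f$ gives $\|Q\| \leq \widehat{\boldsymbol{\lambda}}\big(\mathcal{P}_J(X_n)\big)$; combined with $\boldsymbol{\lambda}\big(\mathcal{P}_J(X_n)\big) \leq \|Q\|$ this is the claim.

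There is essentially no hard step here — the argument is a short chain of inequalities. The only two points that deserve a word of care are: (i) that Lemma~\ref{sup} is being applied to the \emph{extensions} $\widetilde{c}_\alpha^{*}$, which is legitimate precisely because the Hahn--Banach extensions preserve the functional norm; and (ii) that $c_{X_n}(\alpha)$ is well defined, i.e. $0 < \sup_{z \in B_{X_n}} |z^\alpha| < \infty$, which holds since $B_{X_n}$ is a nonempty bounded open subset of $\mathbb{C}^n$.
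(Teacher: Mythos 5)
Your argument is correct and coincides with the paper's own proof: both bound $\|Q\|$ via the triangle inequality, the norm-preservation of the Hahn--Banach extensions $\widetilde{c}_\alpha^{*}$, and Lemma~\ref{sup}, then invoke $\boldsymbol{\lambda}\big(\mathcal{P}_J(X_n)\big)\leq\|Q\|$. Nothing is missing.
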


\begin{proof}
As noted above, the mapping $Q \colon \ell_\infty(B_{X_n}) \to \ell_\infty(B_{X_n})$ defined by the formula \eqref{QQQ} is
the projection of $\ell_\infty(B_{X_n})$ onto $I(\mathcal{P}_{J}(X_n))$. Then by Lemma \ref{sup}, it follows that
\begin{align*}
\|Q\| &=  \sup_{\|f\|_{\ell_{\infty}(B_{X_n})} \leq 1} \Big\|
\sum_{\alpha  \in J} \widetilde{c}^{\ast}_{\alpha} (f) z^\alpha\Big\|_{\mathcal{P}_J(X_n)}
\leq \sup_{z \in B_{X_n}} \sum_{\alpha\in J} \big\|\widetilde{c}^{\ast}_{\alpha} \big\|_{\ell_{\infty}(B_{X_n})^{*}}\,|z^\alpha|
\\
& = \sup_{z \in B_{X_n}} \sum_{\alpha\in J}  \big\|c^{\ast}_{\alpha}\big\|_{\mathcal P_J(X_n)^\ast}|z^\alpha|
\leq \sup_{z \in B_{X_n}} \sum_{\alpha\in J}  c_{X_n}(\alpha)|z^\alpha|
\\
& = \widehat{\boldsymbol{\lambda}}\big(\mathcal{P}_J(X_n)\big)\,,
\end{align*}
and since $\boldsymbol{\lambda}\big(\mathcal{P}_J (X_n)\big) \leq \|Q\|$, the desired estimate follows.
\end{proof}

It is well-known that for every $\alpha \in \Lambda(m,n)$ and $1 \leq r \leq \infty$, we have
\begin{equation} \label{dineen}
c_{\ell_{r}^n}(\alpha) = \Big(\frac{m^m}{\alpha^\alpha}\Big)^{1/r}\,;
\end{equation}
for an  elementary proof see \cite[Lemma 1.38]{dineen1999complex}. Observe as an immediate consequence that for  tetrahedral indices
$\alpha \in \Lambda_T(m,n)$ with $m \leq n$, we get
\begin{equation} \label{dineen2}
c_{\ell_{r}^n}(\alpha) = m^{\frac{m}{r}}\,.
\end{equation}
By Theorem~\ref{lambda-dash} accurate upper bounds for $c_{X_n}(\alpha)$  provide good upper bounds for the
projection constant of $\mathcal{P}_J(X_n)$.

We will explore how to derive general variants of the estimates mentioned above. As in the previous subsection, we will apply
Lozanovski\v{\i}'s theorem discussed earlier.

\begin{theorem} \label{thm: bound for coef functional}
The following statements are true for any Banach lattice $X_n=(\mathbb{C}^n, \|\cdot\|_{X_n})$, any $J\subset \mathbb{N}_0^n$,
and any  $\alpha \in \Lambda(m,n)${\rm:}
\begin{itemize}
\item[(i)] $c_{X_n}(\alpha)\,c_{X_{n}'}(\alpha)  \leq \frac{m^m}{\alpha^{\alpha}}$\,.
\item[(ii)] $c_{X_n}(\alpha) \le \frac{\|\alpha\|^m_{X_n}}{\alpha^\alpha}$\,.
\end{itemize}
\end{theorem}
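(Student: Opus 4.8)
The plan is to use the trivial reformulation $c_{X_n}(\alpha)^{-1}=\sup_{z\in B_{X_n}}|z^\alpha|=\sup_{z\in\overline{B}_{X_n}}|z^\alpha|$ (the two suprema coincide by homogeneity of $z\mapsto z^\alpha$), so that both parts reduce to exhibiting good test points at which $|z^\alpha|$ is large. Throughout write $\alpha^\alpha=\prod_i\alpha_i^{\alpha_i}$ (with $0^0=1$) and recall $|\alpha|=m$; the case $m=0$ is trivial, so assume $\alpha\neq 0$.

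I would dispatch (ii) first, since it needs no factorization theorem. The natural extremizer is the normalized multi-index $z:=\alpha/\|\alpha\|_{X_n}$, where $\alpha$ is regarded as the vector $(\alpha_1,\dots,\alpha_n)\in\mathbb{C}^n$; then $z\in S_{X_n}\subset\overline{B}_{X_n}$ and
\[
|z^\alpha|=\prod_{i=1}^n\Big(\frac{\alpha_i}{\|\alpha\|_{X_n}}\Big)^{\alpha_i}=\frac{\alpha^\alpha}{\|\alpha\|_{X_n}^{\,m}}\,,
\]
so $\sup_{z\in\overline{B}_{X_n}}|z^\alpha|\ge\alpha^\alpha/\|\alpha\|_{X_n}^m$, and taking reciprocals gives exactly $c_{X_n}(\alpha)\le\|\alpha\|_{X_n}^m/\alpha^\alpha$.

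For (i) the point is that $c_{X_n}(\alpha)\,c_{X_n'}(\alpha)$ must be bounded \emph{from above} by bounding the product $\sup_{z\in\overline{B}_{X_n}}|z^\alpha|\cdot\sup_{w\in\overline{B}_{X_n'}}|w^\alpha|$ \emph{from below}, and such a lower bound is precisely what Lozanovski\v{\i}'s factorization theorem provides once fed the right $\ell_1^n$-unit vector. Concretely, I would apply it to the positive vector $f:=(\alpha_1/m,\dots,\alpha_n/m)$, which satisfies $\|f\|_{\ell_1^n}=|\alpha|/m=1$. Since $X_n$, being finite-dimensional, is a maximal Banach function lattice over the counting measure on $\{1,\dots,n\}$ with full support, Lozanovski\v{\i}'s theorem applies and, using $X_n\circ X_n'\equiv\ell_1^n$, yields positive $g\in S_{X_n}$ and $h\in S_{X_n'}$ with $g_ih_i=\alpha_i/m$ for every $i$. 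Hence
\[
g^\alpha\,h^\alpha=\prod_{i=1}^n(g_ih_i)^{\alpha_i}=\prod_{i=1}^n\Big(\frac{\alpha_i}{m}\Big)^{\alpha_i}=\frac{\alpha^\alpha}{m^m}\,,
\]
while $g\in\overline{B}_{X_n}$ gives $g^\alpha\le c_{X_n}(\alpha)^{-1}$ and $h\in\overline{B}_{X_n'}$ gives $h^\alpha\le c_{X_n'}(\alpha)^{-1}$; multiplying these, $\alpha^\alpha/m^m\le 1/\big(c_{X_n}(\alpha)\,c_{X_n'}(\alpha)\big)$, which is (i).

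The step to be careful about is the direction in (i): estimating $\sup|z^\alpha|$ or $\sup|w^\alpha|$ individually \emph{from above} (e.g. via weighted AM--GM over the ball) produces the reverse --- and here useless --- inequality $c_{X_n}(\alpha)\,c_{X_n'}(\alpha)\ge m^m/\alpha^\alpha$. Lozanovski\v{\i}'s theorem is exactly the device that converts the K\"othe-duality identity $X_n\circ X_n'\equiv\ell_1^n$ into a \emph{single} factorization point $(g,h)$ simultaneously witnessing a lower bound for both suprema, and the one genuinely clever choice is $f=\alpha/m$, which makes the products $g^\alpha h^\alpha$ telescope to $\alpha^\alpha/m^m$. (Incidentally, (ii) also follows from (i) together with $\sup_{w\in\overline{B}_{X_n'}}|w^\alpha|\le(\|\alpha\|_{X_n}/m)^m$ --- a consequence of weighted AM--GM and $\sum_i\alpha_i|w_i|\le\|\alpha\|_{X_n}\|w\|_{X_n'}$ --- but the direct test-vector argument is shorter.)
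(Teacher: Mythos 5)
Your proof is correct. For (i) you take essentially the paper's route: the paper likewise invokes Lozanovski\v{\i}'s factorization $X_n\circ X_n'\equiv\ell_1^n$, compares $\sup_{u\in S_{X_n}}|u^\alpha|\cdot\sup_{v\in S_{X_n'}}|v^\alpha|$ with $\sup_{z\in S_{\ell_1^n}}|z^\alpha|$, and then plugs in the very same test point $z=\alpha/m$; your version simply applies the factorization directly at that point, which is the same argument in slightly more economical form, and your warning about the direction of the inequality is exactly the right thing to be careful about. For (ii) you genuinely deviate: the paper does not use the normalized test vector $\alpha/\|\alpha\|_{X_n}$, but runs Lozanovski\v{\i} a second time, bounding $v^\alpha\le\big(\|\alpha\|_{X_n}/m\big)^m$ via weighted AM--GM together with the duality pairing $\sum_i\alpha_i v_i\le\|\alpha\|_{X_n}\|v\|_{X_n'}$ and then optimizing over $z\in S_{\ell_1^n}$ --- precisely the alternative you sketch in your closing parenthesis. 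Your direct one-line normalization argument is more elementary (no factorization theorem is needed for (ii)) and is in fact the same device the paper itself employs later, in Remark~\ref{verysimple}, where normalizing $m^{-1/r}\alpha^{1/r}$ in $X_n$ gives $c_{X_n}(\alpha)\le\|m^{-1/r}\alpha^{1/r}\|_{X_n}^m\big(m^m/\alpha^\alpha\big)^{1/r}$, of which your argument is the case $r=1$. The homogeneity observation identifying the suprema over the open and closed unit balls, which you state explicitly, is used implicitly by the paper as well, so there is no gap on either side.
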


\begin{proof}
(i)
Clearly, \(X_n\) has the Fatou property because it is finite-dimensional. Lozanovski\v{\i}'s factorization theorem (summarized
in Equation \eqref{Lozanovskii}) implies that for every \(z \in S_{\ell_1^n}\), we can select \(u \in S_{X_n}\) and \(v \in S_{X_{n}'}\)
such that \(|u||v| = |z|\). Thus,  we have:
\[
c_{X_n}(\alpha)\,c_{X_{n}'}(\alpha) \le \frac{1}{\dis\sup_{u \in S_{X_n}}|u^\alpha|} \frac{1}{\dis\sup_{v \in S_{X_n'}}|v^\alpha|}
\le \frac{1}{\dis\sup_{z\in S_{\ell_1^n}}|z^\alpha|} \le \frac{m^m}{\alpha^\alpha}\,,
\]
where the last inequality holds taking $z= \frac{\alpha}{m}$.

(ii) For  $z \in S_{\ell_1^n}$ let  $u\in S_{X_n}$, $v\in S_{X_n'}$ be positive and such that $|z|=uv$. Since
\[
(v^{\alpha})^{1/m}= \big(v_1^{\alpha_1} \cdots v_n^{\alpha_n}\big)^{1/m}
\leq \frac{1}{m}(\alpha_1 v_1 + \ldots + \alpha_n v_n)\,,
\]
it follows that
\[
(v^{\alpha})^{1/m} \leq \frac{1}{m} \|(\alpha_1, \ldots, \alpha_n)\|_{X_n}\,\|(v_1, \ldots, v_n)\|_{X_{n}'}
= \frac{1}{m}\,\|(\alpha_1, \ldots, \alpha_n)\|_{X_n}\,,
\]
and hence
\[
c_{X_{n}'}(\alpha) \le \frac{1}{|u^\alpha|} =  \frac{|v^\alpha|}{|z^\alpha|}
\le \frac{1}{m^m}\,\frac{\|(\alpha_1, \ldots, \alpha_n)\|^m_{X_n}}{|z|^{\alpha}}\,.
\]
Then taking the supremum in $z \in S_{\ell_1^n}$, we obtain
\[
c_{X_{n}'}(\alpha) \le \frac{1}{m^m}\,\|(\alpha_1, \ldots, \alpha_n)\|^m_{X_n}\frac{m^m}{\alpha^\alpha}
= \frac{\|(\alpha_1, \ldots, \alpha_n)\|^m_{X_n}}{\alpha^\alpha}\,. \qedhere
\]
\end{proof}

\smallskip

\begin{corollary}\label{coro: bound proj constant tetra}
\label{lambda}
Let $X_n= (\mathbb{C}^n, \|\cdot\|)$~be a symmetric Banach lattice and  $J~\subset~\Lambda_T(m,n)$ some index set, where
$m \leq n$. Then
\[
\widehat{\boldsymbol{\lambda}}\big(\mathcal{P}_J(X_n)\big)
\le e^{m} \bigg(\frac{\varphi_{X_{n}'}(n)}{\varphi_{X_{n}'}(m)}\bigg)^{m}\,.
\]
\end{corollary}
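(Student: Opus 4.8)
The plan is to estimate the polynomial projection constant
\[
\widehat{\boldsymbol{\lambda}}\big(\mathcal{P}_J(X_n)\big)=\sup_{z\in B_{X_n}}\sum_{\alpha\in J}c_{X_n}(\alpha)\,|z^\alpha|
\]
directly, exploiting the very special shape of tetrahedral monomials. Since $J\subset\Lambda_T(m,n)$ and every summand is nonnegative, it suffices to bound $\sup_{z\in B_{X_n}}\sum_{\alpha\in\Lambda_T(m,n)}c_{X_n}(\alpha)\,|z^\alpha|$. The first step is a pointwise estimate for the characteristic of a tetrahedral multi-index: for $\alpha\in\Lambda_T(m,n)$ one has $\alpha^\alpha=1$, and because $X_n$ is symmetric the vector $(\alpha_1,\dots,\alpha_n)$ is the indicator of an $m$-element subset of $\{1,\dots,n\}$, so $\|(\alpha_1,\dots,\alpha_n)\|_{X_n}=\varphi_{X_n}(m)$. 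Plugging this into Theorem~\ref{thm: bound for coef functional}(ii) gives $c_{X_n}(\alpha)\le\varphi_{X_n}(m)^m$. To bring the dual fundamental function into play, I would then invoke the classical duality relation for symmetric Banach sequence lattices, $\varphi_{X_n}(m)\,\varphi_{X_n'}(m)=m$ (which follows by symmetrizing an extremal vector for $\varphi_{X_n'}(m)=\sup\{\sum_{j=1}^m|y_j|:\|y\|_{X_n}\le1\}$ over the first $m$ coordinates and zeroing the rest), obtaining the uniform bound $c_{X_n}(\alpha)\le\big(m/\varphi_{X_n'}(m)\big)^m$.

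The second step is combinatorial: after factoring out this bound, what remains is $\sup_{z\in B_{X_n}}\sum_{\alpha\in\Lambda_T(m,n)}|z^\alpha|=\sup_{z\in B_{X_n}}e_m(|z_1|,\dots,|z_n|)$, where $e_m$ is the $m$-th elementary symmetric polynomial. I would estimate it by expanding $\big(\sum_{i=1}^n|z_i|\big)^m$ and retaining only the multilinear terms (all terms are nonnegative), which have multinomial coefficient $m!$; this yields $e_m(|z_1|,\dots,|z_n|)\le\frac1{m!}\big(\sum_{i=1}^n|z_i|\big)^m$. Finally I would control $\sum_{i=1}^n|z_i|$ for $z\in B_{X_n}$ by K\"othe duality: $\sum_{i=1}^n|z_i|\le\|z\|_{X_n}\,\big\|\sum_{i=1}^n e_i\big\|_{X_n'}\le\varphi_{X_n'}(n)$.

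Putting the three pieces together gives
\[
\widehat{\boldsymbol{\lambda}}\big(\mathcal{P}_J(X_n)\big)\le\frac{m^m}{\varphi_{X_n'}(m)^m}\cdot\frac{\varphi_{X_n'}(n)^m}{m!}
=\frac{m^m}{m!}\bigg(\frac{\varphi_{X_n'}(n)}{\varphi_{X_n'}(m)}\bigg)^m\le e^m\bigg(\frac{\varphi_{X_n'}(n)}{\varphi_{X_n'}(m)}\bigg)^m,
\]
using $m^m/m!\le e^m$. No single step is deep; the main point to handle with care is the bookkeeping of constants so that they collapse exactly to the factor $e^m$ — in particular the correct passage from $\varphi_{X_n}$ to $\varphi_{X_n'}$ via $\varphi_{X_n}(m)\varphi_{X_n'}(m)=m$, and the fact that the crude elementary-symmetric bound $e_m\le\frac1{m!}(\sum|z_i|)^m$ is still good enough because it is offset precisely by $m^m/m!\le e^m$. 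One should also note that the hypothesis $m\le n$ is used to ensure $\Lambda_T(m,n)\neq\varnothing$ and that $\varphi_{X_n}(m)$ is meaningful.
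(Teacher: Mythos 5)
Your proof is correct and takes essentially the same route as the paper: both start from Theorem~\ref{thm: bound for coef functional}(ii) together with $\varphi_{X_n}(m)\varphi_{X_n'}(m)=m$ to bound $c_{X_n}(\alpha)$ by $\big(m/\varphi_{X_n'}(m)\big)^m$, and both reduce the remaining sum to $\big(\sum_i|z_i|\big)^m\le\varphi_{X_n'}(n)^m\|z\|_{X_n}^m$ via the multinomial theorem, with the $e^m$ coming from $m^m\le e^m m!$. The only difference is bookkeeping: the paper absorbs the multinomial coefficient $m!/\alpha!$ into the coefficient bound before summing, whereas you keep $m^m$ outside and bound the elementary symmetric polynomial — these are the same estimate rearranged.
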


\begin{proof}
In case $\alpha$ is tetrahedral, $\|(\alpha_1, \ldots, \alpha_n)\|_{X_n}=\varphi_X(m) = m/\varphi_{X_{n}'}(m)$ and
thus by Theorem \ref{thm: bound for coef functional},
\[
c_{X_n}(\alpha) \le \frac{\|\alpha\|^m_{X_n}}{\alpha^\alpha} \le
\frac{m^m}{\alpha^\alpha}\frac1{\varphi_{X_{n}'}(m)^m}\le e^m\frac{m!}{\alpha!}\frac1{\varphi_{X_{n}'}(m)^m}\,.
\]
Thus, we obtain that
\begin{align*}
\widehat{\boldsymbol{\lambda}}\big(\mathcal{P}_{J}(X_n)\big)
& \leq e^m\frac1{\varphi_{X_{n}'}(m)^m}\sup_{z\in B_{X_n}}\,\,\sum_{\alpha \in J}  \frac{m!}{\alpha!} \,|z^\alpha | \\
& \le e^m\frac1{\varphi_{X_{n}'}(m)^m}\sup_{z\in B_{X_n}}(|z_1|+\dots+|z_n|)^m
= e^m \bigg(\frac{\varphi_{X_{n}'}(n)}{\varphi_{X_{n}'}(m)}\bigg)^{m}\,. \qedhere
\end{align*}
\end{proof}

We conclude this subsection with a result that is interesting in its own right. It is motivated by the surprising fact
noted in \cite{ryll1983homogeneous} (as a simple consequence of \eqref{RWRW}) that

\begin{align*} %\label{surprise}
\boldsymbol{\lambda}\big(\mathcal{P}_m(\ell_2^n)\big) \leq 2^{n-1}, \quad\, m,n\in \mathbb{N}\,.
\end{align*}
In fact, the sequence $\big(X_m\big)_{m \geq 1}$ with $X_m := \mathcal{P}_m(\ell_2^2)$ was the first known nontrivial example of
a~sequence of finite-dimensional Banach spaces for which $\lim_{m \to \infty} \dim X_m = \infty$, even though
\[
\sup_m \boldsymbol{\lambda}(X_m) < \infty\,.
\]
It is worth mentioning that Bourgain \cite{bourgain1989} gave an affirmative solution to a~problem considered in \cite{ryll1983homogeneous},
showing that the sequence $\big(d\big(X_m, \ell_\infty^{\dim X_m}(\mathbb{C})\big)\big)_m$ of Banach-Mazur distances is bounded.

\begin{theorem} \label{Xl1}
Let $X$ be a Banach sequence lattice. Then the following statements are equivalent{\rm:}
\begin{itemize}
\item[{\rm(i)}] $\sup_{m,n} \boldsymbol{\lambda}\big(\mathcal{P}_{ \leq m}(X_n)\big)^\frac{1}{m} < \infty$\,.
\item[{\rm(ii)}] $\sup_{m,n} \boldsymbol{\lambda}\big(\mathcal{P}_{ m}(X_n)\big)^\frac{1}{m} < \infty$\,.
\item[{\rm(iii)}] $X_n = \ell_1^n$ uniformly\,.
\item[{\rm(iv)}] $X=\ell_1$ whenever $X$ is separable\,.
\end{itemize}
\end{theorem}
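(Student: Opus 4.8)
The plan is to establish the cycle of implications $(iii)\Rightarrow(ii)\Rightarrow(i)$ (trivial or near-trivial), then $(i)\Rightarrow(iii)$ as the substantive converse, and finally note that $(iii)\Leftrightarrow(iv)$ is essentially a definitional matter about sections of sequence lattices. For $(iii)\Rightarrow(ii)$ one computes directly: when $X_n=\ell_1^n$ uniformly, the characteristic satisfies $c_{\ell_1^n}(\alpha)=m^m/\alpha^\alpha$ by \eqref{dineen} with $r=1$, so Theorem~\ref{lambda-dash} together with the multinomial identity $\sum_{\alpha\in\Lambda(m,n)}\frac{m!}{\alpha!}|z^\alpha|=(|z_1|+\dots+|z_n|)^m=\|z\|_{\ell_1^n}^m$ gives $\boldsymbol{\lambda}(\mathcal{P}_m(X_n))\le\widehat{\boldsymbol{\lambda}}(\mathcal{P}_m(X_n))\le e^m$, hence the $m$-th root is bounded by $e$ uniformly in $n$; the uniform constant in $X_n\simeq\ell_1^n$ only contributes a further fixed power. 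The implication $(ii)\Rightarrow(i)$ follows at once from Theorem~\ref{degree-homo}, which gives $\boldsymbol{\lambda}(\mathcal{P}_{\le m}(X_n))\le(m+1)\max_{0\le k\le m}\boldsymbol{\lambda}(\mathcal{P}_k(X_n))$, and $(m+1)^{1/m}\to 1$.

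The heart of the argument is $(i)\Rightarrow(iii)$. Here I would argue contrapositively: if $X_n$ is not uniformly isometric (or uniformly isomorphic) to $\ell_1^n$, then the $1$-concavity constants $M_{(1)}(X_n)$ are not uniformly bounded, and I want to leverage this to force exponential growth of the projection constants. The key mechanism is the tetrahedral reduction: by Proposition~\ref{prop: homogeneous part proj constant} and the monotonicity $\boldsymbol{\lambda}(\mathcal{P}_{\Lambda_T(m)}(X_n))\le\boldsymbol{\lambda}(\mathcal{P}_{\le m}(X_n))$ (via Theorem~\ref{OrOuSe}, up to $\kappa^m$), it suffices to control the tetrahedral homogeneous pieces. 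For a single coordinate functional, $\boldsymbol{\lambda}(\mathcal{P}_{\Lambda_T(m,n)}(X_n))$ should be comparable to a power of $\varphi_{X_n}(n)/\varphi_{X_n}(m)$ or of the norm $\|(1,\dots,1)\|_{X_n}=\varphi_{X_n}(m)$ on $m$ ones; using \eqref{Carsten1}, \eqref{schuett}, and the fact that for a Banach lattice $X_n\ne\ell_1^n$ the fundamental function must be strictly sublinear somewhere, one extracts a section on which $\varphi_{X_n}(m)\ge(1+\delta)^{?}$ relative to the $\ell_1$ value, which then propagates to an exponential lower bound on $\boldsymbol{\lambda}$. Concretely I expect to invoke Schütt's inequality \eqref{schuett} to relate $\varphi_{X_n}(n)$ to $\boldsymbol{\lambda}(X_n)$ and then bootstrap through the polynomial spaces.

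The main obstacle I anticipate is making the quantitative link between "$X_n$ is not uniformly $\ell_1^n$" and an \emph{exponential-in-$m$} lower bound for $\boldsymbol{\lambda}(\mathcal{P}_{\le m}(X_n))$ precise: a single bad section gives only one dimension $n$, whereas condition $(i)$ is a statement uniform in both $m$ and $n$, so I must arrange that the bad behavior at some fixed $n_0$ produces, for that same $n_0$ and growing $m$, polynomial spaces whose projection constants grow like $c^m$ with $c>1$. I would handle this by iterating the building-block structure: if $\boldsymbol{\lambda}(\mathcal{P}_{\Lambda_T(2,n_0)}(X_{n_0}))\ge c_0>1$ (which holds as soon as $X_{n_0}\not\equiv\ell_1^{n_0}$, by \eqref{supo}-type bounds applied to bilinear forms together with the characterization that $M_{(1)}(X_{n_0})>1$ forces $\boldsymbol{\lambda}(\mathcal{P}_2(X_{n_0}))>1$), then tensorizing or taking higher homogeneous degrees should give $\boldsymbol{\lambda}(\mathcal{P}_m(X_{n_0}))\gtrsim c_0^{m/2}$; the delicate point is ensuring the submultiplicativity goes the right way, for which I would use the factorization estimates of Theorem~\ref{gl_versus_proj} in reverse together with the lower bound $\boldsymbol{\lambda}(\mathcal{P}_m)\pi_1(\id)\ge\dim$. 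The equivalence $(iii)\Leftrightarrow(iv)$ is then immediate: for separable $X$, $X=\ell_1$ as a sequence lattice if and only if its sections $X_n$ are $\ell_1^n$ with uniformly bounded equivalence constants, since the fundamental function and lattice norm are determined by their finite sections.
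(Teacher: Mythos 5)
Your treatment of the easy implications is fine and matches the paper: $(iii)\Rightarrow(ii)$ via the bound $\boldsymbol{\lambda}\big(\mathcal{P}_m(\ell_1^n)\big)\le e^m$ (which indeed follows from Theorem~\ref{lambda-dash} and the multinomial identity), and $(ii)\Leftrightarrow(i)$ via Theorem~\ref{degree-homo}. The problem is the substantive direction $(i)\Rightarrow(iii)$, where your proposal has a genuine gap and, as you yourself half-admit, no actual argument. Your contrapositive strategy rests on two unsubstantiated mechanisms: first, that $X_{n_0}\not\equiv\ell_1^{n_0}$ at a \emph{single} dimension $n_0$ forces $\boldsymbol{\lambda}\big(\mathcal{P}_2(X_{n_0})\big)\ge c_0$ for some $c_0>1$ in a quantitatively useful way; and second, that this propagates supermultiplicatively to $\boldsymbol{\lambda}\big(\mathcal{P}_m(X_{n_0})\big)\gtrsim c_0^{m/2}$. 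Neither is available. Projection constants of tensor/polynomial powers are not supermultiplicative in general, the inequalities of Theorem~\ref{gl_versus_proj} and \eqref{supo} all run in the wrong direction for a lower bound, and the negation of (iii) concerns the \emph{unboundedness in $n$} of the equivalence constants $\|\id\colon X_n\to\ell_1^n\|$ rather than any defect at a fixed $n_0$ — so even if your bootstrapping worked it would be attacking the wrong statement.

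The paper's proof of $(ii)\Rightarrow(iii)$ goes through an entirely different, linear-theoretic route that your proposal misses: the case $m=1$ of the hypothesis already gives $\sup_n\boldsymbol{\lambda}(X_n^\ast)=\sup_n\gamma_\infty(\id_{X_n^\ast})=\sup_n\gamma_1(\id_{X_n})\le C$, since $\mathcal{P}_1(X_n)=X_n^\ast$. Factorizing $\id_{X_n}$ through $L_1(\mu)$ and applying Grothendieck's theorem, one gets $\pi_1(D_\lambda\colon X_n\to\ell_2^n)\le 2K_GC\,\|D_\lambda\|$ uniformly for all diagonal operators $D_\lambda$, and then the Lindenstrauss--Pe{\l}czy\'nski-type characterization (\cite[Proposition 33.6]{tomczak1989banach}) yields $\|\id\colon X_n\to\ell_1^n\|\le(2K_GC)^2$ uniformly, i.e.\ (iii). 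No polynomial structure beyond degree one is needed for the converse, which is exactly the reduction your sketch fails to find. To repair your proposal you should discard the tensorization idea and instead observe that hypothesis (ii) for $m=1$ is the statement that the duals $X_n^\ast$ have uniformly bounded projection constants, then invoke the classical local theory as above.
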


\begin{proof}
By Theorem~\ref{degree-homo} the first two statements are equivalent. Again Theorem~\ref{degree-homo} combined with
the well known estimate $\boldsymbol{\lambda}\big(\mathcal{P}_{ m}(\ell_1^n)\big) \leq  e^m$ yields
$\boldsymbol{\lambda}\big(\mathcal{P}_{ \leq m}(\ell_1^n)\big) \leq  (m+1) e^{m}$. Thus we need only show that (ii)
implies (iii). Under the assumption (ii) we have that
\[
\sup_{n} \gamma_1(X_n)  = \sup_{n} \gamma_\infty(X_n^\ast) =\sup_{n} \boldsymbol{\lambda}(X_n^\ast) \leq C:= \sup_{m,n} \boldsymbol{\lambda}\big(\mathcal{P}_{  m}(X_n)\big)^\frac{1}{m} < \infty\,.
\]
We prove that there is $K >0$ such that for any diagonal operator $D_\lambda: X_n  \to \ell_n^2, \,(e_i) \mapsto (\lambda_i e_i)$
we have
\[
\sup_n \pi_1(D_\lambda\colon X_n\to \ell^n_2) \leq  K \|D_\lambda\|\,.
\]
Indeed, take a factorization $\id = uv$, where $u\colon X_n \to L_1(\mu)$ and $v\colon  L_1(\mu) \to X_n $ for some measure $\mu$
and  $\|u\| \|v\| \leq  2  \gamma_1(X_n)$. From the classical Grothendieck's theorem, it follows that
\[
\pi_1 (D_\lambda \circ v) \leq K_G  \|D_\lambda \circ v\|\,,
\]
and hence
\begin{align*}
\pi_1(D_\lambda\colon X_n\to \ell^n_2) & =  \pi_1(D_\lambda\circ v \circ u)
\leq \pi_1(D_\lambda\circ v) \|u\| \\
& \leq K_G  \|D_\lambda\| \|v\| \|u\|\leq  K_G  \|D_\lambda\| 2  \gamma_1(X_n) \leq  2 K_G C  \|D_\lambda\|\,.
\end{align*}
Then we deduce from \cite[Proposition 33.6]{tomczak1989banach} (a result which essentially goes  back to Lindenstrauss and
Pe{\l}czy\'nski) that
\[
\|\id\colon  X_n \to \ell_1^n \| \leq K=(2 K_G C)^2\,,
\]
and this completes the proof.
\end{proof}

Determining whether, in Theorem \ref{Xl1}, the projection constant of $\mathcal{P}_{m}(X_n)$ can be replaced by the unconditional
basis constant of the monomial basis remains an open problem, known as 'Lempert's problem' in \cite{defant2019libro}. Specifically,
it is unknown if $X \equiv \ell_1$ if and only if $\sup_{m,n} \boldsymbol{\chimon}\left(\mathcal{P}_{  m}(X_n)\right)^{\frac{1}{m}}< \infty$.

We recall a significant partial solution provided in \cite{bayart2012maximum} (see also \cite[Theorem 20.26]{defant2019libro}),
which essentially shows that in this case $X$ is very similar to $\ell_1$: there is $D=D(\varepsilon) >0$ such that
$\|\id\colon X_n \to \ell_1^n \| \,\,\leq D \,\,(\log \log n)^\varepsilon\,$ for all $n$.

% \begin{theorem}
% Let $X$ be a Banach sequence lattice for which
% $$
% \sup_{m,n} \,\boldsymbol{\chimon}\big(\mathcal{P}_{ m}(X_n)\big)^\frac{1}{m} < \infty
% \,,$$
% then, for each $\varepsilon >0$, there is $D=D(\varepsilon) >0$ such that
% $\|\id\colon X_n \to \ell_1^n \| \,\,\leq D \,\,(\log \log n)^\varepsilon\,$ for all $n$.
% \end{theorem}

As we have observed, the projection constant and the unconditional basis constant are closely related. This indicates that, in
a~certain sense, the results of Theorem \ref{Xl1} can be viewed as a solution to a weaker version of Lempert's problem.

\subsection{Probabilistic estimates}

\label{Probabilistic estimates}

For future reference, we establish several probabilistic lower bounds for the unconditional basis constant of the monomial basis
$(z^\alpha)_{\alpha\in J}$ in the space $\mathcal{P}_{J}(X_n)$ of multivariate polynomials supported on $J$. These bounds are
derived under specific conditions on the underlying Banach sequence lattice $X_n$ and the index set $J \subset \mathbb{N}_0^{n}$.

We closely follow the methods from \cite{bayart2012maximum}, \cite{boas2000majorant}, \cite{defant2004maximum}, \cite{defant2020subgaussian},
and \cite{mastylo2017kahane}, which primarily address the homogeneous case $J = \Lambda(m,n)$. In particular, we will rely on two key
theorems from Bayart’s work in \cite{bayart2012maximum}, which we enclose below for completeness. It is important to note  that the subtle
proofs of these results utilize different techniques: the "coverage method" and the "entropy method." For all the necessary preliminary tools
and proofs of these results, we refer to \cite{defant2019libro}, specifically Corollary~17.5 and Corollary~17.22.

\begin{theorem} \label{Anquetil alfa}
Given  $1 \leq r \leq 2$, there is a constant $C >0$ such that for  each $m \geq 2$, for every Banach space $X_n = (\mathbb{C}^n,\|\cdot\|)$,
and for every choice of scalars $(c_{\alpha})_{\alpha \in \Lambda(m,n)}$ there exists a choice of signs $\varepsilon_{\alpha} = \pm1, \, \alpha \in \Lambda(m,n)$ such that
\[
\sup_{z \in B_{X_n}} \Big\vert \sum_{\alpha \in \Lambda(m,n)} \varepsilon_{\alpha} c_{\alpha} z^{\alpha} \Big\vert
\leq C (n \log m)^{\frac{1}{r'}}  \sup_{\alpha}  \Big(\vert c_{\alpha} \vert \Big( \frac{\alpha !}{m!}\Big)^{\frac{1}{r}} \Big)
\sup_{z \in B_{X_n}} \Big( \sum_{k=1}^{n} \vert z_{k} \vert^{r} \Big)^{\frac{m}{r}}\,.
\]
\end{theorem}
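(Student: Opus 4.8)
The plan is the classical Kahane--Salem--Zygmund scheme. Place independent Rademacher signs $(\varepsilon_\alpha)_{\alpha\in\Lambda(m,n)}$ on a probability space and consider the random $m$-homogeneous polynomial
\[
P_\omega(z)=\sum_{\alpha\in\Lambda(m,n)}\varepsilon_\alpha(\omega)\,c_\alpha\,z^\alpha\,.
\]
Writing $A:=\sup_\alpha|c_\alpha|(\alpha!/m!)^{1/r}$ and $T:=\sup_{z\in B_{X_n}}\big(\sum_{k=1}^n|z_k|^r\big)^{m/r}$, it suffices to show $\EE\,\|P_\omega\|_{B_{X_n}}\le C\,(n\log m)^{1/r'}\,A\,T$ for a universal constant $C$; a first-moment argument then yields a realization $\omega$ with the desired signs.

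Fix $z\in B_{X_n}$. Since $P_\omega(z)$ is a linear combination of the Rademacher variables with coefficients $c_\alpha z^\alpha$, it is subgaussian: $\mathbb{P}\big(|P_\omega(z)|>t\big)\le C\exp\big(-c\,t^2/\sigma(z)^2\big)$ with $\sigma(z)^2=\sum_\alpha|c_\alpha|^2|z^\alpha|^2$. The clean elementary step is to bound $\sigma(z)$: from $|c_\alpha|\le A\,(m!/\alpha!)^{1/r}$ one gets $|c_\alpha|^2|z^\alpha|^2\le A^2\,b_\alpha^{2/r}$ with $b_\alpha:=(m!/\alpha!)\,|z^\alpha|^r\ge 0$, and since $2/r\ge 1$ the inequality $\sum_\alpha b_\alpha^{2/r}\le\big(\sum_\alpha b_\alpha\big)^{2/r}$ together with the multinomial identity $\sum_\alpha b_\alpha=\big(\sum_{k=1}^n|z_k|^r\big)^m$ gives $\sigma(z)\le A\,\big(\sum_{k=1}^n|z_k|^r\big)^{m/r}\le A\,T$. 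This already reproduces the factors $A$ and $T$ of the target bound and reduces everything to controlling the cost of taking the supremum over $B_{X_n}$.

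The main obstacle is precisely this uniformization: proving $\EE\,\sup_{z\in B_{X_n}}|P_\omega(z)|\lesssim(n\log m)^{1/r'}\,A\,T$ with the correct dependence on $m$ (namely $(\log m)^{1/r'}$, not $m^{1/2}$) and the sharp exponent $1/r'$. A crude $\delta$-net does not suffice: a polarization/telescoping estimate for $|P(z)-P(w)|$ transfers the supremum from a net to the ball with only a constant loss provided $\delta\sim e^{-m}/m$, but a union bound over the resulting $e^{O(nm)}$ net points only yields the wasteful factor $\sqrt{nm}$. To reach the sharp bound one follows Bayart's two refinements --- the ``coverage method'' and the ``entropy method'': build the net adapted to the $m$-homogeneity and to the $\ell_r$-geometry of $B_{X_n}$, estimate the covering numbers $N(B_{X_n},d,s)$ of $B_{X_n}$ in the process metric $d(z,w)^2=\sum_\alpha|c_\alpha|^2|z^\alpha-w^\alpha|^2$, and run Dudley's entropy integral $\int_0^{\operatorname{diam}}\sqrt{\log N(B_{X_n},d,s)}\,ds$; the interplay of the ambient real dimension $2n$, the degree $m$ (through $|z^\alpha-w^\alpha|\lesssim m\,\|z-w\|$ on the relevant scale) and the $\ell_r$-structure is what assembles the factor $(n\log m)^{1/r'}$. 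As the statement is quoted verbatim from \cite{bayart2012maximum} (see also \cite[Corollary~17.5 and Corollary~17.22]{defant2019libro}), one may alternatively just invoke it; the sketch above only records the shape of the argument.
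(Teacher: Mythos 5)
The paper does not prove this theorem at all: it is quoted from Bayart \cite{bayart2012maximum}, and for the proof the authors refer to \cite[Corollary 17.5]{defant2019libro}. Your closing option of simply invoking that reference therefore coincides exactly with what the paper does, and to that extent the proposal is fine. As for the sketch itself: the reduction is correct as far as it goes. The first-moment argument, the subgaussian tail bound for fixed $z$, and in particular the estimate $\sigma(z)\le A\,T$ via $|c_\alpha|^2|z^\alpha|^2\le A^2 b_\alpha^{2/r}$, the inequality $\sum_\alpha b_\alpha^{2/r}\le(\sum_\alpha b_\alpha)^{2/r}$ for $2/r\ge 1$, and the multinomial identity $\sum_\alpha (m!/\alpha!)|z|^{r\alpha}=(\sum_k|z_k|^r)^m$ are all valid, and they are indeed the source of the factors $A$ and $T$ in the target bound. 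But be aware that, taken as a standalone proof, the argument stops precisely where the difficulty begins: the uniformization over $B_{X_n}$ with the factor $(n\log m)^{1/r'}$ rather than the $\sqrt{nm}$ that a naive net plus union bound produces. You correctly name the entropy and coverage methods and the Dudley integral as the missing machinery, but you do not execute them, so the sketch on its own has a gap at exactly that step. Since the paper defers to the literature for the same step, this is acceptable provided the citation is retained as the actual justification.
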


\medskip

\begin{theorem} \label{Hinault alfa}
Given  $1 \leq r \leq 2$, there is a constant $C >0$ such that for each  $m \geq 2$, for every Banach space $X_n = (\mathbb{C}^n,\|\cdot\|)$,
and for every choice of scalars $(c_{\alpha})_{\alpha \in \Lambda(m,n)}$ there exists a choice of signs $\varepsilon_{\alpha} = \pm1, \, \alpha \in \Lambda(m,n)$ such that
\begin{align*}
\sup_{z \in B_{X_n}} \Big\vert \sum_{\alpha \in \Lambda(m,n)} \varepsilon_{\alpha} c_{\alpha} z^{\alpha} \Big\vert
\leq C m (\log n)^{1+\frac{1}{r'}}   \sup_{\alpha}  \bigg( \vert c_{\alpha} \vert \Big( \frac{\alpha !}{m!} \Big)^{\frac{1}{r}} \bigg)
\sup_{z \in B_{X_n}} \Big( \sum_{k=1}^{n} \vert z_{k} \vert^{r} \Big)^{\frac{m-1}{r}}  \sup_{z \in B_{X_n}} \sum_{k=1}^{n} \vert z_{k} \vert\,.
\end{align*}
\end{theorem}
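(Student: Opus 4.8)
The plan is to derive Theorem~\ref{Hinault alfa} --- a Kahane--Salem--Zygmund type inequality --- from a bound on the \emph{expected} supremum norm of the associated random polynomial, following Bayart's argument and its exposition in \cite[Chapter~17]{defant2019libro}; I outline the ``entropy method'' route, which is the one that produces the logarithmic factor $(\log n)^{1+1/r'}$ (the companion Theorem~\ref{Anquetil alfa} follows from the same scheme but with a crude net in place of the chaining step). First I would invoke the standard selection principle: if $(\varepsilon_\alpha)_{\alpha\in\Lambda(m,n)}$ are i.i.d.\ Rademacher variables, it suffices to bound $\mathbb{E}_\varepsilon\sup_{z\in B_{X_n}}\big|\sum_\alpha\varepsilon_\alpha c_\alpha z^\alpha\big|$ by the right-hand side of the asserted inequality, since some realisation of the signs then attains at most the average. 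By the comparison between Rademacher and Gaussian averages in a Banach space, this is dominated, up to a universal constant, by $\mathbb{E}_g\sup_{z\in B_{X_n}}|G(z)|$ with $G(z)=\sum_\alpha g_\alpha c_\alpha z^\alpha$, $(g_\alpha)$ i.i.d.\ complex Gaussians; separating real and imaginary parts reduces matters to estimating the expected supremum of a centered Gaussian process on $B_{X_n}$ whose canonical pseudometric is $d(z,w)=\big(\sum_{\alpha\in\Lambda(m,n)}|c_\alpha|^2|z^\alpha-w^\alpha|^2\big)^{1/2}$.

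The engine is Dudley's chaining inequality, $\mathbb{E}_g\sup_z|G(z)|\lesssim\int_0^\infty\big(\log N(B_{X_n},d,t)\big)^{1/2}\,dt$, so everything reduces to covering numbers of $(B_{X_n},d)$. To understand $d$ I would telescope each monomial over its $m$ slots, $|z^\alpha-w^\alpha|\le\sum_{j=1}^m|z_{i_1}\cdots z_{i_{j-1}}|\,|z_{i_j}-w_{i_j}|\,|w_{i_{j+1}}\cdots w_{i_m}|$, then in $d(z,w)^2=\sum_\alpha|c_\alpha|^2|z^\alpha-w^\alpha|^2$ pull out $A:=\sup_\alpha\big(|c_\alpha|(\alpha!/m!)^{1/r}\big)$, bound the $m-1$ ``frozen'' slots by $\big(\sup_z\sum_k|z_k|^r\big)^{(m-1)/r}$ using H\"older across the slots together with the multinomial identity $\sum_{\alpha\in\Lambda(m,n)}\tfrac{m!}{\alpha!}|z^\alpha|=(\sum_k|z_k|)^m$, and thereby dominate $d$ by $m\,A\,\big(\sup_z\sum_k|z_k|^r\big)^{(m-1)/r}$ times a ``one-slot'' metric on $B_{X_n}$ coming from the identity $X_n\to\ell_r^n$ (and, in the worst slot, from $X_n\to\ell_1^n$, which is the source of the final factor $\sup_z\sum_k|z_k|$).

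Plugging this into the Dudley integral, the remaining task is to bound the metric entropy of $B_{X_n}$ in the one-slot metric; this is controlled by the classical asymptotics of the entropy numbers of identities between finite-dimensional $\ell_p$-spaces (Sch\"utt's theorem for $e_k(\ell_1^n\to\ell_r^n)$), and, crucially, the dyadic sum $\sum_{j\ge0}2^{j/2}e_{2^j}$ of those entropy numbers is of order $(\log n)^{1+1/r'}$ rather than a power of $n$ --- because the entropy numbers stay of constant order up to index $\sim\log n$ and decay afterwards. Multiplying the constant $C$, the linear factor $m$ from the telescoping, the entropy sum $(\log n)^{1+1/r'}$, the factor $A=\sup_\alpha(|c_\alpha|(\alpha!/m!)^{1/r})$, the frozen-slot factor $\big(\sup_z\sum_k|z_k|^r\big)^{(m-1)/r}$, and the diameter factor $\sup_z\sum_k|z_k|$ produces exactly the claimed bound.

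I expect the metric-entropy estimate of the last step to be the genuine obstacle: getting the \emph{sharp} power of $\log n$ (and not a power of $n$) requires the precise two-regime behaviour of the entropy numbers of $\ell_1^n\to\ell_r^n$ and a careful accounting of how the $m-1$ frozen coordinates interact with the single chained coordinate --- this is precisely the point at which the ``entropy method'' of Theorem~\ref{Hinault alfa} diverges from the ``coverage method'' of Theorem~\ref{Anquetil alfa}, which simply replaces the chaining by a union bound over an $\varepsilon$-net of $(B_{X_n})^m$ of cardinality $m^{O(n)}$ and therefore pays the $(n\log m)^{1/r'}$ factor appearing there. For the complete arguments, including the multilinear/polarization bookkeeping and the exact entropy-number inputs, see \cite[Corollaries~17.5 and~17.22]{defant2019libro}.
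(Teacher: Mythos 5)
This statement is not proved in the paper at all: it is Bayart's theorem, quoted verbatim, and the authors explicitly refer the reader to \cite{bayart2012maximum} and to \cite[Corollary 17.22]{defant2019libro} for the proof. Your proposal therefore cannot be compared with a proof in the paper; what it does is reconstruct, at the level of strategy, the entropy-method argument from the cited reference (selection principle for random signs, passage to a subgaussian process with pseudometric $d(z,w)=\bigl(\sum_\alpha|c_\alpha|^2|z^\alpha-w^\alpha|^2\bigr)^{1/2}$, telescoping over the $m$ slots of the associated multilinear form to extract the factors $m$, $\sup_\alpha|c_\alpha|(\alpha!/m!)^{1/r}$, $\bigl(\sup_z\sum_k|z_k|^r\bigr)^{(m-1)/r}$ and $\sup_z\sum_k|z_k|$, and Dudley's entropy integral for the remaining one-slot metric). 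That architecture is correct and matches the source, and your deferral of the technical details to \cite[Corollary 17.22]{defant2019libro} is exactly what the paper itself does, so in that sense your treatment is consistent with the paper's.

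However, taken as a proof your proposal has a gap precisely at the step you yourself flag as the obstacle, and your account of it is not right as stated. You claim the factor $(\log n)^{1+1/r'}$ arises because ``the dyadic sum $\sum_{j\ge 0}2^{j/2}e_{2^j}$'' of the entropy numbers of $\id\colon\ell_1^n\to\ell_r^n$ is of that order. With Sch\"utt's two-regime asymptotics, a sum of the form $\sum_k e_k/\sqrt{k}$ for identities between finite-dimensional $\ell_p$-spaces is of order $(\log n)^{1/2}$ (the constant-order regime $k\lesssim\log n$ contributes $\sum_{k\le\log n}k^{-1/2}\sim(\log n)^{1/2}$ and the decaying regime is comparable), so this computation does not produce $(\log n)^{1+1/r'}$; the extra powers of $\log n$ in Bayart's bound come from a more delicate analysis of the entropy integral (the range of scales over which the covering numbers must be controlled, and the truncation of the integral using $|\Lambda(m,n)|\le n^m$), not from the bare dyadic entropy sum. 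Since neither you nor the paper carries out this computation, the decisive quantitative step --- obtaining the sharp power $(\log n)^{1+1/r'}$ rather than some other power of $\log n$ or of $n$ --- remains unverified in your proposal and is simply outsourced to \cite{defant2019libro}.
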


\smallskip

We present some lower bounds for $\boldsymbol{\chimon}\big(\mathcal{P}_J(X_n)\big)$, where $X_n$ is the $n$th section of a Banach sequence
lattice $X$ and the index set $J \subset \mathbb{N}_0^{(\mathbb{N})}$ contains all $m$-homogeneous tetrahedral multi-indices  of length $m\leq n$. The
proofs of these estimates are based on Theorem~\ref{Anquetil alfa}, Theorem~\ref{Hinault alfa} and  the following lemma.

\begin{lemma} \label{innichenA}
For each  $1 \leq r \leq 2$, there is a constant $C >0$ such that for every Banach  lattice $X_n = (\mathbb{C}^n, \|\cdot \|)$
and for each $m \le n$ one has
\[
\frac{1}{\|\id\colon X_n\to \ell_r^n\|^m}\frac{|\Lambda_T(m,n)|}{\varphi_{X_n}(n)^m n^{\frac{1}{r'}} m ^{-\frac{m}{r}}}
\,\,\leq\,\, C e^{\frac{m}{r}} (\log m)^{1/r'} \boldsymbol{\chimon}\big(\mathcal{P}_{\Lambda_T(m)}(X_n)\big)\,.
\]
\end{lemma}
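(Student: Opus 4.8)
The plan is to feed a polynomial supported on the tetrahedral indices into the randomization bound of Theorem~\ref{Anquetil alfa}, and then to compare the resulting average with the norm of the corresponding ``all ones'' polynomial, which is trivial to bound from below.

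Since the case $m=1$ is immediate, I assume $m\ge 2$. I would apply Theorem~\ref{Anquetil alfa} with the coefficients $c_\alpha=1$ for $\alpha\in\Lambda_T(m,n)$ and $c_\alpha=0$ otherwise; this produces signs $(\varepsilon_\alpha)_{\alpha\in\Lambda_T(m,n)}$ with
\[
\Big\|\sum_{\alpha\in\Lambda_T(m,n)}\varepsilon_\alpha z^\alpha\Big\|_{\mathcal P_{\Lambda_T(m)}(X_n)}
\le C\,(n\log m)^{1/r'}\,\sup_{\alpha\in\Lambda_T(m,n)}\Big(\frac{\alpha!}{m!}\Big)^{1/r}\,\sup_{z\in B_{X_n}}\Big(\sum_{k=1}^{n}|z_k|^r\Big)^{m/r}.
\]
Three elementary observations simplify the right-hand side: every tetrahedral $\alpha$ satisfies $\alpha!=1$; the bound $m!\ge (m/e)^m$ gives $(1/m!)^{1/r}\le e^{m/r}m^{-m/r}$; and $\sup_{z\in B_{X_n}}\big(\sum_{k}|z_k|^r\big)^{m/r}=\|\id\colon X_n\to\ell_r^n\|^m$. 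Splitting $(n\log m)^{1/r'}=n^{1/r'}(\log m)^{1/r'}$ then yields
\[
\Big\|\sum_{\alpha\in\Lambda_T(m,n)}\varepsilon_\alpha z^\alpha\Big\|_{\mathcal P_{\Lambda_T(m)}(X_n)}
\le C\,n^{1/r'}(\log m)^{1/r'}\,e^{m/r}\,m^{-m/r}\,\|\id\colon X_n\to\ell_r^n\|^m.
\]

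Next I would lower bound the unsigned polynomial by evaluating at the test point $z^\ast:=\varphi_{X_n}(n)^{-1}(1,\dots,1)$, which lies in $\overline B_{X_n}$ because $\|(1,\dots,1)\|_{X_n}=\varphi_{X_n}(n)$; since $(z^\ast)^\alpha=\varphi_{X_n}(n)^{-m}$ for every $\alpha\in\Lambda_T(m,n)$, this gives
\[
\Big\|\sum_{\alpha\in\Lambda_T(m,n)}z^\alpha\Big\|_{\mathcal P_{\Lambda_T(m)}(X_n)}\ge \frac{|\Lambda_T(m,n)|}{\varphi_{X_n}(n)^m}.
\]
Feeding the scalars $a_\alpha=\varepsilon_\alpha$ together with the sign choice $(\varepsilon_\alpha)$ into the defining inequality of $\boldsymbol{\chimon}\big(\mathcal P_{\Lambda_T(m)}(X_n)\big)$ (so that $\varepsilon_\alpha^2=1$ on the left) yields
\[
\Big\|\sum_{\alpha\in\Lambda_T(m,n)}z^\alpha\Big\|\le \boldsymbol{\chimon}\big(\mathcal P_{\Lambda_T(m)}(X_n)\big)\,\Big\|\sum_{\alpha\in\Lambda_T(m,n)}\varepsilon_\alpha z^\alpha\Big\|.
\]
Combining the last three displays and moving the factors $\|\id\colon X_n\to\ell_r^n\|^m$, $n^{1/r'}$ and $m^{-m/r}$ to the appropriate side produces precisely the asserted inequality.

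There is no serious obstacle once Theorem~\ref{Anquetil alfa} is in hand; the proof is essentially bookkeeping. The points that need care are the choice $c_\alpha\equiv 1$ (rather than, say, the characteristics $c_{X_n}(\alpha)$), which makes numerator and denominator line up with the target expression; the elementary estimates for $\alpha!$ and $m!$; and invoking the unconditional basis inequality in the direction that bounds the unsigned polynomial by the randomized one. The only genuinely ``chosen'' ingredient is the test point $z^\ast=\varphi_{X_n}(n)^{-1}(1,\dots,1)$, which is natural because it makes all tetrahedral monomials equal.
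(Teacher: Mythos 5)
Your proposal is correct and follows essentially the same route as the paper: apply Theorem~\ref{Anquetil alfa} to the tetrahedral coefficients $c_\alpha\equiv 1$, test the unsigned polynomial at $\varphi_{X_n}(n)^{-1}(1,\dots,1)$, and pass between the signed and unsigned sums via the unconditional basis constant, using $\alpha!=1$ and $m^m\le e^m m!$ exactly as in the paper. The only cosmetic caveat is that (as in the paper's own proof) the argument really starts from $m\ge 2$, where Theorem~\ref{Anquetil alfa} applies and $(\log m)^{1/r'}>0$.
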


\begin{proof}
Clearly, taking $z = (\varphi_{X_n}(n)^{-1}, \ldots,\varphi_{X_n}(n)^{-1}) \in B_{X_n}$, we get
\[
\frac{|\Lambda_T(m,n)|}{\varphi_{X_n}(n)^m} \leq \sup_{z \in B_{X_n}} \Big| \sum_{\alpha \in \Lambda_T(m,n)} z^\alpha\Big|\,.
\]
Then, it follows from Theorem~\ref{Anquetil alfa} that we  find signs $\varepsilon_\alpha = \pm 1, \alpha \in \Lambda_T(m,n)$
for which for all  $m \leq n$
\begin{align*}
\sup_{z \in B_{X_n}} \Big| \sum_{\alpha \in \Lambda_T(m,n)} \varepsilon_\alpha  z^\alpha\Big|
& \leq C \,\,  (n \log m)^{1/r'} \sup_{\alpha \in \Lambda_T(m,n)} \Big(\frac{\alpha!}{m!}\Big)^{1/r}
\sup_{z \in B_X} \Big(\sum_{k=1}^n |z_k|^r\Big)^{m/r} \\
& \leq C \,\, (\log m)^{1/r'} n ^{1/r'}m!^{-1/r} \sup_{z \in B_X} \Big(\sum_{k=1}^n |z_k|^r\Big)^{m/r}   \nonumber\\
& =  C \,\, (\log m)^{1/r'} n ^{1/r'}m!^{-1/r} \|\id \colon X_n\to \ell_r^n\|^m\,,  \nonumber
\end{align*}
where $C >0$ is a constant only depending on $r$\,. Using  that $m^m \leq e^m m!$, we finally arrive at
\begin{align*}
\frac{|\Lambda_T(m,n)|}{\varphi_{X_n}(n)^m} & \leq
\sup_{z \in B_{X_n}} \Big| \sum_{\alpha \in  \Lambda_T(m,n)} \varepsilon_\alpha \varepsilon_\alpha z^\alpha \Big| \\
& \leq  \boldsymbol{\chimon} \big(\mathcal{P}_{ \Lambda_T(m,n)}\big)
\sup_{z \in B_{X_n}} \Big| \sum_{\alpha \in \Lambda_T(m,n)} \varepsilon_\alpha z^\alpha\Big| \\
& \leq \boldsymbol{\chimon} \big(\mathcal{P}_{\Lambda_T(m,n)}\big)
\,C\,(\log m)^{1/r'}n^{1/r'} (m!)^{-1/r}\|\id \colon X_n\to \ell_r^n\|^m \\
& \leq \boldsymbol{\chimon} \big(\mathcal{P}_{ \Lambda_T(m,n)}\big)
\,C\, e^{\frac{m}{r}}(\log m)^{1/r'} n^{1/r'} (m!)^{-m/r} \|\id \colon X_n \to \ell_r^n\|^{m}\,. \qedhere
\end{align*}
\end{proof}

We are now ready to prove the aforementioned lower bounds.

\begin{proposition} \label{innichen1}
Let $1 \leq r \leq 2$, and let $X$ be a Banach sequence lattice such that $\varphi_X(n)\prec n^{1/r}$. Then, for any
sequence $\big(J_m\big)$ of index sets, each with degree at most $m$ and $\Lambda_T(m) \subset J_m$, the following
estimate holds{\rm:}
\[
\frac{1}{\|\mathrm{id}\colon X_n\to \ell_r^n\|^m} \Big( \frac{n}{m} \Big)^{\frac{m-1}{r'}}
\prec C^{m} \boldsymbol{\chimon}\big(\mathcal{P}_{J_m}(X_n) \big)\,,
\]
where $C \ge 1$ is a constant depending only on $X$.
\end{proposition}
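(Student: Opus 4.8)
The plan is to combine Lemma~\ref{innichenA} with two elementary reductions: first, passing from $\mathcal{P}_{J_m}(X_n)$ to its homogeneous building block $\mathcal{P}_{\Lambda_T(m)}(X_n)$, and second, estimating the combinatorial quantity $|\Lambda_T(m,n)|$ from below. For the first reduction, note that since $\Lambda_T(m) \subset J_m$, the monomials $(z^\alpha)_{\alpha \in \Lambda_T(m)}$ form a subfamily of the monomial basis of $\mathcal{P}_{J_m}(X_n)$, and $\mathcal{P}_{\Lambda_T(m)}(X_n)$ embeds isometrically into $\mathcal{P}_{J_m}(X_n)$ via inclusion (indeed, it is the image of the norm-one projection $\mathbf{Q}_{J_m, \Lambda_T(m)}$ from Proposition~\ref{prop: homogeneous part proj constant}). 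Since passing to a subfamily of an unconditional basis can only decrease the unconditional basis constant, we get $\boldsymbol{\chimon}\big(\mathcal{P}_{\Lambda_T(m)}(X_n)\big) \leq \boldsymbol{\chimon}\big(\mathcal{P}_{J_m}(X_n)\big)$. So it suffices to prove the stated lower bound with $J_m$ replaced by $\Lambda_T(m)$.

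Next I would invoke Lemma~\ref{innichenA}, which already furnishes
\[
\frac{1}{\|\id\colon X_n\to \ell_r^n\|^m}\,\frac{|\Lambda_T(m,n)|}{\varphi_{X_n}(n)^m\, n^{1/r'}\, m^{-m/r}}
\,\,\leq\,\, C\, e^{m/r}\, (\log m)^{1/r'}\, \boldsymbol{\chimon}\big(\mathcal{P}_{\Lambda_T(m)}(X_n)\big)\,.
\]
The remaining work is purely to massage the left-hand side. Here I use three facts: (a) the binomial estimate $|\Lambda_T(m,n)| = \binom{n}{m} \geq (n/m)^m$ valid for $m \leq n$; (b) the hypothesis $\varphi_X(n) \prec n^{1/r}$, which gives $\varphi_{X_n}(n)^m \prec C^m n^{m/r}$ (note $\varphi_{X_n} = \varphi_X\restrict{\{1,\dots,n\}}$); and (c) absorbing the subexponential factors $e^{m/r}$, $(\log m)^{1/r'}$, and the polynomial factor $n^{1/r'}m^{m/r}$ appropriately. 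Combining (a) and (b),
\[
\frac{|\Lambda_T(m,n)|}{\varphi_{X_n}(n)^m\, m^{-m/r}}
\;\geq\; \frac{(n/m)^m\, m^{m/r}}{C^m n^{m/r}}
\;=\; \frac{1}{C^m}\,\Big(\frac{n}{m}\Big)^{m}\Big(\frac{m}{n}\Big)^{m/r}
\;=\; \frac{1}{C^m}\,\Big(\frac{n}{m}\Big)^{m/r'}\,,
\]
so dividing by the extra $n^{1/r'}$ on the left and bounding $(n/m)^{m/r'}/n^{1/r'} \geq (n/m)^{(m-1)/r'}$ (using $m\geq 1$, $m\leq n$), we obtain after rearranging and absorbing $e^{m/r}(\log m)^{1/r'} \prec C^m$ into the constant:
\[
\frac{1}{\|\id\colon X_n\to \ell_r^n\|^m}\,\Big(\frac{n}{m}\Big)^{(m-1)/r'}
\;\prec\; C^m\,\boldsymbol{\chimon}\big(\mathcal{P}_{\Lambda_T(m)}(X_n)\big)
\;\leq\; C^m\,\boldsymbol{\chimon}\big(\mathcal{P}_{J_m}(X_n)\big)\,,
\]
which is the claim. (The case $m = n$ needs only the trivial observation that then $(n/m)^{(m-1)/r'} = 1$ while the right-hand side is bounded below by a positive constant to the $m$, so the inequality holds after adjusting $C$; in fact the argument above already covers $m\le n$ uniformly.)

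The main obstacle, such as it is, is purely bookkeeping: one must keep careful track of which factors are genuinely exponential in $m$ (and hence harmless, since the conclusion tolerates a $C^m$) versus which carry the essential $(n/m)^{(m-1)/r'}$ growth, and verify that the $n^{1/r'}$ denominator in Lemma~\ref{innichenA} combines correctly with the $(n/m)^{m/r'}$ gain to leave exactly the exponent $(m-1)/r'$ rather than $m/r'$. There is no analytic difficulty; the probabilistic heavy lifting is entirely contained in Theorem~\ref{Anquetil alfa} and Lemma~\ref{innichenA}, and the role of the hypothesis $\varphi_X(n) \prec n^{1/r}$ is simply to ensure the lattice $X_n$ is "not too large" so that the normalizing vector $(\varphi_{X_n}(n)^{-1},\dots,\varphi_{X_n}(n)^{-1})$ sits in $B_{X_n}$ without costing more than a $C^m$ factor.
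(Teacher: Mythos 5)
Your argument is correct and follows the paper's proof essentially verbatim: reduce to $J_m=\Lambda_T(m)$ by monotonicity of the monomial unconditional basis constant under passing to a sub-index-set, invoke Lemma~\ref{innichenA}, and bound the combinatorial fraction from below via $\binom{n}{m}\ge (n/m)^m$ together with $\varphi_X(n)\prec n^{1/r}$. One small correction: since $(n/m)^{m/r'}/n^{1/r'}=(n/m)^{(m-1)/r'}\,m^{-1/r'}$, your claimed inequality ``$\ge (n/m)^{(m-1)/r'}$'' actually goes the other way; this is harmless, because the deficit $m^{-1/r'}$ is subexponential and gets absorbed into $C^m$ together with the factors $e^{m/r}(\log m)^{1/r'}$ from Lemma~\ref{innichenA}, which is exactly how the paper concludes.
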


\begin{proof}
We only have to consider the case $J = \Lambda_T(m,n)$. Note first that
\begin{equation} \label{binom}
\Big(\frac{n}{m}\Big)^m \leq \binom{n}{m} = |\Lambda_T(m,n)|\,.
\end{equation}
Combining this  with the assumption  $\varphi_X(n) \leq C n^{1/r}$, we get
\[
\frac{|\Lambda_T(m,n)|}{\varphi_{X_n}(n)^m n^{\frac{1}{r'}} m ^{-\frac{m}{r}}} \ge
\frac{n^m}{C^m n^{\frac{m}{r}} n^{\frac{1}{r'}}  m^{\frac{m}{r'}}}
=  \Big( \frac{n}{m} \Big)^{\frac{m-1}{r'}}  \frac{1}{C^m m^\frac{1}{r'}},
\]
and hence the claim is an immediate consequence of Lemma~\ref{innichenA}.
\end{proof}

\begin{proposition} \label{toblach}
Let $X$ be a Banach sequence lattice  such that
\begin{equation} \label{M}
\text{
$\varphi_{X_n}(n)\,\varphi_{X_{n}'}(n) \prec n$
\quad and \quad
$\|\id \colon X_n \to \ell_2^{n}\| \prec \frac{1}{\sqrt{n}}\,\|\id\colon X_n \to \ell_1^{n}\|\,.$}
\end{equation}
Then, for any sequence $\big(J_m\big)$ of index sets, each with degree at most $m$
and $\Lambda_T(m)\subset J_m$, we have{\rm:}
\[
\Big( \frac{n}{m} \Big)^{\frac{m-1}{2}} \prec_{C^m}
\boldsymbol{\chimon}\big(\mathcal{P}_{J_m}(X_n)\big)\,,
\]
where $C \ge 1$ is a constant depending only on $X$.
\end{proposition}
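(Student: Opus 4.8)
The plan is to reduce the statement to the purely tetrahedral case $J_m=\Lambda_T(m,n)$ and then feed the two conditions in~\eqref{M} into Lemma~\ref{innichenA} with $r=2$.

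First I would invoke the (automatic) monotonicity of the monomial unconditional basis constant when passing to a subfamily of monomials. Since $\Lambda_T(m,n)\subseteq J_m$, the space $\mathcal{P}_{\Lambda_T(m)}(X_n)$ is an isometric subspace of $\mathcal{P}_{J_m}(X_n)$ whose monomial basis is a sub-basis of $(z^\alpha)_{\alpha\in J_m}$; restricting the unconditionality inequality for $(z^\alpha)_{\alpha\in J_m}$ to polynomials supported on $\Lambda_T(m,n)$ (with the remaining signs equal to $+1$) gives
\[
\boldsymbol{\chimon}\big(\mathcal{P}_{\Lambda_T(m)}(X_n)\big)\ \le\ \boldsymbol{\chimon}\big(\mathcal{P}_{J_m}(X_n)\big)\,.
\]
Hence it suffices to prove the lower bound for $J_m=\Lambda_T(m,n)$; and if $m>n$ then $\Lambda_T(m,n)=\varnothing$ and $(n/m)^{(m-1)/2}<1\le\boldsymbol{\chimon}\big(\mathcal{P}_{J_m}(X_n)\big)$, so we may assume $m\le n$. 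This is the same reduction used in the proof of Proposition~\ref{innichen1}.

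The heart of the argument is to convert the two conditions in~\eqref{M} into the single estimate $\|\id\colon X_n\to\ell_2^n\|\,\varphi_{X_n}(n)\prec\sqrt n$. For this I would first note the elementary bound $\|\id\colon X_n\to\ell_1^n\|\le\varphi_{X_{n}'}(n)$, which is immediate from the definition of the K\"othe dual norm (for every $z\in\mathbb{C}^n$, $\|z\|_{\ell_1^n}=\sum_k|z_k|\cdot1\le\|z\|_{X_n}\,\varphi_{X_{n}'}(n)$). Combining this with the second condition in~\eqref{M} and then with the first,
\[
\|\id\colon X_n\to\ell_2^n\|\,\varphi_{X_n}(n)\ \prec\ \frac{1}{\sqrt n}\,\|\id\colon X_n\to\ell_1^n\|\,\varphi_{X_n}(n)\ \le\ \frac{1}{\sqrt n}\,\varphi_{X_{n}'}(n)\,\varphi_{X_n}(n)\ \prec\ \frac{n}{\sqrt n}=\sqrt n\,.
\]
Then I would apply Lemma~\ref{innichenA} with $r=r'=2$: rearranging its conclusion gives a lower bound for $\boldsymbol{\chimon}\big(\mathcal{P}_{\Lambda_T(m)}(X_n)\big)$ by $\big(e^{m/2}(\log m)^{1/2}\big)^{-1}$ times $|\Lambda_T(m,n)|\,m^{m/2}\big/\big(\|\id\colon X_n\to\ell_2^n\|^m\varphi_{X_n}(n)^m\,n^{1/2}\big)$. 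Inserting $|\Lambda_T(m,n)|=\binom nm\ge(n/m)^m$ and the displayed bound $\|\id\colon X_n\to\ell_2^n\|\varphi_{X_n}(n)\le C_0\sqrt n$, and using the identity $(n/m)^{m/2}/\sqrt n=(n/m)^{(m-1)/2}/\sqrt m$, the right-hand side simplifies to a universal constant times $C_0^{-m}e^{-m/2}(m\log m)^{-1/2}(n/m)^{(m-1)/2}$. Since $(m\log m)^{1/2}\le m\le 2^m$ for all $m\ge1$, every factor in front is of the form $C^{-m}$ and can be absorbed into the $\prec_{C^m}$, yielding $(n/m)^{(m-1)/2}\prec_{C^m}\boldsymbol{\chimon}\big(\mathcal{P}_{\Lambda_T(m)}(X_n)\big)\le C^m\boldsymbol{\chimon}\big(\mathcal{P}_{J_m}(X_n)\big)$ with $C$ depending only on $X$ (through $C_0$).

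The only genuinely non-formal step is the middle one: recognising that the two a~priori unrelated hypotheses in~\eqref{M} are exactly what is needed to bound the ``error factor'' $\|\id\colon X_n\to\ell_2^n\|^m\varphi_{X_n}(n)^m$ appearing in Lemma~\ref{innichenA} by $(C_0\sqrt n)^m$ — which is precisely the size that makes the main term $\binom nm m^{m/2}$ balance the target $(n/m)^{(m-1)/2}$. Everything else is the monotonicity reduction together with routine bookkeeping of $C^m$-factors.
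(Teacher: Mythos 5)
Your proof is correct and follows essentially the same route as the paper's: reduce to $J_m=\Lambda_T(m,n)$ by monotonicity, combine the two hypotheses in \eqref{M} (via $\|\id\colon X_n\to\ell_1^n\|=\varphi_{X_n'}(n)$) to cancel $\|\id\colon X_n\to\ell_2^n\|^m\varphi_{X_n}(n)^m$ against $n^{m/2}$, and feed $\binom{n}{m}\ge(n/m)^m$ into Lemma~\ref{innichenA} with $r=2$. The only cosmetic difference is that you package the hypothesis combination as the single bound $\|\id\colon X_n\to\ell_2^n\|\,\varphi_{X_n}(n)\prec\sqrt n$, whereas the paper performs the same cancellation inline.
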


\begin{proof}
Since for $x =\frac{e_1}{\|e_1\|_X}$ one has $\|x\|_X=1$, it follows that
\[
\gamma:=\frac{1}{{\|e_1\|_X}} \leq \|\id\colon X_n \to \ell_2^{n}\| \prec \frac{1}{\sqrt{n}}\,\|\id\colon X_n \to \ell_1^{n}\|\,.
\]
Using the estimate $\varphi_{X_n}(n)\,\varphi_{X_{n}'}(n) \prec n$, we get
\[
\gamma \prec \frac{1}{\sqrt{n}}\,\|\id\colon X_n \to \ell_1^{n}\| = \frac{1}{\sqrt{n}}\,\varphi_{X_{n}'}(n)
\prec \frac{\sqrt{n}}{\varphi_{X_n}(n)}\,,
\]
and hence $\varphi_{X_n}(n)\prec \sqrt{n}$. This combined with the above estimate \eqref{binom} yields
\begin{align*}
\frac{1}{\|\id \colon X_n\to \ell_2^n\|^m}\frac{|\Lambda_T(m,n)|}{\varphi_{X_n}(n)^m n^{\frac{1}{2}} m ^{-\frac{m}{2}}}
& \succ \frac{n^\frac{m}{2}}{\varphi_{X_{n}'}(n)^m} \Big(\frac{n}{m}\Big)^m \frac{1}{n^\frac{1}{2}
\varphi_{X_n}(n)^m  m^{-\frac{m}{2}}}
\sim \frac{n^{\frac{m-1}{2}}}{m^{\frac{m}{2}}} = \frac{1}{m^{\frac{1}{2}}}\Big( \frac{n}{m} \Big)^{\frac{m-1}{2}}\,,
\end{align*}
and hence  the conclusion follows (as in the preceding proof) from Lemma~\ref{innichenA}.
\end{proof}

Applying Theorem~\ref{Hinault alfa}, we obtain an additional lower bound that will be crucial later on.

\begin{proposition} \label{innichen}
Let $1 \leq r \leq 2$, and let $X$ be a Banach sequence lattice $X$  such that $\varphi_{X_n}(n) \varphi_{X_{n}'}(n)~\prec~n$.
Then, there exists a~constant $C = C(r,X)$ such that for each $m\le n$, the following estimate holds{\rm:}
\[
\frac{\Big( \frac{n}{n-m} \Big)^{n-m} \sqrt{\frac{n}{m(n-m)}}}{C m\, (\log n)^{1+1/r'} e^{\frac{m}{r}} m^{m/r'}}
\,\,\,\,\left(  \frac{\|\id \colon X_n\to \ell_1^n\|}{\|\id \colon X_n\to \ell_r^n\|}\right)^{m-1}
\,\leq\,
\boldsymbol{\chimon}\big(\mathcal{P}_{\Lambda_T(m)}(X_n)\big)\,.
\]
\end{proposition}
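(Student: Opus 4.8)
The plan is to adapt, nearly verbatim, the argument behind Lemma~\ref{innichenA} and Proposition~\ref{toblach}, replacing Theorem~\ref{Anquetil alfa} by Theorem~\ref{Hinault alfa}. The statement concerns only $\mathcal{P}_{\Lambda_T(m)}(X_n)=\mathcal{P}_{\Lambda_T(m,n)}(X_n)$, so no reduction of the index set is needed; I may also assume $2\le m\le n-1$ (the case $m=1$ is absorbed into the constant, since Theorem~\ref{Hinault alfa} requires $m\ge 2$, and the formula on the left is meant for $m<n$). First I would test the unsigned monomial sum at the diagonal point $z_0:=\varphi_{X_n}(n)^{-1}(1,\dots,1)\in B_{X_n}$, for which $z_0^\alpha=\varphi_{X_n}(n)^{-m}$ whenever $|\alpha|=m$; this gives
\[
\frac{|\Lambda_T(m,n)|}{\varphi_{X_n}(n)^m}\;\le\;\sup_{z\in B_{X_n}}\Big|\sum_{\alpha\in\Lambda_T(m,n)}z^\alpha\Big|\,.
\]

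Next I would apply Theorem~\ref{Hinault alfa} with $c_\alpha=1$ for $\alpha\in\Lambda_T(m,n)$ and $c_\alpha=0$ otherwise. Since $\alpha!=1$ for tetrahedral $\alpha$ of order $m\le n$, one has $\sup_\alpha\big(|c_\alpha|(\alpha!/m!)^{1/r}\big)=(m!)^{-1/r}$; using moreover $\sup_{z\in B_{X_n}}\big(\sum_k|z_k|^r\big)^{1/r}=\|\id\colon X_n\to\ell_r^n\|$ and $\sup_{z\in B_{X_n}}\sum_k|z_k|=\|\id\colon X_n\to\ell_1^n\|$, the theorem yields signs $\varepsilon_\alpha=\pm1$ with
\[
\sup_{z\in B_{X_n}}\Big|\sum_{\alpha\in\Lambda_T(m,n)}\varepsilon_\alpha z^\alpha\Big|\;\le\;C_r\,m\,(\log n)^{1+1/r'}\,(m!)^{-1/r}\,\|\id\colon X_n\to\ell_r^n\|^{m-1}\,\|\id\colon X_n\to\ell_1^n\|\,.
\]
Writing $\sum_\alpha z^\alpha=\sum_\alpha\varepsilon_\alpha(\varepsilon_\alpha z^\alpha)$ and invoking the definition of $\boldsymbol{\chimon}(\mathcal{P}_{\Lambda_T(m)}(X_n))$ to pass from the signed to the unsigned sum, the two displays combine to
\[
\boldsymbol{\chimon}\big(\mathcal{P}_{\Lambda_T(m)}(X_n)\big)\;\ge\;\frac{|\Lambda_T(m,n)|\,(m!)^{1/r}}{C_r\,m\,(\log n)^{1+1/r'}\,\varphi_{X_n}(n)^m\,\|\id\colon X_n\to\ell_r^n\|^{m-1}\,\|\id\colon X_n\to\ell_1^n\|}\,.
\]

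It then remains to rewrite this in the asserted form. Using $\|\id\colon X_n\to\ell_1^n\|=\varphi_{X_{n}'}(n)$ (as in the proof of Proposition~\ref{toblach}) together with the hypothesis $\varphi_{X_n}(n)\varphi_{X_{n}'}(n)\prec n$, I would bound $\varphi_{X_n}(n)^m\prec n^m/\varphi_{X_{n}'}(n)^m=n^m/\|\id\colon X_n\to\ell_1^n\|^m$, so the denominator factor $\varphi_{X_n}(n)^m\|\id\colon X_n\to\ell_1^n\|$ is $\prec n^m/\|\id\colon X_n\to\ell_1^n\|^{m-1}$; this produces exactly the ratio $\big(\|\id\colon X_n\to\ell_1^n\|/\|\id\colon X_n\to\ell_r^n\|\big)^{m-1}$ and leaves the quantity $\binom{n}{m}(m!)^{1/r}/n^m$. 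Finally I would invoke the Stirling bounds $\binom{n}{m}\ge c_0\sqrt{n/(m(n-m))}\;n^n/(m^m(n-m)^{n-m})$ (valid for $1\le m\le n-1$, with $c_0$ an absolute constant coming from Robbins' inequalities) and $(m!)^{1/r}\ge(m/e)^{m/r}$, and simplify via $n^n/(m^m(n-m)^{n-m}n^m)=m^{-m}(n/(n-m))^{n-m}$ and $m^{m/r}/m^m=m^{-m/r'}$, which gives
\[
\frac{\binom{n}{m}(m!)^{1/r}}{n^m}\;\ge\;\frac{c_0}{e^{m/r}}\cdot\frac{\big(\tfrac{n}{n-m}\big)^{n-m}\sqrt{\tfrac{n}{m(n-m)}}}{m^{m/r'}}\,,
\]
after which collecting constants yields the claim. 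The only delicate step is the bookkeeping: extracting the precise combinatorial shape $(n/(n-m))^{n-m}\sqrt{n/(m(n-m))}$ from $\binom{n}{m}$ via Stirling and tracking the $e^{m/r}$ and $m^{m/r'}$ factors correctly; the constant contributed by the convexity hypothesis $\varphi_{X_n}(n)\varphi_{X_{n}'}(n)\prec n$ enters (just as in Proposition~\ref{toblach}) as a factor depending only on $r$ and $X$, absorbed into $C$.
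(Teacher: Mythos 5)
Your proposal is correct and follows essentially the same route as the paper: the same diagonal test point giving $|\Lambda_T(m,n)|/\varphi_{X_n}(n)^m$ below the unsigned supremum, the same application of Theorem~\ref{Hinault alfa} to tetrahedral coefficients combined with the definition of $\boldsymbol{\chimon}$, the identification $\|\id\colon X_n\to\ell_1^n\|=\varphi_{X_n'}(n)$ together with $\varphi_{X_n}(n)\varphi_{X_n'}(n)\prec n$ to extract the ratio of identity norms, and the same Stirling lower bound for $\binom{n}{m}$ with the estimates $(m!)^{1/r}\ge (m/e)^{m/r}$ and $m^{m/r}/m^m=m^{-m/r'}$. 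The only differences are bookkeeping (where exactly the hypothesis on the fundamental functions is invoked), and your treatment there matches the paper's.
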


\begin{proof}
Note first that for each $m,n \in \mathbb{N}$
\begin{equation} \label{puchner}
\frac{m!^{1/r}}{C_1m \,  (\log n)^{1+1/r'}}\frac{|\Lambda_T(m,n)|}{ \|\id \colon X_n\to \ell_r^n\|^{m-1}\varphi_{X_n}(n)^m \varphi_{X_{n}'}(n)}
\,\leq\,
\boldsymbol{\chimon}\big(\mathcal{P}_{\Lambda_T(m)}(X_n)\big)\,,
\end{equation}
where $C_1 = C_1(r)$ only depends on $r$. Indeed, choose signs $\varepsilon_\alpha = \pm 1, \alpha \in \Lambda_T(m,n)$ such that
\begin{align*} \label{poly2}
\begin{split}
\sup_{z \in B_{X_n}} \Big| \sum_{\alpha \in \Lambda_T(m,n)} \varepsilon_\alpha  z^\alpha\Big|
\leq \,C_1 m \,  (\log n)^{1+1/r'} m!^{-1/r}  \|\id\colon X_n\to \ell_r^n\|^{m-1}  \|\id \colon X_n\to \ell_1^n\|\,,
\end{split}
\end{align*}
where $C_1 = C_1(r)$ is the constant from Theorem~\ref{Hinault alfa}. Since $\|\id\colon X_n\to \ell_1^n\| = \varphi_{X_{n}'}(n)$,
we may now proceed exactly as in the proof of Lemma~\ref{innichenA} to get \eqref{puchner}. We use now
Stirling's formula,
\begin{equation*}
\sqrt{2 \pi}  n^{n + \frac{1}{2}} e^{-n} \leq n! \leq   \frac{12}{11}\sqrt{2 \pi} n^{n + \frac{1}{2}} e^{-n}, \quad\, n\in \mathbb{N}\,,
\end{equation*}
in  \eqref{puchner} to conclude that there is a uniform constant $\gamma \ge 1$ such that for each $m \leq n$
\begin{equation*}
|\Lambda_T(m,n)| = \binom{n}{m}  \ge \gamma  \Big( \frac{n}{m} \Big)^{m}  \Big( \frac{n}{n-m} \Big)^{n-m} \sqrt{\frac{n}{m(n-m)}}\,.
\end{equation*}
Estimating now \eqref{puchner}, we get with $C_2 = C_1/\gamma$ that for all $m \leq n$
\begin{align*}
\boldsymbol{\chimon}\big(\mathcal{P}_{\Lambda_T(m)}(X_n)\big) \geq
\frac{  m!^{1/r} }{C_2m   (\log n)^{1+1/r'}  }
\, \, \,  \frac{n^m\Big( \frac{n}{n-m} \Big)^{n-m} \sqrt{\frac{n}{m(n-m)}}}{m^m\|\id
\colon X_n\to \ell_r^n\|^{m-1}\varphi_{X_n}(n)^m \varphi_{X_{n}'}(n)}\,.
\end{align*}
Finally, since
$$\displaystyle \frac{n^{m-1}}{\|\id \colon X_n\to \ell_r^n\|^{m-1}\varphi_{X_n}(n)^{m-1}} =
\left(\frac{\|\id\colon X_n\to \ell_1^n\|}{\|\id \colon X_n\to \ell_r^n\|}\right)^{m-1}$$
and
$1 \prec \frac{n}{{\varphi_{X_n}(n)\varphi_{X_n'}(n)}}$, we have
\begin{align*} \boldsymbol{\chimon}\big(\mathcal{P}_{\Lambda_T(m)}(X_n)\big) \geq
\frac{\Big( \frac{n}{n-m} \Big)^{n-m} \sqrt{\frac{n}{m(n-m)}}}{Cm   (\log n)^{1+1/r'} m^m m!^{-1/r}}
\, \, \,  \left(  \frac{\|\id\colon X_n\to \ell_1^n\|}{\|\id \colon X_n\to \ell_r^n\|}\right)^{m-1}\,,
\end{align*}
where now $C \ge 1$ depends on $r$ and $X$. Since $m^m \leq e^m m!$, we have that
$(m!)^{-\frac{1}{r}} \leq e^{\frac{m}{r}} m^{-\frac{m}{r}} $, and so the proof is complete.
\end{proof}

\subsection{Impact on Bohr radii} \label{Part: Bohr radii}

In this section, we focus in the study of multivariate Bohr radii. As already defined in the introduction in Equation
\eqref{definitionB},
$K(B_{X_n},J)$ stands for 
the Bohr radius of the open unit ball $B_{X_n}$ 
of a Banach lattice  ${X_n} = (\CC^n, \| \cdot \|)$
with respect to an index set $J \subset \mathbb{N}_0^{(\NN)}$.

It is immediate that $K(B_{X_n},J) =  K(B_{X_n},J \cap \mathbb{N}_0^n)$ for any index sets $J$, and for $J$, $J' \subset \mathbb{N}_0^{n}$ satisfying $J \subset J'$, the following monotonicity estimate holds:
\begin{equation}\label{rem: bohr radii monotony}
K(B_{X_n},J') \le K(B_{X_n},J)\,.
\end{equation}
Moreover, for any Banach lattice  ${X_n} = (\CC^n, \| \cdot \|)$ and index set  $J$, we have
\begin{equation}\label{lowinfty}
K(B_{\ell_\infty^n},J) \leq K(B_{X_n},J)\,.
\end{equation}
To see the proof of this inequality, one only needs to follow the argument
in \cite[Proposition 19.2]{defant2019libro}.

We write $ K_m(B_{X_n}):= K(B_{X_n},\Lambda(m))$,
and call this number  the $m$-homogeneous Bohr radius of $B_{X_n}$. Note that in this case $J=\Lambda(m)$, so in \eqref{definitionB},
we only consider $m$-homogeneous  polynomials $f \in \mathcal{P}_m( X_n) = H^{\Lambda(m)}_\infty (B_{X_n})\,.$

Typically, $X_{n} = (\CC^n, \| \cdot \|)$ will be the $n$-th section of a Banach sequence lattice  $X$ with a specific geometrical
structure (e.g., a $2$-convex space), or a concrete spaces $X$ (e.g., a Lorentz spaces $\ell_{r,s}$). Additionally,  we focus on various
index sets $J$ in with particular  structures (e.g., index sets which consist of indices  of degree at most $m$, or index sets formed
by tetrahedral indices,
or sets of  multi indices $\alpha$ generated by the prime number decompositions $n = \mathfrak{p}^\alpha$ of certain subsets of natural
numbers).

To illustrate our objectives, we recall a couple of results that serve as motivation for what follows.

From \cite{defant2011bohr}
(see also \cite[Theorem 19.1]{defant2019libro}) we know that for each $1  \leq r \leq \infty$
\[
K(B_{\ell_r^n}) \sim_C \Big(\frac{\log n}{n}\Big)^{\min \big\{ \frac{1}{2},\frac{1}{r'} \big\}}\,.
\]
This result is intimately connected with the topic of the previous section -- the asymptotic determination of
$\boldsymbol{\lambda}\big(\mathcal{P}_J(X_n)\big)$ and $\boldsymbol{\chimon}\big(\mathcal{P}_J(X_n)\big)$.

In fact, in Theorem~\ref{thm: main bohr radii}, we will integrate many of the results obtained so far to characterize the asymptotic
decay of $K(B_{\ell_{r,s}^n}, J)$ as $n$ tends to infinity. This will cover (almost) all possible values of $r$~and $s$, and as well
as a~broad class of index sets $J$.

% To exemplify the importance of considering indices in the study of the Bohr radius, we mention that
% \[
% K(B_{\ell_\infty^n}, \Delta(x)) \sim_C \frac{(\log x)^\frac{1}{4}}{x^\frac{1}{8}}\,,
% \]
% where  $\Delta(x) = \big\{\alpha \in \mathbb{N}_0^{\pi(x)}\colon 1 \leq \mathfrak{p}^\alpha \leq x\big\}$ for $x \in \mathbb{N}$
% and $\pi(x)$
% counts the number of  primes $\mathfrak{p} \leq x$. This result is the key finding of a paper by Carando, García, Maestre, Sevilla-Peris, and the first author, in which they study the usually called Dirichlet-Bohr radii \cite{carando2014Dirichlet}.
% It serves as a counterpart to
% \cite[Theorem 3.11]{defant2024projection}, which states that
% \[
% \boldsymbol{\lambda}\big(H_{\infty}^{\Delta(x)}(B_{\ell_\infty^n})\big)   =
% O\left(\frac{\sqrt{x}}{(\log \log x)^{\frac{1}{4}}}\right)\,,
% \]
% as well as  \cite[Theorem 3]{defant2011bohnenblust} that shows
% \[
% \boldsymbol{\chimon} \big( H_{\infty}^{\Delta(x)}(B_{\ell_\infty^n}) \big) \sim_C \frac{\sqrt{x}}{e^{\big(\frac{1}{\sqrt{2}} + o(1)\big)
% \sqrt{\log x \log \log x}}}\,.
% \]

\subsubsection{\bf Bohr radii vs unconditionality and projection constants}

We begin this subsection by presenting two results  that generalize the cases for the entire index set $J = \NN_0^n$,
first noted in \cite{defant2003bohr} (see also \cite[Proposition~19.4 and Lemma 19.5]{defant2019libro}). The proofs
of these results are straightforward extensions. They are crucial for studying multidimensional Bohr radii in Banach
spaces and demonstrate a close connection with the study of unconditionality in spaces of multivariate polynomials
on Banach spaces.

\begin{proposition}\label{prop: Km vs uncond in Pm}
Let ${X_n} = (\CC^n, \| \cdot \|)$ be a~Banach lattice and  $J \subset \NN_0^n$ an index set. Then for each $m$, we have
\[
K_m(B_{X_n},J) = \frac{1}{\boldsymbol{\chimon}\big(\Pp_{J(m)}( X_n)\big)^{1/m}}\,.
\]
\end{proposition}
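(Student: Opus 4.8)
The plan is to reduce the Bohr-type inequality defining $K_m(B_{X_n},J)$ to a statement purely about the unconditional basis constant of the monomials in the $m$-homogeneous polynomial space. First I would unwind the notation: $K_m(B_{X_n},J)$ stands for $K(B_{X_n},J(m))$ (extending the convention $K_m(B_{X_n})=K(B_{X_n},\Lambda(m))$), and since $J(m)=J\cap\Lambda(m,n)$ is finite and $m$-homogeneous, the monomial expansion of any $f\in H^{J(m)}_\infty(B_{X_n})$ is a finite sum over $J(m)$, so that $H^{J(m)}_\infty(B_{X_n})$ and $\mathcal{P}_{J(m)}(X_n)$ coincide isometrically (with the sup-norm over $B_{X_n}$ on both sides). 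Writing $I:=J(m)$, the goal becomes to show $K(B_{X_n},I)=\boldsymbol{\chimon}\big(\mathcal{P}_I(X_n)\big)^{-1/m}$.

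The key step will be the identity
\[
S:=\sup\Big\{\sup_{z\in B_{X_n}}\sum_{\alpha\in I}|c_\alpha(P)|\,|z^\alpha|\ :\ P\in\mathcal{P}_I(X_n),\ \|P\|_{B_{X_n}}\le 1\Big\}=\boldsymbol{\chimon}\big(\mathcal{P}_I(X_n)\big)\,.
\]
For the bound $S\le\boldsymbol{\chimon}(\mathcal{P}_I(X_n))$ I would, given $P$ with $\|P\|_{B_{X_n}}\le1$ and $z\in B_{X_n}$, first replace $z$ by $|z|=(|z_1|,\dots,|z_n|)$, which again lies in $B_{X_n}$ since $X_n$ is a Banach lattice, and then choose unimodular scalars $\varepsilon_\alpha$ with $\varepsilon_\alpha c_\alpha(P)=|c_\alpha(P)|$; then $\sum_{\alpha\in I}|c_\alpha(P)|\,|z^\alpha|=\big(\sum_{\alpha\in I}\varepsilon_\alpha c_\alpha(P)z^\alpha\big)(|z|)\le\big\|\sum_{\alpha\in I}\varepsilon_\alpha c_\alpha(P)z^\alpha\big\|_{B_{X_n}}\le\boldsymbol{\chimon}(\mathcal{P}_I(X_n))$. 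For the reverse inequality, given any $P$ with $\|P\|_{B_{X_n}}\le1$ and any unimodular family $(\varepsilon_\alpha)_{\alpha\in I}$, one has for every $z\in B_{X_n}$ that $\big|\sum_{\alpha\in I}\varepsilon_\alpha c_\alpha(P)z^\alpha\big|\le\sum_{\alpha\in I}|c_\alpha(P)|\,|z^\alpha|\le S$, so $\big\|\sum_{\alpha\in I}\varepsilon_\alpha c_\alpha(P)z^\alpha\big\|_{B_{X_n}}\le S$, and taking the supremum over $P$ and $(\varepsilon_\alpha)$ gives $\boldsymbol{\chimon}(\mathcal{P}_I(X_n))\le S$.

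Finally I would use $m$-homogeneity: since $|\alpha|=m$ for every $\alpha\in I$, for $0<r<1$ and $P\in\mathcal{P}_I(X_n)$ one has $\sup_{w\in rB_{X_n}}\sum_{\alpha\in I}|c_\alpha(P)w^\alpha|=r^m\sup_{z\in B_{X_n}}\sum_{\alpha\in I}|c_\alpha(P)z^\alpha|$. Substituting this into \eqref{definitionB} and normalizing $\|P\|_{B_{X_n}}\le1$ by homogeneity of both sides, the defining condition for the radius $r$ becomes equivalent to $r^mS\le1$, i.e.\ $r\le S^{-1/m}$; hence $K(B_{X_n},I)=\sup\{r\in(0,1):r\le S^{-1/m}\}=S^{-1/m}$, using that $S=\boldsymbol{\chimon}(\mathcal{P}_I(X_n))\ge1$ so $S^{-1/m}\le1$. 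This yields the claimed formula. I do not expect a serious obstacle — the argument is a direct extension of the case $J=\mathbb{N}_0^n$ — the only points demanding care are the isometric identification $H^{J(m)}_\infty(B_{X_n})\equiv\mathcal{P}_{J(m)}(X_n)$ and, in the first half of the key step, the use of the lattice identity $\big\|\,|z|\,\big\|=\|z\|$ together with the observation that $\sum_{\alpha\in I}|c_\alpha(P)z^\alpha|$ is exactly the value at $|z|$ of the polynomial $\sum_{\alpha\in I}\varepsilon_\alpha c_\alpha(P)z^\alpha$ obtained from an admissible choice of unimodular multipliers.
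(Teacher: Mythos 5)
Your proof is correct and is exactly the ``straightforward extension'' of the classical case $J=\NN_0^n$ (\cite[Proposition 19.4]{defant2019libro}) that the paper invokes without further detail: identify $H^{J(m)}_\infty(B_{X_n})$ with $\mathcal{P}_{J(m)}(X_n)$, use the lattice structure and unimodular multipliers to equate the majorant supremum with $\boldsymbol{\chimon}$, and then use $m$-homogeneity to extract the $m$-th root.
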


\begin{theorem}\label{thm: Bohr vs unc}
Let ${X_n} = (\CC^n, \| \cdot \|)$ be a  Banach lattice and  $J \subset \NN_0^n$ an index set. Then
\[
\frac{1}{3} \inf_{ m \in \mathbb{N} } K_m(B_{X_n},J)  \le K(B_{X_n},J)  \le \inf_{m \in \mathbb{N}} K_m(B_{X_n},J)\,.
\]
\end{theorem}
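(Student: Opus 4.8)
The plan is to reroute everything through the homogeneous Bohr radii by means of Proposition~\ref{prop: Km vs uncond in Pm}, and then to run the classical argument that produces Bohr's constant $\tfrac13$. The upper estimate is immediate from monotonicity: for every $m$ one has $J(m)=J\cap\Lambda(m,n)\subset J$, so \eqref{rem: bohr radii monotony} gives $K(B_{X_n},J)\le K(B_{X_n},J(m))=K_m(B_{X_n},J)$, and taking the infimum over $m$ yields $K(B_{X_n},J)\le\inf_{m\in\NN}K_m(B_{X_n},J)$.

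For the lower estimate put $\rho:=\inf_{m\in\NN}K_m(B_{X_n},J)$; we may assume $\rho>0$ (otherwise there is nothing to prove), and we note $\rho\le 1$. Fix $f\in H^J_\infty(B_{X_n})$ with $\|f\|_\infty\le1$, and write its Taylor expansion $f=\sum_{m\ge0}P_m$ with $P_m\in\Pp_{J(m)}(X_n)$ the $m$-homogeneous part. Two inputs are needed. First, a Wiener-type coefficient bound: for a fixed $z\in B_{X_n}$ the map $\lambda\mapsto f(\lambda z)$ is holomorphic on $\DD$ and bounded by $1$ with $m$-th Taylor coefficient $P_m(z)$, so the one-variable inequality of Wiener gives $|P_m(z)|\le1-|f(0)|^2$ for every $m\ge1$; taking the supremum over $z\in B_{X_n}$, $\|P_m\|\le1-|f(0)|^2$ for all $m\ge1$. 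Second, for each $m$ and each $z\in B_{X_n}$, choosing unimodular scalars $\varepsilon_\alpha$ with $\varepsilon_\alpha c_\alpha(P_m)z^\alpha=|c_\alpha(P_m)|\,|z^\alpha|$ and using the definition of $\boldsymbol{\chimon}$ together with Proposition~\ref{prop: Km vs uncond in Pm},
\[
\sum_{\alpha\in J(m)}|c_\alpha(P_m)|\,|z^\alpha|\le\boldsymbol{\chimon}\bigl(\Pp_{J(m)}(X_n)\bigr)\,\|P_m\|=K_m(B_{X_n},J)^{-m}\,\|P_m\|\,.
\]

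Now fix $z\in\tfrac{\rho}{3}B_{X_n}$, say $z=\tfrac{\rho}{3}w$ with $w\in B_{X_n}$, so that $|z^\alpha|=(\rho/3)^{|\alpha|}|w^\alpha|$. Grouping the nonnegative sum $\sum_{\alpha\in J}|c_\alpha(f)|\,|z^\alpha|$ by homogeneity degree, isolating the $m=0$ term (which is at most $|f(0)|$), and applying the two displayed bounds together with $\rho\le K_m(B_{X_n},J)$, we obtain
\[
\sum_{\alpha\in J}|c_\alpha(f)|\,|z^\alpha|\le|f(0)|+\bigl(1-|f(0)|^2\bigr)\sum_{m\ge1}3^{-m}=|f(0)|+\tfrac12\bigl(1-|f(0)|^2\bigr)\,.
\]
Since $|f(0)|\le1$, the right-hand side equals $1-\tfrac12\bigl(1-|f(0)|\bigr)^2\le1=\|f\|_\infty$; taking the supremum over $z$ and recalling $\tfrac{\rho}{3}<1$ gives $K(B_{X_n},J)\ge\tfrac{\rho}{3}=\tfrac13\inf_{m\in\NN}K_m(B_{X_n},J)$.

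The computation is essentially the scalar proof of Bohr's theorem, and the only genuinely new feature is that the Banach-lattice geometry of $X_n$ is absorbed entirely into $\boldsymbol{\chimon}(\Pp_{J(m)}(X_n))$ via Proposition~\ref{prop: Km vs uncond in Pm}. The step I would watch most closely is the Wiener bound $\|P_m\|\le1-|f(0)|^2$ for $m\ge1$: it is precisely this refinement of the crude Cauchy estimate $\|P_m\|\le\|f\|_\infty$ that produces the constant $\tfrac13$, and one should be sure that restricting $f$ to complex lines through the origin is permissible here (it is, since $f$ is holomorphic on the whole of $B_{X_n}$). One should also record that the rearrangement of the series by homogeneous degree is harmless, all summands being nonnegative, and that the estimates above show the resulting total is finite.
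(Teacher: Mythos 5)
Your proposal is correct and follows essentially the same route as the paper, which for this statement simply defers to the classical argument (the extension of \cite[Lemma 19.5]{defant2019libro} / \cite{defant2003bohr}): monotonicity for the upper bound, and for the lower bound the homogeneous Taylor decomposition, Wiener's refinement $\|P_m\|\le 1-|f(0)|^2$, the identity $\boldsymbol{\chimon}(\Pp_{J(m)}(X_n))=K_m(B_{X_n},J)^{-m}$, and the geometric series $\sum_{m\ge1}3^{-m}=\tfrac12$. The only cosmetic point is the normalization $\|f\|_\infty=1$ (rather than $\le 1$) needed to conclude the displayed bound is $\le\|f\|_\infty$, which is harmless by homogeneity.
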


We now present a direct relationship between Bohr radii and the projection constants of spaces of homogeneous polynomials,
which will be crucial for the proof of Theorem~\ref{applyhedral} (and subsequently for Theorem~\ref{thm: main bohr radii}).

Given $I,J \subset  \mathbb{N}_0^n$ with $I \subset J$, recall the definition of the projection
$
\mathbf{Q}_{J,I} \colon \mathcal{P}_J(X_n) \to \mathcal{P}_I(X_n)
$
from \eqref{anni}, and from~\eqref{index sets} the definition of the reduced index sets $J^\flat$.

\begin{theorem} \label{applyproj}
Let $X$ be a Banach sequence lattice. Then, for every $n$ and each  index set $J \subset \NN_0^n$, it holds that
\[
K(B_{X_n},J)   \ge  \frac{1}{6} \,\,\,\,\inf_{m \in \mathbb{N} }
\bigg( \frac{1}{\sqrt[m]{e\big\|\mathbf{Q}_{\Lambda(m,), J(m)}\big\|}}
\frac{1}{\sqrt[m]{\boldsymbol{\lambda}(\mathcal{P}_{J(m)^\flat}(X_n))}} \bigg)\,.
\]
\end{theorem}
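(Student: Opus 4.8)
The plan is to combine the two previously established tools: Theorem~\ref{thm: Bohr vs unc}, which reduces $K(B_{X_n},J)$ (up to the universal factor $1/3$) to the infimum over $m$ of the homogeneous Bohr radii $K_m(B_{X_n},J)$, and Proposition~\ref{prop: Km vs uncond in Pm}, which identifies each $K_m(B_{X_n},J)$ with $\boldsymbol{\chimon}\big(\mathcal{P}_{J(m)}(X_n)\big)^{-1/m}$. So the first step is to write
\[
K(B_{X_n},J) \ge \frac{1}{3}\inf_{m\in\mathbb{N}} K_m(B_{X_n},J) = \frac{1}{3}\inf_{m\in\mathbb{N}}\frac{1}{\boldsymbol{\chimon}\big(\mathcal{P}_{J(m)}(X_n)\big)^{1/m}}\,.
\]
After this reduction, everything hinges on an upper estimate for $\boldsymbol{\chimon}\big(\mathcal{P}_{J(m)}(X_n)\big)$ in terms of $\big\|\mathbf{Q}_{\Lambda(m,n),J(m)}\big\|$ and $\boldsymbol{\lambda}\big(\mathcal{P}_{J(m)^\flat}(X_n)\big)$.

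For that upper estimate the natural route is Theorem~\ref{gl-versus-unc} together with the first part of Theorem~\ref{gl_versus_proj}: since $J(m)\subset\Lambda(m,n)$ is $m$-homogeneous, Theorem~\ref{gl-versus-unc} gives $\boldsymbol{\chimon}\big(\mathcal{P}_{J(m)}(X_n)\big)\le 2^m\,\boldsymbol{g\!l}\big(\mathcal{P}_{J(m)}(X_n)\big)$, and the first displayed inequality of Theorem~\ref{gl_versus_proj} gives $\boldsymbol{g\!l}\big(\mathcal{P}_{J(m)}(X_n)\big)\le e\,\big\|\mathbf{Q}_{\Lambda(m,n),J(m)}\big\|\,\boldsymbol{\lambda}\big(\mathcal{P}_{J(m)^\flat}(X_n)\big)$. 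Chaining these,
\[
\boldsymbol{\chimon}\big(\mathcal{P}_{J(m)}(X_n)\big)\le 2^m\, e\,\big\|\mathbf{Q}_{\Lambda(m,n),J(m)}\big\|\,\boldsymbol{\lambda}\big(\mathcal{P}_{J(m)^\flat}(X_n)\big)\,.
\]
Taking $m$-th roots, $\boldsymbol{\chimon}\big(\mathcal{P}_{J(m)}(X_n)\big)^{1/m}\le 2\,\big(e\,\big\|\mathbf{Q}_{\Lambda(m,n),J(m)}\big\|\big)^{1/m}\,\boldsymbol{\lambda}\big(\mathcal{P}_{J(m)^\flat}(X_n)\big)^{1/m}$, where I absorb the $e$ either with the projection-constant factor or alone; matching the target display I would keep it as $\sqrt[m]{e\|\mathbf{Q}_{\Lambda(m,n),J(m)}\|}$. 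Substituting this into the reduction above yields
\[
K(B_{X_n},J)\ge \frac{1}{3}\inf_{m\in\mathbb{N}}\frac{1}{2\,\sqrt[m]{e\,\big\|\mathbf{Q}_{\Lambda(m,n),J(m)}\big\|}\,\sqrt[m]{\boldsymbol{\lambda}\big(\mathcal{P}_{J(m)^\flat}(X_n)\big)}} = \frac{1}{6}\inf_{m\in\mathbb{N}}\bigg(\frac{1}{\sqrt[m]{e\,\big\|\mathbf{Q}_{\Lambda(m,n),J(m)}\big\|}}\,\frac{1}{\sqrt[m]{\boldsymbol{\lambda}\big(\mathcal{P}_{J(m)^\flat}(X_n)\big)}}\bigg)\,,
\]
which is exactly the claimed inequality (the typo $\Lambda(m,)$ in the statement should read $\Lambda(m,n)$).

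The main point to be careful about—rather than a genuine obstacle—is the bookkeeping of the $2^m$ and $e$ factors under the $m$-th root, so that the final universal constant is indeed $1/6$: the factor $2^m$ contributes $2$, and the $1/3$ from Theorem~\ref{thm: Bohr vs unc} combines with it to give $1/6$, while $e$ stays inside the $m$-th root exactly as written. One should also note that the case $J(m)=\varnothing$ (or $J(m)$ consisting only of $\alpha$ with $|\alpha|=m$ but the space being trivial) causes no difficulty: if $J(m)=\varnothing$ then $\mathcal{P}_{J(m)}(X_n)=\{0\}$ and the corresponding $K_m$ term is vacuous (equal to $+\infty$ by the convention $\inf\varnothing=\infty$), so it simply does not constrain the infimum; only indices $m$ with $J(m)\ne\varnothing$ matter, and for those the argument above applies verbatim. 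No convexity or concavity hypothesis on $X$ is needed here—only that $X_n$ is a Banach lattice, which holds for every section of a Banach sequence lattice—so the theorem follows.
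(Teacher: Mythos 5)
Your proposal is correct and follows essentially the same route as the paper: reduce via Theorem~\ref{thm: Bohr vs unc} and Proposition~\ref{prop: Km vs uncond in Pm} to an upper bound for $\boldsymbol{\chimon}\big(\mathcal{P}_{J(m)}(X_n)\big)$, which the paper obtains by citing Theorem~\ref{thm: uncond cte vs proj cte} directly while you re-derive it by chaining Theorem~\ref{gl-versus-unc} with the first part of Theorem~\ref{gl_versus_proj} — the same argument, merely unpacked. The bookkeeping of the constants $2^m$, $e$, and $1/3$ matches the paper's exactly.
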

Observe that on the right side of this inequality we take the $m$-root, even though $J(m)^\flat$ is by definition
contained in $\Lambda(m-1,n)$.

\begin{proof}
From Theorem~\ref{thm: Bohr vs unc} we deduce that for all $n$
\[
\frac{1}{3} \inf_{ m \in \mathbb{N} } K_m(B_{X_n},J)  \le K(B_{X_n},J)\,.
\]
On the other hand, by Theorem \ref{thm: uncond cte vs proj cte}  for all $m$ and $n$
\[
\boldsymbol{\chimon}\big( \mathcal{P}_{J(m)}(X_n) \big)\,\, \le\,\, e 2^{m}
\big\|\mathbf{Q}_{\Lambda(m,n),J(m)} \big\|\boldsymbol{\lambda}\big( \mathcal{P}_{J(m)^\flat}(X_n) \big)\,.
\]
The proof is then completed by taking the $m$-th root and using Proposition \ref{prop: Km vs uncond in Pm}.
\end{proof}

As an application, we establish both upper and lower bounds for multivariate Bohr radii in the tetrahedral case. First,
let us note the following fact, which will be repeatedly used in the subsequent results.

\begin{remark}\label{rem: maximum of f}
Given  $n \in \mathbb{N}$, the function $f_n$ defined by  $f_n(t) = \frac{1}{t n^{1/t}}$ for all $t > 0$ attains
its maximum at $t = \log n$. In particular,
\[
\dis \max_{m \in \mathbb{N}}\,f_n(m) \sim_C \frac{1}{\log n}\,.
\]
\end{remark}

\begin{theorem} \label{applyhedral}
Let $X$ be a symmetric Banach sequence lattice. Then
\[
\inf_{m\leq n} \bigg(\frac{\varphi_{X'}(m-1)}{\varphi_{X'}(n)}\bigg)^{\frac{m-1}{m}}
\prec_C\,\, K(B_{X_n},\Lambda_T)\,,
\]
and for  $1 \leq r \leq 2$,
\[
K(B_{X_n},\Lambda_T) \prec_C
\frac{(\log n)^{\frac{1}{r'}}}{\varphi_{X'}(n)}\|\id\colon X_n \to \ell_r^n\|\,.
\]
\end{theorem}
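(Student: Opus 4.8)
\emph{Strategy for the lower bound.} The plan is to apply Theorem~\ref{applyproj} with the index set $J=\Lambda_T$, so that $J(m)=\Lambda_T(m,n)$. The first point to settle is the combinatorial identity
\[
\Lambda_T(m,n)^{\flat}=\Lambda_T(m-1,n)\qquad(1\le m\le n)\,,
\]
which is immediate from \eqref{index sets}: a tetrahedral multi-index of order $m$ is obtained from a tetrahedral multi-index of order $m-1$ exactly by raising one of its vanishing coordinates to $1$. Consequently the reduced spaces occurring in Theorem~\ref{applyproj} are the spaces $\mathcal{P}_{\Lambda_T(m-1,n)}(X_n)$. For the projection norms I would invoke Theorem~\ref{OrOuSe} applied to the full degree-$m$ index set, which gives $\|\mathbf{Q}_{\Lambda(m,n),\Lambda_T(m,n)}\|\le\kappa^m$ and hence $\sqrt[m]{e\,\|\mathbf{Q}_{\Lambda(m,n),\Lambda_T(m,n)}\|}\le e\kappa$ uniformly in $m$. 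For the projection constants of the reduced spaces I would chain Theorem~\ref{lambda-dash} with Corollary~\ref{coro: bound proj constant tetra} (legitimate, since $X_n$ is a symmetric Banach lattice and $\Lambda_T(m-1,n)\subset\Lambda_T(m-1,n)$ with $m-1\le n$), obtaining
\[
\boldsymbol{\lambda}\big(\mathcal{P}_{\Lambda_T(m-1,n)}(X_n)\big)\le\widehat{\boldsymbol{\lambda}}\big(\mathcal{P}_{\Lambda_T(m-1,n)}(X_n)\big)\le e^{\,m-1}\Big(\frac{\varphi_{X'}(n)}{\varphi_{X'}(m-1)}\Big)^{m-1}\,.
\]
Feeding both estimates into Theorem~\ref{applyproj} and taking $m$-th roots gives
\[
K(B_{X_n},\Lambda_T)\ \ge\ \frac{1}{6e^{2}\kappa}\ \inf_{1\le m\le n}\Big(\frac{\varphi_{X'}(m-1)}{\varphi_{X'}(n)}\Big)^{\frac{m-1}{m}}\,,
\]
which is the claimed lower bound; the terms with $m>n$ disappear (as $\Lambda_T(m,n)=\varnothing$), and the $m=1$ term equals $1$.

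\emph{Strategy for the upper bound.} Here I would start from $K(B_{X_n},\Lambda_T)\le\inf_{m}K_m(B_{X_n},\Lambda_T)$ (Theorem~\ref{thm: Bohr vs unc}) together with $K_m(B_{X_n},\Lambda_T)=\boldsymbol{\chimon}\big(\mathcal{P}_{\Lambda_T(m,n)}(X_n)\big)^{-1/m}$ (Proposition~\ref{prop: Km vs uncond in Pm}); thus it is enough to lower-bound $\boldsymbol{\chimon}\big(\mathcal{P}_{\Lambda_T(m,n)}(X_n)\big)$ for one good value of $m$. For this I would use the probabilistic bound of Lemma~\ref{innichenA} (which rests on Bayart's Theorem~\ref{Anquetil alfa}), inserting $|\Lambda_T(m,n)|=\binom{n}{m}\ge(n/m)^m$ from \eqref{binom}, the identity $\|\id\colon X_n\to\ell_1^n\|=\varphi_{X_n'}(n)=\varphi_{X'}(n)$, and the standard relation $\varphi_{X_n}(n)\,\varphi_{X_n'}(n)\asymp n$ for symmetric Banach sequence lattices, which lets one replace $\varphi_{X_n}(n)$ by a fixed multiple of $n/\varphi_{X'}(n)$. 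Collecting the surviving $e^{m/r}$, $m^{m/r}$ and $n^{1/r'}$ factors into a single power of $r'$, this produces a constant $c=c(r)>0$ with
\[
\boldsymbol{\chimon}\big(\mathcal{P}_{\Lambda_T(m,n)}(X_n)\big)^{1/m}\ \ge\ \frac{c}{C^{1/m}\,(\log m)^{1/(mr')}}\cdot\frac{\varphi_{X'}(n)}{\big[\,m\,(n\log m)^{1/m}\,\big]^{1/r'}\,\|\id\colon X_n\to\ell_r^n\|}
\]
for every $2\le m\le n$.

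\emph{Optimisation, and where the difficulty lies.} It remains to choose $m$. Since $m\,(n\log m)^{1/m}$ is comparable, up to bounded factors, to $m\,n^{1/m}$, Remark~\ref{rem: maximum of f} dictates the choice $m\sim\log n$: then $m\,n^{1/m}\asymp\log n$, while $C^{1/m}$, $(\log m)^{1/m}$ and $n^{1/m}$ all remain bounded by absolute constants. Taking $m=\lceil\log n\rceil$ in the previous display and inverting then yields
\[
K(B_{X_n},\Lambda_T)\ \le\ K_m(B_{X_n},\Lambda_T)\ \prec_C\ \frac{(\log n)^{1/r'}}{\varphi_{X'}(n)}\,\|\id\colon X_n\to\ell_r^n\|\,,
\]
as asserted (the finitely many small $n$, where $\log n<2$, being absorbed into the constant). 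I expect two real obstacles. First, the probabilistic lower bound for $\boldsymbol{\chimon}$ is by far the deepest input: it relies on the coverage/entropy machinery behind Theorems~\ref{Anquetil alfa}--\ref{Hinault alfa}, and it must be applied with precisely the right test vector $z=(\varphi_{X_n}(n)^{-1},\dots,\varphi_{X_n}(n)^{-1})$ and the right sign averaging. Second, there is genuine bookkeeping in passing back and forth between $\varphi_{X_n}(n)$ and $\varphi_{X'}(n)$ and in keeping every hypercontractive ($C^m$-type) factor under control, so that after taking $m$-th roots nothing worse than $(\log n)^{1/r'}$ remains; the tetrahedral combinatorics enter decisively through the identity $\Lambda_T(m,n)^{\flat}=\Lambda_T(m-1,n)$ on the lower-bound side and through the optimisation $m\sim\log n$ of Remark~\ref{rem: maximum of f} on the upper-bound side.
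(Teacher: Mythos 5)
Your proposal is correct and follows essentially the same route as the paper: the lower bound via Theorem~\ref{applyproj} combined with Theorem~\ref{OrOuSe} and the chain $\boldsymbol{\lambda}\le\widehat{\boldsymbol{\lambda}}$ through Corollary~\ref{coro: bound proj constant tetra} applied to $\Lambda_T(m,n)^\flat=\Lambda_T(m-1,n)$, and the upper bound via Theorem~\ref{thm: Bohr vs unc}, Proposition~\ref{prop: Km vs uncond in Pm}, Lemma~\ref{innichenA}, the symmetry relation $\varphi_X(n)\varphi_{X'}(n)=n$ together with \eqref{binom}, and the optimisation $m\sim\log n$ of Remark~\ref{rem: maximum of f}. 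The only cosmetic difference is that you fix $m=\lceil\log n\rceil$ explicitly while the paper optimises the supremum over all $m$, which amounts to the same computation.
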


\begin{proof}
Let us start with the proof of the first claim. From Theorem~\ref{OrOuSe} we know that for all $m,n$
\[
\big\|\mathbf{Q}_{\Lambda(m,n),\Lambda_T(m,n)}:
\mathcal{P}_{\Lambda(m,n)}(X_n) \to \mathcal{P}_{\Lambda_T(m,n)}(X_n)\big\|
\leq \kappa^m\,.
\]
Moreover, by Theorem~\ref{lambda-dash} and Corollary~\ref{lambda}, we have
\begin{align*}
\boldsymbol{\lambda}\big(\mathcal{P}_{\Lambda_T(m,n)^\flat}(X_n)\big)
& \le \widehat{\boldsymbol{\lambda}}\big(\mathcal{P}_{\Lambda_T(m,n)^\flat}(X_n)\big)
\le e^{m-1} \Big(\frac{\varphi_{X_n'}(n)}{\varphi_{X_n'}(m-1)}\Big)^{m-1}\,,
\end{align*}
where the last equality holds as $m \le n$ (recall that $\Lambda_T(m,n) = \emptyset$ when $m > n$).
Implementing the preceding two inequalities into Theorem~\ref{applyproj}, yields the assertion.

For the proof of the upper estimate, note first that by Theorem \ref{thm: Bohr vs unc} and Proposition~\ref{prop: Km vs uncond in Pm}
\[
K(B_{X_n},\Lambda_T) \leq  \inf_{ m \in \mathbb{N} } K(B_{X_n},\Lambda_T(m,n))
= \frac{1}{\sup_{ m \in \mathbb{N} } \boldsymbol{\chimon}\big(\mathcal{P}_{\Lambda_T(m,n)}(X_n)\big)^{1/m}}\,.
\]
But applying Lemma~\ref{innichenA}, the fact that $\varphi_X(n)\varphi_{X'}(n)=n$ by the symmetry of $X$ and
\eqref{binom}, we see that
\begin{align*}
\sup_{ m \in \mathbb{N} } \boldsymbol{\chimon}\big(\mathcal{P}_{\Lambda_T(m,n)}(X_n)\big)^{1/m}
&
\le  \frac{1}{\|\id \colon X_n\to \ell_r^n\|}\sup_{ m \in \mathbb{N}}\frac{|\Lambda_T(m,n)|^{1/m}}{\varphi_X(n) n^{\frac{1}{mr'}} m ^{-\frac{1}{r}}} \\
&
\le  \frac{\varphi_{X'}(n) }{ n \|\id \colon X_n\to \ell_r^n\|}\sup_{m \in \mathbb{N} }\frac{  n}{n^{\frac{1}{mr'}} m ^{\frac{1}{r'}}}
\le \frac{\varphi_{X'}(n) }{ \|\id \colon X_n\to \ell_r^n\|}\sup_{ m \in \mathbb{N} }\frac{1}{n^{\frac{1}{mr'}} m ^{\frac{1}{r'}}}\,,
\end{align*}
so that the conclusion again follows using Remark~\ref{rem: maximum of f}.
\end{proof}

\subsubsection{\bf Bohr radii and convexity}

We now use the geometric concept of convexity in Banach sequence spaces to provide estimates of Bohr radii. For $2$-convex spaces,
we present an explicit characterization of the asymptotic growth of the Bohr radius for a~broad class of index sets.

\begin{theorem}\label{thm: 2 convex bohr radiiA}
Let $X$ be a $\,2$-convex Banach sequence space 
and  $J$ an index set such that $\Lambda_{T} \subset~J$.  Then, with a~constant
$C \geq 1$ only depending on $X$,
\[
K_m(B_{X_n},J) \sim_C \Big(\frac{m}{n+m}\Big)^{\frac{m-1}{2m}}\,.
\]
Moreover, with a constant $C \geq 1$ only depending on $X$,
\[
K(B_{X_n},J) \sim_C \sqrt{\frac{\log n}{n}}.
\]
\end{theorem}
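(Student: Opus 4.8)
The plan is to derive both assertions of Theorem~\ref{thm: 2 convex bohr radiiA} from the combinatorial machinery built in the previous subsections, using the $2$-convexity of $X$ to pin down the exact asymptotics (not just an upper or lower bound). For the first claim, by Proposition~\ref{prop: Km vs uncond in Pm} we have $K_m(B_{X_n},J) = \boldsymbol{\chimon}\big(\Pp_{J(m)}(X_n)\big)^{-1/m}$, so everything reduces to showing $\boldsymbol{\chimon}\big(\Pp_{J(m)}(X_n)\big) \sim_{C^m} \big(1+\tfrac{n}{m}\big)^{(m-1)/2}$ uniformly in $n$. Since $\Lambda_T \subset J$, in particular $\Lambda_T(m) \subset J(m)$, so $J(m)$ is exactly of the form required by Theorem~\ref{lower bound 2-convex for poly} (taking the constant sequence $J_m := J(m)$, each of degree at most $m$): that theorem gives $\boldsymbol{\chimon}\big(\Pp_{J(m)}(X_n)\big) \sim_{C^m} \big(1+\tfrac{n}{m}\big)^{(m-1)/2}$ with $C$ depending only on the $2$-convexity constant of $X$. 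Taking $m$-th roots yields
\[
K_m(B_{X_n},J) \sim_C \Big(1+\tfrac{n}{m}\Big)^{-\frac{m-1}{2m}} = \Big(\tfrac{m}{n+m}\Big)^{\frac{m-1}{2m}}\,,
\]
which is the first assertion.

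For the second claim, I would combine the first part with Theorem~\ref{thm: Bohr vs unc}, which sandwiches $K(B_{X_n},J)$ between $\tfrac13 \inf_m K_m(B_{X_n},J)$ and $\inf_m K_m(B_{X_n},J)$. So up to the universal factor $3$ it suffices to compute $\inf_{m} \big(\tfrac{m}{n+m}\big)^{(m-1)/(2m)}$ up to a multiplicative constant. Writing $g_n(m) := \big(\tfrac{m}{n+m}\big)^{(m-1)/(2m)}$, one has $g_n(m) = \exp\!\big(\tfrac{m-1}{2m}\big(\log m - \log(n+m)\big)\big)$. For $m \le n$ the dominant behaviour of the exponent is $\tfrac12(\log m - \log n) + O(1/m) = -\tfrac12 \log(n/m) + O(1)$, so $g_n(m) \sim_C (m/n)^{1/2}$ there; for $m \ge n$ one checks $g_n(m) \gtrsim$ a constant. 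Thus, up to constants, $\inf_m g_n(m)$ behaves like $\inf_{1\le m\le n} (m/n)^{1/2}\cdot(\text{the }O(1)\text{ correction})$; but the naive infimum of $(m/n)^{1/2}$ over $m\ge 1$ is $n^{-1/2}$, which is \emph{not} the claimed $\sqrt{\log n/n}$. The resolution is that the $O(1)$ term hides a factor roughly $m^{-1/(2)}\cdot$(bounded), i.e. one must track it more carefully: from $\big(1+\tfrac nm\big)^{-(m-1)/(2m)}$ one has exactly $\big(1+\tfrac nm\big)^{-1/2}\big(1+\tfrac nm\big)^{1/(2m)}$, and $\big(1+\tfrac nm\big)^{1/(2m)} = \exp\!\big(\tfrac{1}{2m}\log(1+\tfrac nm)\big) \sim_C 1$ for $m$ in the relevant range $m \asymp \log n$ but grows for very small $m$. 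Minimizing $\big(1+\tfrac nm\big)^{-1/2}\exp\!\big(\tfrac{1}{2m}\log(1+\tfrac nm)\big)$ over $m$ is, after taking logs, exactly the one-variable optimization already isolated in Remark~\ref{rem: maximum of f} (the function $t\mapsto \tfrac{1}{t}\log(1+n/t)$, essentially $\tfrac{1}{t}n^{1/t}$-type behaviour), whose optimal scale is $t \asymp \log n$; plugging $m \asymp \log n$ back in gives $\big(1+\tfrac{n}{\log n}\big)^{-1/2}\cdot(\text{const}) \sim_C \sqrt{\log n / n}$, and one checks this value is indeed the infimum up to a constant.

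Concretely, then, the steps are: (1) invoke Proposition~\ref{prop: Km vs uncond in Pm} to pass from $K_m$ to $\boldsymbol{\chimon}$; (2) apply Theorem~\ref{lower bound 2-convex for poly} (legitimate since $\Lambda_T(m)\subset J(m)$) to get the two-sided $\sim_{C^m}$ estimate, and take $m$-th roots to obtain the first displayed formula; (3) apply Theorem~\ref{thm: Bohr vs unc} to reduce the second formula to estimating $\inf_m \big(\tfrac{m}{n+m}\big)^{(m-1)/(2m)}$; (4) factor this as $\big(1+\tfrac nm\big)^{-1/2}\big(1+\tfrac nm\big)^{1/(2m)}$, bound the second factor by a constant for $m \gtrsim 1$ while noting it is maximized near the same scale, and use Remark~\ref{rem: maximum of f} to locate the optimal $m \asymp \log n$; (5) substitute to conclude $K(B_{X_n},J)\sim_C\sqrt{\log n/n}$. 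The main obstacle is step~(4): the exponent $\tfrac{m-1}{2m}$ rather than a clean $\tfrac12$ means the infimum is genuinely attained at $m\asymp\log n$ rather than at $m=1$, and one must argue carefully — separating the regimes $m\le \log n$, $m\asymp\log n$, and $m\ge\log n$ (and $m\ge n$) — that the correction factor $\big(1+\tfrac nm\big)^{1/(2m)}$ neither destroys the upper bound (it is bounded for $m$ past the critical scale) nor is negligible (it forces the minimizer up to $\log n$), so that the infimum is $\sim_C \sqrt{\log n/n}$ and not $\sim n^{-1/2}$. This is exactly the point where Remark~\ref{rem: maximum of f} does the essential work, so the argument is short once that remark is brought to bear.
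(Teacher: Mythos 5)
Your proposal is correct and follows essentially the same route as the paper: Proposition~\ref{prop: Km vs uncond in Pm} plus Theorem~\ref{lower bound 2-convex for poly} for the first claim, and Theorem~\ref{thm: Bohr vs unc} combined with the rewriting $\big(\tfrac{m}{n+m}\big)^{\frac{m-1}{2m}}\sim_C\big(\tfrac{m\,n^{1/m}}{n}\big)^{1/2}$ and the optimization of Remark~\ref{rem: maximum of f} at $m\asymp\log n$ for the second. Your step (4), though more discursive than the paper's one-line identity, resolves the $m=1$ versus $m\asymp\log n$ issue correctly.
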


\begin{proof}
The first claim follows using  Proposition~\ref{prop: Km vs uncond in Pm} and the fact that, by Theorem
\ref{lower bound 2-convex for poly} for all $m, n$
\[
\left( 1+\frac{n}{m}\right)^{\frac{m-1}{2}}\sim_{C^m} \boldsymbol{\chimon}\big(\Pp_{J(m)}(X_n)\big)\,.
\]
Note that this in particular means that
\[
\text{$K_m(B_{X_n},J) \sim_C \Big(\frac{m}{n}\Big)^{\frac{m-1}{2m}}$ \,  for $m \leq n$ \quad and \quad
$K_m(B_{X_n},J) \sim_C 1$  \, for $m \geq n$\,.}
\]
As a consequence, we deduce from Theorem~\ref{thm: Bohr vs unc}  that for some constant $C \ge 1$
\begin{align*}
\frac{1}{3} \min \left\{ \frac{1}{C} ,\inf_{m \leq n}\,\Big(\frac{m}{n}\Big)^{\frac{m-1}{2m}} \right\}
& \leq \frac{1}{3} \min \left\{  \inf_{n \leq m } K_m(B_{X_n},J),\inf_{m \leq n} K_m(B_{X_n},J) \right\} \\
& = \frac{1}{3} \inf_{ m \in \mathbb{N} } K_m(B_{X_n},J) \le K(B_{X_n},J)\\
& \le \inf_{m \in \mathbb{N}} K_m(B_{X_n},J) \le C \inf_{m \leq n}\Big(\frac{m}{n}\Big)^{\frac{m-1}{2m}}\,.
\end{align*}
Using the fact that $\Big(\frac{m}{n}\Big)^{\frac{m-1}{2m}} \sim_C \Big( \frac{m n^{\frac{1}{m}} }{n}\Big)^{\frac{1}{2}}$,
we conclude from  Remark \ref{rem: maximum of f}  and the first statement of the theorem (already proved) that
\[
K(B_{X_n},J) \sim_C K_{[\log n]}(B_{X_n},J) \sim_C \sqrt{\frac{\log n}{n}}\,. \qedhere
\]
\end{proof}

For the rest of his section the aim is to establish a~far reaching extension of the remarkable result by Bayart, Pellegrino, and Seoane
\cite{bayart2014bohr} which states that
\begin{spacing}{1}
\begin{equation}\label{BPS}
\lim_{n \to \infty} \frac{K(B_{\ell_\infty^n})}{\sqrt{\frac{\log n}{n}}}  = 1\,.
\end{equation}
\end{spacing}

We prove that this limit formula is valid for a class of Banach lattices $X_n$ distinct from $\ell_\infty^n$. Moreover, we consider
more general sets of indices beyond the full set $J = \mathbb{N}_0^{(\mathbb{N})}$, including all tetrahedral indices. In particular,
we will demonstrate that a limit analogous to \eqref{BPS} holds for a~class of Lorentz sequence spaces (see Theorem \ref{thm: main bohr radii}).

\begin{theorem}\label{limits++}
Let $X$ be a  Banach sequence lattice such that
\[
\varphi_X(n)\,\varphi_{X'}(n) \prec n\quad \text{ and } \quad
\|\id\colon X_n \to \ell_2^{n}\| \leq \frac{1}{\sqrt{n}}\,\|\id\colon X_n \to \ell_1^{n}\|\,.
\]
Then, for every index set  $J$ for which $\Lambda_{T} \subset~J$, one has
\[
\lim_{n \to \infty} \frac{K(B_{X_n},J)}{\sqrt{\frac{\log n}{n}}}  = 1\,.
\]
\end{theorem}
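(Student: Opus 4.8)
The strategy is to sandwich the normalized Bohr radius $K(B_{X_n},J)\big/\sqrt{\log n/n}$ between quantities both of which tend to $1$, using the already established homogeneous reduction and the sharp probabilistic bounds. First I would invoke Theorem~\ref{thm: Bohr vs unc} together with the monotonicity \eqref{rem: bohr radii monotony} to reduce the estimate of $K(B_{X_n},J)$ to control of the homogeneous Bohr radii $K_m(B_{X_n},J) = \boldsymbol{\chimon}\big(\Pp_{J(m)}(X_n)\big)^{-1/m}$ via Proposition~\ref{prop: Km vs uncond in Pm}, and since $\Lambda_T(m) \subset J(m)$ for every $m \leq n$, to control of $\boldsymbol{\chimon}\big(\Pp_{\Lambda_T(m,n)}(X_n)\big)$. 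The hypotheses on $X$ are exactly those of Proposition~\ref{toblach}, which gives the lower bound $\boldsymbol{\chimon}\big(\mathcal{P}_{\Lambda_T(m)}(X_n)\big) \succ_{C^m} (n/m)^{(m-1)/2}$, hence an \emph{upper} bound $K_m(B_{X_n},J) \prec_C (m/n)^{(m-1)/(2m)}$, and via Remark~\ref{rem: maximum of f} (optimizing at $m \sim \log n$) an upper bound of the correct order $\sqrt{\log n/n}$ for $K(B_{X_n},J)$. So the order is already pinned down by the results of the previous subsection; the real content is to push both constants to $1$.

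For the precise constant, I would run the argument through with explicit tracking of lower-order terms. On the upper side, one uses Theorem~\ref{applyhedral} (or directly Lemma~\ref{innichenA} with the hypothesis $\varphi_X(n)\varphi_{X'}(n) \prec n$, i.e.\ $\|\id\colon X_n\to\ell_1^n\| = \varphi_{X'}(n) \prec n/\varphi_X(n)$) together with $\|\id\colon X_n\to\ell_2^n\| \leq n^{-1/2}\|\id\colon X_n\to\ell_1^n\|$: these force $\varphi_X(n) \prec \sqrt n$, and plugging $r = 2$ into Theorem~\ref{Anquetil alfa} applied to the constant coefficient vector on $\Lambda_T(m,n)$ yields, after taking $m$-th roots and using $\binom{n}{m} \geq (n/m)^m$, that $\limsup_n K(B_{X_n},J)\big/\sqrt{\log n/n} \leq 1$ once the $(\log m)^{1/r'}$, $e^{m/r}$, and $m^{1/r'}$ factors are absorbed — the key point being that these contribute only $(1+o(1))$ after the $m$-th root when $m = m(n) \sim \log n$. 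For the \emph{lower} bound on $K(B_{X_n},J)$ one needs a matching \emph{upper} bound on $\sup_m \boldsymbol{\chimon}\big(\mathcal{P}_{\Lambda_T(m,n)}(X_n)\big)^{1/m}$; here I would combine the Kadets--Snobar-type estimate (Corollary~\ref{K-S for poly}, giving $\boldsymbol{\chimon} \prec_{C^m} (1+n/m)^{(m-1)/2}$) or, for the sharp constant, the refined estimate in Corollary~\ref{coro: bound proj constant tetra} and Theorem~\ref{thm: uncond cte vs proj cte}, which express $\boldsymbol{\chimon}\big(\mathcal{P}_{\Lambda_T(m+1)}(X_n)\big)$ in terms of $\boldsymbol{\lambda}\big(\mathcal{P}_{\Lambda_T(m,n)}(X_n)\big) \leq \widehat{\boldsymbol{\lambda}}\big(\mathcal{P}_{\Lambda_T(m,n)}(X_n)\big) \leq e^m\big(\varphi_{X'}(n)/\varphi_{X'}(m)\big)^m$.

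\textbf{The main obstacle.} The crux is not the order but the constant: every auxiliary inequality in the chain (the factor $e 2^m$ in Theorem~\ref{thm: uncond cte vs proj cte}, the $\kappa^m$ in Theorem~\ref{OrOuSe}, the $e^m$ from $m^m \leq e^m m!$, the $(m+1)$ and $\sqrt{m}$ polynomial losses, and the constants in Theorems~\ref{Anquetil alfa} and~\ref{Hinault alfa}) carries a multiplicative $c^m$ or polynomial-in-$m$ error. When $m$ is fixed these would be fatal, but the whole point is that the infimum/supremum over $m$ is attained near $m \sim \log n \to \infty$, so after extracting the $m$-th root each $c^m$ becomes $c$ and each $m^{O(1)}$ becomes $m^{o(1)} = (\log n)^{o(1)} \to 1$ relative to the main term $\sqrt{\log n/n}$. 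I would therefore structure the argument as: (1) localize the extremal degree to a window $m \in [(1-\varepsilon)\log n, (1+\varepsilon)\log n]$, showing degrees outside contribute strictly suboptimally; (2) on that window, use that $\varphi_X(m), \varphi_{X'}(m) = m^{\Theta(1)}$ together with $\varphi_X(n)\varphi_{X'}(n) \sim n$ to compute $\big(\varphi_{X'}(n)/\varphi_{X'}(m)\big)^{1/m} = (n/\varphi_X(n))^{1/m}\varphi_{X'}(m)^{-1/m}(1+o(1))$ and check that all such factors are $1+o(1)$; (3) let $n \to \infty$ and $\varepsilon \to 0$. Being careful that $\binom{n}{m}^{1/m} \to n/m \cdot e^{1+o(1)}$ in this regime and that the $\sqrt{n/(m(n-m))}\,\big(n/(n-m)\big)^{n-m}$ Stirling correction from Proposition~\ref{innichen} is $e^{m(1+o(1))}$, hence harmless after the $m$-th root — this bookkeeping is the technical heart, but it is exactly parallel to the $\ell_\infty$ computation of Bayart--Pellegrino--Seoane in \cite{bayart2014bohr}, which the stated hypotheses on $\varphi_X$ and the embedding constants are engineered to reproduce.
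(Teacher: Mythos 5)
Your plan for the half $\limsup_n K(B_{X_n},J)/\sqrt{\log n/n}\le 1$ is essentially the paper's: reduce to $\Lambda_T$ by monotonicity, apply the probabilistic lower bound for $\boldsymbol{\chimon}\big(\mathcal{P}_{\Lambda_T(m,n)}(X_n)\big)$ with $r=2$, and optimize at $m=\lceil\log n\rceil$. One caveat: you must use Theorem~\ref{Hinault alfa}/Proposition~\ref{innichen} rather than Theorem~\ref{Anquetil alfa}/Lemma~\ref{innichenA}. In the latter the prefactor is $(n\log m)^{1/r'}$ and the crude bound $\binom{n}{m}\ge (n/m)^m$ is used, so after the $m$-th root at $m\sim\log n$ the factors $n^{1/(2m)}$ and $(e^{m/2})^{1/m}$ each contribute $e^{1/2}$, not $1+o(1)$; only the full Stirling correction $\big(\tfrac{n}{n-m}\big)^{n-m}$ kept in Proposition~\ref{innichen} cancels these, via $n^{1/(2m)}e^{1/2}(1-m/n)^{n/m}\to 1$. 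Your bookkeeping on this point is off (you call $e^{m(1+o(1))}$ "harmless after the $m$-th root", but its $m$-th root is $e$ — it is only harmless because it cancels), though you correctly identify this as the technical heart.

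The genuine gap is your route to $\liminf_n K(B_{X_n},J)/\sqrt{\log n/n}\ge 1$ via an upper bound on $\sup_m\boldsymbol{\chimon}\big(\mathcal{P}_{\Lambda_T(m,n)}(X_n)\big)^{1/m}$. This cannot yield the constant $1$ for two separate reasons. First, the only bridge from homogeneous Bohr radii back to $K$ is Theorem~\ref{thm: Bohr vs unc}, which gives $K(B_{X_n},J)\ge\frac13\inf_m K_m(B_{X_n},J)$; even a perfect estimate for $\boldsymbol{\chimon}$ would then only give $\liminf\ge\frac13$. Second, every available upper bound on $\boldsymbol{\chimon}$ — Corollary~\ref{K-S for poly}, or the chain through Theorem~\ref{thm: uncond cte vs proj cte} (factor $e\,2^m$), Theorem~\ref{OrOuSe} (factor $\kappa^m$) and Corollary~\ref{coro: bound proj constant tetra} (factor $e^m$) — carries a genuine $C^m$ with $C>1$. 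After the $m$-th root this is a fixed constant $C\neq 1$ multiplying the main term $\sqrt{m/n}$, so it destroys the limit even though it is irrelevant for the order; your assertion that "each $c^m$ becomes $c$ \dots hence harmless" conflates these two issues. The paper avoids proving this direction altogether: by \eqref{lowinfty} one has $K(B_{\ell_\infty^n},J)\le K(B_{X_n},J)$ for every Banach lattice $X_n$ and every $J$, and combining this with the monotonicity \eqref{rem: bohr radii monotony} and the Bayart--Pellegrino--Seoane limit \eqref{BPS} for $\ell_\infty^n$ gives $\liminf_n K(B_{X_n},J)/\sqrt{\log n/n}\ge 1$ with no loss. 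You cite that theorem only as a "parallel computation"; in fact it must be invoked as the source of the entire lower bound, and without that step your argument does not close.
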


\begin{proof}
Combining \eqref{lowinfty} and \eqref{BPS} with the monotonicity of Bohr radii in the index set, yields
\begin{align*}
1 = \lim_{n \to \infty}  \frac{K(B_{\ell_\infty^n})}{\sqrt{\frac{\log n}{n}}}
= \liminf_{n \to \infty}  \frac{K(B_{X_n})}{\sqrt{\frac{\log n}{n}}}
&
\leq
\liminf_{n \to \infty} \frac{K(B_{X_n},J)}{\sqrt{\frac{\log n}{n}}}
\leq
\limsup_{n \to \infty} \frac{K(B_{X_n},J)}{\sqrt{\frac{\log n}{n}}}
\end{align*}
and this is less than or equal to $
\limsup_{n \to \infty} \frac{K(B_{X_n},\Lambda_T)}{\sqrt{\frac{\log n}{n}}}.$
Thus to complete the proof, it remains to show that the last term is $\leq 1$. To do so we apply Proposition~\ref{innichen}
with $r=2$, and get for each $m\leq n$
\begin{align*}
\frac{1}{\boldsymbol{\chimon}\big(\mathcal{P}_{\Lambda_T(m,n)}(X_n)\big)^\frac{1}{m}} & \leq
\left(  \frac{\|\id:X_n\to \ell_2^n\|}{\|\id:X_n\to \ell_1^n\|}\right)^{\frac{m-1}{m}}
\frac{\Big( Cm\,  (\log n)^{\frac{3}{2}} \Big)^{\frac{1}{m}}    \Big(e^{\frac{m}{2}} m^{m/2}  \Big)^{\frac{1}{m}}}
{\Big(\Big( \frac{n}{n-m} \Big)^{n-m}\Big)^{\frac{1}{m}} \Big(  \sqrt{\frac{n}{m(n-m)}}\Big)^{\frac{1}{m}}} \,,
\end{align*}
and consequently for each $m\leq n$
\begin{align*}
\frac{1}{\boldsymbol{\chimon}\big(\mathcal{P}_{\Lambda_T(m,n)}(X_n)\big)^\frac{1}{m}}
&
\leq n^{-\frac{m-1}{2m}} \frac{\Big( Cm\,  (\log n)^{\frac{3}{2}} \Big)^{\frac{1}{m}} \Big(e^{\frac{m}{2}}
m^{m/2}  \Big)^{\frac{1}{m}}}{\Big(\Big( \frac{n}{n-m} \Big)^{n-m}\Big)^{\frac{1}{m}}
\Big(\sqrt{\frac{n}{m(n-m)}}\Big)^{\frac{1}{m}}} \\
& = \Big(  \frac{m}{n} \Big)^\frac{1}{2} n^\frac{1}{2m} m^{-\frac{1}{2}}\frac{\Big( Cm\,(\log n)^{\frac{3}{2}} \Big)^{\frac{1}{m}}\Big(e^{\frac{m}{2}}m^{m/2}  \Big)^{\frac{1}{m}}}
{\Big(\Big( \frac{n}{n-m} \Big)^{n-m}\Big)^{\frac{1}{m}} \Big(  \sqrt{\frac{n}{m(n-m)}}\Big)^{\frac{1}{m}}}
\\
& = \Big(  \frac{m}{n} \Big)^\frac{1}{2} n^\frac{1}{2m}  e^{\frac{1}{2}}    \Big( \frac{n-m}{n} \Big)^{\frac{n-m}{m}}
\frac{\Big( Cm\,  (\log n)^{\frac{3}{2}} \Big)^{\frac{1}{m}}}
{\Big(\sqrt{\frac{n}{m(n-m)}}\Big)^{\frac{1}{m}}} \\
& =\Big(  \frac{m}{n} \Big)^\frac{1}{2} \left[n^\frac{1}{2m}  e^{\frac{1}{2}}    \Big( 1-\frac{m}{n} \Big)^{\frac{n}{m}}\right] \, \left[
\frac{n}{n-m}\frac{\Big( Cm\,  (\log n)^{\frac{3}{2}} \Big)^{\frac{1}{m}}}
{\Big(\sqrt{\frac{n}{m(n-m)}}\Big)^{\frac{1}{m}}}\right]\,.
\end{align*}
Taking $m=\lceil\log n\rceil$, the last factor in the previous chain of inequalities tends to $1$ as $n \to \infty$. Since
\[
\lim_{n \to \infty}
n^\frac{1}{2\lceil\log n\rceil}  e^{\frac{1}{2}}\Big( 1-\frac{\lceil\log n\rceil}{n}\Big)^{\frac{n}{\lceil\log n\rceil}}
= e^{\frac{1}{2}}e^{\frac{1}{2}} e^{-1}= 1\,,
\]
we from Theorem~\ref{thm: Bohr vs unc} and Proposition~\ref{prop: Km vs uncond in Pm} deduce
\begin{align*}
\limsup_{n \to \infty}\frac{K(B_{X_n})}{\sqrt{\frac{\log n}{n}}} &
=  \limsup_{n \to \infty} \frac{\sqrt{\frac{n}{\log n}}}{\dis\sup_{m\in \mathbb{N}}  \boldsymbol{\chimon}\big(\mathcal{P}_m(X_n)\big)^\frac{1}{m}}
\leq   \limsup_{n \to \infty} \frac{\sqrt{\frac{n}{\log n}}}{ \boldsymbol{\chimon}\big(\mathcal{P}_{\lceil\log n\rceil}(X_n)\big)^\frac{1}{\lceil\log n\rceil}} \le 1\,,
\end{align*}
which completes the proof.
\end{proof}

Note that with a similar proof, but applying Proposition \ref{toblach} instead of Proposition \ref{innichen}, we can establish
the following, more general result for the entire index set.

\begin{theorem}\label{limits}
Let $X$ be a  Banach sequence lattice such that $\frac{\|\id\colon X_n \to \ell_2^{n}\|}{\|\id\colon X_n \to \ell_1^{n}\|}
\leq \frac{1}{\sqrt{n}}$ for each $n$. Then
\[
\lim_{n \to \infty} \frac{K(B_{X_n})}{\sqrt{\frac{\log n}{n}}} = 1\,.
\]
\end{theorem}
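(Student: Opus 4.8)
The plan is to mirror the proof of Theorem~\ref{limits++} almost verbatim, replacing the only ingredient specific to tetrahedral index sets -- namely Proposition~\ref{innichen} -- by the more robust Proposition~\ref{toblach}. First I would observe that the single hypothesis $\|\id\colon X_n \to \ell_2^{n}\| \leq \tfrac{1}{\sqrt{n}}\|\id\colon X_n \to \ell_1^{n}\|$ is exactly the second of the two conditions \eqref{M} required by Proposition~\ref{toblach}; the first condition there, $\varphi_{X_n}(n)\varphi_{X_n'}(n)\prec n$, is automatic since $\varphi_{X_n}(n)\varphi_{X_n'}(n)=\|\id\colon X_n\to\ell_1^n\|\cdot\|\id\colon\ell_1^n\to X_n\|$ and the product of an operator norm with that of a left inverse is always $\geq n$ on an $n$-dimensional space, while the reverse estimate $\varphi_{X_n}(n)\varphi_{X_n'}(n)\leq n$ holds in any Banach sequence lattice (this is the standard $\varphi_X\varphi_{X'}\leq \id$ inequality). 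Hence Proposition~\ref{toblach} applies to $X$ with $J_m=\Lambda(m)$ (here $\Lambda_T(m)\subset\Lambda(m)$ trivially), giving $(n/m)^{(m-1)/2}\prec_{C^m}\boldsymbol{\chimon}\big(\mathcal{P}_m(X_n)\big)$ for all $m\leq n$.

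Next I would establish the lower bound $\liminf_n K(B_{X_n})/\sqrt{\log n/n}\geq 1$ exactly as in Theorem~\ref{limits++}: by \eqref{lowinfty} we have $K(B_{\ell_\infty^n})\leq K(B_{X_n})$, and \eqref{BPS} gives $K(B_{\ell_\infty^n})/\sqrt{\log n/n}\to 1$, so $\liminf_n K(B_{X_n})/\sqrt{\log n/n}\geq 1$. For the upper bound, one uses Theorem~\ref{thm: Bohr vs unc} and Proposition~\ref{prop: Km vs uncond in Pm} to write
\[
K(B_{X_n})\leq \inf_{m\in\mathbb{N}}K_m(B_{X_n})=\frac{1}{\sup_{m}\boldsymbol{\chimon}\big(\mathcal{P}_m(X_n)\big)^{1/m}}\leq \frac{1}{\boldsymbol{\chimon}\big(\mathcal{P}_{\lceil\log n\rceil}(X_n)\big)^{1/\lceil\log n\rceil}}\,.
\]
Then I would feed in the lower estimate for $\boldsymbol{\chimon}\big(\mathcal{P}_m(X_n)\big)$ coming from Proposition~\ref{toblach} together with the binomial lower bound \eqref{binom} and Stirling's formula (this is precisely the content of the displayed chain in the proof of Proposition~\ref{toblach}), which yields, for each $m\leq n$,
\[
\frac{1}{\boldsymbol{\chimon}\big(\mathcal{P}_m(X_n)\big)^{1/m}}\prec_C \Big(\frac{m}{n}\Big)^{1/2}\Big[n^{1/(2m)}e^{1/2}\Big(1-\tfrac{m}{n}\Big)^{n/m}\Big]\big(\text{lower-order factors}\big)^{1/m}\,,
\]
so that choosing $m=\lceil\log n\rceil$ makes the bracketed expression converge to $e^{1/2}e^{1/2}e^{-1}=1$ and the lower-order factors converge to $1$ as $n\to\infty$; therefore $\limsup_n K(B_{X_n})/\sqrt{\log n/n}\leq 1$.

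The main obstacle -- such as it is -- is purely bookkeeping: one must track the polynomial-in-$m$, $\log$-in-$n$ correction factors through Proposition~\ref{toblach}'s estimate and verify they are harmless at the scale $m\sim\log n$, and one must confirm that Proposition~\ref{toblach} is genuinely applicable, i.e. that its two hypotheses \eqref{M} follow from the single hypothesis of the present theorem. Since all the analytic work has already been done inside Proposition~\ref{toblach} and Theorem~\ref{limits++}, no new idea is needed; the theorem is a clean corollary obtained by substituting the stronger (i.e. hypothesis-lighter, conclusion-for-all-$J$) input at the appropriate place. I would close by remarking that the result is more general than Theorem~\ref{limits++} in that it requires no second-order convexity-type assumption, at the cost of restricting to the full index set $J=\mathbb{N}_0^{(\mathbb{N})}$.
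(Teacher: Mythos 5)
Your overall strategy --- rerun the proof of Theorem~\ref{limits++} with Proposition~\ref{toblach} in place of Proposition~\ref{innichen} --- is exactly the paper's intended route (its proof of Theorem~\ref{limits} consists of precisely this remark). However, there is a genuine gap in the step where you argue that Proposition~\ref{toblach} is applicable. You claim that the first condition in \eqref{M}, namely $\varphi_{X_n}(n)\varphi_{X_{n}'}(n)\prec n$, is automatic because ``$\varphi_{X_n}(n)\varphi_{X_{n}'}(n)\le n$ holds in any Banach sequence lattice.'' This is false: writing $\mathbf 1=(1,\dots,1)$, the universal inequality goes the other way, $n=\sum_k 1\cdot 1\le\|\mathbf 1\|_{X_n}\|\mathbf 1\|_{X_{n}'}=\varphi_{X_n}(n)\varphi_{X_{n}'}(n)$, with equality for symmetric lattices but not in general (for a weighted $\ell_1^n$ with weights $1,n,\dots,n$ the product is of order $n^2$). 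Your supporting identity is also wrong: $\|\id\colon\ell_1^n\to X_n\|=\max_k\|e_k\|_{X_n}$, which is not $\varphi_{X_n}(n)$, and the product of the norms of an operator and its inverse is only $\ge 1$, not $\ge n$. The gap matters quantitatively: in the chain inside Proposition~\ref{toblach} the product $\varphi_{X_n}(n)^m\varphi_{X_{n}'}(n)^m$ occurs to the $m$-th power, so a constant $D>1$ in $\varphi_{X_n}(n)\varphi_{X_{n}'}(n)\le Dn$ survives the $m$-th root as a factor $D$ in the $\limsup$, and you would only obtain $\limsup_n K(B_{X_n})/\sqrt{\log n/n}\le D$ rather than $=1$.

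The fact you need is nevertheless true under the stated hypothesis, but for a different reason, and it must be proved. Since $\|x\|_{\ell_1}\le\sqrt n\,\|x\|_{\ell_2}$ always, the hypothesis forces $\|\id\colon X_n\to\ell_1^n\|=\sqrt n\,\|\id\colon X_n\to\ell_2^n\|$, i.e.\ equality in Cauchy--Schwarz at the extremizer. Taking $x^{(k)}\in B_{X_n}$ with $\|x^{(k)}\|_{\ell_1}\to\varphi_{X_{n}'}(n)=:S$, one gets $\|x^{(k)}\|_{\ell_2}\to S/\sqrt n$, whence $\bigl\||x^{(k)}|-\tfrac1n\|x^{(k)}\|_{\ell_1}\mathbf 1\bigr\|_{\ell_2}^2=\|x^{(k)}\|_{\ell_2}^2-\|x^{(k)}\|_{\ell_1}^2/n\to0$; so $\tfrac{S}{n}\mathbf 1\in\overline B_{X_n}$ and therefore $\varphi_{X_n}(n)\varphi_{X_{n}'}(n)\le n$, in fact $=n$ with constant exactly $1$ --- which is precisely what makes the limit come out to $1$. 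Alternatively, since Theorem~\ref{limits} concerns only the full index set, you can bypass $\varphi_{X_n}(n)$ entirely by testing Theorem~\ref{Anquetil alfa} against $(\sum_k z_k)^m=\sum_{\alpha\in\Lambda(m,n)}\frac{m!}{\alpha!}z^\alpha$, whose supremum on $B_{X_n}$ equals $\|\id\colon X_n\to\ell_1^n\|^m$ exactly, so that only the ratio appearing in the hypothesis enters. With either repair, the remainder of your outline (lower bound from \eqref{lowinfty} and \eqref{BPS}; upper bound at $m=\lceil\log n\rceil$ with the sharp Stirling bookkeeping rather than Proposition~\ref{toblach}'s $\prec_{C^m}$ conclusion used as a black box) goes through.
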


We conclude by presenting a class of Banach sequence lattices $X$ for which  \eqref{BPS} holds when $\ell_\infty$ is replaced
by $X$, and the entire index set is replaced by any arbitrary index set that includes all tetrahedral indices. To achieve this,
we show how to construct Banach sequence lattices that satisfy the assumptions of Theorem~\ref{limits}.

First, we note that the $2$-convexification $X:=E^{(2)}$ of any Banach sequence lattice $E$ is $2$-convex with $M^{(2)}(X)=1$.
If, in addition, $X$ satisfies
\begin{equation*}
\varphi_X(n)\,\varphi_{X'}(n) \prec n\,,
\end{equation*}
then  $X$ is indeed an example we seek. This estimate clearly holds whenever $E$ is symmetric. However, it is important to point
out that it is straightforward to show that this estimate for $X:=E^{(2)}$ is valid even when  $E$ is an arbitrary Banach sequence
lattice satisfying $\varphi_E(n)\,\varphi_{E'}(n) \prec n$, which includes non-symmetric spaces  $E$.

\section{Applications for polynomials on Lorentz spaces $\ell_{r,s}^n$}

In this section, we concentrate on polynomials equipped with supremum norms on unit spheres within a significant class of symmetric
finite-dimensional Banach spaces, specifically Lorentz sequence spaces. We provide some essential definitions and properties that
will be used throughout without further reference. For a more in-depth exploration of Lorentz sequence spaces, we refer to
\cite{creekmore1981type, Jameson, LT1, Reisner}.

For $1 \leq p, q \leq \infty$ the space $\ell_{p,q}$ consists of those (complex) sequences $z$ for which (we use the convention $\frac{1}{\infty}:=0$)
\[
\Vert z \Vert_{\ell_{p,q}} := \Big\Vert \big( z^{*}_{n} n^{\frac{1}{p} - \frac{1}{q}} \big)_{n=1}^{\infty}  \Big\Vert_{\ell_{q}} < \infty\,,
\]
where $z^\ast = (z_k^\ast) $ as usual denotes the decreasing rearrangement of $|z|$.
Observe that, in general, this is a quasi-norm and only defines a norm for \( 1 \le q \le p \le \infty \). For $z \in \ell_{p,q}$ we define
\[
\Vert z \Vert_{\ell_{p,q}}^* := \left( \sum_{n=1}^\infty n^{\frac{q}{p} -1} \left( \frac{1}{n} \sum_{k=1}^n z_k^* \right)^q  \right)^{1/q}\,.
\]
It should be noted (see \cite[Lemma~4.5]{bennett1988interpolation}) that for $1 \le p,q \le \infty$ and $z \in \ell_{p,q}$, it holds
\begin{equation*}\label{eq: equivalencia normas ell_p,q}
\Vert z \Vert_{\ell_{p,q}} \le \Vert z \Vert_{\ell_{p,q}}^* \le p' \Vert z \Vert_{\ell_{p,q}}\,.
\end{equation*}
Thus, we can consistently work with the quasi-norm $\Vert \cdot \Vert_{\ell_{p,q}}$ and treat $(\ell_{p,q}, \Vert \cdot \Vert_{\ell_{p,q}})$
as a~Banach sequence space, albeit with the trade-off of $p'$ (the conjugate exponent of $p$) as a cost each time we do so. Henceforth, if
$X = \ell_{p,q}$, then $X_n$ is denoted by $\ell_{p,q}^n$.

The fundamental function for $\ell_{p,q}$ satisfies the equivalence $\varphi_{\ell_{p,q}}(n) \sim_C n^{\frac{1}{p}}$. Note that if
$1\leq q<p$, then $\ell_{p, q}$ is a~symmetric Banach space.

We heavily use the fact that the spaces $\ell_{p,q}$ are ordered lexicographically:
\begin{align*}
&
\ell_{p,q} \hookrightarrow \ell_{r, s},  \quad\,\,\,\,\, \text{for \, $p < r$} \\
&
\ell_{p, q} \hookrightarrow \ell_{r, s}, \quad\,\,\,\,\, \text{for \, $p=r$ \, and \, $q < s$\,.}
&
\end{align*}
It is easy to check (see, e.g., \cite[Lemma 22]{defant2006norms}) that, for $1 \leq r,s,t \leq \infty$
\begin{equation}\label{eq: norm identity lorentz}
\big\|{\id} \colon \ell_{r,t}^n \to \ell_{r,s}^n\big\| \prec_C\, \max\big\{1,(1 + \log n)^{\frac{1}{s}-\frac{1}{t}}\big\}\,.
\end{equation}
Moreover, notice that for $1 \le q \leq p \le \infty $ it holds that
\begin{equation}\label{eq: norma id lorentz q < p}
\|\id \colon \ell_{p,q} \to \ell_p \| = 1\,.
\end{equation}
Indeed, since $(k^{\frac{q}{p}-1})_k$ is a decreasing sequence, then
\[
z_n^* n^{1/p} = \left(n(z_n^*)^qn^{\frac{q}{p}}\right)^{\frac1{q}}
\le \left( \sum_{k=1}^n (z_k^*)^q k^{\frac{q}{p}-1} \right)^{1/q} = \| z  \|_{\ell_{p,q}}\,,
\]
hence for $z \in B_{\ell_{p,q}}$,
\[
\|z\|_{\ell_p}^p = \sum_{n \ge 1} (z_n^*)^p = \sum_{n \ge 1} (z_n^*)^q (z_n^*)^{p-q}
\le \sum_{n \ge 1} (z_n^*)^q n^{\frac{q}{p}-1} \le 1\,.
\]

Also we will need the fact that $\ell_{p, q}$ is $2$-convex whenever $2 <p < \infty$ \,and \, $2 \leq q < \infty$, and
$\ell_{p, q}$ is $2$-concave whenever $1 \leq p < 2$ \,and \,$1 \leq q \leq 2$. We refer to \cite{Jameson}, where explicit
formulae for the $q$-concavity constant of the Lorentz spaces.

Focusing on the projection constant for polynomials on Lorentz spaces $\ell_{r,s}^n$, it is essential to first address the case of $1$-homogeneous polynomials. This case corresponds to the projection constant of the scale of finite-dimensional Lorentz spaces themselves. The following result encapsulates the current state of knowledge on this topic.

\begin{proposition} \label{lor}
For all $1 < r < \infty$ \,and \, $1 \leq s \leq \infty$ one has
\[
\boldsymbol{\lambda}(\ell^n_{r,s})\sim
\begin{cases}
n^{\min\big\{\frac{1}{2}, \frac{1}{r} \big\}},  & r \neq 2\\[2mm]
n^{\frac{1}{2}},&  r = 2, \, \, \, 2 \leq s \leq \infty \\[2mm]
\Big({\frac{n}{\log \,(e + \log n)}}\Big)^{\frac12},&  r = 2, \, \,\,s=1.
\end{cases}
\]
\end{proposition}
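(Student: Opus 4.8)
\emph{Overview.} The plan is to prove matching upper and lower bounds, handling the two generic rows of the displayed formula uniformly by means of two classical estimates, and isolating the borderline row $r=2$, $s=1$, which is the delicate one. Throughout I work after the standard renorming of the Lorentz quasi-norm (at cost $r'$, hence irrelevant for the asymptotics, together with a trivial rescaling so that $\|e_k\|\le1$), so that $\ell_{r,s}^n$ is a finite-dimensional Banach lattice with $1$-unconditional unit vector basis.

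\textbf{Upper bound.} Kadets--Snobar \eqref{kadets1} gives $\boldsymbol{\lambda}(\ell_{r,s}^n)\le\sqrt n$, while \eqref{Carsten1} gives $\boldsymbol{\lambda}(\ell_{r,s}^n)\le\varphi_{\ell_{r,s}^n}(n)\sim_C n^{1/r}$. Taking the better of the two yields
\[
\boldsymbol{\lambda}(\ell_{r,s}^n)\;\prec_C\;n^{\min\{1/2,\,1/r\}}\,,
\]
which is already the asserted order for every $r\neq 2$ and for $r=2$, $2\le s\le\infty$, and which gives the (non-sharp) bound $\sqrt n$ at $(r,s)=(2,1)$.

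\textbf{Lower bound (generic cases).} Here I would use Sch\"utt's inequality \eqref{schuett},
\[
\boldsymbol{\lambda}(\ell_{r,s}^n)\;\ge\;\frac{\varphi_{\ell_{r,s}^n}(n)}{\sqrt 2\,\big\|\id\colon\ell_2^n\to\ell_{r,s}^n\big\|}\,,
\]
the point being to evaluate the operator norm \emph{directly}, on the decreasing rearrangement, rather than through the factorization $\ell_2^n\to\ell_r^n\to\ell_{r,s}^n$, which would inject the spurious logarithm of \eqref{eq: norm identity lorentz}. A one-line H\"older estimate (the extremal vector being, up to constants, the flat one for $r<2$ and $e_1$ for $r>2$) shows that, with constants depending only on $r,s$,
\[
\big\|\id\colon\ell_2^n\to\ell_{r,s}^n\big\|\;\sim\;n^{\left(\frac{1}{r}-\frac{1}{2}\right)_+}\qquad\bigl(r\neq 2,\ \text{and }r=2,\ 2\le s\le\infty\bigr)\,,
\]
i.e.\ $\sim n^{1/r-1/2}$ for $1<r<2$ and $\sim 1$ for $r\ge 2$ (the governing series $\sum_k k^{2/r-2}$ converging there). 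Inserting $\varphi_{\ell_{r,s}^n}(n)\sim n^{1/r}$ gives $\boldsymbol{\lambda}(\ell_{r,s}^n)\succ_C n^{1/r}/n^{(1/r-1/2)_+}=n^{\min\{1/2,1/r\}}$, matching the upper bound in all these cases.

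\textbf{The main obstacle: $r=2$, $s=1$.} This is the one case where both standard tools are off by a power of $\log\log n$. On the upper side Kadets--Snobar only gives $\sqrt n$; on the lower side the harmonic series $\sum_k k^{-1}$ intervenes in the computation above, so $\big\|\id\colon\ell_2^n\to\ell_{2,1}^n\big\|\sim(\log n)^{1/2}$ (extremal weight $(k^{-1/2})_{k\le n}$), and Sch\"utt only yields $\boldsymbol{\lambda}(\ell_{2,1}^n)\succ_C\sqrt{n/\log n}$ --- short of $\big(n/\log(e+\log n)\big)^{1/2}$ by a factor $\big(\log n/\log\log n\big)^{1/2}$. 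The sharp value uses that $\ell_{2,1}^n$ has enough symmetries, so that $\boldsymbol{\lambda}(\ell_{2,1}^n)\,\pi_1(\id_{\ell_{2,1}^n})=n$, together with a refined estimate of $\pi_1(\id_{\ell_{2,1}^n})$; morally this reflects that on every dyadic block $[2^j,2^{j+1})$ the Lorentz weight is essentially constant, so $\ell_{2,1}^n$ behaves like an $\ell_1$-sum of $\sim\log_2 n$ Euclidean blocks of doubling dimension, and the correction $\log(e+\log n)$ is the logarithm of the number of blocks. I would dispose of this last row by citing the known two-sided estimate (referring to \cite{tomczak1989banach} for the operator-ideal toolkit); making the $\log\log$ sharp on both sides is the genuinely hard point.
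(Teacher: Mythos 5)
Your proposal is correct, and it reaches the stated asymptotics by a route that coincides with the paper's only on part of the parameter range. For $r>2$ (all $s$) and for $r=2$, $s\ge 2$, the paper does exactly what you do: upper bound from the fundamental function via \eqref{Carsten1} (respectively Kadets--Snobar), lower bound from Sch\"utt's inequality \eqref{schuett}; and for $(r,s)=(2,1)$ the paper, like you, simply cites the sharp two-sided Kwapie\'n--Sch\"utt estimate $\boldsymbol{\lambda}(\ell^n_{2,1})\sim (n/\log\log n)^{1/2}$ (the precise reference is \cite{schuettkwapien}, not just the general toolkit of \cite{tomczak1989banach}). Where you genuinely diverge is the case $1<r<2$: the paper identifies $\ell^n_{r,s}$ with $\mathcal P_1(\ell^n_{r',s'})$ and invokes the polynomial machinery --- Theorem~\ref{lower bound 2-convex for poly} for $1\le s\le 2$ (via $2$-convexity of the dual) and Theorems~\ref{t-final}/\ref{t-final2} for $2\le s\le\infty$ --- whereas you get both bounds directly from Kadets--Snobar and from Sch\"utt combined with the computation $\|\id\colon\ell_2^n\to\ell_{r,s}^n\|\sim n^{1/r-1/2}$. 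Your route is more elementary and self-contained, and it has the virtue of treating all $s$ for $r<2$ uniformly; the paper's detour buys nothing extra here but is natural in context since those theorems are proved anyway for all degrees $m$.

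One small repair: your justification of $\|\id\colon\ell_2^n\to\ell_{2,s}^n\|\sim 1$ for $s\ge 2$ via convergence of the ``governing series'' fails at the boundary, since plugging $x_k^*\le k^{-1/2}$ into the Lorentz quasi-norm produces $\sum_k k^{-1}\sim\log n$ there. The claim itself is correct, but you must instead use the lexicographic nesting $\ell_{2,2}\hookrightarrow\ell_{2,s}$ for $s\ge 2$ (equivalently \eqref{eq: norm identity lorentz}), which costs only a constant. This is a one-line fix, not a gap.
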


\begin{proof}
As explained above,  $\ell_{r,s}$ is $2$-concave (then its dual is $2$-convex) for $r < 2$ and $1 \leq s \leq 2$. Thus the
statement follows from Theorem \ref{lower bound 2-convex for poly}. For the proof of the case  $r < 2$ and $2 \leq s \leq \infty$
see Theorem~\ref{t-final}. The case $r=2$, $s\ge2$ and the case $r>2$ follow from  \eqref{Carsten1} and \eqref{schuett}.
The last equivalence is a~remarkable result due to Kwapie\'n and Sch\"{u}tt \cite{schuettkwapien}, which states that
$
\boldsymbol{\lambda}(\ell^n_{2,1})\sim \big(n/\log\,\log n\big)^{\frac12}, \, n\geq 5\,.
$
\end{proof}

It should be noted that, to the best of our knowledge, the optimal estimate for the projection constant of $\ell_{r,s}^n$ with respect to the parameters $r$ and $s$ remains an open problem in general.  Even in the specific case where $r = 2$ and $1 < s < 2$—a scenario not covered by Proposition \ref{lor}—the asymptotic behavior of $\boldsymbol{\lambda}(\ell_{2,s}^n)$ as a function of the dimension $n$ remains unknown. This presents a challenge, particularly when extending the problem to the more complex issue of determining the optimal estimates for the projection constant of polynomials on Lorentz spaces $\ell_{r,s}^n$ in a general setting.

We conclude with the following observation, which is deduced from combining standard estimates involving Banach-Mazur distances.

\begin{lemma}
For every $1<s<2$, it holds that
\[
\boldsymbol{\lambda}(\ell^n_{2,s})\succ
\begin{cases} \
\frac{1}{(1 + \log n)^{1 - \frac{1}{s}}}\,
\Big({\frac{n}{\log \,(e + \log n)}}\Big)^{\frac12},& 1<s <\frac{4}{3} \\[2mm]
\frac{\sqrt{n}}{(1+\log n)^{\frac{1}{s}-\frac{1}{2}}}\,, & \frac{4}{3} \leq s <2\,.
\end{cases}
\]
\end{lemma}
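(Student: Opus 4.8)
The plan is to obtain the lower bound by \emph{transferring} the two known (large) values from Proposition~\ref{lor}, namely $\boldsymbol{\lambda}(\ell^n_{2,1})\sim_C\big(n/\log(e+\log n)\big)^{1/2}$ and $\boldsymbol{\lambda}(\ell^n_2)=\boldsymbol{\lambda}(\ell^n_{2,2})\sim_C n^{1/2}$, to $\ell^n_{2,s}$, using that the projection constant of an $n$-dimensional space is stable under the Banach--Mazur distance. Concretely, since $\boldsymbol{\lambda}(X)=\gamma_\infty(\id_X)$ and $\gamma_\infty$ is an operator ideal norm, for any $n$-dimensional Banach spaces $X,Y$ one has $\boldsymbol{\lambda}(X)\le d(X,Y)\,\boldsymbol{\lambda}(Y)$ (equivalently, factorize $\id_X=v\,\id_Y\,u$ through the set-theoretic identities $u,v$ between the two norms and use the ideal property of $\gamma_\infty$). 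For finite sections of Lorentz spaces sharing the first index $2$ this reads
\[
\boldsymbol{\lambda}(\ell^n_{2,s}) \ \ge\ \frac{\boldsymbol{\lambda}(\ell^n_{2,t})}{\big\|\id\colon \ell^n_{2,s}\to \ell^n_{2,t}\big\|\,\big\|\id\colon \ell^n_{2,t}\to \ell^n_{2,s}\big\|}\,,\qquad 1\le t\le \infty\,,
\]
and I would apply it with $t=1$ and with $t=2$, then keep whichever is sharper in each subrange.

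For $t=1$: the identity estimate \eqref{eq: norm identity lorentz} gives $\|\id\colon\ell^n_{2,1}\to\ell^n_{2,s}\|\prec_C 1$ (the exponent $\tfrac1s-1$ is negative) and $\|\id\colon\ell^n_{2,s}\to\ell^n_{2,1}\|\prec_C(1+\log n)^{1-1/s}$; together with $\boldsymbol{\lambda}(\ell^n_{2,1})\sim_C(n/\log(e+\log n))^{1/2}$ this yields
\[
\boldsymbol{\lambda}(\ell^n_{2,s}) \ \succ\ \frac{1}{(1+\log n)^{1-\frac1s}}\Big(\frac{n}{\log(e+\log n)}\Big)^{1/2}\,.
\]
For $t=2$: by \eqref{eq: norma id lorentz q < p} (or again \eqref{eq: norm identity lorentz}) $\|\id\colon\ell^n_{2,s}\to\ell^n_2\|\prec_C 1$, while $\|\id\colon\ell^n_2\to\ell^n_{2,s}\|\prec_C(1+\log n)^{1/s-1/2}$; with $\boldsymbol{\lambda}(\ell^n_2)\sim_C n^{1/2}$ this gives
\[
\boldsymbol{\lambda}(\ell^n_{2,s}) \ \succ\ \frac{\sqrt n}{(1+\log n)^{\frac1s-\frac12}}\,.
\]

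Both estimates are in fact valid for every $1<s<2$; to conclude, I would compare them and note that the second dominates the first exactly when $(1+\log n)^{\frac2s-\frac32}\prec(\log(e+\log n))^{1/2}$, that is, precisely when the exponent $\tfrac2s-\tfrac32$ is $\le 0$, i.e. for $s\ge\tfrac43$. This splits the range at $s=\tfrac43$ and produces the stated two-case formula. I do not expect a genuine obstacle here: the argument is an assembly of the ideal property of $\gamma_\infty=\boldsymbol{\lambda}$, the interpolation-type norm bounds \eqref{eq: norm identity lorentz} for identities between Lorentz sections, and the catalogue in Proposition~\ref{lor}. The only points requiring a little care are invoking the ideal inequality correctly (or routing everything through $\boldsymbol{\lambda}(X)\le d(X,Y)\boldsymbol{\lambda}(Y)$), recalling that for $1<s<2$ the space $\ell_{2,s}$ is a genuine normed lattice so no quasi-norm correction intervenes, and—the only mildly creative step—choosing which of $\ell^n_{2,1}$ and $\ell^n_2$ is the better comparison space in each subrange.
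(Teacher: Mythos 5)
Your proposal is correct and follows essentially the same route as the paper: both lower bounds come from $\boldsymbol{\lambda}(E)\le d(E,F)\,\boldsymbol{\lambda}(F)$ applied to the comparison spaces $\ell^n_{2,1}$ and $\ell^n_2$, with the Banach--Mazur distances controlled via \eqref{eq: norm identity lorentz} and the known values from Proposition~\ref{lor}. Your explicit comparison showing where the crossover at $s=\tfrac43$ occurs is a harmless elaboration of what the paper leaves implicit.
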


\begin{proof}
Recall that for any pair of isomorphic Banach spaces $E$ and $F$ one has
\[
\boldsymbol{\lambda}(E) \leq d(E, F)\, \boldsymbol{\lambda}(F)\,.
\]
Thus, for each $n\in \mathbb{N}$, we have the estimates
\[
\boldsymbol{\lambda}({\ell_2^n)} \leq d(\ell_2^n,\, \ell_{2, s}^n)\,\boldsymbol{\lambda}(\ell_{2, s}^n), \quad \,\,\,\,
\boldsymbol{\lambda}{(\ell_{2, 1}^n)} \leq d(\ell_{2, 1}^n, \, \ell_{2, s}^n)\, \boldsymbol{\lambda}(\ell_{2, s}^n)\,,
\]
and hence
\[
\boldsymbol{\lambda}(\ell^n_{2,s}) \geq \max \Bigg\{ \frac{\boldsymbol{\lambda}(\ell_2^n)}{d(\ell_2^n,\, \ell_{2, s}^n)},
\,\,\, \frac{\boldsymbol{\lambda}(\ell_{2, 1}^n)}{d(\ell_{2, 1}^n, \, \ell_{2, s}^n)}\Bigg\}\,.
\]

\noindent
By \eqref{eq: norm identity lorentz}, it follows that
\[
d(\ell_2^n,\,\ell_{2, s}^n) \prec (1 + \log n)^{\frac{1}{s}-\frac{1}{2}}, \quad\,
d(\ell_{2, 1}^n,\,\ell_{2, s}^n )\prec (1 + \log n)^{1-\frac{1}{s}}\,.
\]
The above inequalities, combined with the asymptotic behavior $\boldsymbol{\lambda}(\ell_2^n) \sim \sqrt{n}$ and the asymptotic
for $\boldsymbol{\lambda}(\ell_{2,1}^n)$ from Proposition \ref{lor}, provide the necessary estimates.
\end{proof}

\subsection{Tetrahedral index sets of multi indices}

As previously noted, understanding the unconditional basis constants and projection constants for the space $\mathcal{P}_{J}(\ell_{r,s}^n)$,
where $J$ is a tetrahedral index set, is in fact crucial.

We will first show the upper estimates in the tetrahedral case.

\begin{theorem} \label{t-final}
Let $1 < r < \infty$ and $1 \leq s \leq \infty$. Then, there exists a constant $C=C(r, s)>0$ such that, for any sequence $\big(J_m\big)$
of tetrahedral index sets, each with degree at most~$m$, and every $n$ the following estimates hold{\rm:}
\begin{equation*} 
 \boldsymbol{\chimon}\big(\mathcal{P}_{J_m}(\ell_{r,s}^n)\big)  \leq C^m \Big( \frac{n}{m}\Big)^{(m-1)\min\big\{\frac{1}{2},\frac{1}{r'}\big\}}
\,\,\,\, \text{ and } \,\,\,\,\,\,\,
\boldsymbol{\lambda}\big(\mathcal{P}_{J_m}(\ell_{r,s}^n)\big)  \leq C^m \Big( \frac{n}{m}\Big)^{m\min\big\{\frac{1}{2},\frac{1}{r'}\big\}}\,.
\end{equation*}
\end{theorem}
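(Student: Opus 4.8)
The plan is to reduce the statement, in both of its forms, to what is already known for \emph{full} homogeneous tetrahedral index sets, exploiting three earlier ingredients: Corollary~\ref{coro: bound proj constant tetra} (the control of $\widehat{\boldsymbol{\lambda}}$ of a tetrahedral homogeneous space by the fundamental function of the K\"othe dual), Theorem~\ref{OrOuSe} (the bound $\|\mathbf Q_{\Lambda(k,n),\Lambda_T(k,n)}\|\le\kappa^{k}$, already built into Corollary~\ref{main3A}), and Theorem~\ref{degree-homo}. Two preliminary remarks streamline matters. First, if $r\ge 2$ then $\min\{\tfrac12,\tfrac1{r'}\}=\tfrac12$ and both estimates are exactly Corollary~\ref{K-S for poly} applied to the Banach sequence lattice $\ell_{r,s}$ (using $1+\tfrac nm\le\tfrac{2n}{m}$ on the range $m\le n$); so I may assume $1<r<2$, whence $\min\{\tfrac12,\tfrac1{r'}\}=\tfrac1{r'}$. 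Second, passing from $\|\cdot\|_{\ell_{r,s}}$ to the equivalent symmetric lattice norm $\|\cdot\|^{*}_{\ell_{r,s}}$ (cf.\ the discussion preceding this subsection) changes $\boldsymbol{\lambda}\big(\mathcal P_J(\ell_{r,s}^n)\big)$ and $\boldsymbol{\chimon}\big(\mathcal P_J(\ell_{r,s}^n)\big)$ only by a factor absorbed into $C^{m}$; so I treat $\ell_{r,s}^n$ as a genuine symmetric Banach lattice and use throughout the standard asymptotics $\varphi_{(\ell_{r,s})_n'}(j)\sim_{C_0} j^{1/r'}$ ($\ell_{r,s}'=\ell_{r',s'}$ together with $\varphi_{\ell_{r',s'}}(j)\sim_C j^{1/r'}$). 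I run the argument for $m\le n$; for $m>n$ every tetrahedral set of degree at most $m$ in $n$ variables already has degree at most $n$, and the claim follows from Corollary~\ref{K-S for poly}.

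For the projection constant I decompose $J_m=\bigcup_k J_m(k)$ into homogeneous layers. By Theorem~\ref{degree-homo}, $\boldsymbol{\lambda}\big(\mathcal P_{J_m}(\ell_{r,s}^n)\big)\le (m+1)\max_{0\le k\le m}\boldsymbol{\lambda}\big(\mathcal P_{J_m(k)}(\ell_{r,s}^n)\big)$, where each $J_m(k)\subset\Lambda_T(k,n)$ with $k\le n$; by Theorem~\ref{lambda-dash} and Corollary~\ref{coro: bound proj constant tetra},
\[
\boldsymbol{\lambda}\big(\mathcal P_{J_m(k)}(\ell_{r,s}^n)\big)\le \widehat{\boldsymbol{\lambda}}\big(\mathcal P_{J_m(k)}(\ell_{r,s}^n)\big)\le e^{k}\bigg(\frac{\varphi_{(\ell_{r,s})_n'}(n)}{\varphi_{(\ell_{r,s})_n'}(k)}\bigg)^{k}\le (eC_0^{2})^{k}\Big(\frac nk\Big)^{k/r'}.
\]
The decisive (elementary) point is that $k\mapsto (eC_0^{2})^{k}(n/k)^{k/r'}$ is increasing on $[0,n]$ --- its logarithmic derivative equals $\log(eC_0^{2})-\tfrac1{r'}+\tfrac1{r'}\log(n/k)\ge 1-\tfrac1{r'}>0$ for $k\le n$ --- so the maximum over $0\le k\le m$ is attained at $k=m$, which yields $\boldsymbol{\lambda}\big(\mathcal P_{J_m}(\ell_{r,s}^n)\big)\le (m+1)(eC_0^{2})^{m}(n/m)^{m/r'}\le C^{m}(n/m)^{m/r'}$.

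For the unconditional basis constant I first observe that $\boldsymbol{\chimon}$ is monotone in the index set: for $I\subset J$ the monomials $(z^\alpha)_{\alpha\in I}$ form a $\boldsymbol{\chimon}\big(\mathcal P_J(X_n)\big)$-unconditional basis of the isometric subspace $\mathcal P_I(X_n)$ of $\mathcal P_J(X_n)$, so $\boldsymbol{\chimon}\big(\mathcal P_I(X_n)\big)\le\boldsymbol{\chimon}\big(\mathcal P_J(X_n)\big)$. Since $J_m\subset\Lambda_T(\le m,n)$, this lets me pass to the full tetrahedral set and invoke Corollary~\ref{main3A} (valid for $m\ge 2$; the case $m=1$, i.e.\ constants plus linear forms, has $\boldsymbol{\chimon}=1$ since $z_j\mapsto\varepsilon_j z_j$ is an isometry of $B_{\ell_{r,s}^n}$):
\[
\boldsymbol{\chimon}\big(\mathcal P_{J_m}(\ell_{r,s}^n)\big)\le \boldsymbol{\chimon}\big(\mathcal P_{\Lambda_T(\le m)}(\ell_{r,s}^n)\big)\le e(m+1)2^{m}\kappa^{m}\max_{1\le k\le m-1}\boldsymbol{\lambda}\big(\mathcal P_{\Lambda_T(k)}(\ell_{r,s}^n)\big).
\]
Estimating each $\boldsymbol{\lambda}\big(\mathcal P_{\Lambda_T(k)}(\ell_{r,s}^n)\big)\le\widehat{\boldsymbol{\lambda}}\big(\mathcal P_{\Lambda_T(k,n)}(\ell_{r,s}^n)\big)\le (eC_0^{2})^{k}(n/k)^{k/r'}$ exactly as above, and using once more the monotonicity of $k\mapsto(eC_0^{2})^{k}(n/k)^{k/r'}$ on $[0,n]$ (so the maximum over $k\le m-1$ is at $k=m-1$, and $(n/(m-1))^{(m-1)/r'}\le 2^{(m-1)/r'}(n/m)^{(m-1)/r'}$), I obtain $\boldsymbol{\chimon}\big(\mathcal P_{J_m}(\ell_{r,s}^n)\big)\le C^{m}(n/m)^{(m-1)/r'}$.

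The main obstacle --- and the reason for routing through \emph{full} tetrahedral sets instead of applying Theorem~\ref{thm: uncond cte vs proj cte} to the layers $J_m(k)$ directly --- is that the coordinate projections $\mathbf Q_{\Lambda(k,n),J_m(k)}$ onto an arbitrary set of tetrahedral monomials are not uniformly bounded; the monotonicity of $\boldsymbol{\chimon}$ (and, for the projection-constant bound, of $\widehat{\boldsymbol{\lambda}}$) in the index set bypasses this, after which only $\|\mathbf Q_{\Lambda(k,n),\Lambda_T(k,n)}\|\le\kappa^{k}$ is needed. What then remains is routine bookkeeping: the $\|\cdot\|^{*}_{\ell_{r,s}}$ renorming when $s>r$, the precise form of $\varphi_{(\ell_{r,s})_n'}$ via K\"othe duality, and the boundary regime $m>n$; no second parameter $s$ enters the final exponents, which is precisely the asserted independence of $s$.
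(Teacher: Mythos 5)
Your proof is correct and follows essentially the same route as the paper's: the case $r\ge 2$ is reduced to Corollary~\ref{K-S for poly}, the projection constant for $1<r<2$ is bounded via Theorem~\ref{lambda-dash} combined with Corollary~\ref{coro: bound proj constant tetra} applied to the homogeneous tetrahedral layers, and the unconditional basis constant is then obtained from Corollary~\ref{main3A}. The only cosmetic differences are that you route the projection bound through Theorem~\ref{degree-homo} (a maximum over layers) where the paper sums $\widehat{\boldsymbol{\lambda}}$ over the layers of $\Lambda_T(\le m)$ directly, and that you make explicit the renorming and index-set monotonicity steps the paper leaves implicit; your treatment of the boundary regime $m>n$ is exactly as (in)formal as the paper's own.
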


\begin{proof}
For $2 \leq r < \infty$ both claims  follow from Corollary \ref{K-S for poly}.

For $1 < r \leq 2$, we begin with the upper bound for the projection constant. Using
Theorem~\ref{lambda-dash} and  Corollary~\ref{coro: bound proj constant tetra}, we obtain
\begin{align*}
\boldsymbol{\lambda}\big(\mathcal{P}_{J_m}(\ell_{r,s}^n)\big)
&
\leq
\widehat{\boldsymbol{\lambda}}\big(\mathcal{P}_{J_m}(\ell_{r,s}^n)\big)
\\
&
\le
\sup_{z\in B_{\ell_{r,s}^n}}\,\,\sum_{\alpha \in J_m} c_{\ell_{r,s}^n}(\alpha) \,|z^\alpha |
\le \sup_{z\in B_{\ell_{r,s}^n}}\,\,\sum_{\alpha \in \Lambda_T(\le m)} c_{\ell_{r,s}^n}(\alpha) \,|z^\alpha |
\\
&
\le \sum_{k=0}^{m} \sup_{z\in B_{\ell_{r,s}^n}}\,\,\sum_{\alpha \in \Lambda_T( k)} c_{\ell_{r,s}^n}(\alpha) \,|z^\alpha|
\le \sum_{k=0}^{m} e^k \left(  \frac{\varphi_{\ell_{r',s'}}(n)}{\varphi_{\ell_{r',s'}}(k)}\right)^{k}\,.
\end{align*}
However, since $\varphi_{\ell_{r',s'}}(n) \sim n^{1/r'}$, we need to provide a bound for the sum $\dis\sum_{0 \leq k \leq m} \left( \frac{n}{k} \right)^{\frac{k}{r'}}$.
Notice that $t\mapsto\big( \frac{n}{t}\big)^t$ is increasing in the interval $\big(1,\frac{n}{e}\big)$ and has a maximum at
$t=\frac{n}{e},$ hence
\begin{equation}\label{wuerzburg}
\sum_{0\leq k \leq m} \Big( \frac{n}{k}\Big)^{\frac{k}{r'}} \leq (m+1)\left(\max_{0\leq k \leq m}  \Big(\frac{n}{k}\Big)^k\right)^{\frac{1}{r'}}
\leq   (m+1)\max\left\{e^{\frac{m}{r'}} \,;\,\Big( \frac{n}{m}\Big)^{\frac{m}{r'}}\right\}\,,
\end{equation}
and the claim follows. To prove the upper bound for the unconditional basis constant, note that by Corollary~\ref{main3A} (and by our previous
results), it follows that
\begin{align*}
\boldsymbol{\chimon}\big(\mathcal{P}_{J_m}(\ell_{r,s}^n)\big)
& \leq \boldsymbol{\chimon}\big(\mathcal{P}_{\Lambda_T(\leq m)}(\ell_{r,s}^n)\big) \\
& \le C^m  \max_{1 \leq  k \leq m-1} \boldsymbol{\lambda}\big( \mathcal{P}_{\Lambda_T(k)}(X_n)\big)
\le C^m \max_{1 \leq  k \leq m-1} \Big( \frac{n}{k}\Big)^{\frac{k}{r'}}\,,
\end{align*}
hence the claim is again a consequence of the elementary estimate from \eqref{wuerzburg}.
\end{proof}

Proposition~\ref{lor} shows that the upper estimates mentioned above, specifically for $r=2$ and $s=\infty$, do not provide
the correct asymptotic behavior.  We now turn our attention to the lower estimates.

\begin{theorem} \label{t-final2}
Assume that
\[
\text{$\mathbf{A}: \,2 < r < \infty,  \,2\leq s \leq \infty$ \quad or \quad
$\mathbf{B}: \, 2 \leq r < \infty, \, 1 \leq s \leq 2$ \quad or \quad $\mathbf{C}: \,1 < r < 2 ,\, r \leq s$\,.}
\]
Then, there  is a  constant $C=C(r,s) >0$  such that for every sequence 
$\big(J_m\big)$ of index sets, each with degree at most~$m$ and such that  $\Lambda_T(m) \subset J_m$, and for each $n$ the following estimate holds{\rm:}
\begin{equation*} 
\Big(\frac{n}{m}\Big)^{(m-1)\min\big\{\frac{1}{2},\frac{1}{r'}\big\}}
\leq C^m
\boldsymbol{\chimon}\big(\mathcal{P}_{J_m}(\ell_{r,s}^n)\big)
\,\,\,\, \text{ and } \,\,\,\,\,\,\,
\Big( \frac{n}{m}\Big)^{m\min\big\{\frac{1}{2},\frac{1}{r'}\big\}}
\leq C^m
\boldsymbol{\lambda}\big(\mathcal{P}_{J_m}(\ell_{r,s}^n)\big)\,.
\end{equation*}
Moreover, in the case $2 \leq  r < \infty$ and $r \leq  s$, we (only) know that for all $n$ and $m$ with
$\Lambda_T(m) \subset J_m$
\[
\frac{1}{(\log n)^{\frac{m}{r}-\frac{m}{s} }}\Big( \frac{n}{m} \Big)^{\frac{m-1}{r'}}
\prec_{C^m} \boldsymbol{\chimon}\big(\mathcal{P}_{J_m}(\ell_{r,s}^n)\big)
\]
and
\[
\frac{1}{(\log n)^{\frac{m}{r}-\frac{m}{s} }}\Big( \frac{n}{m} \Big)^{\frac{m}{r'}}
\prec_{C^m}
\boldsymbol{\lambda}\big(\mathcal{P}_{J_m}(\ell_{r,s}^n)\big)\,.
\]
\end{theorem}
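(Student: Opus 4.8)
The plan is to split the proof into the three regimes $\mathbf{A}$, $\mathbf{B}$, $\mathbf{C}$ (plus the separate borderline regime $2 \le r < \infty$, $r \le s$), and in each case reduce to the purely tetrahedral case $J_m = \Lambda_T(m,n)$, which is legitimate by Proposition~\ref{prop: homogeneous part proj constant} (the homogeneous building block $J_m(m)$ has projection and unconditionality constants dominated by those of $\mathcal{P}_{J_m}(\ell_{r,s}^n)$, and $\Lambda_T(m) = J_m(m) \cap \Lambda_T(\le m)$ so one extra application of Theorem~\ref{OrOuSe} brings the tetrahedral piece back at the cost of $\kappa^m$). So throughout we may prove the lower bounds for $\mathcal{P}_{\Lambda_T(m)}(\ell_{r,s}^n)$ only. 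For the projection constant it then suffices to prove the $\boldsymbol{\chimon}$ estimate, since Theorem~\ref{gl-versus-unc} and the Gordon--Lewis inequality give $\boldsymbol{\chimon}(\mathcal{P}_J(X_n)) \le 2^m \boldsymbol{g\!l}(\mathcal{P}_J(X_n)) \le 2^m \boldsymbol{\lambda}(\mathcal{P}_J(X_n))$ up to the $e(m{+}1)$-type factors already built into Theorem~\ref{thm: uncond cte vs proj cte}; more directly, $\boldsymbol{\chimon} \le e\,2^m \|\mathbf{Q}_{\Lambda(m,n),\Lambda_T(m,n)}\|\,\boldsymbol{\lambda}(\mathcal{P}_{\Lambda_T(m,n)^\flat}(\ell_{r,s}^n))$ and $\Lambda_T(m,n)^\flat \subset \Lambda_T(m{-}1,n)$, so an $\boldsymbol{\chimon}$-lower bound of order $(n/m)^{(m-1)\theta}$ upgrades to a $\boldsymbol{\lambda}$-lower bound of order $(n/m)^{m\theta}$ with $\theta = \min\{1/2, 1/r'\}$, using the degree shift $m \mapsto m{-}1$.

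For the core estimate I would use the probabilistic machinery of Subsection~\ref{Probabilistic estimates}, feeding into it the fundamental-function data of $\ell_{r,s}$. The key numerical inputs are $\varphi_{\ell_{r,s}}(n) \sim n^{1/r}$ (hence $\varphi_{\ell_{r,s}}(n)\varphi_{\ell_{r',s'}}(n) \sim n$, verifying the hypotheses of Proposition~\ref{innichen1}, Proposition~\ref{toblach}, Proposition~\ref{innichen}), together with the norm of the inclusion $\|\id\colon \ell_{r,s}^n \to \ell_p^n\|$. In regime $\mathbf{C}$ ($1 < r < 2$, $r \le s$) one has $1 \le r \le 2$, and by \eqref{eq: norma id lorentz q < p} together with \eqref{eq: norm identity lorentz} (to pass between the second Lorentz parameters, at logarithmic cost that disappears on taking $m$-th roots in the $\prec_{C^m}$ sense only if $s$ is not too far from $r$ — here we exploit $r \le s$) one gets $\|\id\colon \ell_{r,s}^n \to \ell_r^n\| \prec_C 1$, so $\varphi_X(n) \prec n^{1/r}$ and Proposition~\ref{innichen1} with that $r$ yields $(n/m)^{(m-1)/r'} \prec_{C^m} \boldsymbol{\chimon}(\mathcal{P}_{\Lambda_T(m)}(\ell_{r,s}^n))$, which is exactly the bound with $\min\{1/2,1/r'\} = 1/r'$ for $r < 2$. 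In regimes $\mathbf{A}$ ($2 < r$, $2 \le s$) and $\mathbf{B}$ ($2 \le r$, $1 \le s \le 2$), the target exponent is $1/2$, so I would instead invoke Proposition~\ref{toblach}: one must check $\varphi_{X_n}(n)\varphi_{X_n'}(n) \prec n$ (true, as above) and $\|\id\colon \ell_{r,s}^n \to \ell_2^n\| \prec \frac{1}{\sqrt n}\|\id\colon \ell_{r,s}^n\to \ell_1^n\|$; the right side is $\frac{1}{\sqrt n}\varphi_{\ell_{r',s'}}(n) \sim n^{1/r' - 1/2} = n^{1/2 - 1/r}$, while $\|\id\colon \ell_{r,s}^n \to \ell_2^n\|$ is (up to logs from the second index, absorbed since $s \ge 2$ in $\mathbf{A}$, and since $s \le 2$ but $r \ge 2$ in $\mathbf{B}$ where the relevant inclusion goes the favorable way) of order $n^{(1/2 - 1/r)_+} = n^{1/2-1/r}$ because $r \ge 2$ — so the two sides match and Proposition~\ref{toblach} gives $(n/m)^{(m-1)/2} \prec_{C^m} \boldsymbol{\chimon}(\mathcal{P}_{\Lambda_T(m)}(\ell_{r,s}^n))$.

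For the final "moreover" part, the borderline case $2 \le r < \infty$, $r \le s$: here the target is the weaker $\frac{1}{(\log n)^{m/r - m/s}}(n/m)^{(m-1)/r'}$. Now $\|\id\colon \ell_{r,s}^n \to \ell_r^n\| \prec_C 1$ fails in general when $s > r$ — instead \eqref{eq: norm identity lorentz} gives $\|\id\colon \ell_{r,s}^n \to \ell_r^n\| = \|\id\colon \ell_{r,s}^n \to \ell_{r,r}^n\| \prec_C (1+\log n)^{1/r - 1/s}$. Plugging this into Proposition~\ref{innichen1} (valid since still $r \le 2$? no — here $r \ge 2$, so one must instead use the general $r$-version, i.e. Lemma~\ref{innichenA} / Proposition~\ref{innichen1}'s proof directly with this $r$, together with $\varphi_X(n) \sim n^{1/r}$) produces the factor $\|\id\colon \ell_{r,s}^n \to \ell_r^n\|^{-m} \succ (\log n)^{-(1/r - 1/s)m}$ times $(n/m)^{(m-1)/r'}$, which is precisely the claimed bound; the $\boldsymbol{\lambda}$-version follows by the same $\boldsymbol{\chimon} \Rightarrow \boldsymbol{\lambda}$ degree-shift argument as above. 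The main obstacle I anticipate is bookkeeping the logarithmic losses in \eqref{eq: norm identity lorentz} when moving between the second Lorentz parameter and $\ell_1$, $\ell_2$, $\ell_r$: one must verify that in regimes $\mathbf{A}$, $\mathbf{B}$, $\mathbf{C}$ these losses are genuinely $o(1)$ on the scale that survives the $m$-th root in $\prec_{C^m}$ (equivalently, that they are subexponential in $m$ and polylogarithmic in $n$ in a way that the statement tolerates), and that the hypothesis of whichever of Propositions~\ref{innichen1}/\ref{toblach} is being applied is met exactly — in particular checking $\|\id\colon X_n\to\ell_2^n\| \prec \frac{1}{\sqrt n}\|\id\colon X_n\to\ell_1^n\|$ carefully in case $\mathbf{B}$ where $s \le 2$ makes $\ell_{r,s} \hookrightarrow \ell_{r}$ but the $\ell_1$-direction must be handled via the fundamental function rather than a clean inclusion.
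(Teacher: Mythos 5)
Your overall architecture coincides with the paper's: reduce to $\Lambda_T(m,n)$ (via $\boldsymbol{\chimon}(\mathcal{P}_I)\leq \boldsymbol{\chimon}(\mathcal{P}_J)$ for $I\subset J$, Proposition~\ref{prop: homogeneous part proj constant} and Theorem~\ref{OrOuSe}), obtain the $\boldsymbol{\chimon}$ lower bound from the probabilistic propositions, and upgrade to $\boldsymbol{\lambda}$ by applying $\boldsymbol{\chimon}\big(\mathcal{P}_{\Lambda_T(m+1)}\big)\leq e\,2^{m+1}\kappa^{m+1}\boldsymbol{\lambda}\big(\mathcal{P}_{\Lambda_T(m)}\big)$ one degree up; this is exactly \eqref{verflucht}/\eqref{end}. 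Case $\mathbf{B}$ matches the paper's verification of the hypotheses of Proposition~\ref{toblach}, and in case $\mathbf{A}$ your route through Proposition~\ref{toblach} (using $z_k^*\prec k^{-1/r}\|z\|_{r,s}$ to get $\|\id\colon\ell_{r,s}^n\to\ell_2^n\|\prec n^{\frac12-\frac1r}$) is a legitimate alternative to the paper's appeal to $2$-convexity and Theorem~\ref{lower bound 2-convex for poly}.

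Case $\mathbf{C}$ and the ``moreover'' part are where your argument breaks. The claim $\|\id\colon\ell_{r,s}^n\to\ell_r^n\|\prec_C 1$ is false when $s>r$: equation \eqref{eq: norma id lorentz q < p} applies only when the second Lorentz index is $\leq$ the first, and \eqref{eq: norm identity lorentz} gives $\|\id\colon\ell_{r,s}^n\to\ell_{r,r}^n\|\asymp(1+\log n)^{\frac1r-\frac1s}$, which is unbounded. This factor enters Proposition~\ref{innichen1} as $\|\id\|^{-m}$, so it does \emph{not} disappear after taking $m$-th roots in the $\prec_{C^m}$ sense; what you actually obtain for $1<r<2$, $r<s$ is only $\frac{1}{(\log n)^{m(\frac1r-\frac1s)}}\big(\frac{n}{m}\big)^{\frac{m-1}{r'}}\prec_{C^m}\boldsymbol{\chimon}$, i.e.\ the log-lossy ``moreover''-type estimate, not the clean one. (The paper's own proof of case $\mathbf{C}$ silently treats $1\leq s\leq r<2$, where $\|\id\colon\ell_{r,s}\to\ell_r\|=1$ does hold; the ``$r\leq s$'' in the statement of $\mathbf{C}$ appears to be a misprint, consistent with Theorem~\ref{thm: main bohr radii}(iii) leaving a logarithmic gap precisely for $1<r\leq2$, $r<s$.) Your patch for the ``moreover'' part also fails as written: Lemma~\ref{innichenA} and Proposition~\ref{innichen1} rest on Theorem~\ref{Anquetil alfa}, which is available only for parameters in $[1,2]$, so one cannot ``run the proof directly with this $r$'' when $r>2$; and substituting the admissible parameter $2$ yields only $(n/m)^{(m-1)/2}$ up to the norm factor, strictly weaker than $(n/m)^{(m-1)/r'}$ for $r>2$. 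Indeed, for $r>2$ the stated ``moreover'' lower bound would exceed the upper bound $C^m(n/m)^{(m-1)/2}$ of Theorem~\ref{t-finalII}, so that range too is presumably meant to be $1<r\leq2$ — where your (and the paper's) appeal to Proposition~\ref{innichen1} is legitimate and gives exactly the stated estimate.
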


\begin{proof}
In case $\mathbf{A}$ both claims are consequences of Theorem \ref{lower bound 2-convex for poly} as $\ell_{r,s}$ is $2$-convex.

In case $\mathbf{B}$, we first address the unconditional basis constant using Proposition~\ref{toblach}. To proceed, we need
to verify that the hypotheses are satisfied. First, recall that
\begin{align} \label{fundfunc}
\varphi_{\ell_{r,s}^n}(n) \sim n^{\frac{1}{r}} \,\,\,\,\,\,\,\,
\text{and} \,\,\,\,\,\,\,\,
\varphi_{(\ell_{r,s}^n)'}(n) \sim \varphi_{\ell_{r',s'}^n}(n)~\sim~n^{\frac{1}{r'}}\,,
\end{align}
hence the first assumptions is satisfied. For the second assumption of this proposition note that, using equation
\eqref{eq: norma id lorentz q < p}, we obtain
\begin{align*}
\|\id \colon \ell_{r,s}^n \to \ell_2^n\| & \leq \|\id\colon \ell_{r,s}^n \to \ell_r^n\| \cdot \|\id \colon \ell_r^n \to \ell_2^n\| \\
& \leq  \|\id \colon \ell_{r}^n \to \ell_2^n\| = n^{\frac{1}{2}-\frac{1}{r}}
= \frac{n^{1-\frac{1}{r} } }{n^{\frac{1}{2} } } =\frac{\|\id \colon \ell_{r,s}^n \to \ell_1^n\|}{\sqrt{n}}\,. \nonumber
\end{align*}
Consequently, we deduce from Proposition~\ref{toblach} that, for $m \leq n$, it holds
\[
\Big(\frac{n}{m}\Big)^{\frac{m-1}{2}}
\prec_{C^m}
\boldsymbol{\chimon}\big( \mathcal{P}_{\Lambda_T(m)}(\ell_{r,s}^n)\big)\,.
\]
Since the case $n \leq m$ is anyway obvious, as the index set would be empty, the claim for the lower bound of the unconditional
basis constant is proved.

Let us consider  case $\mathbf{C}$, so $1\le s \leq r<2$:  For the unconditional basis constant, this is an immediate consequence
of Proposition~\ref{innichen1} and equation
\eqref{eq: norma id lorentz q < p}:
\[
\Big( \frac{n}{m} \Big)^{\frac{m-1}{r'}} = \frac{1}{\|\id\colon \ell_{r,s}^n\to \ell_r^n\|^m} \Big( \frac{n}{m} \Big)^{\frac{m-1}{r'}}
\prec_{C^m} \boldsymbol{\chimon}\big(\mathcal{P}_{\Lambda_T(m)}(\ell_{r,s}^n)\big)\,.
\]

It remains to show the claims for the projection constant in the cases $\mathbf{B}$ and $\mathbf{C}$:
We apply Theorem~\ref{thm: uncond cte vs proj cte} in conjunction with Theorem~\ref{OrOuSe} (note that $\Lambda_T(m) = J_m(m)\cap  \Lambda_T(\leq m)$) and Proposition~\ref{prop: homogeneous part proj constant}. This shows that
\begin{align}\label{end}
\boldsymbol{\chimon}\big(\mathcal{P}_{\Lambda_T(m+1)}(\ell_{r,s}^n)\big)
\prec_{C^m} \boldsymbol{\lambda}\big(\mathcal{P}_{\Lambda_T(m)}(\ell_{r,s}^n)\big)
\prec_{C^m}
\boldsymbol{\lambda}\big(\mathcal{P}_{J_m(m)}(\ell_{r,s}^n)\big)
\leq \boldsymbol{\lambda}\big(\mathcal{P}_{J_m}(\ell_{r,s}^n)\big)\,.
\end{align}
This provides the lower bound for the projection constant, using the bounds obtained for the unconditional basis constant.

Finally, it remains to prove the last statement of the theorem. The estimate for the unconditional basis constant follows
from Proposition~\ref{innichen1}, and for the estimate on the projection constant we again repeat the argument from~\eqref{end}.
\end{proof}

\subsection{Arbitrary index sets of multi indices}

Having discussed the tetrahedral case, we now turn to more general index sets. A natural question that arises is whether
variants of the estimates established for the spaces $\ell_r^n$ in \cite{bayart2019monomial, defant2011bohr} remain valid
for Lorentz spaces $\ell_{r,s}^n$, regardless of the parameter $s$. Theorems~\ref{t-final} and~\ref{t-final2} indicate that
this is indeed the case when we restrict ourselves to tetrahedral indices, at least for $r > 2$. However, there are subtle
differences in more complex scenarios. It turns out that the case of $r > 2$ can be treated with methods similar to those
used for tetrahedral index sets. In contrast, the case where $1 < r \leq 2$ is significantly different and technically much more challenging.

\subsubsection{\bf The case $\pmb{2 \leq r < \infty}$ and $\pmb{1 \leq s \leq \infty}$}

We present a theorem that summarizes our results for the case of general index sets.

\begin{theorem} \label{t-finalII}
Let $2 \leq r < \infty$ and $1 \leq s \leq \infty$. Then, there exists a~constant $C=C(r, s)>0$ such that for any sequence
$\big(J_m\big)$ of index sets, each with degree at most $m$, and for any $n$, the following estimate holds{\rm:}
\begin{equation*} 
 \boldsymbol{\chimon}\big(\mathcal{P}_{J_m}(\ell_{r,s}^n) \big) \leq C^m \Big( 1+\frac{n}{m}\Big)^{\frac{m-1}{2}}
\,\,\,\, \text{ and } \,\,\,\,\,\,\,
\boldsymbol{\lambda}\big(\mathcal{P}_{J_m}(\ell_{r,s}^n) \big) \leq C^m \Big(1+ \frac{n}{m}\Big)^{\frac{m}{2}}\,.
\end{equation*}
Moreover, in each of the cases
$\mathbf{A}: 2 < r < \infty, \,1 \leq s \leq \infty $
or
$\mathbf{B}: r= 2, \,1 \leq s \leq  2$
the preceding estimates are optimal for all $n$ and  all $m$ for which   $\Lambda_T(m) \subset J_m$ (in the sense that under this assumption that the inequality is replaced equivalence $\sim_{C^m}$).
\end{theorem}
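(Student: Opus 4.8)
The plan is to prove the two inequalities in Theorem~\ref{t-finalII} separately, deriving the upper estimates from the Kadets--Snobar machinery developed in Section~2 and the matching lower estimates (under $\Lambda_T(m)\subset J_m$) from the probabilistic results of the previous subsection together with the Gordon--Lewis bridge.

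\textbf{Upper bounds.} These should be immediate from Corollary~\ref{K-S for poly}, which already gives, for \emph{any} Banach sequence lattice $X$ and any sequence $(J_m)$ of index sets of degree at most $m$,
\[
\boldsymbol{\lambda}\big(\mathcal{P}_{J_m}(X_n)\big)\prec_{C^m}\Big(1+\tfrac{n}{m}\Big)^{m/2}
\quad\text{and}\quad
\boldsymbol{\chimon}\big(\mathcal{P}_{J_m}(X_n)\big)\prec_{C^m}\Big(1+\tfrac{n}{m}\Big)^{(m-1)/2}.
\]
Taking $X=\ell_{r,s}$ (which, up to the harmless factor $r'$ recorded in Section~3, is a Banach sequence lattice) yields both claimed upper estimates with a constant $C=C(r,s)$. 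No extra work beyond invoking Corollary~\ref{K-S for poly} is needed here.

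\textbf{Lower bounds (optimality) in cases $\mathbf{A}$ and $\mathbf{B}$.} Here I would first handle the unconditional basis constant and then transfer to the projection constant exactly as in the proof of Theorem~\ref{t-final2}. In case $\mathbf{A}$ ($2<r<\infty$, $1\le s\le\infty$), $\ell_{r,s}$ is $2$-convex, so Theorem~\ref{lower bound 2-convex for poly} applies directly to any $(J_m)$ with $\Lambda_T(m)\subset J_m$ and gives $\boldsymbol{\chimon}\big(\mathcal{P}_{J_m}(\ell_{r,s}^n)\big)\sim_{C^m}(1+n/m)^{(m-1)/2}$ and $\boldsymbol{\lambda}\big(\mathcal{P}_{J_m}(\ell_{r,s}^n)\big)\sim_{C^m}(1+n/m)^{m/2}$. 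In case $\mathbf{B}$ ($r=2$, $1\le s\le 2$), $2$-convexity fails, so instead I would verify the two hypotheses of Proposition~\ref{toblach}: the fundamental-function condition $\varphi_{\ell_{2,s}^n}(n)\,\varphi_{(\ell_{2,s}^n)'}(n)\prec n$ follows from $\varphi_{\ell_{2,s}}(n)\sim n^{1/2}$ and $\varphi_{\ell_{2',s'}}(n)\sim n^{1/2}$, and the norm condition $\|\id\colon\ell_{2,s}^n\to\ell_2^n\|\prec \tfrac1{\sqrt n}\|\id\colon\ell_{2,s}^n\to\ell_1^n\|$ follows from $\|\id\colon\ell_{2,s}^n\to\ell_2^n\|\le\|\id\colon\ell_{2,s}^n\to\ell_2^n\|=1$ (by \eqref{eq: norma id lorentz q < p}) while $\|\id\colon\ell_{2,s}^n\to\ell_1^n\|=\varphi_{(\ell_{2,s}^n)'}(n)\sim n^{1/2}$. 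Proposition~\ref{toblach} then yields $(n/m)^{(m-1)/2}\prec_{C^m}\boldsymbol{\chimon}\big(\mathcal{P}_{J_m}(\ell_{2,s}^n)\big)$ for all $m$ with $\Lambda_T(m)\subset J_m$; for $m\ge n$ the set $\Lambda_T(m)$ is empty (resp.\ the estimate is trivial as in \eqref{obvious}), so the bound holds in all regimes. To pass from the unconditional basis constant to the projection constant I would repeat the chain \eqref{end}: Theorem~\ref{thm: uncond cte vs proj cte} together with Theorem~\ref{OrOuSe} and Proposition~\ref{prop: homogeneous part proj constant} gives
\[
\boldsymbol{\chimon}\big(\mathcal{P}_{\Lambda_T(m+1)}(\ell_{r,s}^n)\big)\prec_{C^m}\boldsymbol{\lambda}\big(\mathcal{P}_{\Lambda_T(m)}(\ell_{r,s}^n)\big)\prec_{C^m}\boldsymbol{\lambda}\big(\mathcal{P}_{J_m(m)}(\ell_{r,s}^n)\big)\le\boldsymbol{\lambda}\big(\mathcal{P}_{J_m}(\ell_{r,s}^n)\big),
\]
so the (already established) lower bound $(n/m)^{m/2}\prec_{C^m}\boldsymbol{\chimon}(\mathcal{P}_{\Lambda_T(m+1)}(\ell_{r,s}^n))$ transfers to $\boldsymbol{\lambda}(\mathcal{P}_{J_m}(\ell_{r,s}^n))$. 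Combining the upper and lower estimates gives the equivalence $\sim_{C^m}$, proving optimality.

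\textbf{Main obstacle.} The genuinely delicate point is confirming that Proposition~\ref{toblach} is applicable in case $\mathbf{B}$ — i.e., checking the inequality $\|\id\colon\ell_{2,s}^n\to\ell_2^n\|\prec n^{-1/2}\|\id\colon\ell_{2,s}^n\to\ell_1^n\|$ with the right constant and, crucially, keeping careful track of the $r'$-distortion between $\|\cdot\|_{\ell_{p,q}}$ and its renormed Banach version throughout, since a sloppy treatment there would inflate the multiplicative constants but must not spoil the hypercontractive $C^m$-scaling. Everything else reduces to bookkeeping with the fundamental functions $\varphi_{\ell_{r,s}}(n)\sim n^{1/r}$ and the already-proven Section~2 theorems.
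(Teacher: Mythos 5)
Your overall strategy coincides with the paper's: the upper bounds come from Corollary~\ref{K-S for poly}, the lower bounds for the unconditional basis constant come from $2$-convexity (Theorem~\ref{lower bound 2-convex for poly}) respectively Proposition~\ref{toblach}, and the passage to the projection constant is the chain \eqref{verflucht}. However, there is one genuine error. In case $\mathbf{A}$ you assert that $\ell_{r,s}$ is $2$-convex for all $1\le s\le\infty$ once $2<r<\infty$; this is false for $1\le s<2$, and the paper records $2$-convexity of $\ell_{p,q}$ only for $2<p<\infty$ and $2\le q<\infty$. Indeed, the $2$-concavification of $\ell_{r,s}$ is $\ell_{r/2,s/2}$, whose second index is $<1$ when $s<2$, and this quasi-norm is not equivalent to a norm: taking the disjoint dyadic blocks $x_k=\sum_{2^{k-1}\le i<2^k}i^{-1/r}e_i$ for $1\le k\le K$, $N=2^K$, one has $\|x_k\|_{\ell_{r,s}^N}\sim 1$ and $\big(\sum_k\|x_k\|^2\big)^{1/2}\sim(\log N)^{1/2}$, while $\big\|\big(\sum_k|x_k|^2\big)^{1/2}\big\|_{\ell_{r,s}^N}=\big(\sum_{i\le N}i^{-1}\big)^{1/s}\sim(\log N)^{1/s}$; a uniform $2$-convexity inequality would therefore force $s\ge 2$. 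Consequently Theorem~\ref{lower bound 2-convex for poly} is not applicable on the sub-range $2<r<\infty$, $1\le s<2$, and as written your proof of optimality is incomplete there.

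The gap is repaired with the tools you already deploy: the Proposition~\ref{toblach} argument you give for case $\mathbf{B}$ works verbatim for all $2\le r<\infty$ and $1\le s\le 2$ (this is precisely case $\mathbf{B}$ of Theorem~\ref{t-final2}, and it is how the paper proceeds, routing all lower bounds for $\boldsymbol{\chimon}$ through Theorem~\ref{t-final2} rather than through $2$-convexity). The hypothesis check is the same as the one you perform for $r=2$: $\varphi_{\ell_{r,s}^n}(n)\,\varphi_{(\ell_{r,s}^n)'}(n)\prec n$, and $\|\id\colon\ell_{r,s}^n\to\ell_2^n\|\le\|\id\colon\ell_{r}^n\to\ell_2^n\|=n^{\frac12-\frac1r}=n^{-1/2}\|\id\colon\ell_{r,s}^n\to\ell_1^n\|$ via \eqref{eq: norma id lorentz q < p}. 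With that substitution the remainder of your argument (the upper bounds via Corollary~\ref{K-S for poly}, the trivial regime $m\ge n$, and the transfer to the projection constant via \eqref{verflucht}) is correct and matches the paper's proof.
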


\begin{proof}
The upper estimates are special cases  from Theorem~\ref{lower bound 2-convex for poly}. The lower one for the unconditional
basis constants follows from Theorem~\ref{t-final2}, since for all $m $ with $\Lambda_T(m) \subset J_m$ clearly
\[
\boldsymbol{\chimon}\big( \mathcal{P}_{\Lambda_T(m)}(\ell_{r,s}^n)\big)
\leq
\boldsymbol{\chimon} \big(\mathcal{P}_{J(m)} (\ell_{r,s}^n) \big)\,.
\]
 It remains to show the lower bound for the projection constant: By what was proved
in \eqref{verflucht}, we then for all $m$ with $\Lambda_T(m) \subset J_m$ have
\begin{align*}
\boldsymbol{\chimon}\big( \mathcal{P}_{\Lambda_T(m+1)}(\ell_{r,s}^n)\big)
\prec_{C^m}  \boldsymbol{\lambda}\big( \mathcal{P}_{J_m}(\ell_{r,s}^n)\big)\,,
\end{align*}
hence the claim  follows from the result on the unconditional basis constant.
\end{proof}

We already indicated that the case $r=2$ is somewhat special -- at least for $s = \infty$ (see again Proposition~\ref{lor}).
For a better overview, we collect our knowledge for $r=2$ in the following corollary. The statements are all covered by
Theorem~\ref{t-finalII} and Theorem~\ref{t-final2}

\begin{corollary}\label{thm: proy + uncon l2,s}
Let $1 \leq s \leq \infty$. Then, there exists a constant $C=C(s)>0$ such that for any sequence $\big(J_m\big)$ of index sets,
each with degree at most~$m$, the following estimates hold{\rm:}
\[
\boldsymbol{\chimon}\big(\mathcal{P}_{J_m}(\ell_{2,s}^n)\big)  \leq C^m\Big(1+ \frac{n}{m}\Big)^{\frac{m-1}{2}}
\quad and \quad
\boldsymbol{\lambda}\big(\mathcal{P}_{J_m}(\ell_{2,s}^n)\big)  \leq C^m\Big( 1+\frac{n}{m}\Big)^{\frac{m}{2}}\,,
\]
and for $1 \leq s \leq 2$ these estimates  are optimal for all $n$ and $m$ with $\Lambda_T(m) \subset J_m$.

For $2 \leq s \leq \infty$ and for all $n$ and $m$ with $\Lambda_T(m) \subset J_m$ we have that
\[
\frac{1}{(\log n)^{(m-1)(\frac{1}{2}-\frac{1}{s}) }}
\Big( 1+\frac{n}{m} \Big)^{\frac{m-1}{2}}
\prec_{C^m}
\boldsymbol{\chimon}\big(\mathcal{P}_{J_m}(\ell^n_{2,s})\big)
\]
and
\begin{equation} \label{april7} \frac{1}{(\log n)^{m(\frac{1}{2}-\frac{1}{s}) }} \Big( 1+\frac{n}{m} \Big)^{\frac{m}{2}}
\prec_{C^m} \boldsymbol{\lambda}\big(\mathcal{P}_{J_m}(\ell^n_{2,s})\big)\,.
\end{equation}
\end{corollary}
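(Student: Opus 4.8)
The plan is to read off all the assertions as the $r=2$ instances of Theorem~\ref{t-finalII} and of Theorem~\ref{t-final2}; no genuinely new argument is needed, only a verification of the structural hypotheses those theorems require, now for the concrete lattice $X=\ell_{2,s}$. The facts about $\ell_{2,s}$ I would use are $\varphi_{\ell_{2,s}^n}(n)\sim n^{1/2}$, $\varphi_{(\ell_{2,s}^n)'}(n)\sim n^{1/2}$ (hence $\varphi_{\ell_{2,s}^n}(n)\,\varphi_{(\ell_{2,s}^n)'}(n)\sim n$), $\|\id\colon\ell_{2,s}^n\to\ell_1^n\|=\varphi_{(\ell_{2,s}^n)'}(n)\sim n^{1/2}$, $\|\id\colon\ell_{2,s}^n\to\ell_2^n\|=1$ for $s\le2$ by~\eqref{eq: norma id lorentz q < p}, and $\|\id\colon\ell_{2,s}^n\to\ell_2^n\|\prec(1+\log n)^{1/2-1/s}$ for $s\ge2$, the latter being~\eqref{eq: norm identity lorentz} applied to the inclusion $\ell_{2,s}^n\hookrightarrow\ell_{2,2}^n=\ell_2^n$.

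For the two upper bounds I would invoke Corollary~\ref{K-S for poly}: since $|J_m|\le|\Lambda(\le m,n)|\prec_{C^m}(1+n/m)^m$ by~\eqref{cardi}, that corollary gives $\boldsymbol{\lambda}(\mathcal P_{J_m}(\ell_{2,s}^n))\prec_{C^m}(1+n/m)^{m/2}$ and $\boldsymbol{\chimon}(\mathcal P_{J_m}(\ell_{2,s}^n))\prec_{C^m}(1+n/m)^{(m-1)/2}$ for every $1\le s\le\infty$; when $s>2$, where $\|\cdot\|_{\ell_{2,s}}$ is only a quasi-norm, one first passes to the equivalent norm $\|\cdot\|^{*}_{\ell_{2,s}}$, which changes the sup-norm of a polynomial of degree at most $m$ by a factor $2^{O(m)}$ and is therefore harmless for $\prec_{C^m}$. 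For the matching lower bounds when $1\le s\le2$ I would use case $\mathbf B$ of Theorem~\ref{t-final2} with $r=2$, whose proof rests on Proposition~\ref{toblach}; its two hypotheses hold for $X=\ell_{2,s}$ by the facts just listed (the second reducing here to the identity $n^{-1/2}\|\id\colon\ell_{2,s}^n\to\ell_1^n\|=1=\|\id\colon\ell_{2,s}^n\to\ell_2^n\|$). This gives $(n/m)^{(m-1)/2}\prec_{C^m}\boldsymbol{\chimon}(\mathcal P_{J_m}(\ell_{2,s}^n))$ whenever $\Lambda_T(m)\subset J_m$; since $(n/m)^{\bullet}\sim_{C^m}(1+n/m)^{\bullet}$ for $m\le n$ and the bound is trivial for $m>n$ (as then $\Lambda_T(m,n)=\varnothing$), combining with the upper bound turns the $\boldsymbol{\chimon}$ estimate into the equivalence $\sim_{C^m}$. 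For the projection constant one repeats the estimates of~\eqref{verflucht} (Theorem~\ref{thm: uncond cte vs proj cte}, then Theorem~\ref{OrOuSe} and Proposition~\ref{prop: homogeneous part proj constant}), obtaining $\boldsymbol{\chimon}(\mathcal P_{\Lambda_T(m+1)}(\ell_{2,s}^n))\prec_{C^m}\boldsymbol{\lambda}(\mathcal P_{J_m}(\ell_{2,s}^n))$, and inserting the lower bound for $\boldsymbol{\chimon}$ at degree $m+1$ gives $(n/m)^{m/2}\prec_{C^m}\boldsymbol{\lambda}(\mathcal P_{J_m}(\ell_{2,s}^n))$, which is the claimed equivalence.

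For $2\le s\le\infty$ (which for $r=2$ is precisely the range $r\le s$) the two remaining lower bounds are the $r=2$ case of the ``moreover'' part of Theorem~\ref{t-final2}. I would feed the estimate $\|\id\colon\ell_{2,s}^n\to\ell_2^n\|\prec(1+\log n)^{1/2-1/s}$, together with $\varphi_{\ell_{2,s}^n}(n)\sim n^{1/2}$ and $\|\id\colon\ell_{2,s}^n\to\ell_1^n\|\sim n^{1/2}$, into Proposition~\ref{innichen} to obtain the lower bound for $\boldsymbol{\chimon}(\mathcal P_{\Lambda_T(m)}(\ell^n_{2,s}))$ — the logarithmic factor entering to the power $m-1$, exactly the exponent with which the ratio $\|\id\colon X_n\to\ell_1^n\|/\|\id\colon X_n\to\ell_2^n\|$ occurs there — and then pass to a general $J_m\supset\Lambda_T(m)$ via the plain monotonicity $\boldsymbol{\chimon}(\mathcal P_{\Lambda_T(m)}(\ell^n_{2,s}))\le\boldsymbol{\chimon}(\mathcal P_{J_m(m)}(\ell^n_{2,s}))\le\boldsymbol{\chimon}(\mathcal P_{J_m}(\ell^n_{2,s}))$; the Stirling estimate from the proof of Proposition~\ref{innichen}, together with $\binom nm\ge(n/m)^m$, turns $(n/m)^{(m-1)/2}$ into $(1+n/m)^{(m-1)/2}$. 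One final pass through~\eqref{verflucht} then produces~\eqref{april7}.

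The step I expect to demand the most care is the logarithmic and $C^m$ bookkeeping in this last part: one must be sure the factor $(1+\log n)^{1/2-1/s}$ coming from~\eqref{eq: norm identity lorentz} is raised exactly to the power $m-1$ for $\boldsymbol{\chimon}$ and to the power $m$ for $\boldsymbol{\lambda}$, and that the auxiliary polynomial-in-$m$ and $e^{O(m)}$ factors produced by Proposition~\ref{innichen} and by Stirling are genuinely absorbed into the constant $C$ of $\prec_{C^m}$ rather than attaching themselves to the growth in $n$. Everything else is a routine specialization of the two governing theorems, once the fundamental-function identities for $\ell_{2,s}$ and the inclusion norms~\eqref{eq: norm identity lorentz}--\eqref{eq: norma id lorentz q < p} are in hand.
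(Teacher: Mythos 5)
Your treatment of the upper bounds and of the optimality for $1\le s\le 2$ is exactly the paper's route: the paper proves this corollary simply by pointing to Theorem~\ref{t-finalII} and Theorem~\ref{t-final2}, and your verification of the hypotheses of Corollary~\ref{K-S for poly}, of Proposition~\ref{toblach} (via $\varphi_{\ell_{2,s}^n}(n)\varphi_{(\ell_{2,s}^n)'}(n)\sim n$ and $\|\id\colon\ell_{2,s}^n\to\ell_2^n\|=1$ for $s\le2$), and the passage to the projection constant through \eqref{verflucht} is the intended argument.

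The case $2\le s\le\infty$ contains a genuine gap. You propose to get the lower bound for $\boldsymbol{\chimon}$ from Proposition~\ref{innichen}, on the grounds that the factor $(1+\log n)^{\frac12-\frac1s}$ then enters only through the ratio $\|\id\colon X_n\to\ell_1^n\|/\|\id\colon X_n\to\ell_2^n\|$, raised to the power $m-1$. But the prefactor of Proposition~\ref{innichen} carries $(\log n)^{1+1/r'}=(\log n)^{3/2}$ in its denominator, inherited from the entropy-method inequality of Theorem~\ref{Hinault alfa}. This is a function of $n$ alone; it is not one of the ``polynomial-in-$m$ or $e^{O(m)}$'' factors you propose to absorb into $C^m$, and for fixed $m$ with $n\to\infty$ it cannot be absorbed at all. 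That route therefore only yields $(\log n)^{-\frac32-(m-1)(\frac12-\frac1s)}\big(\frac nm\big)^{\frac{m-1}{2}}\prec_{C^m}\boldsymbol{\chimon}$, which is strictly weaker than the displayed claim. The tool the paper actually uses in the ``moreover'' part of Theorem~\ref{t-final2} is Proposition~\ref{innichen1}, which rests on Theorem~\ref{Anquetil alfa} and whose only logarithmic loss is $(\log m)^{1/r'}\prec_{C^m}1$; since $\varphi_{\ell_{2,s}}(n)\prec n^{1/2}$, it gives $\|\id\colon\ell_{2,s}^n\to\ell_2^n\|^{-m}\big(\frac nm\big)^{\frac{m-1}{2}}\prec_{C^m}\boldsymbol{\chimon}$, i.e.\ the logarithm appears to the power $m$ (and to the power $m+1$ for $\boldsymbol{\lambda}$ after one pass through \eqref{verflucht} at degree $m+1$). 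Be aware that this matches Theorem~\ref{t-final2} but not the literal exponents $(m-1)(\frac12-\frac1s)$ and $m(\frac12-\frac1s)$ printed in the corollary; neither Proposition~\ref{innichen} nor Proposition~\ref{innichen1} saves that extra factor of $(\log n)^{\frac12-\frac1s}$, so you should either record the bounds with the exponents delivered by Proposition~\ref{innichen1} or supply an additional argument for the sharper form.
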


\subsubsection{\bf The case $\pmb{1 < r  \le 2}$ and $\pmb{s\leq r}$}
\label{firstcase}

Notice that estimates of the characteristic  $c_{X_n}(\alpha)$ for all $\alpha$ in the given index set $J$ will play an essential
role in the estimates of the polynomial projection. The following remark regarding estimates of characteristics will be useful
in what follows.

\begin{remark} \label{verysimple}
Let $X_n = (\mathbb{C}^n,\|\cdot\|_{X_n})$ and $Y_n = (\mathbb{C}^n,\|\cdot\|_{Y_n})$ be two Banach spaces,
and  $\alpha \in \mathbb{N}_0^{(\mathbb{N})}$. Then the following estimate holds{\rm:}
\[
c_{Y_n}(\alpha) \leq \|\id \colon X_n  \to Y_n \|^m c_{X_n}(\alpha)\,.
\]
Moreover, for every $1 \leq r \leq \infty$, we have
\[
c_{X_n}(\alpha) \le \big\|m^{-1/r}(\alpha_1^{1/r},\dots,\alpha_n^{1/r})\big\|^m_{X_n} \left( \frac{m^{m}}{\alpha^\alpha} \right)^{1/r}\,.
\]
\end{remark}

The first estimate is immediate from the given definitions. The second one follows by normalizing the vector $m^{-1/r}\alpha^{1/r}$
in $X_n$ and using  it to estimate $c_{X_n}(\alpha)$.

In the study of the estimates of polynomial projection and unconditional basis constants of $\mathcal{P}_J(\ell_{r,s}^n)$, we consider
two subcases,  each requiring different techniques: the first subcase is for $1 \leq s \leq r \leq 2$, and the second subcase is for
$1 < r \leq 2$ with $r \leq s$.

\noindent {\bf The subcase $\pmb{1 \leq s \leq r \leq 2}$:} 

The following result describes the asymptotic behavior of the polynomial projection constant of $\mathcal P_{J}(\ell_{r,s}^n)$ for
a~broad range of index sets $J$, subject to mild restrictions on $m$ and $n$.

\begin{theorem} \label{bound_similar_ell_r}
Let $1\le s \le r\le 2$. Then, there exists a constant $C=C(r, s)>0$ such that 
for all $n\geq m$ with
\begin{equation}\label{restriction}
\log m + \frac{r'}{r}(\log m)^{r(\frac1{s}-\frac1{r})} \le  \log n\,,
\end{equation}
and for all
sequences $\big(J_m\big)$ of index sets,
each with degree at most $m$, 
the following estimates hold{\rm:}
\[
\boldsymbol{\chimon}\big(\mathcal P_{J_m}( \ell_{r,s}^n)\big) \le C^m  \left(\frac{n}{m} \right)^{\frac{m-1}{r'}}
\,\,\quad \text{and}\,\,\quad
 \boldsymbol{\lambda}\big(\mathcal P_{J_m}(\ell_{r,s}^n)\big) \le C^m   \left(\frac{n}{m} \right)^{\frac{m}{r'}}\,.
\]
Moreover,  these estimates  are  optimal for all $n$ and $m$ with $\Lambda_T(m) \subset J_m$
(without restricting to \eqref{restriction}); in particular,  whenever   $m^{1+\delta} \prec_C n$ for some $\delta>0$ in the case
$(r,s)\ne(2,1)$, or $m^{2} \prec_C n$ in the case $(r,s)=(2,1)$.
\end{theorem}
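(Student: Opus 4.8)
The plan is to reduce both quantities to the polynomial projection constant $\widehat{\boldsymbol{\lambda}}$ of the \emph{full} space of polynomials of degree at most $m$ on $\ell_{r,s}^n$, to estimate that by separating the tetrahedral monomials (which carry no logarithmic loss) from the rest, and only then to bring in the restriction~\eqref{restriction}; the lower bounds will come from the probabilistic results of the previous subsection. For the reduction, observe that $\widehat{\boldsymbol{\lambda}}\big(\mathcal P_I(X_n)\big)=\sup_{z\in B_{X_n}}\sum_{\alpha\in I}c_{X_n}(\alpha)|z^\alpha|$ is a sum of non-negative terms, hence monotone in $I$, so Theorem~\ref{lambda-dash} gives $\boldsymbol{\lambda}\big(\mathcal P_{J_m}(\ell_{r,s}^n)\big)\le\widehat{\boldsymbol{\lambda}}\big(\mathcal P_{J_m}(\ell_{r,s}^n)\big)\le\widehat{\boldsymbol{\lambda}}\big(\mathcal P_{\Lambda(\le m,n)}(\ell_{r,s}^n)\big)$; likewise $\boldsymbol{\chimon}\big(\mathcal P_{J_m}(\ell_{r,s}^n)\big)\le\boldsymbol{\chimon}\big(\mathcal P_{\Lambda(\le m,n)}(\ell_{r,s}^n)\big)$ since the supremum norm on $\mathcal P_{J_m}(\ell_{r,s}^n)$ is the one induced from $\mathcal P_{\Lambda(\le m,n)}(\ell_{r,s}^n)$, and then Corollary~\ref{main3A} together with Theorem~\ref{lambda-dash} yields $\boldsymbol{\chimon}\big(\mathcal P_{J_m}(\ell_{r,s}^n)\big)\le e(m+1)2^m\max_{1\le k\le m-1}\widehat{\boldsymbol{\lambda}}\big(\mathcal P_{\Lambda(\le k,n)}(\ell_{r,s}^n)\big)$. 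Thus it suffices to prove $\widehat{\boldsymbol{\lambda}}\big(\mathcal P_{\Lambda(\le\ell,n)}(\ell_{r,s}^n)\big)\le C^\ell(n/\ell)^{\ell/r'}$ for every $\ell\le n$ satisfying~\eqref{restriction} with $m$ replaced by $\ell$: the bound on $\boldsymbol{\lambda}$ is the case $\ell=m$, and the one on $\boldsymbol{\chimon}$ follows by taking $\ell=m-1$ and using the elementary fact that $\max_{1\le k\le\ell}C^k(n/k)^{k/r'}\le (C')^{\ell}(n/\ell)^{\ell/r'}$ for $\ell\le n$.

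Next I would estimate $\widehat{\boldsymbol{\lambda}}\big(\mathcal P_{\Lambda(\le\ell,n)}(\ell_{r,s}^n)\big)$. By~\eqref{eq: norma id lorentz q < p} one has $\|\id\colon\ell_{r,s}^n\to\ell_r^n\|=1$, hence $B_{\ell_{r,s}^n}\subset B_{\ell_r^n}$, and it is enough to bound $\sup_{z\in B_{\ell_r^n}}\sum_{|\alpha|\le\ell}c_{\ell_{r,s}^n}(\alpha)|z^\alpha|$. For $\alpha\in\Lambda(k,n)$ with support of size $d\le k$, applying Remark~\ref{verysimple} to the unit vector $k^{-1/r}(\alpha_1^{1/r},\dots,\alpha_n^{1/r})$ of $\ell_r^n$ and Hölder's inequality (conjugate exponents $r/s$ and $r/(r-s)$, together with $\sum_{j\le d}j^{-1}\le1+\log d$) gives $c_{\ell_{r,s}^n}(\alpha)\le(1+\log d)^{k(1/s-1/r)}\big(k^k/\alpha^\alpha\big)^{1/r}=(1+\log d)^{k(1/s-1/r)}\,c_{\ell_r^n}(\alpha)$ by~\eqref{dineen}. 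For the tetrahedral $\alpha$ (those with $d=k$) this estimate is wasteful, so for that block I would instead invoke Corollary~\ref{coro: bound proj constant tetra}, which produces \emph{no} logarithmic factor: using $\varphi_{\ell_{r',s'}}(t)\sim t^{1/r'}$, one gets $\sum_{k=0}^{\ell}\widehat{\boldsymbol{\lambda}}\big(\mathcal P_{\Lambda_T(k,n)}(\ell_{r,s}^n)\big)\le\sum_{k=0}^{\ell}e^k\big(\varphi_{\ell_{r',s'}}(n)/\varphi_{\ell_{r',s'}}(k)\big)^k\le(\ell+1)\max_{0\le k\le\ell}C^k(n/k)^{k/r'}\le C^\ell(n/\ell)^{\ell/r'}$, where the last step rests on the behaviour of $t\mapsto(n/t)^{t/r'}$ (maximum at $t=n/e$, monotone on each side), and $\ell\le n$ holds automatically under~\eqref{restriction}.

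The non-tetrahedral contribution is the heart of the matter. Here one keeps the support size: organizing $\sum_{|\alpha|\le\ell,\ \alpha\ \text{non-tet.}}c_{\ell_{r,s}^n}(\alpha)|z^\alpha|$ by the support $S$ of $\alpha$ and using that the restriction of $z\in B_{\ell_r^n}$ to $S$ lies in $B_{\ell_r^{|S|}}$, the inner sum over multi-indices supported in a fixed $S$ with $|S|=d$ is controlled—via the arguments underlying~\eqref{eq:proj const pols on lp} and the entropy estimate $k^k/\alpha^\alpha\le d^k$—by a $d$-dimensional polynomial projection constant; multiplying by the number $\binom nd$ of admissible supports and summing over $d<k\le\ell$, one finds that this part is smaller than the tetrahedral one by a negative power of $n$ against a power of $\ell$, and~\eqref{restriction} is precisely the quantitative hypothesis that forces this gain to dominate the logarithmic factors $(1+\log\ell)^{\ell(1/s-1/r)}$ produced above, keeping the whole of $\widehat{\boldsymbol{\lambda}}\big(\mathcal P_{\Lambda(\le\ell,n)}(\ell_{r,s}^n)\big)$ below $C^\ell(n/\ell)^{\ell/r'}$. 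This combinatorial balancing—matching the logarithmic loss in the characteristic $c_{\ell_{r,s}^n}(\alpha)$ of a multi-index against the scarcity of multi-indices of small support, exactly under the threshold~\eqref{restriction}—is the step I expect to be the main obstacle and the one demanding the most care.

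For the optimality statement the lower bounds do not require~\eqref{restriction}. Since $\varphi_{\ell_{r,s}}(n)\sim n^{1/r}$ and $\|\id\colon\ell_{r,s}^n\to\ell_r^n\|=1$ by~\eqref{eq: norma id lorentz q < p}, Proposition~\ref{innichen1} gives, for every sequence $(J_m)$ with $\Lambda_T(m)\subset J_m$ and each $n$, that $(n/m)^{(m-1)/r'}\prec_{C^m}\boldsymbol{\chimon}\big(\mathcal P_{J_m}(\ell_{r,s}^n)\big)$; applying this at degree $m+1$ to $J_{m+1}=\Lambda_T(m+1)$ and then the inequality $\boldsymbol{\chimon}\big(\mathcal P_{\Lambda_T(m+1)}(\ell_{r,s}^n)\big)\prec_{C^m}\boldsymbol{\lambda}\big(\mathcal P_{J_m}(\ell_{r,s}^n)\big)$ obtained exactly as in~\eqref{verflucht} (via Theorem~\ref{thm: uncond cte vs proj cte}, Theorem~\ref{OrOuSe} and Proposition~\ref{prop: homogeneous part proj constant}, using $\Lambda_T(m)=J_m(m)\cap\Lambda_T(\le m)$) yields $(n/m)^{m/r'}\prec_{C^m}\boldsymbol{\lambda}\big(\mathcal P_{J_m}(\ell_{r,s}^n)\big)$. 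Finally, the ``in particular'' clause is checked by noting that $1\le s\le r\le2$ forces $r\le2s$, with equality only when $(r,s)=(2,1)$; hence the exponent $r(1/s-1/r)=r/s-1$ in~\eqref{restriction} is $<1$ for $(r,s)\ne(2,1)$—so $(\log m)^{r/s-1}=o(\log m)$ and~\eqref{restriction} reduces to $\log n\ge(1+o(1))\log m$, i.e.\ $m^{1+\delta}\prec_C n$—while it equals $1$ for $(r,s)=(2,1)$, where $r'/r=1$ and~\eqref{restriction} becomes $2\log m\le\log n$, i.e.\ $m^{2}\prec_C n$.
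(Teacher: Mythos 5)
Your architecture is sound and, in all of its peripheral components, coincides with the paper's: the reduction of both constants to polynomial projection constants of homogeneous blocks via Theorem~\ref{lambda-dash} and Corollary~\ref{main3A}, the treatment of the tetrahedral block through Corollary~\ref{coro: bound proj constant tetra}, the lower bounds via Proposition~\ref{innichenA}/Proposition~\ref{innichen1} and the chain of Theorem~\ref{thm: uncond cte vs proj cte}, Theorem~\ref{OrOuSe} and Proposition~\ref{prop: homogeneous part proj constant}, and the elementary verification of the ``in particular'' clause are all correct. The gap is exactly where you flag it, and the route you sketch for closing it cannot be completed as stated. For $\alpha$ of degree $m$ with support of size $d$, your characteristic bound is $c_{\ell_{r,s}^n}(\alpha)\prec_C(1+\log d)^{m(\frac1s-\frac1r)}(m^m/\alpha^\alpha)^{1/r}$, and you propose to absorb the logarithmic factor by the cardinality gain $\binom{n}{d}^{1/r'}$ versus $\binom{n}{m}^{1/r'}$, i.e.\ by a factor of order $(m/n)^{(m-d)/r'}$. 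Take $s<r$ and $d=m-1$, with $n$ at the threshold of~\eqref{restriction}: the gain is $(m/n)^{1/r'}=\exp\big(-\tfrac1r(\log m)^{\frac rs-1}\big)\ge m^{-1/r}$, because $\tfrac rs-1\le 1$ throughout the range $1\le s\le r\le 2$, whereas the loss $(\log m)^{m(\frac1s-\frac1r)}=\exp\big(m(\tfrac1s-\tfrac1r)\log\log m\big)$ exceeds $C^m$ for every fixed $C$. So for non-tetrahedral indices whose support size is close to (but strictly below) the degree, the crude per-index estimate multiplied by any counting of supports overshoots the target $C^m(n/m)^{m/r'}$, and no combinatorial bookkeeping can recover it.

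The missing ingredient is a refined characteristic bound for these ``almost tetrahedral'' indices: if $\alpha\in\Lambda^L(m,n)$ with $L>\tfrac m2$, then $\alpha$ has at least $2L-m$ entries equal to $1$, so $m^{-1/r}\alpha^{1/r}$ is nearly flat and
$\|m^{-1/r}\alpha^{1/r}\|_{r,s}\prec_C\big(\tfrac{m-L}{m}\big)^{1/r}(\log(m-L))^{\frac1s-\frac1r}+1$,
rather than $(\log L)^{\frac1s-\frac1r}$; this is Lemma~\ref{rem: bound for coord fun}, obtained by splitting $\alpha^*$ into its block of $1$'s and its block of entries $\ge 2$, the latter of total degree at most $2(m-L)$. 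Even with this, the paper needs three regimes in $L$: for $L\ge m\big(1-(\log m)^{-r(\frac1s-\frac1r)}\big)$ the refined bound removes the logarithm entirely (Lemma~\ref{lem: bound for big L}); in the middle range one optimizes $t\mapsto t^{1/r}(m/n)^{t/r'}$ in $t=(m-L)/m$, and hypothesis~\eqref{restriction} enters precisely to control the location of the maximizer (Lemmas~\ref{lem: first bound for middle L } and~\ref{lem: second bound for middle L}); only for $L\le\tfrac m2$ does your cardinality argument suffice as written (Lemma~\ref{lemma L>m/2}). Without the refined bound and the first two regimes, the upper estimates of the theorem are not established.
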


For the  homogeneous case $J_m= \Lambda(m)$ the following theorem presents an upper bound that applies to the entire range, which
will be essential in deriving asymptotically optimal bounds for the Bohr radius in Lorentz spaces.

\begin{theorem} \label{proj lorentz s<r}
Let  $1\le s \le r\le 2$ and $0<\kappa<1$. Then, there exists a constant $C=C(r, s)>0$ such that for all  $n\ge m$, we have
\begin{equation*}\label{proj lorentz s<r - bound}
\widehat{\boldsymbol{\lambda}}\big(\mathcal P_m(\ell_{r,s}^n)\big) \leq C^m \Big(\frac{n}{m}\Big)^{\frac{m}{r'}}
\max\left\{1,\,\frac{\log(m)^{m(\frac1{s}-\frac1{r})}}{n^{\frac{m^\kappa}{2r'}}}\right\}\,.
\end{equation*}
\end{theorem}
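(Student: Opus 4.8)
The plan is to start from the very definition
\[
\widehat{\boldsymbol{\lambda}}\big(\mathcal P_m(\ell_{r,s}^n)\big)=\sup_{z\in B_{\ell_{r,s}^n}}\ \sum_{\alpha\in\Lambda(m,n)}c_{\ell_{r,s}^n}(\alpha)\,|z^\alpha|
\]
and to estimate the sum after partitioning $\Lambda(m,n)$ according to how spread out $\alpha$ is. The guiding idea is that for indices with small support the contribution is essentially the $\ell_r^n$ one, up to a controlled logarithmic loss, while for indices with large support the largeness of $\alpha^\alpha$ together with the restricted number of degrees of freedom available for placing $\alpha$ in $\{1,\dots,n\}$ buys back the compensating power $n^{m^\kappa/(2r')}$.

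First I would record a pointwise bound for the characteristic. Since $c_{\ell_{r,s}^n}(\alpha)$ depends only on the non-zero entries of $\alpha$, setting $k:=|\mathrm{supp}\,\alpha|$ and combining Remark~\ref{verysimple} with \eqref{dineen} and the Lorentz norm identity \eqref{eq: norm identity lorentz} (applied with $\ell_{r,r}=\ell_r$, so that $1/s-1/r\ge 0$) gives
\[
c_{\ell_{r,s}^n}(\alpha)\ \le\ \big\|\operatorname{id}\colon\ell_r^k\to\ell_{r,s}^k\big\|^m\,c_{\ell_r^n}(\alpha)\ \le\ C^m(1+\log k)^{m(1/s-1/r)}\Big(\tfrac{m^m}{\alpha^\alpha}\Big)^{1/r},
\]
while the convexity of $t\mapsto t\log t$ (Jensen) yields $\alpha^\alpha\ge(m/k)^m$, hence also $c_{\ell_{r,s}^n}(\alpha)\le C^m(1+\log k)^{m(1/s-1/r)}k^{m/r}$. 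Second, I would record the baseline estimate $\widehat{\boldsymbol{\lambda}}(\mathcal P_m(\ell_r^n))\le e^m(1+n/m)^{m/r'}$, which follows straight from the definition of $\widehat{\boldsymbol{\lambda}}$ via \eqref{dineen}, the inequality $m^m/\alpha^\alpha\le e^m\,m!/\alpha!$, Hölder applied to $\sum_\alpha\big(\tfrac{m!}{\alpha!}(|z|^r)^\alpha\big)^{1/r}$ with exponents $r,r'$ (using $\sum_\alpha\tfrac{m!}{\alpha!}(|z|^r)^\alpha=\|z\|_{\ell_r}^{rm}\le1$), and the cardinality estimate \eqref{cardi}; the same Hölder scheme, but carried out on the sub-sums of fixed support size, will supply the refined count $\big(\binom nk\binom{m-1}{k-1}\big)^{1/r'}$ that I need.

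Next comes the splitting. Fix $L:=\lceil m^\kappa\rceil$ and write $\Lambda(m,n)=\Lambda_{\le L}\sqcup\Lambda_{>L}$ according to $|\mathrm{supp}\,\alpha|\le L$ or $>L$. On $\Lambda_{\le L}$ the loss factor is at most $(1+\log L)^{m(1/s-1/r)}\prec_C(1+\log m)^{m(1/s-1/r)}$ and the characteristics are dominated by $c_{\ell_r^n}$, so this part is bounded by the $\ell_r^n$-estimate times the logarithmic loss; crucially only $\lesssim m^\kappa$ coordinates ever enter the monomials here, so the accumulated power of $n$ stays below $m^\kappa$, which is exactly what makes the $\max$ with $1$ switch once $n$ is large. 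On $\Lambda_{>L}$ I would exploit that every $z\in B_{\ell_{r,s}^n}$ satisfies $z_p^*\le p^{-1/r}$ (immediate from $(z_p^*)^s p^{s/r}\le\sum_{q\le p}(z_q^*)^s q^{s/r-1}\le\|z\|_{\ell_{r,s}}^s$, since $s\le r$), hence $|z^\alpha|\le\prod_p (z_p^*)^{\alpha_p^*}\le\big(\prod_p p^{\alpha_p^*}\big)^{-1/r}$; for $|\mathrm{supp}\,\alpha|>L$ this product is at least $\prod_{p\le L}p^{\alpha_p^*}\ge L!$, so each such monomial is at most $(L!)^{-1/r}$, and—after reorganizing the sum by profiles and placements, bounding the placement sums by complete homogeneous symmetric functions and feeding in Stirling—the power of $n$ in this block is reduced by $\gtrsim m^\kappa/(2r')$ relative to the tetrahedral count, producing the factor $n^{-m^\kappa/(2r')}$. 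Adding the two blocks and maximizing the resulting bound over the scale parameter with the elementary one-variable monotonicity estimates (in the spirit of \eqref{wuerzburg} and Remark~\ref{rem: maximum of f}) gives the stated $\max$.

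The hard part will be the bookkeeping on $\Lambda_{>L}$: one must balance the $(1+\log k)^{m(1/s-1/r)}$ loss—which genuinely occurs for harmonic-type profiles $\alpha_p\sim c/p$—against the lower bound for $\prod_p p^{\alpha_p^*}$ (equivalently, a Stirling-type lower bound for $\alpha^\alpha$) and against the limited number of coordinates that contribute a full factor $\|z\|_{\ell_1^n}\approx n^{1/r'}$ rather than $\|z\|_{\ell_2^n}^2\le1$, and to do so uniformly enough that the exponent of $n$ comes out as exactly $m^\kappa/(2r')$ for every $0<\kappa<1$. This is the step where the restriction $\kappa<1$ is forced: pushing $L$ up to the range $L\sim m/\log m$ (where the harmonic profiles live) would make the "free" combinatorial count in $\Lambda_{\le L}$ too large to absorb into a $C^m$, so $L$ can only be taken to be a power $m^\kappa$ with $\kappa<1$.
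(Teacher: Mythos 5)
Your skeleton --- partition $\Lambda(m,n)$ by support size, bound $c_{\ell_{r,s}^n}(\alpha)$ by the $\ell_r$-characteristic times a loss $(\log k)^{m(\frac1s-\frac1r)}$ depending only on the support size $k$, and run H\"older against the cardinality of each class --- is exactly the skeleton of the paper's argument (Lemma~\ref{cardinal J^L} and the lemmas that follow), and your small-support block is sound: for $k\le m^\kappa$ (indeed for all $k\le m/2$) the deficit of $\binom{n}{k}$ against $\binom{n}{m}$ is what produces the gain $n^{-m^\kappa/(2r')}$, just as in the last step of the proof of Theorem~\ref{bound_similar_ell_rBBB}.

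The genuine gap is the large-support block $\Lambda_{>L}$ with $L=\lceil m^\kappa\rceil$, which you flag as ``the hard part'' but do not carry out, and whose announced outcome cannot be right: the tetrahedral indices $\Lambda_T(m,n)$ lie in $\Lambda_{>L}$ and already contribute $\succ_{C^m}(n/m)^{m/r'}$ to $\widehat{\boldsymbol{\lambda}}$ (test with $z=n^{-1/r}(1,\dots,1)$ and $c_{\ell_{r,s}^n}(\alpha)\ge m^{m/r}$), so no estimate of this block can show a reduction of the power of $n$ by $m^\kappa/(2r')$ ``relative to the tetrahedral count.'' The pointwise bound $|z^\alpha|\le (L!)^{-1/r}$ does not rescue this: summing pointwise bounds over the $\approx e^m(n/m)^m$ indices of the block forfeits the constraint that $z$ is one fixed vector and yields $(n/m)^m$ instead of $(n/m)^{m/r'}$, so H\"older is indispensable, and then the monomial bound cannot be inserted. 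What is actually needed on $\Lambda_{>L}$, and what your sketch lacks, are the paper's two ingredients there: (i) for nearly tetrahedral $\alpha$, i.e.\ support at least $m\big(1-(\log m)^{-r(\frac1s-\frac1r)}\big)$, a refined characteristic bound with \emph{no} logarithmic loss, obtained by splitting $\alpha$ into its tetrahedral part and a remainder of degree $\le 2(m-L)$ so that the loss exponent is $m-L$ rather than $m$ (Lemma~\ref{rem: bound for coord fun}, Lemma~\ref{lem: bound for big L}); and (ii) for intermediate support sizes, an optimization in $t=(m-L)/m$ of $(\log m)^{m(\frac1s-\frac1r)}\,t^{m/r}(m/n)^{tm/r'}$ (Lemma~\ref{lem: second bound for middle L}), which either absorbs the logarithmic loss when $n$ satisfies \eqref{restriction} (giving the branch ``$1$'' of the max) or, when that fails and hence $n\prec m^2$, gives the second branch after observing that $n^{m^\kappa/(2r')}\prec_{C^m}1$ there --- which is also the actual reason $\kappa<1$ is forced, rather than the size of the small-support count. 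Without (i) and (ii), the contribution of $\Lambda_{>L}$ is not controlled and the proof is incomplete precisely where the difficulty sits.
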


To establish the upper estimates in the preceding theorems, we will isolate the following upper estimates for the polynomial
projection constants of $\mathcal{P}_{J_m}(\ell_{r,s}^n)$, in light of Theorem~\ref{lambda-dash}.

\begin{theorem} \label{bound_similar_ell_rBBB}
Let  $1\le s \le r\le 2$. Then there exists a constant $C=C(r, s)$ such that the following results hold true:
\begin{itemize}
\item[(a)] For any sequence $(J_m)$ of index sets, each with degree at most $m$, and for each $n \geq m$ that satisfies
the condition in \eqref{restriction}, we have
\begin{equation*}
\widehat{\boldsymbol{\lambda}}\big(\mathcal P_{J_m}(\ell_{r,s}^n)\big) \le C^m \Big(\frac{n}{m} \Big)^{\frac{m}{r'}}\,.
\end{equation*}
\item[(b)] For all $n \leq m$
\begin{equation*}
\widehat{\boldsymbol{\lambda}}\big(\mathcal P_m(\ell_{r,s}^n)\big)
\le C^m \Big(\frac{n}{m}\Big)^{\frac{m}{r'}}
\max\left\{1,\,\frac{\log(m)^{m(\frac1{s}-\frac1{r})}}{n^{\frac{m^\kappa}{2r'}}}\right\}\,.
\end{equation*}
\end{itemize}
\end{theorem}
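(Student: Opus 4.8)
The plan is to reduce everything to a careful estimate of the characteristic $c_{\ell_{r,s}^n}(\alpha)$ and then sum over the reduced index set. By Theorem~\ref{lambda-dash} it suffices to bound the polynomial projection constant $\widehat{\boldsymbol{\lambda}}$ directly, and by the very definition
\[
\widehat{\boldsymbol{\lambda}}\big(\mathcal{P}_{J_m}(\ell_{r,s}^n)\big) = \sup_{z\in B_{\ell_{r,s}^n}}\sum_{\alpha\in J_m} c_{\ell_{r,s}^n}(\alpha)\,|z^\alpha|\,.
\]
First I would invoke the second estimate in Remark~\ref{verysimple} with this particular $r$: since $\|m^{-1/r}(\alpha_1^{1/r},\dots,\alpha_n^{1/r})\|_{\ell_{r,s}^n} = m^{-1/r}\|(\alpha_1,\dots,\alpha_n)\|_{\ell_{r/r,\, s/r}^n}^{1/r}$ — more precisely one rewrites the Lorentz quasi-norm of the power vector in terms of the decreasing rearrangement of $\alpha$ itself — one obtains a bound of the shape
\[
c_{\ell_{r,s}^n}(\alpha) \le C^m\,\frac{\big(\text{rearrangement factor}\big)^{m}}{m^{m/r}}\left(\frac{m^m}{\alpha^\alpha}\right)^{1/r}\,,
\]
where the rearrangement factor measures how far $\ell_{r,s}$ is from $\ell_r$ on the support of $\alpha$. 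Using $\|\id\colon \ell_{r,s}^n\to\ell_r^n\|=1$ from \eqref{eq: norma id lorentz q < p} together with the reverse estimate \eqref{eq: norm identity lorentz} (which costs a factor $(1+\log m)^{1/s-1/r}$ per degree, hence $(1+\log m)^{m(1/s-1/r)}$ overall), the characteristic is squeezed between the $\ell_r^n$ value $\big(m^m/\alpha^\alpha\big)^{1/r}$ from \eqref{dineen} and that same value inflated by the logarithmic factor.

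Next I would separate the two regimes. For part~(a), the hypothesis \eqref{restriction} is exactly designed so that the logarithmic inflation $(1+\log m)^{m(1/s-1/r)}$ is dominated by the gain $\big(\tfrac{n}{m}\big)^{\varepsilon m/r'}$ coming from the main term, so the extra factor is absorbed into $C^m$. Concretely one would follow the proof of \cite{defant2011bohr} for $\ell_r^n$: decompose $J_m = \bigcup_{k\le m} J_m(k)$ into homogeneous pieces, and for each $k$-homogeneous piece bound
\[
\sup_{z\in B_{\ell_{r,s}^n}}\sum_{\alpha\in J_m(k)} c_{\ell_{r,s}^n}(\alpha)|z^\alpha|
\le (1+\log m)^{k(1/s-1/r)}\, e^{k/r'}\,\sup_{z\in B_{\ell_r^n}}\Big(\sum_{j=1}^n |z_j|^r\Big)^{k/r}\cdot\big(\text{multinomial count}\big)^{1/r'}\,,
\]
using the multinomial theorem $\sum_{|\alpha|=k}\tfrac{k!}{\alpha!}|z^\alpha| = \|z\|_{\ell_1}^k$ and $\|\id\colon\ell_r^n\to\ell_1^n\|=n^{1/r'}$ to produce the $\big(\tfrac nm\big)^{k/r'}$ factor, and then summing the geometric-type series over $0\le k\le m$ exactly as in \eqref{wuerzburg}. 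For part~(b), where $n\le m$, there is no room to absorb the logarithm and it is displayed explicitly; here the exponent $m^\kappa$ appears because one only needs the rearrangement/entropy improvement on a sub-block of size roughly $m^\kappa$ of the coordinates, which is where the $n^{-m^\kappa/(2r')}$ saving comes from — this is the delicate combinatorial core and I would handle it by splitting the support of $\alpha$ according to whether $\alpha_j$ is large or small and estimating the two contributions separately.

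The hard part will be part~(b): controlling the interplay between the Lorentz rearrangement (which is sensitive to the distribution of the $\alpha_j$'s, not just to $|\alpha|$) and the supremum over $z\in B_{\ell_{r,s}^n}$, in the regime $n\le m$ where the naive bound loses a full power of $\log m$ per degree. The mechanism for recovering the $n^{m^\kappa/(2r')}$ factor — essentially a quantitative version of "most of the mass of $\alpha$ sits on few coordinates, and on those coordinates $\ell_{r,s}$ and $\ell_r$ differ only by a power of $\log$ of the support size, which is $\le \log m$, but the support relevant for the worst case has size only $n^{\text{something}}$" — requires a careful case analysis that I expect to occupy the bulk of the technical work; the rest (parts~(a), the homogeneous summation, the passage from $\widehat{\boldsymbol{\lambda}}$ back to $\boldsymbol{\lambda}$ via Theorem~\ref{lambda-dash}, and the optimality claims via Theorem~\ref{t-final2} and \eqref{verflucht}) is routine once the characteristic estimate is in hand.
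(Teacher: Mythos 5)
Your starting point (reduce to the homogeneous case, estimate the characteristic $c_{\ell_{r,s}^n}(\alpha)$, apply H\"older against the cardinality of the index set) matches the paper's, but the mechanism you propose for part (a) does not work. If you only squeeze $c_{\ell_{r,s}^n}(\alpha)$ between $c_{\ell_r^n}(\alpha)$ and $c_{\ell_r^n}(\alpha)\,(1+\log m)^{m(\frac1s-\frac1r)}$ and then sum as in the $\ell_r^n$ case, the $m$-homogeneous piece comes out as $C^m(\frac nm)^{m/r'}(\log m)^{m(\frac1s-\frac1r)}$, and the factor $(\log m)^{m(\frac1s-\frac1r)}=e^{m(\frac1s-\frac1r)\log\log m}$ is superexponential in $m$ whenever $s<r$: it cannot be absorbed into $C^m$, and there is no spare power of $\frac nm$ to hide it in either, since the target $(\frac nm)^{m/r'}$ is exactly the claimed (and, by Theorem~\ref{bound_similar_ell_r}, optimal) order. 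The hypothesis \eqref{restriction} is not used to absorb this factor; it enters only at the very end, to locate the maximizer of a one-parameter optimization.

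The missing idea is the decomposition of $\Lambda(m,n)$ according to the number $L$ of variables appearing in each monomial (the sets $\Lambda^L(m,n)$ and spaces $\mathcal P_{m,L}$), together with two facts that play against each other. First, a refined characteristic estimate (Lemma~\ref{rem: bound for coord fun}): an index using $L\ge m/2$ variables must have at least $2L-m$ entries equal to $1$, so its non-tetrahedral excess lives on only $m-L$ coordinates and $c_{\ell_{r,s}^n}(\alpha)\prec_{C^m}\big(((m-L)/m)^{1/r}\log(m-L)^{\frac1s-\frac1r}+1\big)^m|[\alpha]|^{1/r}$; the logarithmic loss is damped by $((m-L)/m)^{m/r}$, and for tetrahedral-type indices it disappears entirely because the relevant extremal vector is flat. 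Second, the cardinality estimate $|\Lambda^L(m,n)|\sim_{C^m}(n/L)^L$, which for small $L$ is far below $(n/m)^m$ and compensates the full logarithmic loss. One then splits $[1,m]$ into three ranges of $L$ and optimizes $t\mapsto t^{1/r}(m/n)^{t/r'}$ over $t=(m-L)/m$; this is precisely where \eqref{restriction} is used (it pushes the critical point outside the admissible interval, so the endpoint value $(n/m)^{(1-t^*)m/r'}\le(n/m)^{m/r'}$ wins), and where the $n^{-m^\kappa/(2r')}$ term of part (b) arises. You do sketch this support-splitting philosophy, but only for part (b) and only as a statement of the difficulty; without it, part (a) already fails.
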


Before we proceed to the proof of Theorem~\ref{bound_similar_ell_rBBB} (a), we want to ensure that it, along with our findings
in Section~\ref{Unconditionality}, actually proves Theorem~\ref{bound_similar_ell_r}. Additionally, note that
Theorem~\ref{bound_similar_ell_rBBB} (a), when combined with Theorem~\ref{lambda-dash}, leads to Theorem~\ref{proj lorentz s<r}.

\begin{proof}[Proof of Theorem \ref{bound_similar_ell_r}]
We distinguish the cases $n \le m$ and $n \ge m$. In the first case, $n \le m$, we can immediately conclude from Corollary~\ref{K-S for poly}
\[
\boldsymbol{\chimon}\big(\mathcal P_{J_m}( \ell_{r,s}^n)\big)  \sim_{C^m} 1
\quad\textrm{ and }\quad  \boldsymbol{\lambda}\big(\mathcal P_{J_m}(\ell_{r,s}^n)\big)  \sim_{C^m} 1\,.
\]
For $n \ge m$, the lower bound for the unconditional basis constant follows directly from Proposition~\ref{innichen1}. Meanwhile,
the lower estimate for the projection constant relies on Theorem~\ref{t-final2}, Theorem~\ref{OrOuSe}
(note that we by the assumption on $J_m$ have 
$\Lambda_T(m) =J_m(m)\cap \Lambda_T(\leq\ m)$)
and finally Proposition~\ref{prop: homogeneous part proj constant}, which all together  shows that
\begin{align*}
\Big(\frac{n}{m}\Big)^{\frac{m}{r'}}
\prec_{C^m}\boldsymbol{\lambda}\big(\mathcal{P}_{\Lambda_T(m)}(\ell_{r,s}^n)\big)
\prec_{C^m}
\boldsymbol{\lambda}\big(\mathcal{P}_{J_m(m) }(\ell_{r,s}^n)\big)
\prec_{C^m}\boldsymbol{\lambda}\big(\mathcal{P}_{J_m}(\ell_{r,s}^n)\big)\,.
\end{align*}
Obviously, the   upper bound for the projection constant is a consequence of  Theorem~\ref{lambda-dash}
and Theorem~\ref{bound_similar_ell_rBBB} (which  remains to be proved).
For the upper bound of the unconditional basis constant we may apply Corollary~\ref{main3A}
and the upper bound for the projection constant (already shown) to  conclude that
\begin{align*}
\boldsymbol{\chimon}\big(\mathcal{P}_{J_m}(\ell_{r,s}^n)\big)
& \le \boldsymbol{\chimon}\big(\mathcal{P}_{\leq m}(\ell_{r,s}^n)\big)
\prec_{C^m}  \boldsymbol{\chimon}\big(\mathcal{P}_{m}(\ell_{r,s}^n)\big)
\\
& \prec_{C^m} \max_{k \leq m-1} \boldsymbol{\lambda}\big(\mathcal{P}_{m-1}(\ell_{r,s}^n)\big)
\prec_{C^m}  \Big(\frac{n}{m}\Big)^{\frac{m-1}{r'}}\,. \qedhere
\end{align*}
\end{proof}
We begin the proof of Theorem~\ref{bound_similar_ell_rBBB} and first note that, without loss of generality, we may assume
that $J_m = \Lambda(m)$. Indeed, if we establish this homogeneous case, we can then consider an arbitrary $J_m$ of degree
at most $m$, and observe that
\begin{equation}\label{red1}
\widehat{\boldsymbol{\lambda}}\big(\mathcal P_{J_m}(\ell_{r,s}^n)\big)
\leq 1 +\sum_{k=1}^{m} \widehat{\boldsymbol{\lambda}}\big(\mathcal P_{\Lambda(k)}(\ell_{r,s}^n)\big)\,.
\end{equation}
The hypothesis on $n$ and $m$ from \eqref{restriction} implies
that $\log k + \frac{r'}{r}(\log k)^{r(\frac1{s}-\frac1{r})} \le  \log n$  for all $1 \leq k \leq m$, and hence
\begin{equation}\label{red2}
\widehat{\boldsymbol{\lambda}}\big(\mathcal P_{J_m}(\ell_{r,s}^n)\big)
\prec_{C^m} \sum_{k=1}^{m}  \Big(\frac{n}{k} \Big)^{\frac{k}{r'}} \prec_{C^m}  \Big(\frac{n}{m} \Big)^{\frac{m}{r'}}\,,
\end{equation}
where the last estimate is a straightforward consequence of the following technical remark, which will also be utilized later.
\begin{remark} \label{rem: bound for n/klog(n)}
Given $a>e$, define $f(t) := \big(\frac{a}{t}\big)^t$ for all $t\geq 1$. Then $f$ is increasing on $[1, a/e]$
and decreasing on $[a/e, \infty)$.  Moreover, $f$ reaches its maximum value at $t_0 = a/e$, where $f(t_0)= e^{\frac{a}{e}}$.
\end{remark}

\hspace{20 mm}

After this reduction of the proof of Theorem~\ref{bound_similar_ell_rBBB} to \( J = \bigcup_m\Lambda(m) \), the  idea is to
leverage the flexibility in the definition of \(\widehat{\boldsymbol{\lambda}}(\mathcal{P}_{\Lambda(m)}(\ell_{r,s}^n))\).
Specifically, we break down the sum
\[
\sum_{\alpha \in J_m} c_{\ell_{r,s}^n}(\alpha) |z^{\alpha}|
\]
into smaller components. This decomposition relates to the number of variables involved in each monomial. The key insight is
that if an index $\alpha$ involves a large number of variables, then $c_{\ell_{r,s}^n}(\alpha)$ closely resembles $c_{\ell_{r}^n}(\alpha)$,
placing us in a~classical scenario. Conversely, if the number of variables in $\alpha$ is limited, we encounter a troublesome
logarithmic term in
$
c_{\ell_{r,s}^n}(\alpha)
$
compared to $c_{\ell_{r}^n}(\alpha)$.
To address this, we analyze the number of monomials associated with a fixed number of variables. The underlying philosophy is that
the indices leading to poor estimates are relatively few, allowing for a compensatory effect. However, managing all these elements
simultaneously introduces significant technical challenges, requiring a great deal of subtlety. The primary difficulty lies in
balancing the number of indices with the estimates obtained and ensuring that all components can be cohesively integrated.

To accomplish this, we need to consider specific sets of multi-indices that define subspaces of polynomials based on the number of
variables involved in each monomial. We define, for $1 \le L \le m \leq n$, the set of indices
\[
\Lambda^L(m,n):=\left\{\alpha \in \Lambda(m,n) : \;\; \vert\{ i : \alpha_i \neq 0\} \vert = L \right\}\,.
\]
In other words,  $\alpha\in \Lambda^L(m,n)$ whenever the monomial $z^\alpha$ involves exactly $L$ variables.
Given a Banach space $X_n=(\mathbb C^n,\|\cdot\|)$, we denote
\[
\mathcal{P}_{m,L}(X_n):=  \mathcal{P}_{\Lambda^L(m,n)}(X_n)\,.
\]
With this notation, the set of tetrahedral \( m \)-homogeneous polynomials can be denoted by $\mathcal{P}_{m,m}(X_n)$. The following
estimate for the cardinality of the sets $\Lambda^{L}(m,n)$ is crucial.

\begin{lemma}\label{cardinal J^L} 
For $1 \le L \le m \le n$, we have
\[
|\Lambda^{L}(m,n)|\sim_{C^m} \Big(\frac{n}{L}\Big)^{L} \sim_{C^m} \binom{n}{L}\,.
\]
\end{lemma}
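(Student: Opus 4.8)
The plan is to count $|\Lambda^L(m,n)|$ by first choosing which $L$ of the $n$ variables actually appear in the monomial, and then distributing the total degree $m$ among those $L$ variables so that each gets a strictly positive exponent. First I would write down the exact identity
\[
|\Lambda^L(m,n)| = \binom{n}{L}\binom{m-1}{L-1}\,,
\]
where $\binom{n}{L}$ counts the choice of the support set and $\binom{m-1}{L-1}$ counts the number of compositions of $m$ into exactly $L$ positive parts (a standard stars-and-bars argument). This reduces the lemma to two purely combinatorial estimates: $\binom{n}{L}\sim_{C^m}(n/L)^L$ and $\binom{m-1}{L-1}\sim_{C^m}1$.

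For the second factor I would use the crude bound $1\le\binom{m-1}{L-1}\le 2^{m-1}\le 2^m$, which is exactly of the form absorbed by a $C^m$ factor; so $\binom{m-1}{L-1}\sim_{C^m}1$. For the first factor I would invoke the elementary two-sided estimate $(n/L)^L\le\binom{n}{L}\le (en/L)^L = e^L (n/L)^L$ (the lower bound from $\binom{n}{L}\ge\prod_{j=0}^{L-1}\frac{n-j}{L-j}\ge (n/L)^L$ when $L\le n$, and the upper bound from $\binom{n}{L}\le\frac{n^L}{L!}$ together with $L!\ge (L/e)^L$). Since $L\le m$, the factor $e^L\le e^m$ is again swallowed by the hypercontractive comparison, giving $\binom{n}{L}\sim_{C^m}(n/L)^L$.

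Combining the two displays, $|\Lambda^L(m,n)| = \binom{n}{L}\binom{m-1}{L-1}\sim_{C^m}(n/L)^L\sim_{C^m}\binom{n}{L}$, which is the claim. The routine bookkeeping is in checking that each auxiliary constant ($2^m$, $e^m$) is independent of $n$ and $m$ as required by the definition of $\sim_{C^m}$; the only mildly delicate point — and the main thing to get right — is the case $L=m$ (tetrahedral monomials), where $\binom{m-1}{m-1}=1$ exactly and the estimate must reduce to $|\Lambda^m(m,n)|=\binom{n}{m}$, consistent with the earlier identity $|\Lambda_T(m,n)|=\binom{n}{m}$ used repeatedly in the paper. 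No genuine obstacle is expected here; the lemma is essentially a packaging of standard binomial estimates in the $C^m$-framework set up in Section~1.
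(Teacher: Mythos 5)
Your proof is correct, and it takes a cleaner route than the paper's. You count $\Lambda^L(m,n)$ exactly as $\binom{n}{L}\binom{m-1}{L-1}$ (support set times compositions of $m$ into $L$ positive parts) and then absorb $\binom{m-1}{L-1}\le 2^m$ and the $e^L$ discrepancy between $\binom{n}{L}$ and $(n/L)^L$ into the $C^m$ factor; all of these constants are indeed of the hypercontractive form required by $\sim_{C^m}$. The paper instead proves the two bounds separately and only approximately: for the upper bound it decomposes each $\alpha\in\Lambda^L(m,n)$ as a tetrahedral support indicator in $\Lambda_T(L,n)$ plus an ``excess'' multi-index of order $m-L$, yielding $|\Lambda^L(m,n)|\le\binom{n}{L}\,|\Lambda(m-L,m-L)|\prec_{C^m}(n/L)^L$, and for the lower bound it exhibits an explicit injection $\Lambda_T(L,n)\to\Lambda^L(m,n)$, $\alpha\mapsto(\alpha_1+m-L,\alpha_2,\dots,\alpha_n)$, giving $|\Lambda^L(m,n)|\ge\binom{n}{L}$. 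Both arguments reduce to the same standard estimate $(n/L)^L\le\binom{n}{L}\le(en/L)^L$; your exact identity makes the bookkeeping more transparent (and in particular handles the tetrahedral case $L=m$ automatically, as you note), while the paper's decomposition is the one it reuses elsewhere (e.g.\ in Lemma~\ref{3 cases}) and so is stated in that form for consistency. No gaps.
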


\begin{proof}
Any multi-index $\alpha \in \Lambda^L(m,n)$ can be expressed as the sum of a tetrahedral index $\beta \in \Lambda_T(L,n)$
and another multi-index whose support is contained within the support of $\beta$. Furthermore, if $\alpha$ has $L-k$
coordinates equal to $1$, then the remaining $k$ non-zero coordinates of $\alpha$ are at least $2$, and so
$
L-k+2k\le m, \quad or,\quad k\le m-L\,.
$
Thus, the decreasing reordering of $\alpha$ can be expressed as
\[
\alpha^*=\beta^* \, + \,(\alpha^*-\beta^*)
=(1,\dots,1,0,\dots)+(\alpha_1^*-1,
\dots,\alpha_k^*-1,0\dots)\,.
\]
Therefore, as any $\alpha^*$ can be decomposed as a sum of $\beta^* \in \Lambda_T(L,n)$ and $\alpha^* - \beta^* \in \Lambda(m-L,m-L)$,
we have
\begin{align*}
|\Lambda^{L}(m,n)|\le |\Lambda_T(L,n)|\cdot|\Lambda(m-L,m-L)|\le \binom{n}{L} \,\Big(1+\frac{m-L}{m-L}\Big)^{m-L}
\prec_{C^m} \Big(\frac{n}{L}\Big)^{L}\,.
\end{align*}
The lower bound follows from the fact that we can define an injection from
\[
\Lambda_T(L,n) \to \Lambda^{L}(m,n), \quad \alpha \mapsto (\alpha_1 + m - L, \alpha_2, \dots, \alpha_n).
\]
Therefore,
$
|\Lambda^{L}(m,n)| \ge |\Lambda_T(L,n)| \sim_{C^m} \left(\frac{n}{L}\right)^{L},
$
which completes the argument.
\end{proof}

As already mentioned, to prove Theorem~\ref{bound_similar_ell_rBBB} (a), we may concentrate on the index set $J = \bigcup_m \Lambda(m)$.
Now, observe that for $1 \le L \le m \le n$,
\begin{equation*}\label{union}
\Lambda^L(m,n) \subset \bigcup_{k=0}^k \Lambda^L(k,n),
\end{equation*}
implying
\begin{equation}\label{dashunion}
\widehat{\boldsymbol{\lambda}}\big(\mathcal{P}_{m}(\ell_{r,s}^n)\big)
\le \sum_{L=0}^m \widehat{\boldsymbol{\lambda}}\big(\mathcal P_{m,L}(\ell_{r,s}^n)\big)\,.
\end{equation}
So, given \(1 \leq L \leq m\), our next goal is to provide upper estimates for
\[
\widehat{\boldsymbol{\lambda}}\big(\mathcal P_{m,L}(\ell_{r,s}^n)\big), \quad 1 \leq L \leq m\,.
\]
From Lemma~\ref{cardinal J^L} we obtain a first  simple bound.

\begin{remark}\label{rem: bound coef functional ell_r,s}
For all $1 \leq L \leq m$
\[
\widehat{\boldsymbol{\lambda}}\big(\mathcal P_{m,L}(\ell_{r,s}^n)\big)
\prec_{C^m} \Big(\frac{n}{L}\Big)^{\frac{L}{r'}}(\log m)^{m(\frac1{s}-\frac1{r})}\,.
\]
\end{remark}

\begin{proof}
Observe that for a given $\alpha \in \Lambda^L(m,n)$, using 
Remark~\ref{verysimple}, \eqref{eq: norm identity lorentz}
and Lemma~\ref{cardinal J^L}, we obtain
\begin{equation*}\label{eq: bound c_alpha J^L}
c_{\ell_{r,s}^n}(\alpha)
\le {\big\|m^{-1/r}(\alpha_1^{1/r},\dots,\alpha_n^{1/r})\big\|_{r,s}^{m}} \, \left( \frac{m^{m}}{\alpha^\alpha} \right)^{1/r}\prec_{C^m}
(\log L)^{m(\frac1{s}-\frac1{r})}|[\alpha]|^{1/r}\,.
\end{equation*}
Thus, applying H\"older's inequality along with the binomial formula, we have 
\begin{align*}
\widehat{\boldsymbol{\lambda}}\big(\mathcal P_{m,L}(\ell_{r,s}^n)\big)
 & \prec_{C^m}
\log(m)^{m(\frac1{s}-\frac1{r})} \left(\sup_{z \in B_{\ell_{r,s}^n}}\sum_{\alpha \in \Lambda^{L}(m,n)} |[\alpha]||z^{\alpha r}| \right)^\frac{1}{r}\left|
\Lambda^{L}(m,n) \right|^{1/r'} \nonumber\\
& \prec_{C^m} (\log m)^{m(\frac1{s}-\frac1{r})} \sup_{z \in B_{\ell_{r,s}^n}}\|z\|_{\ell_r}^{m}\left| \Lambda^{L}(m,n) \right|^{1/r'} \prec_{C^m} \Big(\frac{n}{L}\Big)^{\frac{L}{r'}}
(\log m)^{m(\frac1{s}-\frac1{r})}\,.\qedhere
\end{align*}
\end{proof}

Unfortunately, the bound given in the previous remark is inadequate for proving the two bounds from Theorem~\ref{bound_similar_ell_rBBB}.
A more precise estimation is required - in fact, we for all index sets need precisely the estimate given in Theorem~\ref{bound_similar_ell_rBBB},
(a), that is: Let  $1\le s \le r\le 2$, and  $1\le m\le n$ such that $\log m + \frac{r'}{r}(\log m)^{r(\frac1{s}-\frac1{r})}\le  \log n$.
Then, for any $0\le L\le m$
\begin{equation} \label{proj lorentz s<r - ell_r bound}
\widehat{\boldsymbol{\lambda}}\big(\mathcal P_{m,L}(\ell_{r,s}^n)\big)  \prec_{C^m} \Big(\frac{n}{m}\Big)^{\frac{m}{r'}}\,.
\end{equation}
To achieve this, we divide the interval of all possible \(L\)'s into three subintervals:
\[
\Big[1, \frac{m}{2}\Big], \quad \Big[\frac{m}{2}, s_m\Big], \quad \Big[s_m, m\Big],
\quad
\text{where $s_m =\Big(1-\frac{1}{(\log m)^{r(\frac1{s}-\frac1{r})}}\Big)$}\,
\]
and handle each of these three cases separately.

We now proceed to the proof of~\eqref{proj lorentz s<r - ell_r bound}, which requires some additional preparation.
In fact, this lemma directly follows from Lemma~\ref{lem: bound for big L}, Lemma~\ref{lem: second bound for middle L},
and Lemma~\ref{lemma L>m/2}, which are presented below. We start by examining the case when \( L \) is considered
'big'—that is, when the polynomials are 'almost tetrahedral'.

\begin{lemma}\label{rem: bound for coord fun}
Let  $1\le s \le r$ and $1 \le L < m$ with $\frac{m}{2} \leq L$. Then, there exists a~constant $C=C(r,s)>0$ such that,
for each $\alpha  \in \Lambda_{L}(m,n)$, the following estimate holds{\rm:}
\[
c_{\ell_{r,s}^n}(\alpha)
\prec_{C^m} \left( \Big(\frac{(m-L)}{m}\Big)^{1/r}\log(m-L)^{\frac1{s}-\frac1{r}}+1 \right)^m  |[\alpha]|^{1/r}\,.
\]
\end{lemma}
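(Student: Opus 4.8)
The plan is to estimate the characteristic $c_{\ell_{r,s}^n}(\alpha)$ for a multi-index $\alpha \in \Lambda^L(m,n)$ with $m/2 \le L < m$ directly via the second estimate in Remark~\ref{verysimple}, namely
\[
c_{\ell_{r,s}^n}(\alpha) \le \big\|m^{-1/r}(\alpha_1^{1/r},\dots,\alpha_n^{1/r})\big\|^m_{\ell_{r,s}^n}\,\Big(\frac{m^m}{\alpha^\alpha}\Big)^{1/r}\,.
\]
The second factor is controlled by the standard bound $\frac{m^m}{\alpha^\alpha} \le e^m \frac{m!}{\alpha!} = e^m |[\alpha]|$, so it contributes $e^{m/r}|[\alpha]|^{1/r}$, which is of the form $C^m|[\alpha]|^{1/r}$. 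Hence everything reduces to bounding the Lorentz quasi-norm $\big\|m^{-1/r}(\alpha_1^{1/r},\dots,\alpha_n^{1/r})\big\|_{\ell_{r,s}^n}$.

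For that quasi-norm, the key structural observation (as in the proof of Lemma~\ref{cardinal J^L}) is that since $\alpha \in \Lambda^L(m,n)$ with $L \ge m/2$, the vector $\alpha$ has exactly $L$ nonzero entries, of which at most $m-L$ can be $\ge 2$; the remaining (at least $2L-m$) nonzero entries equal $1$. So the decreasing rearrangement $\alpha^*$ looks like a block of at most $m-L$ entries that may be large (but each is $\le m$, and they sum to $\le m - (L-(m-L)) = 2(m-L) + \dots$, so in fact the "large part" has total mass $O(m-L)$), followed by a block of $1$'s, followed by zeros. The plan is to split the Lorentz norm accordingly: write $\big\|m^{-1/r}\alpha^{*\,1/r}\big\|_{\ell_{r,s}} \le \big\|(\text{large block})\big\| + \big\|(\text{block of ones})\big\|$ using the quasi-triangle inequality (paying a constant). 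For the block of ones of length $\le L \le m$ with values $m^{-1/r}$, one gets $\|\cdot\|_{\ell_{r,s}^n} \prec_C m^{-1/r}\,\varphi_{\ell_{r,s}}(m) \sim m^{-1/r} m^{1/r} = 1$ after accounting for $\varphi_{\ell_{r,s}}(k)\sim k^{1/r}$. For the large block of length $j \le m-L$, using $\|x\|_{\ell_{r,s}^n} \le \|\id\colon \ell_{r,r}^n \to \ell_{r,s}^n\|\,\|x\|_{\ell_r^n} \prec_C (\log(m-L))^{\frac1s - \frac1r}\,\|x\|_{\ell_r^n}$ via~\eqref{eq: norm identity lorentz}, and $\|x\|_{\ell_r}^r = m^{-1}\sum (\text{large }\alpha_i) \le m^{-1}\cdot O(m-L)$, one obtains a bound $\prec_C \big(\frac{m-L}{m}\big)^{1/r}(\log(m-L))^{\frac1s-\frac1r}$. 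Adding the two blocks and raising to the $m$-th power gives exactly the claimed
\[
\Big(\big(\tfrac{m-L}{m}\big)^{1/r}\log(m-L)^{\frac1s-\frac1r} + 1\Big)^m\,,
\]
and multiplying by the $e^{m/r}|[\alpha]|^{1/r}$ from the other factor and absorbing constants into $C^m$ finishes the proof.

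I expect the main obstacle to be the bookkeeping on the "large block": one must carefully argue that the total mass $\sum_{\alpha_i \ge 2}\alpha_i$ is $O(m-L)$ (not merely $O(m)$), which is what makes the logarithmic penalty appear only with the small prefactor $\big(\frac{m-L}{m}\big)^{1/r}$ rather than contributing a bare $(\log m)^{\frac1s-\frac1r}$; this is exactly the counting $L - k + 2k \le m$, i.e. $k \le m-L$ for the number $k$ of entries $\ge 2$, combined with the crude bound that each such entry is $\le m-L+1$ or that their sum is $\le m - (L-k) \le 2(m-L)$. A secondary technical point is that the Lorentz space is only a quasi-normed space when $s > r$ is not our regime (here $s \le r$ so it is genuinely normed), so the triangle inequality is available without loss; one should still be mindful of the $p'$-factor incurred when passing between $\|\cdot\|_{\ell_{r,s}}$ and $\|\cdot\|^*_{\ell_{r,s}}$, but these are constants independent of $m$ and $n$ and get absorbed into $C^m$. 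Finally, one should record the trivial cases (e.g. $m-L = 0$, handled separately as it corresponds to the purely tetrahedral situation, or interpreting $\log(m-L)$ suitably when $m-L=1$) to make the statement uniform over the stated range of $L$.
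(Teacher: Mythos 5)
Your proposal is correct and follows essentially the same route as the paper's proof: the same reduction via Remark~\ref{verysimple} and $m^m/\alpha^\alpha \le e^m|[\alpha]|$, the same counting that at most $m-L$ entries exceed $1$ with total mass at most $2(m-L)$, the same split of the Lorentz norm into the large block (estimated through $\ell_r$ and the identity-norm bound \eqref{eq: norm identity lorentz}, producing the $\big(\frac{m-L}{m}\big)^{1/r}\log(m-L)^{\frac1s-\frac1r}$ term) and the block of ones (contributing the $+1$). The edge cases you flag ($m-L=1$, where one should read the logarithm as in the $\max\{1,\cdot\}$ form of \eqref{eq: norm identity lorentz}) are handled equally informally in the paper, so no substantive difference remains.
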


\begin{proof}
Before starting, let us recall that $\|M^{-1/r}\beta^{1/r}\|_{\ell_r}=1$ for any $\beta\in\Lambda(M,N)$, and if $\beta$
is tetrahedral, then by \eqref{fundfunc}
\begin{equation}\label{norm for alpha tetrahedral}
\big\|M^{-1/r}\alpha^{1/r}\big\|_{r,s} = \big\|M^{-1/r}(\underbrace{1,\dots,1}_{M},0,\dots)\big\|_{r,s}\sim_C 1\,.
\end{equation}
Given $\alpha  \in \Lambda^{L}(m,n)$, for $1 \le L < m$ with $\frac{m}{2} \leq L$, note $\alpha$ has at least $2L-m$ coordinates which are equal to $1$. Indeed, if  $k$ denotes the numbers  $1$'s then $\alpha$ has $L-k$ coordinates which are greater than or equal to $2$, so that $m\ge  2(L-k)+k$. Thus,
\[
\alpha^*=(\alpha_1^*,\dots,\alpha_{m-L}^*,\underbrace{1,\dots,1}_{2L-m},0,\dots)\,.
\]
Since the degree of $(\alpha_1^*,\dots,\alpha_{m-L}^*)$
equals $2(m-L)$, it follows from \eqref{norm for alpha tetrahedral} and \eqref{eq: norm identity lorentz} that
\begin{align*}
\nonumber \big\|m^{-1/r}(\alpha_1^{1/r},\dots,\alpha_n^{1/r})\big\|_{r,s} &\le  \big\|m^{-1/r}((\alpha_1^*)^{1/r},\dots,(\alpha_{m-L}^*)^{1/r})\big\|_{r,s}+
\big\|m^{-1/r}(\underbrace{1,\dots,1}_{2L-m},0,\dots)\big\|_{r,s}\\
\nonumber
&\le \big\|(2(m-L))^{-1/r}((\alpha_1^*)^{1/r},\dots,(\alpha_{m-L}^*)^{1/r})\big\|_{r} \, \Big(\frac{2(m-L)}{m}\Big)^{1/r}\log(m-L)^{\frac1{s}-\frac1{r}}
\\
&
\quad +\big\|(2L-m)^{-1/r}(\underbrace{1,\dots,1}_{2L-m},0,\dots)\big\|_{r,s} \nonumber \\
&\prec_C \Big(\frac{2(m-L)}{m}\Big)^{1/r}\log(m-L)^{\frac1{s}-\frac1{r}}+1\,.
\end{align*}
Using that  $z = \frac{m^{-1/r}\alpha^{1/r}}{\| m^{-1/r}\alpha^{1/r} \|_{\ell_{r,s}^n}} \in B_{\ell_{r,s}^n}$, we obtain
\begin{equation*}\label{eq: bound for coord fun}
c_{\ell_{r,s}^n}(\alpha)  \le {\big\|m^{-1/r}(\alpha_1^{1/r},\dots,\alpha_n^{1/r})\big\|_{r,s}^{m}}
\left( \frac{m^{m}}{\alpha^\alpha} \right)^{1/r}
 \prec_{C^m} \left( \Big(\frac{2(m-L)}{m}\Big)^{1/r}\log(m-L)^{\frac1{s}-\frac1{r}}+1 \right)^m e^{m/r} |[\alpha]|^{1/r}\,.
 \qedhere
\end{equation*}
\end{proof}

\smallskip

\begin{lemma}\label{lem: bound for big L}
Let $1 \leq s \leq r \leq 2$, and assume $2\le m\le n$  and $L\ge  m\left(1-\frac{1}{(\log m)^{r(\frac1{s}-\frac1{r})}}\right)$.
Then, there exists a constant $C=C(r,s)>0$ such that, the following estimate holds{\rm:}
\begin{equation*}%\label{eq: lema1 s<r}
\widehat{\boldsymbol{\lambda}}\big(\mathcal P_{m,L}(\ell_{r,s}^n)\big)
\, \prec_{C^m} \,  \Big(\frac{n}{L}\Big)^{\frac{L}{r'}} \, \prec_{C^m} \, \Big(\frac{n}{m}\Big)^{\frac{m}{r'}}\,.
\end{equation*}
\end{lemma}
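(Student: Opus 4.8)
The plan is to reduce everything to the pointwise estimate for the characteristic proved in Lemma~\ref{rem: bound for coord fun}, then sum via H\"older's inequality and the multinomial theorem, and finally pass from $L$ to $m$ using the monotonicity of $t\mapsto (n/t)^t$ recorded in Remark~\ref{rem: bound for n/klog(n)}.

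\emph{Step 1: the characteristic is essentially $|[\alpha]|^{1/r}$ on $\Lambda^L(m,n)$.} First I would note that the hypothesis $L\ge m\big(1-(\log m)^{-r(1/s-1/r)}\big)$ says exactly that $\tfrac{m-L}{m}\le (\log m)^{-r(1/s-1/r)}$, whence
\[
\Big(\tfrac{m-L}{m}\Big)^{1/r}\,\log(m-L)^{\frac1s-\frac1r}\ \le\ \frac{(\log m)^{\frac1s-\frac1r}}{(\log m)^{\frac1s-\frac1r}}\ =\ 1 .
\]
Plugging this into Lemma~\ref{rem: bound for coord fun} (valid for $m/2\le L<m$, which follows from the displayed hypothesis once $m$ exceeds a threshold $m_0(r,s)$) gives, for every $\alpha\in\Lambda^L(m,n)$,
\[
c_{\ell_{r,s}^n}(\alpha)\ \prec_{C^m}\ 2^m\,|[\alpha]|^{1/r}\ \prec_{C^m}\ |[\alpha]|^{1/r}.
\]
For the boundary value $L=m$ one has $\Lambda^m(m,n)=\Lambda_T(m,n)$ and, using $\|m^{-1/r}\alpha^{1/r}\|_{\ell_{r,s}^n}\sim_C 1$ for tetrahedral $\alpha$ (as in \eqref{norm for alpha tetrahedral}) together with $\alpha^\alpha=1$, the same conclusion $c_{\ell_{r,s}^n}(\alpha)\prec_{C^m}m^{m/r}\prec_{C^m}|[\alpha]|^{1/r}$ holds.

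\emph{Step 2: summation.} Next, for $z\in B_{\ell_{r,s}^n}$, I would apply H\"older's inequality with exponents $r$ and $r'$:
\[
\sum_{\alpha\in\Lambda^L(m,n)}c_{\ell_{r,s}^n}(\alpha)\,|z^\alpha|
\ \prec_{C^m}\ \sum_{\alpha\in\Lambda^L(m,n)}\big(|[\alpha]|\,|z^{\alpha r}|\big)^{1/r}
\ \le\ \Big(\sum_{\alpha\in\Lambda^L(m,n)}|[\alpha]|\,|z^{\alpha r}|\Big)^{1/r}\,|\Lambda^L(m,n)|^{1/r'}.
\]
By the multinomial theorem the inner sum is at most $\big(|z_1|^r+\dots+|z_n|^r\big)^m=\|z\|_{\ell_r^n}^{rm}$, and since $s\le r$ we have $\|z\|_{\ell_r^n}\le\|z\|_{\ell_{r,s}^n}\le 1$ by \eqref{eq: norma id lorentz q < p}. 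Taking the supremum over $z$ and invoking Lemma~\ref{cardinal J^L} then yields
\[
\widehat{\boldsymbol{\lambda}}\big(\mathcal{P}_{m,L}(\ell_{r,s}^n)\big)\ \prec_{C^m}\ |\Lambda^L(m,n)|^{1/r'}\ \prec_{C^m}\ \Big(\tfrac{n}{L}\Big)^{\frac{L}{r'}},
\]
which is the first claimed inequality.

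\emph{Step 3: from $L$ to $m$.} Finally I would show $(n/L)^{L}\prec_{C^m}(n/m)^{m}$ and raise to the power $1/r'$. If $m\le n/e$, then $1\le L\le m\le n/e$ and Remark~\ref{rem: bound for n/klog(n)} (with $a=n$) says $t\mapsto (n/t)^t$ is increasing on $[1,n/e]$, so $(n/L)^L\le (n/m)^m$. If $m>n/e$, the same remark gives $(n/L)^L\le e^{n/e}\le e^{m}$, while $(n/m)^m\ge 1$ because $n\ge m$, so $(n/L)^L\le e^{m}(n/m)^m$. The finitely many values $m<m_0(r,s)$ are absorbed into the final constant $C$, since for each fixed such $m$ the above computation still yields $\widehat{\boldsymbol{\lambda}}(\mathcal{P}_{m,L}(\ell_{r,s}^n))=O\big((n/m)^{m/r'}\big)$. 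The main point of the argument is thus entirely contained in the already‑established Lemma~\ref{rem: bound for coord fun}; the only delicate bookkeeping is to match the admissible range of $L$ (in particular $L\ge m/2$) with the hypotheses of that lemma and to dispose of the boundary case $L=m$ and the small‑$m$ exceptions.
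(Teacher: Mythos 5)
Your proposal is correct and follows essentially the same route as the paper's proof: the hypothesis on $L$ is fed into Lemma~\ref{rem: bound for coord fun} to get $c_{\ell_{r,s}^n}(\alpha)\prec_{C^m}|[\alpha]|^{1/r}$, then H\"older's inequality together with the multinomial identity and Lemma~\ref{cardinal J^L} gives $(n/L)^{L/r'}$, and the passage to $(n/m)^{m/r'}$ uses the monotonicity of $t\mapsto(n/t)^t$. The only differences are cosmetic: you verify the boundary case $L=m$ directly from the tetrahedral characteristic (the paper refers to Theorem~\ref{t-final}), and you spell out the $L\to m$ comparison and the small-$m$ exceptions that the paper leaves implicit.
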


\begin{proof}
The case $m=L$ is the tetrahedral case which is  contained in (the proof of) Theorem \ref{t-final}. We may thus assume $1\le L<m.$
Since $m-L\le \frac{m}{(\log m)^{r(\frac1{s}-\frac1{r})}}$, Lemma \ref{rem: bound for coord fun} yields
\[
c_{\ell_{r,s}^n}(\alpha)
\prec_{C^m}  |[\alpha]|^{1/r}\,, \quad \alpha \in \Lambda^L(m,n)\,.
\]
Therefore with an application of H\"older's inequality and Lemma \ref{cardinal J^L} we obtain,
\begin{align*}
\sup_{z\in B_{\ell_{r,s}^n}}\,\,\sum_{\alpha \in \Lambda^L(m,n)}  c_{\ell_{r,s}^n}(\alpha) \,|z^\alpha |
& \prec_{C^m} \sup_{z\in B_{\ell_{r,s}^n}}\,\,\sum_{\alpha \in \Lambda^L  (m,n)} |z^\alpha| |[\alpha]|^{1/r}
\\
&
\le \sup_{z\in B_{\ell_{r,s}^n}}\,\,\left( \sum_{\alpha \in \Lambda^L  (m,n)} |z^\alpha|^r |[\alpha]| \right)^{1/r} | \Lambda^L  (m,n) |^{1/r'}
\\
& \le \sup_{z\in B_{\ell_{r,s}^n}}\,\,\| z  \|_{\ell_r}^m \,\, | \Lambda^L  (m,n) |^{1/r'}
 \prec_{C^m}  \Big(\frac{n}{L}\Big)^{\frac{L}{r'}} \prec_{C^m}  \Big(\frac{n}{m}\Big)^{\frac{m}{r'}}\,.\label{final bound for L>m-sqrt(m)}
\qedhere
\end{align*}
\end{proof}

Lemma \ref{lem: bound for big L} provides a bound for \( \widehat{\boldsymbol{\lambda}}\big(\mathcal{P}_{m,L}(\ell_{r,s}^n)\big) \) when \( L \) is 'big', meaning that \( L \) is comparable to \( m \). The following two lemmas offer bounds for the 'middle' values of \( L \).

\begin{lemma}\label{lem: first bound for middle L }
Let $1 \leq s \leq r \leq 2$ and $1<\frac{m}{2}<L\le m\left(1-\frac{1}{(\log m)^{r(\frac1{s}-\frac1{r})}}\right)$. Then, there exists
a constant $C=C(r,s)>0$ such that, the following estimate holds{\rm:}
\begin{equation*}
\widehat{\boldsymbol{\lambda}}\big(\mathcal P_{m,L}(\ell_{r,s}^n)\big)
\,\prec_{C^m}
\, \Big(\frac{n}{m}\Big)^{m/r'} \log(m-L)^{m(\frac1{s}-\frac1{r})}
\Big(\frac{m}{n}\Big)^{\frac{m-L}{r'}}\left(\frac{m-L}{m}\right)^{\frac{m}{r}}\,.
\end{equation*}
\end{lemma}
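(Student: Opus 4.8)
The plan is to re-run the argument used for the tetrahedral and ``big $L$'' cases while keeping track of the logarithmic loss. First note that the hypothesis $L\le m\bigl(1-(\log m)^{-r(1/s-1/r)}\bigr)$ is vacuous when $s=r$, so we may assume $s<r$ and set $\beta:=\frac1s-\frac1r>0$. Starting from the definition
\[
\widehat{\boldsymbol{\lambda}}\big(\mathcal P_{m,L}(\ell_{r,s}^n)\big)=\sup_{z\in B_{\ell_{r,s}^n}}\sum_{\alpha\in\Lambda^L(m,n)}c_{\ell_{r,s}^n}(\alpha)\,|z^\alpha|\,,
\]
I would feed in the pointwise characteristic estimate of Lemma~\ref{rem: bound for coord fun} (applicable since here $\frac m2<L<m$), namely $c_{\ell_{r,s}^n}(\alpha)\prec_{C^m}\bigl((\frac{m-L}{m})^{1/r}\log(m-L)^{\beta}+1\bigr)^{m}|[\alpha]|^{1/r}$ for $\alpha\in\Lambda^L(m,n)$.

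The one genuinely delicate point is discarding the ``$+1$''. The claim is that, \emph{within the prescribed window}, the quantity $a:=(\frac{m-L}{m})^{1/r}\log(m-L)^{\beta}$ is bounded below by a constant $c_0=c_0(r,s)>0$, so that $(a+1)^m\le(1+c_0^{-1})^m a^m\prec_{C^m}(\frac{m-L}{m})^{m/r}\log(m-L)^{m\beta}$. For the lower bound on $a$, observe $m-L\ge m(\log m)^{-r\beta}$, which for $m$ beyond a threshold depending only on $r,s$ exceeds $\sqrt m$, whence $\log(m-L)\ge\frac12\log m$; combined with $(\frac{m-L}{m})^{1/r}\ge(\log m)^{-\beta}$ this forces $a\ge(\frac12)^{\beta}$. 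The remaining finitely many values of $m$ are harmless, since there the crude bound of Remark~\ref{rem: bound coef functional ell_r,s} already agrees with the target up to a constant, hence up to $C^m$. Thus $c_{\ell_{r,s}^n}(\alpha)\prec_{C^m}(\frac{m-L}{m})^{m/r}\log(m-L)^{m\beta}\,|[\alpha]|^{1/r}$ uniformly in $\alpha\in\Lambda^L(m,n)$.

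With this pointwise bound I would estimate the remaining sum exactly as in the proof of Lemma~\ref{lem: bound for big L}: by H\"older's inequality and $|[\alpha]|=\frac{m!}{\alpha!}$,
\[
\sum_{\alpha\in\Lambda^L(m,n)}|[\alpha]|^{1/r}|z^\alpha|\le\Bigl(\sum_{\alpha\in\Lambda^L(m,n)}|[\alpha]|\,|z^\alpha|^r\Bigr)^{1/r}|\Lambda^L(m,n)|^{1/r'}\le\|z\|_{\ell_r}^{m}\,|\Lambda^L(m,n)|^{1/r'}\,,
\]
and for $z\in B_{\ell_{r,s}^n}$ one has $\|z\|_{\ell_r}\le1$ by \eqref{eq: norma id lorentz q < p} (as $s\le r$), while $|\Lambda^L(m,n)|^{1/r'}\prec_{C^m}(\frac nL)^{L/r'}$ by Lemma~\ref{cardinal J^L}. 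Collecting everything gives
\[
\widehat{\boldsymbol{\lambda}}\big(\mathcal P_{m,L}(\ell_{r,s}^n)\big)\prec_{C^m}\Bigl(\tfrac{m-L}{m}\Bigr)^{m/r}\log(m-L)^{m\beta}\,\Bigl(\tfrac nL\Bigr)^{L/r'}\,.
\]

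Finally I would convert $(\frac nL)^{L/r'}$ into the stated form, which is pure bookkeeping: writing $(\frac nL)^{L/r'}=(\frac nm)^{m/r'}(\frac mn)^{(m-L)/r'}(\frac mL)^{L/r'}$, it suffices to note $(\frac mL)^{L/r'}\prec_{C^m}1$. Indeed, with $x=L/m\in[\frac12,1]$ one has $L\log\frac mL=-mx\log x$, and $-x\log x$ is decreasing on $[\frac1e,1]$, hence $\le-\frac12\log\frac12=\frac{\log2}{2}$; therefore $(\frac mL)^{L}\le 2^{m/2}$ and $(\frac mL)^{L/r'}\le 2^{m/(2r')}$. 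Substituting back yields exactly
\[
\widehat{\boldsymbol{\lambda}}\big(\mathcal P_{m,L}(\ell_{r,s}^n)\big)\prec_{C^m}\Bigl(\tfrac nm\Bigr)^{m/r'}\log(m-L)^{m(\frac1s-\frac1r)}\Bigl(\tfrac mn\Bigr)^{\frac{m-L}{r'}}\Bigl(\tfrac{m-L}{m}\Bigr)^{\frac mr}\,,
\]
which is the assertion. The main obstacle is the lower bound $a\gtrsim1$ discarding the ``$+1$''; everything else is H\"older's inequality, Lemma~\ref{cardinal J^L}, the identity \eqref{eq: norma id lorentz q < p}, and elementary calculus.
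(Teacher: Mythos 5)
Your proposal is correct and follows essentially the same route as the paper's proof: the pointwise characteristic bound from Lemma~\ref{rem: bound for coord fun}, the observation that the hypothesis on $L$ makes the term $(\tfrac{m-L}{m})^{1/r}\log(m-L)^{\frac1s-\frac1r}$ bounded below so the ``$+1$'' can be discarded, then H\"older's inequality with $\|z\|_{\ell_r}\le\|z\|_{\ell_{r,s}}$ and the cardinality bound of Lemma~\ref{cardinal J^L}, followed by the same elementary bookkeeping with $(\tfrac nL)^{L/r'}$. You merely spell out the lower bound on $a$ and the exponential bookkeeping in more detail than the paper does.
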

\begin{proof}

Note first that our hypothesis on $L$ implies that $\Big(\frac{(m-L)}{m}\Big)^{1/r}\log(m-L)^{\frac1{s}-\frac1{r}}\succ_C 1.$
By Lemma \ref{rem: bound for coord fun}, for any $\alpha\in\Lambda^L(m,n)$
\begin{align*}\label{bound norm in l_rs few variables}
c_{\ell_{r,s}^n}(\alpha)
\prec_{C^m}
\Big(\frac{m-L}{m}\Big)^{m/r}\log(m-L)^{m(\frac1{s}-\frac1{r})}|[\alpha]|^{1/r}\,.
\end{align*}
Combining  H\"older's inequality with the binomial formula  and Lemma \ref{cardinal J^L}, we obtain
\begin{align*}
\widehat{\boldsymbol{\lambda}}\big(\mathcal P_{m,L}(\ell_{r,s}^n)\big) \, & \prec_{C^m} \Big(\frac{m-L}{m}\Big)^{m/r}\log(m-L)^{m(\frac1{s}-\frac1{r})} \sup_{z \in B_{\ell_{r,s}^n}}\sum_{\alpha \in \Lambda^{L}(m,n)} |[\alpha]|^{1/r}|z^\alpha|
\nonumber\\
& \prec_{C^m}
\left(\Big(\frac{m-L}{m}\Big)^{m/r}\log(m-L)^{m(\frac1{s}-\frac1{r})} \left| \Lambda^{L}(m,n) \right|^{1/r'} \right) \sup_{z \in B_{\ell_{r,s}^n}}\|z\|_{\ell_r}^{m}\nonumber\\
& \prec_{C^m}  \Big(\frac{m-L}{m}\Big)^{m/r}\log(m-L)^{m(\frac1{s}-\frac1{r})}  \left( \frac{n}{L} \right)^{L/r'}
 \sup_{z \in B_{\ell_{r,s}^n}} \|z\|^m_{\ell_{r}^n}
 \nonumber\\
& \prec_{C^m}  \Big(\frac{m-L}{m}\Big)^{m/r}\log(m-L)^{m(\frac1{s}-\frac1{r})}  \left( \frac{n}{m} \right)^{L/r'}  \nonumber\\
& =  \Big(\frac{n}{m}\Big)^{m/r'}
\log(m-L)^{m(\frac1{s}-\frac1{r})}
\Big(\frac{m}{n}\Big)^{\frac{m-L}{r'}}\left(\frac{m-L}{m}\right)^{\frac{m}{r}}\,. \qedhere
\end{align*}
\end{proof}

The next lemma consists of two parts. The first part will be instrumental in proving~\eqref{proj lorentz s<r - ell_r bound}
(which, as mentioned above, is necessary to complete the proof of Theorem \ref{bound_similar_ell_r}), while the second part
will be used to prove Theorem~\ref{proj lorentz s<r}.

\begin{lemma}\label{lem: second bound for middle L}
Let $1 \leq s \leq r \leq 2$ and $1<\frac{m}{2}<L<m\left(1-\frac{1}{(\log m)^{r(\frac1{s}-\frac1{r})}}\right)$. Then there
exists a constant $C=C(r,s)>0$ such that the following statements are true{\rm:}
\begin{itemize}
\item[(i)] For $ \log n \ge \log m + \frac{r'}{r}(\log m)^{r(\frac1{s}-\frac1{r})}$,
\begin{align*}
\widehat{\boldsymbol{\lambda}}\big(\mathcal P_{m,L}(\ell_{r,s}^n)\big)\,&\prec_{C^m} \Big(\frac{n}{m}\Big)^{\frac{m}{r'}\left(1-\frac1{(\log m)^{r(\frac1{s}-\frac1{r})}}\right)}\,.
\end{align*}
In particular, this holds for $n\ge m^{1+\delta},$ provided $\delta\ge \frac{r'}{r(\log m)^{2-\frac{r}{s}}}$.
\item[(ii)] For $ \log n \le \log m + \frac{r'}{r}(\log m)^{r(\frac1{s}-\frac1{r})}$ and  $ m\le n$, $0<\kappa<1$,
\begin{align*}
\widehat{\boldsymbol{\lambda}}\big(\mathcal P_{m,L}(\ell_{r,s}^n)\big)  \, &\prec_{C^m}\Big(\frac{n}{m}\Big)^{\frac{m}{r'}}\max\left\{\frac{(\log m)^{m(\frac1{s}-\frac1{r})}}{n^{\frac{m^\kappa}{2r'}}};\,1\right\}\,.
\end{align*}
\end{itemize}
\end{lemma}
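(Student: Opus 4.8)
The plan is to establish both parts of Lemma~\ref{lem: second bound for middle L} by combining Lemma~\ref{lem: first bound for middle L } with a careful analysis of the function
\[
g(L) := \log(m-L)^{m(\frac1{s}-\frac1{r})}\Big(\frac{m}{n}\Big)^{\frac{m-L}{r'}}\Big(\frac{m-L}{m}\Big)^{\frac{m}{r}}
\]
over the middle range $\frac m2 < L < m\bigl(1-\frac1{(\log m)^{r(1/s-1/r)}}\bigr)$, since Lemma~\ref{lem: first bound for middle L } already gives $\widehat{\boldsymbol{\lambda}}(\mathcal P_{m,L}(\ell_{r,s}^n)) \prec_{C^m} (n/m)^{m/r'} g(L)$. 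So the whole task reduces to bounding $g(L)$ uniformly in $L$ on this interval, under the two hypotheses on the size of $\log n$. First I would substitute $t := m - L$, so that $t$ ranges over $\bigl(\frac{m}{(\log m)^{r(1/s-1/r)}}, \frac m2\bigr)$, and write $g = (\log t)^{m(1/s-1/r)}(m/n)^{t/r'}(t/m)^{m/r}$; the point is to understand for which $t$ this is largest. The factor $(m/n)^{t/r'}$ is decreasing in $t$ (as $m\le n$), while $(t/m)^{m/r}$ is increasing in $t$; the competition between these, modulated by the slowly varying $(\log t)^{m(1/s-1/r)}$, is what drives the estimate.

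For part (i), under $\log n \ge \log m + \frac{r'}{r}(\log m)^{r(1/s-1/r)}$, the plan is to show the maximum of $g$ is attained at the left endpoint $t_0 = m/(\log m)^{r(1/s-1/r)}$, i.e. at $L_0 = m\bigl(1-\frac1{(\log m)^{r(1/s-1/r)}}\bigr)$. The key inequality is that at $t_0$ one has $(m/n)^{t_0/r'} = n^{-t_0/r'} m^{t_0/r'}$, and the hypothesis on $\log n$ is exactly designed so that $n^{-t_0/r'}$ absorbs the logarithmic factor $(\log t_0)^{m(1/s-1/r)} \le (\log m)^{m(1/s-1/r)}$: indeed $\frac{t_0}{r'}\log n \ge \frac{t_0}{r'}\bigl(\log m + \frac{r'}{r}(\log m)^{r(1/s-1/r)}\bigr)$, and $\frac{t_0}{r}(\log m)^{r(1/s-1/r)} = \frac{m}{r}$, so this kills the $(m/r)$-order contribution of $(t_0/m)^{m/r} = (\log m)^{-\frac mr \cdot r(1/s-1/r)}$... — here I would organize the bookkeeping so that the surviving power is $(n/m)^{\frac{m}{r'}(1 - \frac1{(\log m)^{r(1/s-1/r)}})}$ as claimed. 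To handle the interior of the interval rather than just the endpoint, I would either verify $g$ is monotone decreasing in $t$ on the relevant range by a logarithmic-derivative computation (the $(m/n)^{t/r'}$ decay dominates once $\log n$ is large enough) or, more robustly, bound $g(t)$ crudely by $g(t) \le (\log t)^{m(1/s-1/r)}(t/m)^{m/r} \le (\log m)^{m(1/s-1/r)} 2^{-m/r}$ on the upper half and reconcile with the endpoint bound on the lower half. The final statement ``in particular for $n\ge m^{1+\delta}$ provided $\delta \ge \frac{r'}{r(\log m)^{2-r/s}}$'' then follows by checking that $\delta \log m \ge \frac{r'}{r}(\log m)^{r(1/s-1/r)}$ rearranges to exactly that condition on $\delta$, since $r(1/s - 1/r) = r/s - 1$, so $(\log m)^{r(1/s-1/r)}/\log m = (\log m)^{r/s - 2} = (\log m)^{-(2-r/s)}$.

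For part (ii), when $\log n \le \log m + \frac{r'}{r}(\log m)^{r(1/s-1/r)}$ and $m \le n$, the hypothesis on $\log n$ is too weak for $n^{-t/r'}$ to absorb the logarithm at the left endpoint, so instead I would split on the size of $t = m-L$ relative to $m^\kappa$. For $t \ge m^\kappa$: here $(t/m)^{m/r} \le (m^{\kappa-1})^{m/r}$ is extremely small, more than enough to dominate $(\log m)^{m(1/s-1/r)}$, giving $g(L) \le 1$ and hence the bound $(n/m)^{m/r'}$ — but one must be careful this clashes with the decreasing factor $(m/n)^{t/r'}$, which only helps, so I'd drop it. Actually the cleaner route for $t\ge m^\kappa$ is to keep $(m/n)^{t/r'} \le n^{-m^\kappa/(2r')} m^{t/r'}$, pairing half of it with the logarithm: $(\log m)^{m(1/s-1/r)} n^{-m^\kappa/(2r')}$ is one of the two terms in the claimed max, and the leftover $n^{-m^\kappa/(2r')} m^{t/r'}(t/m)^{m/r}$ I would check is $\prec_{C^m} 1$ using $t \le m/2$ so $m^{t/r'} \le m^{m/(2r')}$ balanced against... — here I anticipate the computation is delicate and I'd lean on $t/m < 1/2$ to get exponential decay $(t/m)^{m/r} \le 2^{-m/r}$ which beats $m^{m/(2r')} = e^{(m/2r')\log m}$ is false, so actually the correct pairing must use the $n^{-m^\kappa/(2r')}$ factor: since $\log n \ge \log m$, $n^{-m^\kappa/(2r')} \le m^{-m^\kappa/(2r')}$ and this must absorb $m^{t/r'} \le m^{m/(2r')}$, which needs $m^\kappa \ge m$... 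This indicates I should instead split at $t \asymp \log\log n$ or use that on $t < m^\kappa$ the function $(\log t)^{m(1/s-1/r)} \le (\log m^\kappa)^{m(1/s-1/r)} \sim (\log m)^{m(1/s-1/r)}$ and $(t/m)^{m/r}$ is tiny. \textbf{The main obstacle} is precisely this: calibrating the split-point and the apportionment of the three competing factors $(\log t)^{m(1/s-1/r)}$, $(m/n)^{t/r'}$, $(t/m)^{m/r}$ so that in every subcase one genuinely lands inside $\max\{(\log m)^{m(1/s-1/r)}n^{-m^\kappa/(2r')},\,1\}$ up to a constant to the $m$-th power — the exponents are all of order $m$ and the margins are thin, so the inequalities have to be arranged with care rather than bludgeoned. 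Once the three lemmas (Lemmas~\ref{lem: bound for big L}, \ref{lem: second bound for middle L}, and the forthcoming Lemma~\ref{lemma L>m/2} for small $L$) are in place, $\widehat{\boldsymbol{\lambda}}(\mathcal P_{m,L}(\ell_{r,s}^n)) \prec_{C^m}$ the asserted bounds follows by summing the $O(m)$ terms in \eqref{dashunion}, absorbing the factor $m+1$ into $C^m$.
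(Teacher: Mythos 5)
Your reduction — feed Lemma~\ref{lem: first bound for middle L } into the problem, substitute $t=m-L$, and optimize the resulting one--variable expression over the admissible range of $t$ — is exactly the paper's route, and your treatment of part (i) is essentially correct: the hypothesis $\log n\ge\log m+\frac{r'}{r}(\log m)^{r(\frac1s-\frac1r)}$ is precisely the condition under which the critical point $t^\ast=\frac{mr'}{r\log(n/m)}$ of $h(t):=(m/n)^{t/r'}(t/m)^{m/r}$ lies at or to the left of the left endpoint $t_{\min}=m(\log m)^{-r(\frac1s-\frac1r)}$, so $h$ is decreasing on the whole interval; bounding $(\log t)^{m(\frac1s-\frac1r)}\le(\log m)^{m(\frac1s-\frac1r)}$ and evaluating $h(t_{\min})$ gives the exact cancellation you describe. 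One caveat: your ``more robust'' fallback of dropping the factor $(m/n)^{t/r'}$ on the upper half of the $t$-range does \emph{not} work, since for $n\gg m$ the target bound $(n/m)^{-t_{\min}/r'}$ is far smaller than $(\log m)^{m(\frac1s-\frac1r)}2^{-m/r}$; you must keep the monotonicity argument.

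The genuine gap is part (ii), which you correctly flag as unresolved. Your attempts all try to make $n^{-m^\kappa/(2r')}$ do real absorbing work against factors like $m^{t/r'}$ or $(\log m)^{m(\frac1s-\frac1r)}$, and as you noticed this cannot succeed. The missing observation is that under the hypothesis of (ii) this factor is simply \emph{free}: since $\log n\le\log m+\frac{r'}{r}(\log m)^{r(\frac1s-\frac1r)}\prec_C\log m$ (note $r(\frac1s-\frac1r)=\frac rs-1\le1$), one has $n^{\frac{m^\kappa}{2r'}}=e^{\frac{m^\kappa}{2r'}\log n}\le e^{Cm^\kappa\log m}\prec_{C^m}1$ because $\kappa<1$, hence $(\log m)^{m(\frac1s-\frac1r)}\prec_{C^m}(\log m)^{m(\frac1s-\frac1r)}n^{-\frac{m^\kappa}{2r'}}$. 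It therefore suffices to prove the much weaker bound $\widehat{\boldsymbol{\lambda}}\big(\mathcal P_{m,L}(\ell_{r,s}^n)\big)\prec_{C^m}(n/m)^{m/r'}(\log m)^{m(\frac1s-\frac1r)}$, which is immediate from your own $g$: on the whole interval $(\log t)^{m(\frac1s-\frac1r)}\le(\log m)^{m(\frac1s-\frac1r)}$, $(m/n)^{t/r'}\le1$, and $(t/m)^{m/r}\le2^{-m/r}$. (The paper reaches the same conclusion by locating $t^\ast$ and distinguishing whether it lies inside or to the right of the interval, but both subcases are then weakened to the stated bound using exactly the observation $n^{m^\kappa/(2r')}\prec_{C^m}1$.) With that one insight added, your argument for (ii) closes; without it, the proposal does not prove part (ii).
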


\begin{proof}
By Lemma~\ref{lem: first bound for middle L },
\begin{equation*}
\widehat{\boldsymbol{\lambda}}\big(\mathcal P_{m,L}(\ell_{r,s}^n)\big) \,\prec_{C^m}
\, \Big(\frac{n}{m}\Big)^{m/r'} \log(m-L)^{m(\frac1{s}-\frac1{r})}
\Big(\frac{m}{n}\Big)^{\frac{m-L}{r'}}\left(\frac{m-L}{m}\right)^{\frac{m}{r}}\,.
\end{equation*}
Denote now  $m-L=tm$, with $(\log m)^{-r(\frac1{s}-\frac1{r})}\le t\le \frac12.$ Then, taking the $m$-th root and rearranging,
\begin{equation}\label{L as a function of t}
\widehat{\boldsymbol{\lambda}}\big(\mathcal P_{m,L}(\ell_{r,s}^n)\big)^{1/m}\Big(\frac{m}{n}\Big)^{1/r'}(\log m)^{\frac1{r}-\frac1{s}}
\prec_C t^{\frac1{r}}\left(\frac{m}{n}\right)^{t/r'}\,.
\end{equation}
Let $f(t):=\frac1{r}\log(t)-\frac{t}{r'}\log\frac{n}{m}$ for all $t>0$. Then, $f$ has a global maximum at
$t_0=\left(\frac{r}{r'}\log\frac{n}{m}\right)^{-1}$.  Now, since we are looking for the maximum of $f$ for
$(\log m)^{-r(\frac1{s}-\frac1{r})}\le t\le \frac12$, we have three possibilities:
\begin{enumerate}
\item[$(a)$] \label{case1} $t_0>\frac12$, in which case we  consider $t^*:=\frac{1}{2}$. This is the case when $n<e^{\frac{2r'}{r}}m$.
\item[$(b)$] \label{case3} $\log(m)^{-r(\frac1{s}-\frac1{r})} \le t_0 \le \frac{1}{2}$, in which case we  consider $t^*:=t_0$. This
is the case when $m e^{\frac{2r'}{r}}\le n$  and $\log n\le \log m +\frac{r'}{r}(\log m)^{r(\frac1{s}-\frac1{r})}$.
\item[$(c)$] \label{case2} $t_0<(\log m)^{-r(\frac1{s}-\frac1{r})}$, in which case we  consider $t^*:=(\log m)^{-r(\frac1{s}-\frac1{r})}$.
This is the case when $\log n\le \log m +\frac{r'}{r}(\log m)^{r(\frac1{s}-\frac1{r})}$.
\end{enumerate}
Replacing $t$ by $t^*$ in \eqref{L as a function of t},  $(c)$ gives us the bound stated in $(i)$. In the other two cases, replacing
$t$ by $t^*$ in \eqref{L as a function of t}, gives us:

\begin{itemize}
\item[$(a)$] for $e^{-\frac{2r}{r'}}n\le m\le n$,
\begin{align*}
\widehat{\boldsymbol{\lambda}}\big(\mathcal P_{m,L}(\ell_{r,s}^n)\big)   &\prec_{C^m}
(\log m)^{m(\frac1{s}-\frac1{r})}\,.
\end{align*}
\item[$(b)$] for $m\le e^{-\frac{2r}{r'}}n$ and $ \log n \le \log m + \frac{r'}{r}(\log m)^{r(\frac1{s}-\frac1{r})}$,
\[
\widehat{\boldsymbol{\lambda}}\big(\mathcal P_{m,L}(\ell_{r,s}^n)\big)
\prec_{C^m} \Big(\frac{n}{m}\Big)^{\frac{m}{r'}} \frac{\left(\log m\right)^{m(\frac1{s}-\frac1{r})}}{\left(\log \frac{n}{m}\right)^{\frac{m}{r}}}\,.
\]
\end{itemize}
Let $0 < \kappa < 1$. Then, for both cases (a) and (b) above, we have $n \prec_C m^2$, which implies $n^{\frac{m^\kappa}{2r'}} \prec_{C^m} 1$.
Therefore, we deduce the bound stated in (ii) for both cases.
\end{proof}

It remains to prove the case $L < \frac{m}{2}$. The key point in this scenario is that, although we do not have a strong bound for
$c_\alpha(\ell_{r,s}^n)$, the cardinality of $\Lambda^L(m,n)$ is sufficiently small to compensate.

\begin{lemma}\label{lemma L>m/2}
Let $1 \leq s \leq r \leq 2$ and  $L\le \frac{m}{2}$. Then there exists a constant $C=C(r,s)>0$ such that for  each $n \geq m \log(m)^{2r'(\frac{1}{s}-\frac{1}{r})}$, we have
\begin{equation*} \label{good bound for L>m/2}
\widehat{\boldsymbol{\lambda}}\big(\mathcal P_{m,L}(\ell_{r,s}^n)\big)
 \,\prec_{C^m} \left(\frac{n}{m} \right)^{\frac{m}{r'}}\,.
\end{equation*}
\end{lemma}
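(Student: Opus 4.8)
The plan is to bootstrap from the estimate already recorded in Remark~\ref{rem: bound coef functional ell_r,s}, which gives, for every $1\le L\le m$,
\[
\widehat{\boldsymbol{\lambda}}\big(\mathcal P_{m,L}(\ell_{r,s}^n)\big)\;\prec_{C^m}\;\Big(\frac{n}{L}\Big)^{\frac{L}{r'}}(\log m)^{m\left(\frac1s-\frac1r\right)}\,,
\]
so that it suffices to show, in the regime $L\le m/2$ and $n\ge m\log(m)^{2r'(\frac1s-\frac1r)}$, that this right-hand side is itself $\prec_{C^m}(n/m)^{m/r'}$. (The case $L=0$ is vacuous since then $\Lambda^L(m,n)=\varnothing$; and since we may assume $\log m\ge 1$, the hypothesis forces $n\ge m$, the finitely many remaining values of $m$ being trivial as both sides are bounded above and below by a constant to the power $m$.)

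The first step is a purely algebraic rephrasing. By the above it is enough to prove $\big(\tfrac nL\big)^{L/r'}(\log m)^{m(\frac1s-\frac1r)}\le C^m(n/m)^{m/r'}$, i.e.\ $(\log m)^{m(\frac1s-\frac1r)}\le C^m\,n^{(m-L)/r'}L^{L/r'}m^{-m/r'}$. Raising to the $r'$-th power and writing $L=\lambda m$ with $\lambda\in(0,1/2]$, so that $n^{m-L}L^Lm^{-m}=(n/m)^{(1-\lambda)m}\lambda^{\lambda m}$, the inequality to establish becomes
\[
(\log m)^{mr'\left(\frac1s-\frac1r\right)}\;\le\;C^{\,mr'}\,\Big(\frac{n}{m}\Big)^{(1-\lambda)m}\,\lambda^{\lambda m}\,.
\]

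The second step bounds the right-hand side from below via two elementary facts: $\lambda^{\lambda}\ge e^{-1/e}$ for all $\lambda\in(0,1]$ (minimize $\lambda\log\lambda$), and $1-\lambda\ge 1/2$ precisely because $L\le m/2$, which together with $n\ge m$ gives $(n/m)^{(1-\lambda)m}\lambda^{\lambda m}\ge e^{-m/e}(n/m)^{m/2}$. The third step feeds in the hypothesis on $n$: from $n\ge m\log(m)^{2r'(\frac1s-\frac1r)}$ one gets $(n/m)^{1/2}\ge(\log m)^{r'(\frac1s-\frac1r)}$, hence $(n/m)^{m/2}\ge(\log m)^{mr'(\frac1s-\frac1r)}$. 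Combining,
\[
\Big(\frac{n}{m}\Big)^{(1-\lambda)m}\lambda^{\lambda m}\;\ge\;e^{-m/e}\,(\log m)^{mr'\left(\frac1s-\frac1r\right)}\,,
\]
which is the displayed inequality with $C=e^{1/(er')}$, completing the proof. When $s=r$ every logarithmic power is $1$ and the whole argument collapses to $(n/L)^{L/r'}\prec_{C^m}(n/m)^{m/r'}$ for $n\ge m$ and $L\le m/2$, which is the same computation.

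The main obstacle is not concentrated in a single step but lies in keeping the bookkeeping of exponents tight: the content of the lemma is exactly that for \emph{small} $L$ the naive bound $(\log m)^{m(\frac1s-\frac1r)}(n/L)^{L/r'}$ need not be dominated by $(n/m)^{m/r'}$, and the estimate only goes through because (i) the smallness of $|\Lambda^L(m,n)|$ is already encoded as the exponent $L$ rather than $m$ in Remark~\ref{rem: bound coef functional ell_r,s}, and (ii) the complementary exponent $m-L\ge m/2$ is large enough for the extra factor $(n/m)^{m-L}$ to absorb the logarithmic loss $(\log m)^{mr'(\frac1s-\frac1r)}$ against the assumed lower bound on $n$. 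No further machinery---probabilistic estimates, tensor products, Lozanovski\v{\i} factorization---should be required beyond Remark~\ref{rem: bound coef functional ell_r,s} and these two elementary inequalities.
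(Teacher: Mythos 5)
Your proposal is correct and follows essentially the same route as the paper: both start from Remark~\ref{rem: bound coef functional ell_r,s}, factor out $(n/m)^{m/r'}$, use $m-L\ge m/2$ together with the hypothesis $n\ge m(\log m)^{2r'(\frac1s-\frac1r)}$ to absorb the factor $(\log m)^{m(\frac1s-\frac1r)}$, and control the leftover $(m/L)^{L/r'}=\lambda^{-\lambda m/r'}$ by $e^{m/(er')}$. The bookkeeping with $\lambda=L/m$ is just a cosmetic reparametrization of the paper's computation.
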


\begin{proof}
Since $s \le r$, it follows from the Remark \ref{rem: bound coef functional ell_r,s} that
\begin{align*}
\widehat{\boldsymbol{\lambda}}\big(\mathcal P_{m,L}(\ell_{r,s}^n)\big)
\prec_{C^m}  \log(L)^{m(\frac1{s}-\frac1{r})}  \left( \frac{n}{L} \right)^{L/r'} \label{bound depending on L for L>m/2} = \Big(\frac{n}{m}\Big)^{m/r'}(\log L)^{m(\frac1{s}-\frac1{r})}\Big(\frac{m^m}{n^{m-L}L^L}\Big)^{\frac{1}{r'}}.\nonumber
\end{align*}
For $n \geq m (\log m)^{2r'(\frac{1}{s}-\frac{1}{r})}$ and  $L\le \frac{m}{2}$ (and hence $2(m-L)\ge m$), so
\begin{align*}
\widehat{\boldsymbol{\lambda}}\big(\mathcal P_{m,L}(\ell_{r,s}^n)\big)
& \prec_{C^m} \Big(\frac{n}{m}\Big)^{m/r'}(\log L)^{m(\frac1{s}-\frac1{r})}\Big(\frac{m^m}{n^{m-L}L^L}\Big)^{\frac{1}{r'}}\\
&
\le \Big(\frac{n}{m}\Big)^{m/r'}\log(m)^{m(\frac1{s}-\frac1{r})}\Big(\frac{m^m}{m^{m-L}L^L}\Big)^{\frac{1}{r'}}(\log m)^{-2(m-L)(\frac1{s}-\frac1{r})}\\
&
\le \Big(\frac{n}{m}\Big)^{m/r'}\Big(\frac{m}{L}\Big)^{\frac{L}{r'}}\prec_{C^m} \Big(\frac{n}{m}\Big)^{m/r'}\,. \qedhere
\end{align*}
\end{proof}

We  are now in position to prove~\eqref{proj lorentz s<r - ell_r bound} (still needed to complete the proof of Theorem
\ref{bound_similar_ell_r}).

\begin{proof}[Proof of Equation~\eqref{proj lorentz s<r - ell_r bound}]
For $m = 1$ (and thus $L=1$) we have $ c_{\ell_{r,s}^n}(\alpha) = 1$, then
\begin{align*}
\widehat{\boldsymbol{\lambda}}\big(\mathcal P_{m,L}(\ell_{r,s}^n)\big)
& \le \sup_{z\in B_{\ell_{r,s}^n}}\,\,\sum_{j = 1}^n  |z_j | = \varphi_{\ell_{r,s}}(n) \prec_{C} n^{\frac1{r'}} \prec_{C^m} \Big(\frac{n}{m}\Big)^{\frac{m}{r'}}\,.
\end{align*}
In the  case $2 \leq m$ and $1 \leq L \leq m$, all desired estimates follow from  Lemma~\ref{lem: bound for big L},
Lemma~\ref{lem: second bound for middle L}$(i)$, and Lemma~\ref{lemma L>m/2}.
\end{proof}
As explained above, this completes the proof of Theorem~\ref{bound_similar_ell_r}. We conclude this subsection by presenting the

\begin{proof}[Proof of Theorem~$\ref{bound_similar_ell_rBBB}$], second estimate] Returning to \eqref{dashunion}, we observe that
it suffices to show that $\widehat{\boldsymbol{\lambda}}\big(\mathcal{P}_{m,L}(\ell_{r,s}^n)\big)$ satisfies the bound in
Theorem~\ref{bound_similar_ell_rBBB} (b) for all $L \le m$.

Lemma~\ref{lem: bound for big L} implies the bound for $L\ge m\big(1-1/(\log m)^{r(\frac1{s}-\frac1{r})}\big)$. For
$\frac{m}{2}\le L\le m\big(1-1/(\log m)^{r(\frac1{s}-\frac1{r})}\big)$ we use Lemma~\ref{lem: second bound for middle L}$(ii)$.
Finally, for  $L\le \frac{m}{2}$, note that since $L\le \frac{m}{2}\le m-\frac{m^\kappa}{2}\le n$, it holds
\[
\binom{n}{L} \le \binom{2n}{L} \le \binom{2n}{[m-\frac{m^\kappa}{2}]}\,.
\]
Thus, by Remark \ref{eq: bound c_alpha J^L}, for every $z \in \mathbb{C}^n$, we have
\begin{align*}
\sum_{\alpha \in \Lambda^{L}(m,n)} c_{\ell_{r,s}^n}(\alpha)  |z^\alpha|
& {\prec_{C^m}} (\log m)^{m(\frac1{s}-\frac1{r})}  \binom{2n}{[m-m^\kappa/2]}^{1/r'}  \|z\|_{\ell_r}^{m}\nonumber
\\
& {\prec_{C^m}} (\log m)^{m(\frac1{s}-\frac1{r})} \left( \Big(\frac{n}{m-m^\kappa/2}\Big)^{m-m^\kappa/2} \right)^{1/r'}  \|z\|_{\ell_r}^{m}\nonumber
\\
&=\|z\|_{\ell_r}^{m}\Big(\frac{n}{m}\Big)^{m/r'}(\log m)^{m(\frac1{s}-\frac1{r})}\Big(\frac{m-m^\kappa/2}{n}\Big)^{\frac{m^\kappa}{2r'}}
\Big(\frac{m}{m-m^\kappa/2}\Big)^{\frac{m}{r'}}\nonumber
\\
& \prec_{C^m} \|z\|_{\ell_{r,s}}^{m}\Big(\frac{n}{m}\Big)^{m/r'}\Big(\frac{(\log m)^{m(\frac1{s}-\frac1{r})}}{n^{\frac{m^\kappa}{2r'}}}\Big)\,.\nonumber \qedhere
\end{align*}
\end{proof}

To achieve our goal in this subsection, only one subcase remains.

\noindent {\bf The subcase $\pmb{1 < r \le 2}$ and $\pmb{r \leq s}$:} \label{secondcase}

 We follow a different method. Instead of partitioning the Banach
spaces $\mathcal P_J(\ell_{r,s}^n)$ into smaller pieces as previously, we use a~specific decomposition of multi-indices. This approach allows
us to control the expression $\widehat{\boldsymbol{\lambda}}\big(\mathcal{P}_J(\ell_{r,s}^n)\big)$ expressing it as a product of terms involving
monomials of lower degrees, for which we can establish suitable bounds.

\begin{theorem} \label{proj lorentz s>r}
Let $1 < r \leq 2$ and $r \leq s$. Then, there exists a constant $C=C(r, s)>0$ such that for any sequence $\big(J_m\big)$
of index sets, each with degree at most~$m$, and any $n$ the following estimate holds{\rm:}
\[
\widehat{\boldsymbol{\lambda}}\big(\mathcal P_{J_m}(\ell_{r,s}^n)\big) \le C^m \Big(\frac{n}{m} \Big)^{\frac{m}{r'}}\,,
\]
provided that $m$ and $n$ satisfy the condition
\[
\frac{n}{e(\log n)^{r'(\frac1{r}-\frac1{s})}}\ge~m\,.
\]
In particular, we have{\rm:}
\[
\boldsymbol{\chimon}\big(\mathcal P_{J_m}(\ell_{r,s}^n)\big) 
\prec_{C^m}   \Big(\frac{n}{m} \Big)^{\frac{m-1}{r'}} \quad \textrm{and}\quad\,
{\boldsymbol{\lambda}}\big(\mathcal P_{J_m}(\ell_{r,s}^n)\big)\prec_{C^m} \Big(\frac{n}{m} \Big)^{\frac{m}{r'}}\,.
\]
\end{theorem}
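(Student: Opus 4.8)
The plan is to reduce first to the homogeneous case $J_m=\Lambda(m)$. By the subadditivity estimate \eqref{red1} one has $\widehat{\boldsymbol{\lambda}}\big(\mathcal P_{J_m}(\ell_{r,s}^n)\big)\le 1+\sum_{k=1}^m\widehat{\boldsymbol{\lambda}}\big(\mathcal P_{\Lambda(k)}(\ell_{r,s}^n)\big)$; the hypothesis $n\ge e(\log n)^{r'(1/r-1/s)}m$ holds verbatim with $m$ replaced by any $k\le m$, it forces $m\le n/e$, and $k\mapsto(n/k)^{k/r'}$ is unimodal on $[1,n/e]$ by Remark~\ref{rem: bound for n/klog(n)}; hence a homogeneous bound $\widehat{\boldsymbol{\lambda}}\big(\mathcal P_{\Lambda(k)}(\ell_{r,s}^n)\big)\le C^k(n/k)^{k/r'}$ propagates to $\widehat{\boldsymbol{\lambda}}\big(\mathcal P_{J_m}(\ell_{r,s}^n)\big)\prec_{C^m}(n/m)^{m/r'}$. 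Next, since $s\ge r$, the identity $\ell_r^n=\ell^n_{r,r}\hookrightarrow\ell_{r,s}^n$ has norm $\prec_C 1$ by \eqref{eq: norm identity lorentz}, so $B_{\ell_r^n}\subset CB_{\ell_{r,s}^n}$ and, by \eqref{dineen}, $c_{\ell_{r,s}^n}(\alpha)\le C^m c_{\ell_r^n}(\alpha)=C^m(m^m/\alpha^\alpha)^{1/r}$ for every $\alpha\in\Lambda(m,n)$; conversely $B_{\ell_{r,s}^n}\subset C(\log n)^{1/r-1/s}B_{\ell_r^n}$, whence $c_{\ell_r^n}(\alpha)\le C^m(\log n)^{m(1/r-1/s)}c_{\ell_{r,s}^n}(\alpha)$. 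The essential difficulty is that the supremum defining $\widehat{\boldsymbol{\lambda}}\big(\mathcal P_m(\ell_{r,s}^n)\big)$ runs over the larger ball $B_{\ell_{r,s}^n}$, on which $\|z\|_{\ell_r}$ may be as large as $(\log n)^{1/r-1/s}$; consequently the crude H\"older estimate gives only $\widehat{\boldsymbol{\lambda}}\big(\mathcal P_m(\ell_{r,s}^n)\big)\le C^m(1+n/m)^{m/r'}(\log n)^{m(1/r-1/s)}$, which is too lossy, so a more structured argument is required.

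The heart of the proof is a decomposition of each multi-index into tetrahedral pieces of lower degree, turning $\widehat{\boldsymbol{\lambda}}$ into a (suitably compensated) product of tetrahedral polynomial projection constants. Concretely, I would split every $\alpha\in\Lambda(m,n)$ along its level sets, $\alpha=\gamma_1+\dots+\gamma_t$ with $\gamma_k=\mathbf 1_{\{i:\alpha_i\ge k\}}$ a nested chain of tetrahedral multi-indices ($\gamma_1\ge\gamma_2\ge\dots$, $t=\|\alpha\|_\infty$, $\sum_k|\gamma_k|=m$), so that $z^\alpha=\prod_k z^{\gamma_k}$. Because the fundamental functions of $\ell_{r,s}^n$ and of its dual satisfy $\varphi_{\ell_{r,s}^n}(L)\sim L^{1/r}$ and $\varphi_{(\ell_{r,s}^n)'}(L)\sim L^{1/r'}$ for every $s$, each tetrahedral piece carries no logarithmic penalty: $c_{\ell_{r,s}^n}(\gamma_k)\sim_{C^{|\gamma_k|}}|\gamma_k|^{|\gamma_k|/r}$, and by Corollary~\ref{coro: bound proj constant tetra} one gets $\widehat{\boldsymbol{\lambda}}\big(\mathcal P_{\Lambda_T(L)}(\ell_{r,s}^n)\big)\le C^L(n/L)^{L/r'}$ for $L\le n$. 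Combining this factorization with the characteristic bound from the first step, grouping the sum over $\alpha$ by the partition $(L_1\ge\dots\ge L_t)$ of $m$ formed by the degrees $L_k=|\gamma_k|$ (only $p(m)\le e^{C\sqrt m}$ partitions, hence harmless), and estimating the resulting sums over nested families of supports by an iterated elementary symmetric function / H\"older argument on $B_{\ell_{r,s}^n}$, one is left to verify that all accumulated factors combine to at most $C^m(n/m)^{m/r'}(\log n)^{m(1/r-1/s)}$, the residual logarithm coming from the powers of $\|z\|_{\ell_r}$ over $B_{\ell_{r,s}^n}$. Feeding the hypothesis $n\ge e(\log n)^{r'(1/r-1/s)}m$ in at this final stage absorbs $(\log n)^{m(1/r-1/s)}$ into $(n/m)^{m/r'}$ and yields the desired bound.

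The step I expect to be the main obstacle is exactly this last reassembly. Forgetting the nesting $\gamma_1\ge\gamma_2\ge\dots$ over-counts catastrophically---for instance the single family $\alpha=me_i$ would be inflated to a bound of order $n^{m/r'}$---so the chain structure of the supports must be retained and the sums $\sum_{S_1\supseteq\dots\supseteq S_t}\prod_k\big(\prod_{i\in S_k}|z_i|\big)$ handled iteratively, balancing the gain $\prod_k L_k^{L_k/r}$ in the characteristics against the loss in these symmetric sums; the precise shape of the hypothesis relating $m$ and $n$ is precisely what this bookkeeping forces. Once $\widehat{\boldsymbol{\lambda}}\big(\mathcal P_{J_m}(\ell_{r,s}^n)\big)\le C^m(n/m)^{m/r'}$ is established, the two displayed consequences follow at once: $\boldsymbol{\lambda}\big(\mathcal P_{J_m}(\ell_{r,s}^n)\big)\le\widehat{\boldsymbol{\lambda}}\big(\mathcal P_{J_m}(\ell_{r,s}^n)\big)$ by Theorem~\ref{lambda-dash}, while, since the hypothesis descends to every $k\le m$, Corollary~\ref{main3A} gives $\boldsymbol{\chimon}\big(\mathcal P_{J_m}(\ell_{r,s}^n)\big)\le\boldsymbol{\chimon}\big(\mathcal P_{\le m}(\ell_{r,s}^n)\big)\prec_{C^m}\max_{1\le k\le m-1}\boldsymbol{\lambda}\big(\mathcal P_k(\ell_{r,s}^n)\big)\prec_{C^m}(n/m)^{(m-1)/r'}$, using Remark~\ref{rem: bound for n/klog(n)} once more for the last step.
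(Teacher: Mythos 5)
Your outer skeleton matches the paper's: reduce to $J_m=\Lambda(m)$ via \eqref{red1}--\eqref{red2} and Remark~\ref{rem: bound for n/klog(n)}, use $c_{\ell_{r,s}^n}(\alpha)\le c_{\ell_r^n}(\alpha)\prec_{C^m}|[\alpha]|^{1/r}$ so that everything hinges on bounding $\sum_{\alpha\in\Lambda(m,n)}|z^\alpha|\,|[\alpha]|^{1/r}$ over $B_{\ell_{r,s}^n}$, and recover the two displayed consequences from Theorem~\ref{lambda-dash} and Corollary~\ref{main3A}. Those steps are fine. The gap is the central estimate, which you explicitly leave unverified. Your level-set decomposition $\alpha=\sum_k\gamma_k$ into a nested chain of tetrahedral pieces requires (a) a factorization of the form $|[\alpha]|\prec_{C^m}\prod_k|[\gamma_k]|$, which you never establish, and (b) control of the sums over nested families of supports $S_1\supseteq\dots\supseteq S_t$, which you yourself identify as ``the main obstacle'' and then do not resolve. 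In particular the hypothesis $m\le n/\big(e(\log n)^{r'(\frac1r-\frac1s)}\big)$ --- the whole content of the theorem --- never enters any concrete computation in your argument; asserting that ``feeding it in at the final stage absorbs the logarithm'' is not a proof. As written, the key bound $\widehat{\boldsymbol{\lambda}}\big(\mathcal P_m(\ell_{r,s}^n)\big)\prec_{C^m}(n/m)^{m/r'}$ is not established.

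For contrast, the paper gets by with a much coarser two-piece split $\alpha=\alpha_T+\alpha_E$ (indicator of the odd entries plus the remaining even part), together with $|[\alpha]|\le 2^m|[\alpha_T]|\,|[\alpha_E]|$ from \eqref{eq: cardinality decomposition even-tetra}. The tetrahedral sum is computed exactly and yields $\|z\|_{\ell_{r,s}}^k(n^k/k!)^{1/r'}$ with no logarithm (estimate \eqref{estimate1}), while the even sum is reduced via the substitution $\alpha=2\beta$ and $\|\cdot\|_{\ell_{2/r}}\le\|\cdot\|_{\ell_1}$ to $\|z\|_{\ell_r}^{m-k}\prec_{C^m}\|z\|_{\ell_{r,s}}^{m-k}(\log n)^{(m-k)(\frac1r-\frac1s)}$, so the logarithm appears only with exponent $m-k$ (estimate \eqref{estimate2}). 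Maximizing $n^{k/r'}k^{-k/r'}(\log n)^{(m-k)(\frac1r-\frac1s)}$ over $0\le k\le m$ with Remark~\ref{rem: bound for n/klog(n)} applied to $a=n/(\log n)^{r'(\frac1r-\frac1s)}$ shows the maximum sits at $k=m$ precisely when $m\le a/e$; this is where the hypothesis is used and where $(n/m)^{m/r'}$ emerges. To salvage your chain decomposition you would have to reproduce exactly this trade-off between the top tetrahedral level and the deeper levels --- which is the bookkeeping you postponed --- so the simpler odd/even split is not just a stylistic alternative but the step that makes the argument close.
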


We start by noting that, regarding the first estimate on the polynomial projection constant, we may assume without loss of generality
that $J_m = \Lambda(m)$. This follows as in the first subcase of  Section~\ref{firstcase}; indeed, the hypothesis on $m$ and $n$ implies that
$\frac{n}{e(\log n)^{r'(\frac1{r}-\frac1{s})}}\ge~k$ for all $1 \leq k \leq m$. Hence the argument again follows
like in \eqref{red1} and \eqref{red2} using Remark~\ref{rem: bound for n/klog(n)}.

Since for all $s\geq r$ and $\alpha \in J$, we have $c_{\ell_{r,s}^n}(\alpha) \leq c_{\ell_{r}^n}(\alpha)
\prec_{C^m} |[\alpha]|^{1/r}$. Thus to prove Theorem~\ref{proj lorentz s>r} we need to show proper bounds for the sum
\begin{equation*} %\label{pontes}
\sum_{\alpha \in \Lambda (m,n)} \vert z \vert^{\alpha} \vert [\alpha] \vert^{1/r}\,.
\end{equation*}
To do this, we will analyse smaller components of this sum: the \emph{tetrahedral} and the \emph{even} part, and use the bounds obtained for each of these parts to conclude something about sums which involve general monomials. As mentioned, this technique was developed by Galicer, Mansilla, Muro, and Sevilla-Peris \cite{galicer2021monomial} (see also \cite{mansilla2019thesis}) for studying 'sets of monomial convergence'. Additionally, this decomposition method was employed in \cite{defant2024asymptotic} to investigate the projection constant of Boolean cube function spaces.

Recall that for each $m,n$ the set of even multi indices is given by
\[
\Lambda_E(m,n) = \big\lbrace \alpha \in \Lambda(m,n) :  \alpha_i \text{ is even for $1 \le i \le n$ }  \big\rbrace.
\]
Clearly, $m$ is even whenever $\alpha \in \Lambda_E(m,n)$. Then for every $\alpha \in \Lambda_E(m,n)$ there is a~unique
$\beta \in \Lambda(m/2,n)$ such that $\alpha = 2 \beta$. Given $\alpha \in \Lambda (M,N)$, the tetrahedral part and the even part are
defined as,
\[
\big(\alpha_{T} \big)_{i} = \begin{cases}
1 & \text{ if } \alpha_{i} \text{ is odd} \\
0 & \text{ if } \alpha_{i} \text{ is even}
\end{cases}
\quad\,\,\, \text{ and } \quad\,\,\, \alpha_E=\alpha-\alpha_T\,.
\]
If $0 \leq k \leq M$ denotes the number of odd entries in  $\alpha$, then $\alpha_{T} \in \Lambda_T(k,N)$  and $\alpha_E \in \Lambda_E(M-k,N)$.
Since $(\alpha_E)_i \le \alpha_i$ for every $i$, it follows that $\alpha_E! \le \alpha!$. On the other hand, because $\alpha_T! = 1$,
we have $\alpha_T! \alpha_E! \le \alpha!$. Therefore,
\begin{align}\label{eq: cardinality decomposition even-tetra}
|[\alpha]| = \frac{M!}{\alpha!} \le \frac{M!}{\alpha_T! \alpha_E!} = \frac{M!}{(M-k)! k!} \frac{k!}{\alpha_T!} \frac{(M-k)!}{\alpha_E!}
=  \binom{M}{k} |[\alpha_T]||[\alpha_E]| \le 2^M |[\alpha_T]||[\alpha_E]|\,.
\end{align}

\smallskip

We are now ready to prove a key lemma which play a key role in the proof of Theorem \ref{proj lorentz s>r}.

\begin{lemma} \label{3 cases}
Let $1<r\leq 2$ and $r\leq s$. Then, there exists a constant $C=C(r,s)>0$ such that for all $m,n$ and all $z \in \mathbb{C}^n$,  the following estimates hold{\rm:}
\begin{equation} \label{estimate1}
\sum_{\alpha \in \Lambda_T(m,n)} |z^\alpha| |[\alpha]|^{\frac{1}{r}}  \le C^m  \| z \|_{\ell_{r,s}}^m \left(\frac{n^m}{m!}\right)^{\frac{1}{r'}}\,,
\end{equation}
\begin{equation} \label{estimate2}
\sum_{\alpha \in \Lambda_E(m,n)} |z^\alpha| |[\alpha]|^{\frac{1}{r}}
\le 2^{\frac{m}{2r}}\| z \|_{\ell_r}^m  \le C^m  \| z \|_{\ell_{r,s}}^m (\log n)^{(\frac{1}{r}-\frac{1}{s})m}\,,
\end{equation}
\begin{equation} \label{estimate3}
\sum_{\alpha \in \Lambda_E(m,n)} |z^\alpha| |[\alpha]|^{\frac{1}{r}}
\le C^m 
\begin{cases}
\left(\frac{n}{m}\right)^{m/r'} & \text{ for } m \le \frac{n}{e(\log n)^{r'(\frac1{r}-\frac1{s})}} \\
(\log n)^{m(\frac1{r}-\frac1{s})}  & \text{else.}
\end{cases}
\end{equation}
\end{lemma}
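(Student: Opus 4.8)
The plan is to establish the three displayed bounds in turn, obtaining \eqref{estimate3} as a quick corollary of \eqref{estimate2}. In \eqref{estimate2} and \eqref{estimate3} we may assume $m$ is even, since $\Lambda_E(m,n)=\varnothing$ otherwise, and \eqref{estimate3} is to be read with $z\in B_{\ell_{r,s}^n}$ (equivalently, with a factor $\|z\|_{\ell_{r,s}^n}^m$ inserted on the right), its left side being $m$-homogeneous. For \eqref{estimate1}, observe that every $\alpha\in\Lambda_T(m,n)$ has $\alpha!=1$, hence $|[\alpha]|=m!$, so that $\sum_{\alpha\in\Lambda_T(m,n)}|z^\alpha|\,|[\alpha]|^{1/r}=(m!)^{1/r}\,e_m(|z_1|,\dots,|z_n|)$ with $e_m$ the $m$-th elementary symmetric polynomial. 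Expanding $\big(\sum_i|z_i|\big)^m=\sum_{\gamma\in\Lambda(m,n)}|[\gamma]|\,|z^\gamma|$ and keeping only the tetrahedral terms (each with coefficient $m!$) gives $e_m(|z_1|,\dots,|z_n|)\le\tfrac1{m!}\|z\|_{\ell_1^n}^m$. Since $\|z\|_{\ell_1^n}\le\|\id\colon\ell_{r,s}^n\to\ell_1^n\|\,\|z\|_{\ell_{r,s}^n}=\varphi_{(\ell_{r,s}^n)'}(n)\,\|z\|_{\ell_{r,s}^n}\prec_C n^{1/r'}\|z\|_{\ell_{r,s}^n}$ (using $\varphi_{\ell_{r',s'}}(n)\sim n^{1/r'}$), we conclude $\sum_{\alpha\in\Lambda_T(m,n)}|z^\alpha|\,|[\alpha]|^{1/r}\le(m!)^{1/r}\,\tfrac{C^m n^{m/r'}}{m!}\,\|z\|_{\ell_{r,s}^n}^m=C^m\big(\tfrac{n^m}{m!}\big)^{1/r'}\|z\|_{\ell_{r,s}^n}^m$.

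For \eqref{estimate2}, write $\alpha=2\beta$ with $\beta\in\Lambda(m/2,n)$. The elementary bound $\binom{2k}{k}\ge2^k$ gives $(2\beta)!=\prod_i(2\beta_i)!\ge2^{m/2}(\beta!)^2$, hence $|[\alpha]|=\tfrac{m!}{(2\beta)!}\le\tfrac{\binom{m}{m/2}}{2^{m/2}}|[\beta]|^2\le2^{m/2}|[\beta]|^2$, and so $|[\alpha]|^{1/r}\le2^{m/(2r)}|[\beta]|^{2/r}$. Putting $v_i=|z_i|^r$ we have $|z^{2\beta}|=(v^\beta)^{2/r}$, whence $|z^\alpha|\,|[\alpha]|^{1/r}\le2^{m/(2r)}\big(v^\beta\,|[\beta]|\big)^{2/r}=2^{m/(2r)}\big(\tbinom{m/2}{\beta}v^\beta\big)^{2/r}$. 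Summing over $\beta$, the exponent $2/r\ge1$ lets us apply $\sum_\beta a_\beta^{2/r}\le\big(\sum_\beta a_\beta\big)^{2/r}$ to the nonnegative numbers $a_\beta=\binom{m/2}{\beta}v^\beta$, and the multinomial identity $\sum_\beta\binom{m/2}{\beta}v^\beta=\big(\sum_i v_i\big)^{m/2}$ then yields $\sum_{\alpha\in\Lambda_E(m,n)}|z^\alpha|\,|[\alpha]|^{1/r}\le2^{m/(2r)}\big(\sum_i|z_i|^r\big)^{m/r}=2^{m/(2r)}\|z\|_{\ell_r^n}^m$, which is the first inequality of \eqref{estimate2}. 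The second one follows from $\ell_r^n=\ell_{r,r}^n$ and \eqref{eq: norm identity lorentz}, which for $s\ge r$ gives $\|\id\colon\ell_{r,s}^n\to\ell_r^n\|\prec_C(1+\log n)^{1/r-1/s}$, hence $2^{m/(2r)}\|z\|_{\ell_r^n}^m\prec_{C^m}(\log n)^{m(1/r-1/s)}\|z\|_{\ell_{r,s}^n}^m$ (for $n\ge3$; the cases $n\le2$ are trivial).

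Finally, for \eqref{estimate3} with $z\in B_{\ell_{r,s}^n}$, the second inequality of \eqref{estimate2} already gives $\sum_{\alpha\in\Lambda_E(m,n)}|z^\alpha|\,|[\alpha]|^{1/r}\le C^m(\log n)^{m(1/r-1/s)}$, which is the ``else'' branch. For the remaining branch, the hypothesis $m\le n/(e(\log n)^{r'(1/r-1/s)})$ rearranges to $(\log n)^{1/r-1/s}\le(n/(em))^{1/r'}$; raising this to the $m$-th power gives $(\log n)^{m(1/r-1/s)}\le e^{-m/r'}(n/m)^{m/r'}\le(n/m)^{m/r'}$, so the first branch follows from the ``else'' bound. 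The single genuinely delicate step is obtaining the clean constant $2^{m/(2r)}$ in the first inequality of \eqref{estimate2}: a priori the power $|[\beta]|^{2/r}$ with $2/r\ge1$ threatens a factorial blow-up, but after the substitution $v_i=|z_i|^r$ the whole summand becomes a $(2/r)$-th power of a single multinomial term, so that $\ell^{2/r}\hookrightarrow\ell^1$ collapses the sum into one multinomial expansion; everything else — the combinatorial estimate $|[\alpha]|\le2^{m/2}|[\beta]|^2$, the bound on $e_m$, and the arithmetic of the hypothesis in \eqref{estimate3} — is routine.
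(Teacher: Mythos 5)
Your arguments for \eqref{estimate1} and \eqref{estimate2} are correct and essentially identical to the paper's: for the tetrahedral sum you factor out $|[\alpha]|=m!$ and compare with the multinomial expansion of $\|z\|_{\ell_1^n}^m$; for the even sum you write $\alpha=2\beta$, use $2^k\le\binom{2k}{k}$ to get $|[\alpha]|\le 2^{m/2}|[\beta]|^2$, and collapse the sum via $\sum a_\beta^{2/r}\le(\sum a_\beta)^{2/r}$ applied to the multinomial terms in $(\sum_i|z_i|^r)^{m/2}$. You also correctly supply the factor $\|z\|_{\ell_{r,s}}^m$ omitted from the right-hand side of \eqref{estimate3}.

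The problem is \eqref{estimate3}. The index set $\Lambda_E(m,n)$ there is a typo for $\Lambda(m,n)$: the paper's own proof of the third estimate bounds $\sum_{\alpha\in\Lambda(m,n)}|z^\alpha|\,|[\alpha]|^{1/r}$, and this is the version actually needed later — in the proof of Theorem~\ref{proj lorentz s>r} the lemma is applied to $\sum_{\ii\in\mathcal{J}(m-1,j_m)}|z_\ii|\,|[\ii]|^{1/r}$, a sum over the \emph{full} index set of degree $m-1$. Your reduction of \eqref{estimate3} to \eqref{estimate2} proves only the literal $\Lambda_E$ version, which is an immediate (and pointless) corollary of \eqref{estimate2}; under that reading \eqref{estimate1} would play no role in the lemma at all, which should have been a warning sign. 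The genuine content of \eqref{estimate3} is the combination of the two preliminary estimates: each $\alpha\in\Lambda(m,n)$ is split as $\alpha=\alpha_T+\alpha_E$ with $\alpha_T$ tetrahedral of some degree $k$ and $\alpha_E$ even of degree $m-k$; inequality \eqref{eq: cardinality decomposition even-tetra} gives $|[\alpha]|^{1/r}\le 2^{m/r}|[\alpha_T]|^{1/r}|[\alpha_E]|^{1/r}$, so the full sum is dominated by $2^{m/r}\sum_{k=0}^m\bigl(\sum_{\alpha_T\in\Lambda_T(k,n)}|z^{\alpha_T}||[\alpha_T]|^{1/r}\bigr)\bigl(\sum_{\alpha_E\in\Lambda_E(m-k,n)}|z^{\alpha_E}||[\alpha_E]|^{1/r}\bigr)$. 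Inserting \eqref{estimate1} at degree $k$ and \eqref{estimate2} at degree $m-k$ leaves $\max_{1\le k\le m}\bigl(n/(k(\log n)^{r'(\frac1r-\frac1s)})\bigr)^{k/r'}$, which one optimizes in $k$ via Remark~\ref{rem: bound for n/klog(n)} with $a=n/(\log n)^{r'(\frac1r-\frac1s)}$: the maximum sits at $k=m$ when $m\le a/e$ (giving the first branch) and is $\le e^{a/e}\le e^m$ otherwise (giving the second). None of this appears in your proposal, so the estimate that the lemma is actually for is not established.
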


\begin{proof}
We start with the first estimate, which is straightforward when $n=1$. Thus, we can assume $n \geq 2$. Given $\alpha \in \Lambda_T(m,n)$,
note that $\alpha !=1$ and $|[\alpha]|= m!$. Then
\begin{align*}
\sum_{\alpha \in \Lambda_T(m,n)} |z^\alpha| |[\alpha]|^{\frac{1}{r}}
= \sum_{\alpha \in \Lambda_T(m,n)} | z^\alpha| |[\alpha]| \frac{1}{|[\alpha]|^{\frac{1}{r'}}}
= \Big( \sum_{k=1}^n |z_k| \Big)^{m} \frac{1}{m!^{\frac{1}{r'}}}
\prec_{C^m} n^{m/r'} \| z \|_{\ell_{r,s}}^m \frac{1}{m!^{\frac{1}{r'}}}\,,
\end{align*}
where in the last estimate \eqref{fundfunc} is used. For the proof of the second inequality let us recall first that
for each $\alpha \in \Lambda_E(m,n)$ there is a unique $\beta \in {\Lambda}(m/2,n)$
such that $\alpha = 2 \beta$. Applying an obvious inequality $2^k \le \frac{(2k)!}{k!^2} \le 2^{2k}$ true for each positive integer
$k$, we obtain
\[
|[\alpha]| = \frac{m!}{\alpha_1 ! \cdots \alpha_n!} = \Big(\frac{(m/2)!}{\beta_1 ! \cdots \beta_n!}\Big)^2 \frac{m!}{(m/2)!(m/2)!} \prod_{i=1}^n \frac{\beta_i! \beta_i!}{(2\beta_i)!} \le 2^{m/2}|[\beta]|^2\,.
\]
This implies that
\[
\frac{m!}{(m/2)!(m/2)!} \prod_{i=1}^n \frac{\beta_i! \beta_i!}{(2\beta_i)!} \le 2^m \prod_{i=1}^n \frac{1}{2^{\beta_i}} = 2^{m/2}\,.
\]
Since $2/r \ge 1$, the $\ell_1$-norm bounds the $\ell_{2/r}$-norm), so we get
\begin{align*}
\sum_{\alpha \in \Lambda_E(m,n)} |z^\alpha| |[\alpha]|^{\frac{1}{r}}
&
\le \dis 2^{\frac{m}{2r}}\sum_{\beta \in {\Lambda}(m/2,n)} |(z^2)^\beta| |[\beta]|^{2/r}
= 2^{\frac{m}{2r}} \dis\sum_{\beta \in {\Lambda}(m/2,n)} \Big( |(z^r)^\beta| |[\beta]| \Big)^{2/r}
\\&
\le 2^{\frac{m}{2r}}\Big( \dis\sum_{\beta \in {\Lambda}(m/2,n)} |(z^r)^\beta| |[\beta]| \Big)^{2/r}
=  2^{\frac{m}{2r}}\Big( \sum_{l=1}^n |z_l|^r \Big)^{m/r}
\\&
\prec_{C^m} 2^{\frac{m}{2r}}\|z\|_{\ell_{r,s}}^m (\log n)^{(\frac{1}{r}-\frac{1}{s})m}\,,
\end{align*}
where we for the last inequality used \eqref{eq: norm identity lorentz}. This concludes the proof of the first two inequalities.

To prove the third estimate we apply \eqref{eq: cardinality decomposition even-tetra} and estimates \eqref{estimate1} and
\eqref{estimate2} to get
\begin{align*}
\sum_{\alpha \in \Lambda(m,n)} |z^\alpha| |[\alpha]|^{\frac{1}{r}}
& =  \sum_{k = 0}^m \sum_{\alpha_{T} \in \Lambda_T(k,n)} \sum_{\alpha_{E} \in \Lambda_E(m-k,n)}|z^{(\alpha_{T} + \alpha_{E})}| |[\alpha_{T} + \alpha_{E}]|^\frac{1}{r}\\
& \leq 2^{\frac{m}{r}} \sum_{k = 0}^m
\left( \sum_{\alpha_{T} \in \Lambda_T(k,n)} |z^\alpha_{T} | | [\alpha_{T}]|^{\frac{1}{r}} \right)
\left( \sum_{\alpha_{E} \in \Lambda_E(m-k,n)}|z^\alpha_{E}| |[\alpha_{E}]|^\frac{1}{r} \right) \\
& \leq 2^{\frac{3m}{2r}} \sum_{k = 0}^m
\left(n^{k/r'} \| z \|_{\ell_{r,s}}^k \frac{1}{k!^{\frac{1}{r'}}}  \right)
\left((\log n)^{(m-k)(\frac{1}{r} - \frac{1}{s})} \| z \|_{\ell_{r,s}}^{m-k} \right) \\
& \le 2^{\frac{3m}{2r}} \| z \|_{\ell_{r,s}}^m m \dis\max_{k = 1, \ldots, m}
n^{k/r'} \frac{1}{k^{\frac{k}{r'}}}
 (\log n)^{(m-k)(\frac{1}{r} - \frac{1}{s})}.
\end{align*}
Using Remark~\ref{rem: bound for n/klog(n)} with $a = \frac{n}{(\log n)^{r'(\frac1{r}-\frac1{s})}}$, we have that
for $m \le a/e $,  the maximum value in the previous estimate is attained when  $k=m$. Therefore,
\begin{align*}
\sum_{\alpha \in \Lambda(m,n)} |z^\alpha| |[\alpha]|^{\frac{1}{r}}
& \prec_{C^m}  \| z \|_{\ell_{r,s}}^m n^{m/r'} \frac{1}{m^{\frac{m}{r'}}}.
\end{align*}
For $m >a/e $ we have that
$\dis\max_{ k = 1, \ldots, m} \left( \frac{a}{k}  \right)^k \le  \left( \frac{a}{a/e} \right)^{a/e} \le e^{a/e} \le e^m$,
and consequently
\begin{align*}
\sum_{\alpha \in \Lambda(m,n)} |z^\alpha| |[\alpha]|^{\frac{1}{r}}
& \prec_{C^m} 2^{\frac{m}{r}} \| z \|_{\ell_{r,s}}^m m \dis\max_{k = 1, \ldots, m}
n^{k/r'} \frac{1}{k^{\frac{k}{r'}}}
(\log n)^{(m-k)(\frac{1}{r} - \frac{1}{s})}\\
& \prec_{C^m} \| z \|_{\ell_{r,s}^m} (\log n)^{m(\frac{1}{r} - \frac{1}{s})} \left( \dis\max_{ k = 1, \ldots, m} \left( \frac{n}{k (\log n)^{r'(\frac{1}{r} - \frac{1}{s})}}  \right)^k \right)^{1/r'}
\prec_{C^m} \|z\|_{\ell_{r,s}^m} (\log n)^{m(\frac{1}{r} - \frac{1}{s})}.
\end{align*}
This completes the proof.
\end{proof}

\smallskip

\begin{proof} [Proof of Theorem \ref{proj lorentz s>r}]
We start proving the first estimate of the theorem. Since $s\geq r$, it follows from the second part Remark~\ref{verysimple} and \eqref{eq: norm identity lorentz} that
\[
c_{\ell_{r,s}^n}(\alpha) \leq \|m^{-1/r}\alpha^{1/r}\|_{\ell_{r,s}^n} \left( \frac{m^{m}}{\alpha^\alpha} \right)^{1/r}
\leq C\,\|m^{-1/r}\alpha^{1/r}\|_{\ell_{r}^n} \left( \frac{m^{m}}{\alpha^\alpha} \right)^{1/r} =
C \left( \frac{m^{m}}{\alpha^\alpha} \right)^{1/r} \leq C e^{m/r} |[\alpha]|^{1/r}\,.
\]
Additionally,  for each $\alpha \in \Lambda(m,n)$ there is a pair $(\ii,j_{m})$, with $\ii \in \Jj(m-1,j_{m})$
and $j_m\in\{1,\dots,n\}$, so that $z^\alpha = z_{j_m}z_\ii$. Therefore,
\begin{equation*}%\label{desig 1er lemma 1 < r < 2}
\begin{split}
\sum_{\alpha \in \Lambda(m,n)} c_{\ell_{r,s}^n}(\alpha)  |z^\alpha|
& \prec_{C^m} \sum_{\alpha \in \Lambda(m,n)}  |z^\alpha | |[\alpha]|^{1/r} =  C^{m} \sum_{j\in \mathcal{J}(m,n)} |z_j| |[j]|^{1/r}
\\
&
= C^{m} \sum_{j_{m}=1}^n |z_{j_{m}}| \sum_{\ii \in \Jj(m-1,j_{m})} |z_\ii| |[(\ii,j_{m})]|^{\frac{1}{r}}
\\
&  \prec_{C^m}  \sum_{j_{m}=1}^n |z_{j_{m}}|   \sum_{\ii \in \Jj(m-1,j_{m})} |z_\ii| |[\ii]|^\frac{1}{r}\,,
\end{split}
\end{equation*}
where the last inequality is due to the fact that $|[(\ii,j_{m})]| \le m |[\ii]|$ for every $ \ii \in \Jj(m-1, j_{m})$.
Recall that $\varphi_{\ell_{r, s}}(k) \equiv k^{1/r'}$ and so
\[
\sum_{j_m =1}^n |z_{j_m}| \prec \|z\|_{\ell_{r, s}^n} n^{\frac{1}{r'}}\,,
\]
Combining with Lemma~\ref{3 cases}, in the range $\frac{n}{e(\log n)^{r'(\frac1{r}-\frac1{s})}}\ge m$, we obtain
\begin{align*}
\sum_{j_{m}=1}^n |z_{j_{m}}|
\sum_{\ii \in \Jj(m-1,j_{m})} |z_\ii| |[\ii]|^\frac{1}{r}
&
\prec_{C^m} \Vert z \Vert_{\ell_{r,s}}^{m-1} \sum_{j_{m}=1}^n |z_{j_{m}}|  \left(\frac{n}{m}\right)^{\frac{m-1}{r'}}
\prec_{^m} \Vert z \Vert_{\ell_{r,s}}^{m-1} \left(\frac{n}{m}\right)^{\frac{m}{r'}}
\\
&
\prec C^{m} m^{\frac{1}{r'}} \|z\|_{\ell_{r,s}^{n}}^m \Big(\frac{n}{m}\Big)^{\frac{m}{r'}}\,\Big(\frac{n}{m}\Big)^{\frac{m-1}{r'}}
\prec C^m \|z\|_{\ell_{r, s}}^{m} \Big(\frac{n}{m}\Big)^{\frac{m}{r'}}\,.
\end{align*}
The final assertions follows from  Theorem~\ref{lambda-dash} regarding the projection constant, and
Corollary~\ref{main3A} concerning the unconditional basis constant.
\end{proof}

In similar manner, we obtain the following result.

\begin{theorem} \label{proj lorentz s>r, big m}
Let $1 < r \leq 2$ and $r \leq s$. 
Then, there exists a constant $C=C(r, s)>0$ such that for any sequence $\big(J_m\big)$
of index sets, each with degree at most~$m$, and for any $n$, the following estimate holds{\rm:}
\[
\widehat{\boldsymbol{\lambda}}\big(\mathcal P_{J_m}(\ell_{r,s}^n)\big) \le C^m (\log n)^{m(\frac{1}{r} - \frac{1}{s})}\,,
\]
provided that $m$ and $n$ satisfy the condition
\[
\frac{n}{e(\log n)^{r'(\frac1{r}-\frac1{s})}}\le m\,.
\]
In particular, we have{\rm:}
\[
\boldsymbol{\chimon}\big(\mathcal P_{J_m}(\ell_{r,s}^n)\big) \prec_{C^m}  (\log n)^{m(\frac{1}{r} - \frac{1}{s})}  \quad \textrm{and}\quad\,
{\boldsymbol{\lambda}}\big(\mathcal P_{J_m}(\ell_{r,s}^n)\big) \prec_{C^m} (\log n)^{m(\frac{1}{r} - \frac{1}{s})}\,.
\]
\end{theorem}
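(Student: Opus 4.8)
The plan is to follow the scheme of the proof of Theorem~\ref{proj lorentz s>r}, but invoking now the \emph{second} alternative of estimate~\eqref{estimate3} in Lemma~\ref{3 cases}; indeed, the standing hypothesis $\tfrac{n}{e(\log n)^{r'(1/r-1/s)}}\le m$ is precisely the regime in which that alternative is the relevant one. First I would reduce to the homogeneous index sets $\Lambda(k)$, $1\le k\le m$: by~\eqref{red1},
\[
\widehat{\boldsymbol{\lambda}}\big(\mathcal P_{J_m}(\ell_{r,s}^n)\big)\le 1+\sum_{k=1}^{m}\widehat{\boldsymbol{\lambda}}\big(\mathcal P_{\Lambda(k)}(\ell_{r,s}^n)\big),
\]
so it suffices to bound each summand by $C^m(\log n)^{m(1/r-1/s)}$. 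Exactly as at the start of the proof of Theorem~\ref{proj lorentz s>r}, the assumption $s\ge r$ together with the second part of Remark~\ref{verysimple} and the estimate $\|\id\colon\ell_r^n\to\ell_{r,s}^n\|\prec_C 1$ (a consequence of~\eqref{eq: norm identity lorentz}, since $1/s-1/r\le 0$) yields $c_{\ell_{r,s}^n}(\alpha)\prec_{C^k}|[\alpha]|^{1/r}$ for every $\alpha\in\Lambda(k,n)$, and hence
\[
\widehat{\boldsymbol{\lambda}}\big(\mathcal P_{\Lambda(k)}(\ell_{r,s}^n)\big)\prec_{C^k}\sup_{z\in B_{\ell_{r,s}^n}}\sum_{\alpha\in\Lambda(k,n)}|z^\alpha|\,|[\alpha]|^{1/r}.
\]

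Next I would apply estimate~\eqref{estimate3} of Lemma~\ref{3 cases} for each fixed $1\le k\le m$, distinguishing the two regimes. If $k>\tfrac{n}{e(\log n)^{r'(1/r-1/s)}}$, its second alternative gives directly $\widehat{\boldsymbol{\lambda}}\big(\mathcal P_{\Lambda(k)}(\ell_{r,s}^n)\big)\prec_{C^k}(\log n)^{k(1/r-1/s)}\le(\log n)^{m(1/r-1/s)}$. If instead $k\le\tfrac{n}{e(\log n)^{r'(1/r-1/s)}}$, the first alternative gives $\widehat{\boldsymbol{\lambda}}\big(\mathcal P_{\Lambda(k)}(\ell_{r,s}^n)\big)\prec_{C^k}(n/k)^{k/r'}$; writing $a:=\tfrac{n}{(\log n)^{r'(1/r-1/s)}}$ one has the identity $(n/k)^{k/r'}=(a/k)^{k/r'}(\log n)^{k(1/r-1/s)}$, and Remark~\ref{rem: bound for n/klog(n)} bounds $(a/k)^{k}\le e^{a/e}\le e^{m}$, the last inequality being exactly the hypothesis $a/e\le m$. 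Thus in both regimes $\widehat{\boldsymbol{\lambda}}\big(\mathcal P_{\Lambda(k)}(\ell_{r,s}^n)\big)\prec_{C^m}(\log n)^{m(1/r-1/s)}$, and summing the $m$ terms — absorbing the polynomial factor $m$ and the additive $1$ into the exponential constant — gives the asserted bound for $\widehat{\boldsymbol{\lambda}}\big(\mathcal P_{J_m}(\ell_{r,s}^n)\big)$. (The few small values of $n$ with $\log n<1$ are immediate, since both sides grow at most geometrically in $m$, so one may assume $\log n\ge 1$.)

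The ``in particular'' statements then follow formally: Theorem~\ref{lambda-dash} gives $\boldsymbol{\lambda}\big(\mathcal P_{J_m}(\ell_{r,s}^n)\big)\le\widehat{\boldsymbol{\lambda}}\big(\mathcal P_{J_m}(\ell_{r,s}^n)\big)$, while the monotonicity $\boldsymbol{\chimon}\big(\mathcal P_{J_m}(\ell_{r,s}^n)\big)\le\boldsymbol{\chimon}\big(\mathcal P_{\le m}(\ell_{r,s}^n)\big)$ combined with Corollary~\ref{main3A} bounds the unconditional basis constant by $e(m+1)2^m\max_{1\le k\le m-1}\boldsymbol{\lambda}\big(\mathcal P_k(\ell_{r,s}^n)\big)\le e(m+1)2^m\max_{1\le k\le m-1}\widehat{\boldsymbol{\lambda}}\big(\mathcal P_{\Lambda(k)}(\ell_{r,s}^n)\big)$, which was already estimated above. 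I expect the one genuinely delicate point — the main obstacle in an otherwise routine argument — to be this homogeneous reduction: the pieces $\Lambda(k)$ of low degree lie in the \emph{opposite} regime of~\eqref{estimate3} and contribute a factor $(n/k)^{k/r'}$ rather than a purely logarithmic one, and it is exactly the hypothesis $n\le e\,m\,(\log n)^{r'(1/r-1/s)}$ that keeps $(a/k)^{k/r'}$ below $e^{m/r'}$, so that these low-degree terms cannot overwhelm the sum.
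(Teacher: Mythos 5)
Your proof is correct and is essentially the argument the paper intends: the paper gives no explicit proof of this theorem, only the remark ``in similar manner,'' pointing back to the proof of Theorem~\ref{proj lorentz s>r}, and your write-up fills in exactly those details — reduction to the homogeneous pieces $\Lambda(k)$, the bound $c_{\ell_{r,s}^n}(\alpha)\prec_{C^k}|[\alpha]|^{1/r}$, and Lemma~\ref{3 cases}\eqref{estimate3} with the case split at $k=a/e$, where the low-degree terms $(n/k)^{k/r'}$ are absorbed via Remark~\ref{rem: bound for n/klog(n)} and the standing hypothesis. (A cosmetic point only: \eqref{estimate3} is displayed for $\Lambda_E(m,n)$ but is in fact proved for all of $\Lambda(m,n)$, which is how both you and the paper use it.)
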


\subsection{Bohr radii} \label{section: main bohr radii}

Finally, we apply our previous results to derive asymptotic estimates for the Bohr radii of the space of multivariate analytic
polynomials on finite-dimensional Lorentz spaces $\ell_{r,s}^n$. The following theorem provides a strong improvement over
\cite[Corollary 10]{defant2018bohr}.

\begin{theorem}\label{thm: main bohr radii}
Let $1<r<\infty$, $1 \le s \le \infty $, and let $J$ be an index set for which  $\Lambda_{T}(m) \subset J$ for all $m$.
Then the following statements are true{\rm:}
\begin{itemize}
\item[(i)] If\, $2 < r < \infty$ and $1 \le s \le \infty$, then $K(B_{\ell_{r,s}^n},J)\sim_C \sqrt{\frac{\log n}{n}}$. Moreover,
if\, $2 < r < \infty$ and $r \le s $, then
\[
\lim_{n \to \infty} \frac{K(B_{\ell_{r,s}^n},J)}{\sqrt{\frac{\log n}{n}}} = 1\,.
\]
\item[(ii)] If\, $1 < r \le 2$ and  $1 \le s \le r$, then  $K(B_{\ell_{r,s}^n},J)\sim_C \left( \frac{\log n}{n} \right)^{1-\frac{1}{r}}$\,.
\item[(iii)] If\, $1<r\le2$ and $r < s $, then  $\left( \frac{\log n}{n} \right)^{1-\frac{1}{r}}
\prec_C K(B_{\ell_{r,s}^n},J) \prec_C\frac{(\log n)^{1-\frac{1}{s}}}{n^{1-\frac{1}{r}}}$.
\end{itemize}
\end{theorem}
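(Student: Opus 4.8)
The plan for Theorem~\ref{thm: main bohr radii} is to run, in each of the three cases, the scheme that already underlies the proofs of Theorem~\ref{thm: 2 convex bohr radiiA} and Theorem~\ref{limits++}. By Theorem~\ref{thm: Bohr vs unc} and Proposition~\ref{prop: Km vs uncond in Pm} the problem reduces to estimating $\inf_{m}\boldsymbol{\chimon}\big(\mathcal{P}_{J(m)}(\ell_{r,s}^n)\big)^{-1/m}$ and then optimizing over $m$ via Remark~\ref{rem: maximum of f} (the infimum sits around $m\sim\log n$). For the \emph{upper} estimates on $K(B_{\ell_{r,s}^n},J)$ one only needs \emph{lower} bounds on $\boldsymbol{\chimon}(\mathcal{P}_{J(m)}(\ell_{r,s}^n))$, and since $\Lambda_T(m)\subset J(m)$ it is enough to bound $\boldsymbol{\chimon}(\mathcal{P}_{\Lambda_T(m)}(\ell_{r,s}^n))$ from below at $m=[\log n]$. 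For the \emph{lower} estimates on $K$ one needs \emph{upper} bounds on $\boldsymbol{\chimon}(\mathcal{P}_{J(m)}(\ell_{r,s}^n))$ that hold for \emph{every} $m$.

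For part~(i) ($2<r<\infty$, $1\le s\le\infty$) I would invoke Theorem~\ref{t-finalII} (case~$\mathbf{A}$), which supplies $\boldsymbol{\chimon}(\mathcal{P}_{J(m)}(\ell_{r,s}^n))\le C^m(1+n/m)^{(m-1)/2}$ for all $m,n$ and $\boldsymbol{\chimon}(\mathcal{P}_{\Lambda_T(m)}(\ell_{r,s}^n))\sim_{C^m}(1+n/m)^{(m-1)/2}$ for $m\le n$; feeding these into the scheme above reproduces, word for word, the argument in the proof of Theorem~\ref{thm: 2 convex bohr radiiA} and gives $K(B_{\ell_{r,s}^n},J)\sim_C\sqrt{\log n/n}$ (for $2\le s<\infty$ one may also simply quote Theorem~\ref{thm: 2 convex bohr radiiA}, $\ell_{r,s}$ being $2$-convex). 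For the limit when $r\le s$ I would verify the two hypotheses of Theorem~\ref{limits++} for $X=\ell_{r,s}$: the condition $\varphi_X(n)\varphi_{X'}(n)\prec n$ is immediate from $\varphi_{\ell_{r,s}}(n)\sim n^{1/r}$, and the inequality $\|\id\colon X_n\to\ell_2^n\|\le n^{-1/2}\|\id\colon X_n\to\ell_1^n\|$ is obtained by exhibiting $\ell_{r,s}$ as a $2$-convexification---trivially $\ell_r=(\ell_{r/2})^{(2)}$ if $s=r$, and otherwise the $2$-convexification of the symmetric Banach sequence lattice $\ell_{r/2,s/2}$ (after the harmless equivalent renorming used throughout Section~\ref{Unconditionality})---and then applying the construction recorded right after Theorem~\ref{limits}. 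Theorem~\ref{limits++} then yields $K(B_{\ell_{r,s}^n},J)/\sqrt{\log n/n}\to1$.

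For parts~(ii) and~(iii) ($1<r\le2$) the exponent $1/2$ is replaced by $1/r'=1-1/r$. For the \emph{upper} bound on $K$ I would use Proposition~\ref{innichen1}, which (as $\varphi_{\ell_{r,s}}(n)\prec n^{1/r}$) gives
\[
\boldsymbol{\chimon}\big(\mathcal{P}_{\Lambda_T(m)}(\ell_{r,s}^n)\big)\;\succ_{C^m}\;\|\id\colon\ell_{r,s}^n\to\ell_r^n\|^{-m}\,(n/m)^{(m-1)/r'},\qquad m\le n.
\]
In case~(ii), $\|\id\colon\ell_{r,s}^n\to\ell_r^n\|=1$ by~\eqref{eq: norma id lorentz q < p} (here $s\le r$), so evaluating at $m=[\log n]$ and using Remark~\ref{rem: maximum of f} gives $K\prec_C(\log n/n)^{1-1/r}$; in case~(iii), $\|\id\colon\ell_{r,s}^n\to\ell_r^n\|\prec_C(1+\log n)^{1/r-1/s}$ by~\eqref{eq: norm identity lorentz}, which introduces the extra $(\log n)^{1/r-1/s}$ and yields $K\prec_C(\log n)^{1-1/s}/n^{1-1/r}$. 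For the \emph{lower} bound on $K$ I would split the degrees into two ranges. On the admissible range---$m$ obeying~\eqref{restriction} in case~(ii), resp.\ $m\le n/(e(\log n)^{r'(1/r-1/s)})$ in case~(iii)---Theorems~\ref{bound_similar_ell_r} and~\ref{proj lorentz s>r} give $\boldsymbol{\chimon}(\mathcal{P}_{J(m)}(\ell_{r,s}^n))\le C^m(n/m)^{(m-1)/r'}$, so $K_m(B_{\ell_{r,s}^n},J)\succ_C(m/n)^{(m-1)/(mr')}$; by Remark~\ref{rem: maximum of f} the infimum of the right-hand side over $m$ equals $(\log n/n)^{1-1/r}$ up to constants, attained near $m\sim\log n$, which lies in the admissible range once $n$ is large. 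On the complementary range---which forces $m\ge n^{1-o(1)}$ (the left side of~\eqref{restriction} being $(1+o(1))\log m$, and in case~(iii) because $n/m\le e(\log n)^{r'(1/r-1/s)}$ is polylogarithmic), hence $n/m=n^{o(1)}$---the crude Kadets--Snobar estimate Corollary~\ref{K-S for poly} (or Theorem~\ref{proj lorentz s>r, big m} in case~(iii)) gives $\boldsymbol{\chimon}(\mathcal{P}_{J(m)}(\ell_{r,s}^n))^{1/m}\le C(1+n/m)^{1/2}=n^{o(1)}$, so $K_m(B_{\ell_{r,s}^n},J)\succ_C n^{-o(1)}$, asymptotically far larger than $(\log n/n)^{1-1/r}$. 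Hence $\inf_m K_m(B_{\ell_{r,s}^n},J)\sim_C(\log n/n)^{1-1/r}$, and Theorem~\ref{thm: Bohr vs unc} closes the lower bound on $K$ in both~(ii) and~(iii).

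The hard part is precisely this last step: the sharp per-degree estimates of Theorems~\ref{bound_similar_ell_r}, \ref{proj lorentz s>r} and~\ref{proj lorentz s>r, big m} for $1<r\le2$ carry genuine restrictions on the pair $(m,n)$, and one has to check both that the degrees falling outside those restrictions are pushed so close to $n$ on the logarithmic scale that the trivial Kadets--Snobar bound already beats the target order $(\log n/n)^{1-1/r}$, and that the minimizing degree $m\sim\log n$ always remains inside the admissible range for $n$ large. Once this bookkeeping is in place, the optimization over $m$ is the routine application of Remark~\ref{rem: maximum of f}, exactly as in the proofs of Theorem~\ref{thm: 2 convex bohr radiiA} and Theorem~\ref{limits++}; the remaining ingredients (the value $\varphi_{\ell_{r,s}}(n)\sim n^{1/r}$, the norm identities~\eqref{eq: norma id lorentz q < p}--\eqref{eq: norm identity lorentz}, and the $2$-convexification structure of $\ell_{r,s}$ for $r\le s$) are standard.
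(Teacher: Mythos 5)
Your proposal is correct and, in its overall architecture, is the paper's proof: reduce to the per-degree quantities via Theorem~\ref{thm: Bohr vs unc} and Proposition~\ref{prop: Km vs uncond in Pm}, use the monotonicity \eqref{rem: bohr radii monotony} to prove upper bounds for $\Lambda_T$ and lower bounds for the full index set, feed in the Section~3 estimates, and optimize at $m\sim\log n$ via Remark~\ref{rem: maximum of f}. Parts (i) and (iii) match the paper essentially step for step (the paper packages your Proposition~\ref{innichen1} computation for the upper bounds on $K$ as Theorem~\ref{applyhedral}, which rests on the same Lemma~\ref{innichenA}). The one genuine divergence is the lower bound on $K$ in case (ii): the paper does \emph{not} split degrees into an admissible and a complementary range, but instead invokes Theorem~\ref{proj lorentz s<r} with $\kappa=1/2$ --- the unrestricted bound $\widehat{\boldsymbol{\lambda}}(\mathcal P_m(\ell_{r,s}^n))\prec_{C^m}(n/m)^{m/r'}\max\{1,(\log m)^{m(1/s-1/r)}n^{-m^{\kappa}/(2r')}\}$ valid for all $m\le n$ --- combined with Theorem~\ref{thm: uncond cte vs proj cte}, and then runs a single global optimization of the resulting $\max$ expression. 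Your range-splitting plus Kadets--Snobar on the bad range is a legitimate alternative and arguably cleaner bookkeeping, but one justification you give there is literally false in the boundary case $(r,s)=(2,1)$: there $r(\tfrac1s-\tfrac1r)=1$, so \eqref{restriction} reads $2\log m\le\log n$, the complementary range begins already at $m>\sqrt n$ (not at $m\ge n^{1-o(1)}$), and $n/m$ can be as large as $\sqrt n$ rather than $n^{o(1)}$. The argument survives because Kadets--Snobar then gives $K_m\succ_C n^{-1/4}$, which still dominates the target $(\log n/n)^{1/2}$, but the check should be stated as ``the Kadets--Snobar bound $(1+n/m)^{1/2}$ on the complementary range is always $o\big((n/\log n)^{1/r'}\big)$'' rather than via the incorrect claim $n/m=n^{o(1)}$.
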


For the full index set $J= \N_{0}^{(\N)}$ statement (i)  of Theorem \ref{thm: main bohr radii} was first presented in  \cite[Corollary 10, (i)]{defant2018bohr},
though without the assertion about the limit. The other two cases in Theorem~\ref{thm: main bohr radii} improve the results in  \cite[Corollary 10, (ii) and (iii)]{defant2018bohr}.
Notably, (ii) of Theorem~\ref{thm: main bohr radii} is asymptotically optimal, which contrasts with \cite[Corollary 10, (ii)]{defant2018bohr}).

By the monotonicity property as described in  \eqref{rem: bohr radii monotony} it suffices to prove the upper bounds  in the 'tetrahedral case',
and the lower bounds
in the  'full case'. However, the lower estimate in the tetrahedral case appears to be technically less complex than the lower estimate in the full case. This motivates us
to provide, up to a certain extent, two separate proofs for these two situations.

\begin{proof}
Proof of (i). Recall that $\ell_{r,s}$  is $2$-convex whenever $2 < r < \infty, \,  2 \leq s \leq \infty$. Hence in this
situation the result is a special case of Theorem~\ref{thm: 2 convex bohr radiiA}. But in the case $2 < r < \infty,\, 1 \leq s \leq 2$
we also know from Theorem \ref{lower bound 2-convex for poly} that
$
\big( 1+\frac{n}{m}\big)^{\frac{m-1}{2}}\sim_{C^m}
\boldsymbol{\chimon}(\Pp_{J_m}( X_n)) \,,
$
for all  $m, n$, and so exactly the same arguments as in the proof of  Theorem~\ref{thm: 2 convex bohr radiiA} work.
To get the the limit formula, it suffices to note that the assumptions of Theorem \ref{limits++} are satisfied in the case of
$2< r \le s$.

\noindent
Proof of (ii) and (iii) (the upper bounds). We show them for the tetrahedral case only,  the general case follows by
monotonicity from \eqref{rem: bohr radii monotony}.  In fact, both  cases  are immediate from Theorem~\ref{applyhedral},
since under the assumption of $(ii)$, we have
$
\|\id\colon \ell_{r,s}^n \to \ell_r^n\| = 1$, and on the other hand assuming $(iii)$ it follows
$
\|\id\colon \ell_{r,s}^n \to \ell_r^n\| \prec_C (\log n)^{\frac{1}{r}-\frac{1}{s}}$.

\noindent Proof of (ii) and (iii) (the lower bounds in the tetrahedral case): By Theorem~\ref{applyhedral} we have that
\begin{align*}
K\big(B_{\ell_{r,s}^n},\Lambda_T\big)
& \succ_C \inf_{m\leq n}
\left(\frac{ \varphi_{\ell_{r',s'}^n(m-1)}}{\varphi_{\ell_{r',s'}^n}(n)} \right)^{\frac{m-1}{m}} \sim_C\inf_{m\leq n}
\frac{ (m-1)^{\frac{1}{r'}\frac{m-1}{m}}}{n^{\frac{1}{r'}\frac{m-1}{m}}}
\\
& \sim_C \inf_{m\leq n}
\left( \frac{m n^{\frac{1}{m}} }{n} \right) ^{\frac{1}{r'}} =\frac{1}{n^{\frac{1}{r'}}}\inf_{m\leq n}
\Big(m n^{\frac{1}{m}}\Big)^{\frac{1}{r'}}
\,,
\end{align*}
hence minimizing the last infimum (using Remark~\ref{rem: maximum of f}) leads to the  claim.

Finally, given $r$ and $s$ like in (ii) or (iii), it remains to prove the lower bounds for $ K(B_{\ell_{r,s}^n}, J)$, for any index set
$J\subset \NN_{0}^{(\NN)}$ for which  $\Lambda_{T}(m) \subset J$ for all $m$. But, again looking at  \eqref{rem: bohr radii monotony},
we may assume that  $J = \NN_0^{(\NN)}$. To achieve the bounds we are looking for, it is central to use Theorem \ref{thm: Bohr vs unc} and
good bounds for the unconditional basis constant for the space of $m$-homogeneous polynomials in each of these cases.

\noindent
$1 < r \le 2$ and $1 \le s \le r$: By  Theorem \ref{proj lorentz s<r} with $\kappa = 1/2$ and Theorem \ref{thm: uncond cte vs proj cte},
since $\frac{1}{s}-\frac{1}{r}\leq\frac{1}{r'}$, we know that,
\begin{align*}
\boldsymbol{\chimon}\big(\mathcal P_m( \ell_{r,s}^n)\big) & \prec_{C^m} \Big(\frac{n}{m} \Big)^{(m-1)/r'} \max \left\{ \Big( \frac{  \log(m)^{m}}{n^{\sqrt{m}-1}}\Big)^{1/r'},1 \right\} \\
& \prec_{C^m} \left( n^{1/r'}\Big(\frac{1}{mn^{1/m}} \Big)^{1/r'}\max \left\{ \Big(\frac{\log m}{n^{(\sqrt{m}-1)/m}}\Big)^{1/r'},1 \right\} \right)^m\\
& = \left( n^{1/r'}\max \left\{\Big(\frac{\log m}{mn^{1/\sqrt{m}}} \Big)^{1/r'},\Big(\frac{1}{mn^{1/m}} \Big)^{1/r'}\right\}\right)^m\\
&\prec_{C^m} \left( n^{1/r'}\max\left\{\Big(\frac{1}{\sqrt{m}n^{1/\sqrt{m}}} \Big)^{1/r'},\Big(\frac{1}{mn^{1/m}} \Big)^{1/r'}\right\}\right)^m.
\end{align*}
Optimizing with  Remark \ref{rem: maximum of f}  shows
$
\dis\sup_{m}  \boldsymbol{\chimon}(\mathcal P_m( \ell_{r,s}^n))^{1/m} \prec_C \Big( \frac{n}{\log n} \Big)^{1/r'},
$
and hence Theorem \ref{thm: Bohr vs unc} gives
$
\Big( \frac{n}{\log n} \Big)^{1/r'} \prec_C K(B_{X_n})$, as desired.

\noindent $1 < r \le 2$ and $ r<s$: \,By Theorems \ref{proj lorentz s>r} and \ref{proj lorentz s>r, big m}, for $m \leq n$, we have
\[
\boldsymbol{\chimon}\big(\Pp_m(\ell^n_{r,s})\big) \prec_{C^m} \begin{cases}
 \Big( \frac{n}{m n^{1/m}} \Big)^{m/r'} & m\le  \frac{n}{e (\log n)^{r'(\frac{1}{r} - \frac{1}{s})}},\\
(\log n)^{m(\frac1{r}-\frac1{s})} & \textrm{ otherwise.}
\end{cases}
\]
Choose now some  $n_0 = n_0(r,s)$  such that $\log n \le \frac{n}{e (\log n)^{r'(\frac{1}{r} - \frac{1}{s})}}
$ for all $n \ge n_0$. Then by Remark \ref{rem: maximum of f}, for each $n \ge n_0$ and $ m
\le \frac{n}{e (\log n)^{r'(\frac{1}{r} - \frac{1}{s})}}$, we have
\[
\boldsymbol{\chimon}\big(\Pp_m(\ell^n_{r,s})\big) \prec_{C^m} \left( \frac{n}{\log n} \right)^{m(1 - \frac{1}{r})}\,.
\]
On the other hand, for all $n$ and every $ m \ge \frac{n}{e (\log n)^{r'(\frac{1}{r} - \frac{1}{s})}}$, it holds
\[
\boldsymbol{\chimon}\big(\Pp_m(\ell^n_{r,s})\big) \prec_{C^m} (\log n)^{m(\frac{1}{r} - \frac{1}{s})}
\prec_{C^m} \left( \frac{n}{\log n} \right)^{m(1 - \frac{1}{r})}\,.
\]
Combining both estimates, Proposition~\ref{prop: Km vs uncond in Pm} and Theorem \ref{thm: Bohr vs unc} complete the proof.
\end{proof}

\end{document}